\title{Crushing Surfaces of Positive Genus}
\date{\today}
\author{
\bgroup
\setlength{\tabcolsep}{12pt}
\begin{tabular}{cc}
    \shortstack{
    Benjamin A. Burton\\
    The University of Queensland\\
    \email{bab}{maths.uq.edu.au}
    }
    &
    \shortstack{
    Thiago de Paiva\\
    Monash University\\
    \email{thiago.depaivasouza}{monash.edu}
    }
    \\[12pt]
    \shortstack{
    Alexander He\\
    The University of Queensland\\
    \email{a.he}{uqconnect.edu.au}
    }
    &
    \shortstack{Connie On Yu Hui\\
    Monash University\\
    \email{onyu.hui}{monash.edu}
    }
\end{tabular}
\egroup
}
\setlist[itemize]{leftmargin=*, noitemsep}
\setlist[enumerate]{leftmargin=*, noitemsep}
\setlist[description]{leftmargin=*, labelwidth=*, noitemsep}
\theoremstyle{plain}
\newtheorem{theorem}{Theorem}
\newtheorem{lemma}[theorem]{Lemma}
\newtheorem{corollary}[theorem]{Corollary}
\newtheorem{observation}[theorem]{Observation}
\newtheorem{atomic}{Claim}
\newtheorem{subatomic}{Claim}
\numberwithin{subatomic}{atomic}
\declaretheoremstyle[
	headfont=\normalfont\bfseries,
	numbered=yes,
	bodyfont=\normalfont,
	qed={$\blacksquare$},
	spaceabove=1em,
	spacebelow=1em,
]{qeddef}
\declaretheorem[
	style=qeddef,
	title=Definitions,
	sibling=theorem,
]{definitions}
\declaretheorem[
	style=qeddef,
	title=Notation,
	sibling=atomic,
]{atomicnotn}
\theoremstyle{definition}
\newtheorem*{remark*}{Remark}
\newcommand{\Regina}{\texttt{\textup{Regina}}}
\newcommand{\ConnSum}{\mathbin{\#}}
\DeclareMathOperator{\Wt}{wt}
\renewenvironment{proof}[1][\proofname] {\par\pushQED{\qed}\normalfont\topsep6\p@\@plus6\p@\relax\trivlist\item[\hskip\labelsep\itshape\bfseries#1\@addpunct{.}]\ignorespaces}{\popQED\endtrivlist\@endpefalse}
\newcommand{\email}[2]{\href{mailto:#1@#2}{\textsf{#1\hspace{1pt}$@$\hspace{1pt}#2}}}
\DeclareMathOperator{\Int}{int}
\begin{document}

\maketitle

\begin{abstract}
The operation of crushing a normal surface has proven to be a powerful tool in computational $3$-manifold topology,
with applications both to triangulation complexity and to algorithms.
The main difficulty with crushing is that it can drastically change the topology of a triangulation,
so applications to date have been limited to relatively simple surfaces:
$2$-spheres, discs, annuli, and closed boundary-parallel surfaces.
We give the first detailed analysis of the topological effects of crushing closed essential surfaces of positive genus.
To showcase the utility of this new analysis, we use it to prove some results
about how triangulation complexity interacts with JSJ decompositions and satellite knots;
although similar applications can also be obtained using techniques of Matveev,
our approach has the advantage that it avoids the machinery of almost simple spines and handle decompositions.
\end{abstract}

\paragraph{Keywords}$3$-manifolds, Triangulations, Normal surfaces, Crushing

\section{Introduction}\label{sec:intro}

The idea of crushing a normal surface was first developed by Jaco and Rubinstein~\cite{JacoRubinstein2003}
as part of a broader program of giving a theory of ``efficient'' $3$-manifold triangulations.
This led to new insights on minimal triangulations~\cite{JacoRubinstein2003}, and has also been the key to developing ``efficient''
(in various senses of the word, depending on the particular application) algorithms to solve a number of fundamental problems in
low-dimensional topology \cite{Burton2013Regina,Burton2014,BCT2013,BurtonHe2023SoCG,BurtonOzlen12,BurtonTillmann2018,Fowler2003,INT2022}.

The key obstacle in developing new applications of crushing is that this operation can drastically alter the topology of a triangulation.
This difficulty was initially compounded by the complicated formulation of crushing that was originally given by Jaco and Rubinstein;
although Jaco and Rubinstein were able to give a number of applications,
they required intricate arguments about the topological effects of crushing $2$-spheres, discs and closed boundary-parallel surfaces~\cite{JacoRubinstein2003}.
More recent applications rely on simpler formulations of crushing that are easier to understand and use:
\begin{itemize}
\item Following unpublished ideas of Casson, Fowler~\cite{Fowler2003} used
the language of special spines to understand the effect of crushing $2$-spheres.
\item The first author introduced a way to break crushing down into a sequence of simple atomic moves,
and used this atomic approach to describe the topological effects of crushing $2$-spheres and discs~\cite{Burton2014};
this has proven to be extremely useful for turning crushing into an accessible algorithmic tool
for working with $3$-manifolds \cite{Burton2013Regina,Burton2014,BCT2013,BurtonHe2023SoCG,BurtonOzlen12,BurtonTillmann2018}.
This atomic approach has also recently been applied to crushing certain types of properly embedded annuli~\cite{INT2022}.
\end{itemize}

We emphasise that although it is, in principle, possible to crush any normal surface, the applications to date have
only involved $2$-spheres, discs, annuli and closed boundary-parallel surfaces.
Probably the main reason for this is that as the surfaces get more complicated, the topological effects of crushing also appear to get more complicated.
Nevertheless, we demonstrate in this paper that it is possible to push through this challenge by building upon the atomic approach to crushing from~\cite{Burton2014}.

To be precise, we use the atomic approach to understand the topological effects of crushing closed normal surfaces of positive genus;
in particular, we are able to crush essential surfaces, not just boundary-parallel ones.
This work is distributed across two sections of this paper.
First, in Section~\ref{sec:atomicIdeal}, we carefully work through the necessary details to extend the atomic approach to crushing.
Then, in Section~\ref{sec:genus}, we apply the work from Section~\ref{sec:atomicIdeal} to
actually understand the effect of crushing a surface of positive genus.

To state the main theorem from Section~\ref{sec:genus}, we require some notation and terminology
which we now outline (see Section~\ref{sec:genus} for the precise definitions).
Given a normal surface $S$ in a $3$-manifold triangulation $\mathcal{T}$, our goal is to triangulate a submanifold $X$ of $\mathcal{T}$ that is ``cut out'' by the surface $S$.
More precisely, we fix a component of $\mathcal{T}-S$, which we call the \emph{chosen region}, and then take $X$ to be the closure of the chosen region.
After crushing $S$ to obtain a new triangulation $\mathcal{T}'$, each component of $\mathcal{T}-S$ ``falls apart'' to yield some subset of the components of $\mathcal{T}'$.
In particular, the chosen region yields some subset $\mathcal{T}^\ast$ of $\mathcal{T}'$, and our hope is that (a component of) $\mathcal{T}^\ast$ actually gives a triangulation of $X$.

For our purposes, it turns out to be important to ``push'' or ``expand'' $S$ as far into the chosen region as possible, to obtain what we call a \emph{maximal} surface.
We show in Lemma~\ref{lem:maximal} that we can always, without loss of generality, assume that $S$ is maximal.
With this groundwork, together with our analysis from Section~\ref{sec:atomicIdeal}, we are able to prove the following theorem in Section~\ref{sec:genus}:

\begin{restatable}{theorem}{crushPositiveGenus}\label{thm:crushPositiveGenus}
Suppose that $X$ is irreducible, $\partial$-irreducible and anannular, and that it contains no two-sided properly embedded M\"{o}bius bands.
Also, suppose $S$ is maximal.
Then $\mathcal{T}^\ast$ is a valid triangulation such that:
\begin{itemize}[nosep]
\item one of its components is an ideal triangulation of $X$; and
\item every other component is a triangulation of the $3$-sphere.
\end{itemize}
\end{restatable}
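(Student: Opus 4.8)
The plan is to realise the crushing of $S$ as a sequence of atomic moves, in the extended sense developed in Section~\ref{sec:atomicIdeal}, and to keep track throughout of the part of the evolving triangulation that descends from the chosen region. Since $S$ is maximal, the tetrahedra of $\mathcal{T}$ on the chosen side of $S$ already form a triangulation of $X$ whose boundary is the normal surface $S$; maximality will moreover be the ingredient that forces $\mathcal{T}^\ast$ to be valid, by ruling out the degenerate local configurations (an edge glued to itself in reverse, or a degenerate vertex link) that are the only obstacles to validity of the atomic moves. The heart of the argument is then to show that the combined effect of the atomic moves on the chosen piece is exactly to collapse the product collar of $S$ inside $X$ down to a single ideal vertex with link $S$, while the only other components produced are triangulated $3$-spheres.

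First I would recall from Section~\ref{sec:atomicIdeal} the catalogue of atomic moves together with their effect on the underlying $3$-manifold, and sort these moves into two classes. Moves of the first class --- edge collapses, crushing a tetrahedron met by $S$ in the generic way, and flattenings of pillows whose pinched pieces are trivial --- either leave the homeomorphism type of the chosen piece unchanged or merely split off a $3$-sphere. Moves of the second class are the ``twisted'' flattenings, or flattenings that run across a non-trivial two-sided surface; by the analysis of Section~\ref{sec:atomicIdeal}, such a move can occur only if the chosen piece (which at that stage is still homeomorphic to $X$, up to $3$-sphere summands) contains an essential $2$-sphere, an essential disc, an essential annulus, or a two-sided properly embedded M\"{o}bius band. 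Here I would invoke the four hypotheses in turn: irreducibility forbids the essential spheres --- and, because $\partial X = S$ is non-empty, it also rules out $\mathbb{RP}^3$, $L(3,1)$, $S^2 \times S^1$ and $S^2 \widetilde{\times} S^1$ summands, so that every closed by-product component is genuinely an $S^3$ --- while $\partial$-irreducibility forbids the discs, anannularity forbids the annuli, and the final hypothesis forbids the M\"{o}bius bands. Hence no move of the second class does anything beyond splitting off a $3$-sphere.

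It then remains to single out the ``large'' component. Once every tetrahedron met by $S$ has been crushed, what survives of the collar between $S$ and the interior of $X$ is a two-sided (or one-sided) product neighbourhood of $S$, and the remaining atomic moves collapse this neighbourhood; using the ideal-vertex versions of the moves established in Section~\ref{sec:atomicIdeal}, this collapse replaces $S$ by an ideal vertex whose link is $S$, rather than capping it off with a ball or cone. This is precisely where positive genus enters: for a $2$-sphere or a disc the collapsed collar is topologically trivial and is simply discarded, whereas here the collar carries the genus of $S$ and must be recorded as ideal. Truncating the resulting ideal vertex recovers $X$, so the corresponding component of $\mathcal{T}^\ast$ is an ideal triangulation of $X$, and combining this with the previous paragraph shows that every other component triangulates the $3$-sphere --- which is the theorem.

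I expect the main obstacle to be the case analysis underlying the second paragraph: one must check that each ``twisted'' flattening genuinely exhibits one of the four forbidden surfaces inside $X$, with careful attention to how that surface sits relative to the portion of the triangulation already processed, and one must also confirm that the collar-collapse produces exactly one ideal vertex and never disconnects $X$ or leaves behind an extra non-spherical component. This is delicate precisely because the positive genus of $S$ allows the collar to meet many tetrahedra in intricate patterns, so the ``standard versus twisted'' dichotomy has to be verified against local configurations that simply do not occur when one crushes spheres, discs or annuli.
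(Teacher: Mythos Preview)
Your overall strategy---realise crushing as atomic moves and show each move either preserves the homeomorphism type of the $X$-piece or splits off an $S^3$---is exactly the paper's approach. But there is a genuine gap in your case analysis, and it is tied to a misidentification of what maximality is for.

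You claim that the ``twisted'' flattenings can only occur in the presence of an essential sphere, disc, annulus, or two-sided M\"{o}bius band, and that anannularity disposes of the annulus case. This is not true: anannularity forbids \emph{essential} annuli, but the truncated bigon associated to a flattened face can perfectly well be a \emph{boundary-parallel} annulus in $X$. Decomposing along such an annulus does not in general return $X$ together with a $3$-sphere, so this case genuinely threatens the induction and is not covered by any of your four hypotheses. The paper handles this by introducing the notion of a \emph{bad bigon path} (an internal bigon path whose truncated version is a boundary-parallel annulus) and strengthening the inductive hypothesis to ``acceptable \emph{and} contains no bad bigon paths''. The base case---that the initial cell decomposition $\mathcal{D}^\ast$ has no bad bigon paths---is Lemma~\ref{lem:badBigonPaths}, and \emph{this} is where maximality of $S$ is used: a bad bigon path would let one push $S$ strictly further into the chosen region, contradicting maximality. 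One then has to check, for every atomic move, not only that the topology behaves but also that no new bad bigon path is created; this is a non-trivial part of the inductive step.

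So maximality is not, as you suggest, what forces validity or what makes the chosen-side tetrahedra ``already form a triangulation of $X$''. Validity falls out of the case analysis once the bad-bigon-path obstruction is controlled; maximality is precisely the tool that controls it. Without this ingredient your second paragraph does not go through, because the boundary-parallel annulus case is simply missing from your dichotomy.
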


In Section~\ref{sec:submanifold}, we apply Theorem~\ref{thm:crushPositiveGenus} to study the \emph{triangulation complexity} $\Delta(\mathcal{M})$:
the minimum number of tetrahedra required to triangulate some particular $3$-manifold $\mathcal{M}$.
In particular, we obtain the following general result as a relatively straightforward consequence of Theorem~\ref{thm:crushPositiveGenus}:

\begin{restatable}{theorem}{submanifold}\label{thm:submanifold}
Let $\mathcal{M}$ be a compact $3$-manifold with no $2$-sphere boundary components.
Suppose $\mathcal{M}$ contains a (possibly disconnected) closed incompressible surface $S$ with
no $2$-sphere components, no projective plane components, and no boundary-parallel components.
Let $\mathcal{R}$ be a component obtained after cutting $\mathcal{M}$ along $S$.
If $\mathcal{R}$ is irreducible, $\partial$-irreducible, anannular, and does not contain any
two-sided properly embedded M\"{o}bius bands, then $\Delta(\mathcal{R}) < \Delta(\mathcal{M})$.
\end{restatable}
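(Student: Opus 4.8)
The plan is to argue in three steps: pass to a minimal triangulation of $\mathcal{M}$, realise $S$ as a \emph{maximal} normal surface inside it, crush that surface, and then read off the bound directly from Theorem~\ref{thm:crushPositiveGenus}. First I would fix a minimal triangulation $\mathcal{T}$ of $\mathcal{M}$, so that $\mathcal{T}$ has exactly $\Delta(\mathcal{M})$ tetrahedra. Since $\mathcal{M}$ has no $2$-sphere boundary components, and since $\partial\mathcal{R}$ likewise has no $2$-sphere components — this is part of the reason projective-plane components of $S$ are excluded, as the two-sided double cover of $\mathbb{RP}^2$ is a sphere — both $\Delta(\mathcal{M})$ and $\Delta(\mathcal{R})$ are computed over ideal triangulations in the usual way. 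Note also that $S\neq\emptyset$, since otherwise the asserted strict inequality is false.

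Next I would realise $S$ as a normal surface. Since $S$ is closed and incompressible, standard normalisation theory for incompressible surfaces (after first reducing, if necessary, to the case where $\mathcal{M}$ is irreducible by isotoping $S$ off a system of reducing spheres and passing to the prime summand containing $S$, whose complexity is strictly smaller) shows that $S$ is isotopic to a normal surface $S'$ in $\mathcal{T}$; an essential surface cannot normalise to the empty surface, so $S'\neq\emptyset$, and $\mathcal{M}$ cut along $S'$ still has a component homeomorphic to $\mathcal{R}$. I would designate that component as the chosen region, so that the region $X$ cut out by $S'$ is homeomorphic to $\mathcal{R}$; then $X$ is irreducible, $\partial$-irreducible, anannular, and contains no two-sided properly embedded M\"obius band, exactly by the hypotheses on $\mathcal{R}$. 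Finally, by Lemma~\ref{lem:maximal} I may assume without loss of generality that $S'$ is maximal. At this point every hypothesis of Theorem~\ref{thm:crushPositiveGenus} is in place.

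Now I would crush $S'$ to obtain the triangulation $\mathcal{T}'$, and let $\mathcal{T}^\ast$ be the sub-collection of components of $\mathcal{T}'$ coming from the chosen region. By Theorem~\ref{thm:crushPositiveGenus}, one component of $\mathcal{T}^\ast$ is an ideal triangulation $\mathcal{T}_{\mathcal{R}}$ of $X\cong\mathcal{R}$, and every other component triangulates $S^3$; in particular $\Delta(\mathcal{R})$ is at most the number of tetrahedra of $\mathcal{T}_{\mathcal{R}}$. Since crushing a non-empty normal surface strictly decreases the number of tetrahedra, $\mathcal{T}_{\mathcal{R}}$ has at most $|\mathcal{T}^\ast|\le|\mathcal{T}'|<|\mathcal{T}|=\Delta(\mathcal{M})$ tetrahedra. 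Therefore $\Delta(\mathcal{R})<\Delta(\mathcal{M})$, as required.

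The counting in the last step is routine, relying only on the basic fact that crushing removes at least one tetrahedron. The part I expect to be most delicate is the middle step: faithfully realising the (possibly disconnected, possibly one-sided) incompressible surface $S$ as a normal surface whose complement genuinely recovers $\mathcal{R}$ — this is where one must deal carefully with one-sided components of $S$, with possible reducibility of $\mathcal{M}$, and with the fact that normalising a disconnected surface should neither merge nor lose components — together with checking that the passage to a maximal surface via Lemma~\ref{lem:maximal} leaves the homeomorphism type of the chosen region unchanged, and keeping the ideal-triangulation convention for $\Delta$ consistent throughout.
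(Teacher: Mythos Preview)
Your overall strategy matches the paper's: take a minimal triangulation, realise the surface as a maximal normal surface, crush, and apply Theorem~\ref{thm:crushPositiveGenus}. However, there are two genuine gaps.

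First, the setup of Section~\ref{sec:genus} (and hence Theorem~\ref{thm:crushPositiveGenus}) requires that every component of the normal surface be \emph{two-sided} and that the chosen region meet each component on \emph{exactly one side}. You flag one-sidedness as ``delicate'' but never resolve it, and you do not address the second condition at all. The paper's fix is to replace $S$ by the surface $S'$ consisting of those components of $\partial N(S)$ that meet $\mathcal{R}$; this automatically makes every component two-sided and ensures the chosen region (namely the interior of $\mathcal{R}$) meets each component exactly once. One then has to re-check that $S'$ is still incompressible and has no boundary-parallel components, which the paper does case by case (using $\partial$-irreducibility and anannularity of $\mathcal{R}$ for the one-sided case). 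Your detour through prime decompositions of $\mathcal{M}$ is unnecessary and potentially problematic (e.g.\ passing to a summand could alter triangulation complexity in ways you would need to control); the paper never assumes $\mathcal{M}$ is irreducible.

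Second, your final inequality relies on ``crushing a non-empty normal surface strictly decreases the number of tetrahedra''. This is false as stated: crushing a \emph{trivial} normal surface (a union of vertex links, built entirely from triangles) leaves the triangulation unchanged (see Observation~\ref{obs:crushReduce} and the discussion preceding it). You need the normalised surface to be \emph{non-trivial}, i.e.\ to contain at least one quadrilateral. The paper obtains this from the hypothesis that $S$ has no boundary-parallel components: vertex links in an ideal triangulation are precisely the boundary-parallel surfaces, so a normal surface isotopic to $S'$ cannot be trivial.
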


We continue in Section~\ref{sec:submanifold} by specialising Theorem~\ref{thm:submanifold}
to the particularly interesting setting where $S$ is a collection of essential tori.
This gives various nice results about how triangulation complexity interacts with JSJ decompositions and satellite knots.

The applications that we obtain in Section~\ref{sec:submanifold} are not entirely new, since they can also be obtained by
combining various pieces of machinery from Matveev's book~\cite{Matveev2007}
(we discuss this in a little more detail in Section~\ref{sec:submanifold}).
Nevertheless, our applications demonstrate that crushing normal surfaces of positive genus has
non-trivial consequences for objects that are of independent interest.
This provides hope that future refinements of our techniques could lead to further applications,
such as new algorithms involving decompositions along surfaces of positive genus.

It is also worth noting that whilst Matveev's techniques use almost simple spines and handle decompositions, our work does not require such machinery;
instead, our analysis of crushing only uses triangulations and cell decompositions.
Some readers might therefore find our approach more accessible than that of Matveev.
Moreover, in contrast to handle decompositions, crushing has the advantage that it is
well-established in software such as \Regina~\cite{Burton2013Regina,Regina};
thus, our approach is probably more amenable for practical algorithmic applications.

\paragraph{Acknowledgements.}
This project is the result of a conversation that began at the \emph{PhD Student Symposium: Graduate Talks in Geometry and Topology Get-Together, or (GT)\textsuperscript{3}}, hosted by MATRIX in July 2022;
we would therefore like to thank the MATRIX Institute and the organisers of the symposium.

AH was supported by an Australian Federal Government Research Training Program Scholarship.

We would like to thank the anonymous reviewer for their many suggestions, which greatly helped to improve the readability of this paper.

\section{Preliminaries}\label{sec:prelims}

The main purpose of this section is to review all the definitions that we will require for our analysis of crushing.

As a convention that we will use throughout this paper, except where we explicitly state otherwise,
all $3$-manifolds will be compact.
We will call a (compact) $3$-manifold \textbf{closed} if it has empty boundary, and \textbf{bounded} if it has non-empty boundary.

Also, whenever we are working with an object $X$ (such as a knot or a surface) embedded in a $3$-manifold $\mathcal{M}$,
we will often refer to ambient isotopies of $X$ in $\mathcal{M}$ simply as isotopies of $X$.
For example, when we speak of isotoping a knot $K$ (embedded in the $3$-sphere $S^3$),
we really mean that we are applying an \emph{ambient} isotopy to the embedding of $K$ in $S^3$.

\subsection{Triangulations and cell decompositions}\label{subsec:tri}

A \textbf{(generalised) triangulation} $\mathcal{T}$ consists of finitely many (abstract) tetrahedra with some or all of their triangular faces
\textbf{glued} together in pairs via affine identifications
(Figure~\ref{fig:gluedTets} illustrates a single such gluing);
denote the number of tetrahedra in $\mathcal{T}$ by $\lvert\mathcal{T}\rvert$.
We allow faces from the same tetrahedron to be glued together, which means that $\mathcal{T}$ need not be a simplicial complex;
indeed, generalised triangulations can usually be made much smaller than topologically equivalent
simplicial complexes, which is often important for computational purposes.

\begin{figure}[htbp]
\centering
	\includegraphics[scale=0.6]{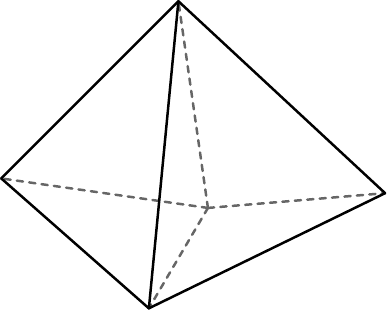}
\caption{Two tetrahedra glued together along a single pair of triangular faces.}
\label{fig:gluedTets}
\end{figure}

In this paper, we also work with cell decompositions, which generalise the triangulations that we just defined.
We build gradually towards a definition of cell decompositions, starting with
an explanation of how we generalise tetrahedra to obtain a larger class of ``building blocks''.

Topologically, we can think of a tetrahedron as a $3$-ball whose boundary $2$-sphere
is divided into triangles by an embedding of the complete graph on four vertices.
To generalise this, consider a topological $3$-ball $\Delta$ with a multigraph $\Gamma$ embedded in $\partial\Delta$.
We call $\Delta$ an \textbf{(abstract) 3-cell} if:
\begin{itemize}
\item $\Gamma$ has no degree one vertices; and
\item the closure of each component of $(\partial\Delta)-\Gamma$ forms
an embedded disc, which we call a \textbf{face} of $\Delta$, whose boundary circle contains two or more vertices of $\Gamma$.
\end{itemize}
Assuming that these conditions are indeed satisfied, we refer to the vertices and edges of $\Gamma$ as
\textbf{vertices} and \textbf{edges}, respectively, of the $3$-cell $\Delta$.
Intuitively, each face of an abstract $3$-cell forms a curvilinear polygon with two or more edges;
indeed, depending on the number of edges, we will often describe $3$-cell faces as bigons, triangles, quadrilaterals, and so on.

There are infinitely many types of $3$-cells.
However, for our purposes, we will only need to deal with a finite number of these;
some examples are shown in Figure~\ref{fig:examples3Cells}.
For details on precisely which types of $3$-cells we need, see Definitions~\ref{defs:inducedCells} and Section~\ref{subsec:atomic}.

\begin{figure}[htbp]
\centering
	\begin{tikzpicture}

	\newcommand{\Size}{0.6}

	\node at (0,0) {
		\includegraphics[scale=\Size]{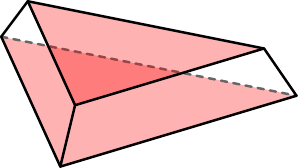}
	};
	\node at (3.2,0) {
		\includegraphics[scale=\Size]{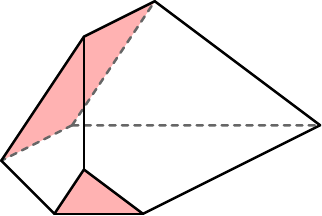}
	};
	\node at (6.5,0) {
		\includegraphics[scale=\Size]{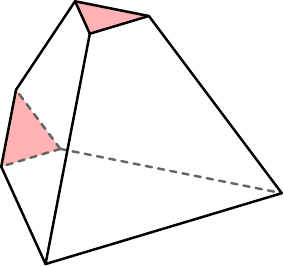}
	};
	\node at (8.45,0) {
		\includegraphics[scale=\Size]{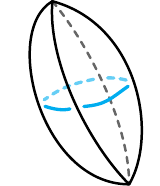}
	};
	\node at (10.55,0) {
		\includegraphics[scale=\Size]{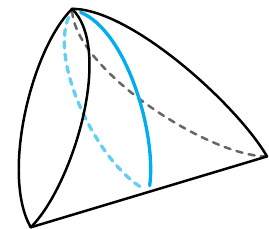}
	};
	\node at (13.25,0) {
		\includegraphics[scale=\Size]{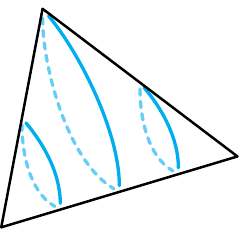}
	};

	\end{tikzpicture}
\caption{Some examples of the non-tetrahedron cells that we will encounter.}
\label{fig:examples3Cells}
\end{figure}

We now explain how we glue $3$-cells together to obtain a cell decomposition.
Endow every edge $e$ of a $3$-cell with an affine structure---a homeomorphism from $e$ to the interval $[0,1]$.
We \textbf{glue} two distinct faces of two (not necessarily distinct) $3$-cells via a homeomorphism that:
\begin{itemize}
    \item maps vertices to vertices;
    \item maps edges to edges; and
    \item restricts to an affine map on each edge.
\end{itemize}
A \textbf{cell decomposition} is a collection of finitely many $3$-cells
with some or all of their faces glued together in pairs;
we emphasise again that we allow faces from the same $3$-cell to be glued together.
Since triangulations are a special case of cell decompositions,
all of the subsequent definitions for cell decompositions apply to triangulations too.

Let $\mathcal{D}$ denote a cell decomposition.
The gluings that define $\mathcal{D}$ give an equivalence relation on the faces of the $3$-cells of $\mathcal{D}$;
call each equivalence class a \textbf{face} or \textbf{2-cell} of $\mathcal{D}$.
More explicitly, a face of $\mathcal{D}$ is either:
\begin{itemize}
\item a pair of $3$-cell faces that have been glued together, in which case we say that the face is \textbf{internal}; or
\item a single $3$-cell face that has been left unglued, in which case we say that the face is \textbf{boundary}.
\end{itemize}
The \textbf{boundary} of $\mathcal{D}$ is the (possibly empty) union of all its boundary faces.

The gluings that define $\mathcal{D}$ also merge vertices and edges of the $3$-cells into equivalence classes;
call each such vertex class a \textbf{vertex} or \textbf{0-cell} of $\mathcal{D}$, and call each such edge class an \textbf{edge} or \textbf{1-cell} of $\mathcal{D}$.
For each $k\in\{0,1,2\}$, define the \textbf{\emph{k}-skeleton} of $\mathcal{D}$, denoted $\mathcal{D}^{(k)}$,
to be the union of all $n$-cells of $\mathcal{D}$, where $n$ runs over all dimensions up to and including $k$.

In general, if we consider the quotient topology arising from the face gluings that define a cell decomposition $\mathcal{D}$,
the resulting topological space might fail to be a $3$-manifold.
Specifically, although nothing goes wrong in the interiors of $3$-cells and the interiors of faces,
we need to be careful with vertices and with midpoints of edges.

We begin by considering the midpoint $p$ of an edge $e$.
If $e$ lies entirely in the boundary of $\mathcal{D}$, then the frontier of a small regular neighbourhood of $p$ is a disc;
in this case, nothing goes wrong, and we say that $e$ is \textbf{boundary}.
However, if $e$ does not lie in the boundary, then we have two possibilities:
\begin{itemize}
    \item If $e$ is identified with itself in reverse, then the frontier of a small regular neighbourhood
    of $p$ is a projective plane;
    this cannot occur in a $3$-manifold.
    In this case, we say that $e$ is \textbf{invalid}.
    \item Otherwise, the frontier of a small regular neighbourhood of $p$ is a $2$-sphere.
    In this case, nothing goes wrong and we say that $e$ is \textbf{internal}.
\end{itemize}
We also say that $e$ is \textbf{valid} if it is either boundary or internal.

For a vertex $v$, consider the surface given by the frontier of a small regular neighbourhood of $v$;
we call this surface the \textbf{link} of $v$.
When $v$ lies in the boundary of $\mathcal{D}$, its link is a surface with boundary.
If the link is a disc, then nothing goes wrong and we say that $v$ is \textbf{boundary};
otherwise, if the link is any other surface with boundary, then we say that the vertex is \textbf{invalid}.

On the other hand, when $v$ does not lie in the boundary, its link is a closed surface.
If the link is a $2$-sphere, then nothing goes wrong and we say that $v$ is \textbf{internal};
otherwise, if the link is any other closed surface, then we say that $v$ is \textbf{ideal}.

A cell decomposition is \textbf{valid} if it has no invalid edges or vertices, and \textbf{invalid} otherwise.
Given a (possibly invalid) cell decomposition $\mathcal{D}$, we often find it useful to
\textbf{truncate} a vertex $v$ by deleting a small open regular neighbourhood of $v$.
In particular, by truncating each ideal or invalid vertex in $\mathcal{D}$,
we obtain a pseudomanifold $\mathcal{P}$ that we call the \textbf{truncated pseudomanifold} of $\mathcal{D}$;
the reason $\mathcal{P}$ is a pseudomanifold (and not necessarily a manifold)
is that midpoints of invalid edges in $\mathcal{D}$ would give non-manifold points in $\mathcal{P}$.

Observe that if $\mathcal{D}$ has no invalid edges, then the truncated pseudomanifold $\mathcal{P}$ is actually a (compact) $3$-manifold.
In this case, we will often refer to $\mathcal{P}$ as the \textbf{truncated $3$-manifold} of $\mathcal{D}$,
and we will say that $\mathcal{D}$ \textbf{represents} the $3$-manifold $\mathcal{P}$;
when $\mathcal{D}$ happens to be a triangulation, we will also often say that $\mathcal{D}$ \textbf{triangulates} $\mathcal{P}$.
Moreover, in the case where $\mathcal{D}$ is valid and has no ideal vertices, since we do not need to truncate any vertices
to obtain the truncated $3$-manifold $\mathcal{P}$, we will sometimes find it more natural to
refer to $\mathcal{P}$ as the \textbf{underlying $3$-manifold} of $\mathcal{D}$.

If we assume that $\mathcal{D}$ is actually valid (so it has neither invalid edges nor invalid vertices),
then the boundary components of the truncated $3$-manifold $\mathcal{P}$ come in two possible types:
\begin{itemize}
\item \textbf{ideal} boundary components, which are the boundary components that arise from truncating the ideal vertices; and
\item \textbf{real} boundary components, which are built from boundary faces of $\mathcal{D}$.
\end{itemize}
In this case, it will be convenient to distinguish the following special types of cell decompositions:
\begin{itemize}
\item A valid cell decomposition is \textbf{closed} if every vertex is internal.
For a closed cell decomposition, the truncated $3$-manifold is a closed $3$-manifold.
\item A valid cell decomposition is \textbf{bounded} if it has at least one boundary vertex, and has no ideal vertices.
For a bounded cell decomposition, the truncated $3$-manifold is a bounded $3$-manifold whose boundary components are all real.
\item A valid cell decomposition is \textbf{ideal} if it has at least one ideal vertex, and has no boundary vertices.
For an ideal cell decomposition, the truncated $3$-manifold is again a bounded $3$-manifold, but this time the boundary components are all ideal.
\end{itemize}

\begin{remark*}
When we have an ideal cell decomposition $\mathcal{D}$, we use the notion of
the truncated $3$-manifold to turn $\mathcal{D}$ into a \emph{compact} $3$-manifold $\mathcal{M}$.
A very common alternative (which we do not use in this paper) is to turn $\mathcal{D}$ into a \emph{noncompact} $3$-manifold $\mathcal{M}'$ by
simply deleting (rather than truncating) each ideal vertex.
Observe that $\mathcal{M}'$ is homeomorphic to the interior of $\mathcal{M}$, so this distinction is not too important.
\end{remark*}

\begin{remark*}
Suppose $\mathcal{T}$ is either a closed or ideal triangulation, and let $\mathcal{M}$ denote the truncated $3$-manifold of $\mathcal{T}$.
Since we do not truncate the internal vertices of $\mathcal{T}$, observe that $\mathcal{M}$ is a $3$-manifold with no $2$-sphere boundary components.
For this reason, we will often find it convenient to make the mild assumption that a $3$-manifold has no $2$-sphere boundary components.
\end{remark*}

\subsection{Decomposing along curves and surfaces}\label{subsec:decomposing}

The goal in this section is to introduce some terminology that will streamline our descriptions of the topological effects of crushing.
The idea is that crushing often changes the truncated $3$-manifold or pseudomanifold by ``decomposing along'' a properly embedded surface;
we will build gradually towards defining precisely what we mean by this.
We start by going one dimension down, and defining what we mean by decomposing a surface along an embedded curve;
this is useful in its own right, since it will help us describe how crushing changes the links of vertices.

Consider an embedded closed curve $\gamma$ in a compact surface $S$.
Let $S^\dagger$ denote the surface obtained from $S$ by \textbf{cutting along}
$\gamma$---that is, removing a small open regular neighbourhood of $\gamma$ from $S$.
If $\gamma$ is a two-sided curve in $S$, then we have two new copies of $\gamma$ in $\partial S^\dagger$;
on the other hand, if $\gamma$ is one-sided, then we have a single new curve in $\partial S^\dagger$.
Call each of these new curves in $\partial S^\dagger$ a \textbf{remnant} of $\gamma$;
see Figures~\ref{subfig:cuttingTwoSidedCurve} and~\ref{subfig:cuttingOneSidedCurve}.
Consider the surface $S'$ given by \textbf{filling} each remnant of $\gamma$ with a disc;
we say that $S'$ is obtained from $S$ by \textbf{decomposing along} $\gamma$.

\begin{figure}[htbp]
\raggedleft
	\begin{subfigure}[t]{0.78\textwidth}
	\centering
		\begin{tikzpicture}

		\node (before) at (0,0) {
			\includegraphics[scale=1.4]{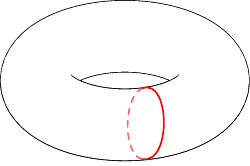}
		};
		\node (after) at (7,0) {
			\includegraphics[scale=1.4]{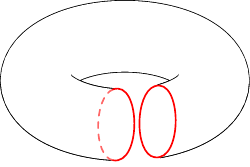}
		};
		\draw[thick, line cap=round, -Stealth] ($(before.east)+(-0.03,0)$) -- ($(after.west)+(0.05,0)$);

		\end{tikzpicture}
	\caption{Cutting along a two-sided curve yields a pair of remnants.}
	\label{subfig:cuttingTwoSidedCurve}
	\end{subfigure}
	\hfill
	\bigskip
	\hfill
	\begin{subfigure}[t]{0.78\textwidth}
	\centering
		\begin{tikzpicture}

		\node (before) at (0,0) {
			\includegraphics[scale=0.6]{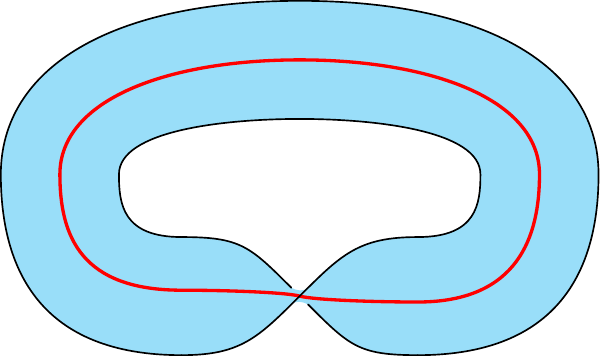}
		};
		\node (after) at (7,0) {
			\includegraphics[scale=0.6]{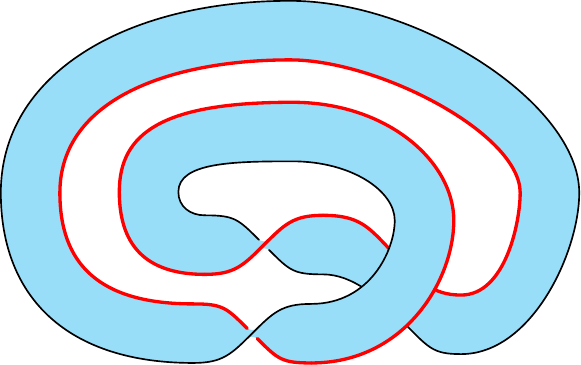}
		};
		\draw[thick, line cap=round, -Stealth] ($(before.east)+(-0.03,0)$) -- ($(after.west)+(0.05,0)$);

		\end{tikzpicture}
	\caption{Cutting along a one-sided curve yields a single remnant.}
	\label{subfig:cuttingOneSidedCurve}
	\end{subfigure}
\caption{Cutting along an embedded closed curve in a surface.}
\label{fig:cuttingCurve}
\end{figure}

We now aim to define similar terminology for truncated pseudomanifolds.
Consider a (possibly disconnected) properly embedded surface $S$ in a truncated pseudomanifold $\mathcal{P}$.
Let $\mathcal{P}^\dagger$ denote the pseudomanifold obtained from $\mathcal{P}$ by \textbf{cutting along} $S$---similar to before,
this means that we obtain $\mathcal{P}^\dagger$ by removing a small open regular neighbourhood of $S$ from $\mathcal{P}$.
For each two-sided component $E$ of $S$, we have two new copies of $E$ in $\partial\mathcal{P}^\dagger$;
on the other hand, for each one-sided component $E$ of $S$, we have a single new double cover of $E$ in $\partial\mathcal{P}^\dagger$.
Call each of these new pieces in $\partial\mathcal{P}^\dagger$ a \textbf{remnant} of $S$.

For our purposes, it will be useful to have a notion of ``decomposing along'' $S$ when
$S$ is one of the following seven types of (properly embedded) surface:
\begin{itemize}
\item a $2$-sphere---which means that cutting along $S$ yields a pair of $2$-sphere remnants;
\item a two-sided annulus---which means that cutting along $S$ yields a pair of annulus remnants;
\item a one-sided annulus---which means that cutting along $S$ yields a single annulus remnant;
\item a two-sided projective plane---which means that cutting along $S$ yields a pair of projective plane remnants;
\item a one-sided projective plane---which means that cutting along $S$ yields a single $2$-sphere remnant;
\item a two-sided M\"{o}bius band---which means that cutting along $S$ yields a pair of M\"{o}bius band remnants; or
\item a one-sided M\"{o}bius band---which means that cutting along $S$ yields a single annulus remnant.
\end{itemize}
Notice that for these types of surface, the remnants are always either $2$-spheres, annuli, projective planes or M\"{o}bius bands.

Similar to what we did with curves on surfaces, we construct the result of ``decomposing along'' $S$ by ``filling'' the remnants of $S$.
To do this for projective plane and M\"{o}bius band remnants, we use the following terminology:
define an \textbf{invalid cone} to be a pseudomanifold given by taking a cone over a projective plane.
With this in mind, let $S^\dagger$ denote a remnant of $S$ in $\mathcal{P}^\dagger$,
and suppose $S^\dagger$ is either a $2$-sphere, annulus, projective plane or M\"{o}bius band.
We define the operation of \textbf{filling} $S^\dagger$ as follows:
\begin{itemize}
\item If $S^\dagger$ is a $2$-sphere, then filling means attaching a $3$-ball $B$ by identifying $S^\dagger$ with the $2$-sphere boundary of $B$.
\item If $S^\dagger$ is an annulus, then filling means attaching
a thickened disc $D\times[0,1]$ by identifying $S^\dagger$ with the annulus $(\partial D)\times[0,1]$.
\item If $S^\dagger$ is a projective plane, then filling means attaching
an invalid cone $\mathcal{C}$ by identifying $S^\dagger$ with the projective plane boundary of $\mathcal{C}$.
\item If $S^\dagger$ is a M\"{o}bius band, then filling means attaching an invalid cone $\mathcal{C}$ by
choosing a small open disc $D$ in $\partial\mathcal{C}$, and identifying $S^\dagger$ with the M\"{o}bius band given by $(\partial\mathcal{C})-D$.
\end{itemize}

Putting everything together, suppose $S$ is one of the seven types of surface listed above,
and let $\mathcal{P}'$ denote the pseudomanifold obtained from $\mathcal{P}^\dagger$ by filling each remnant of $S$.
We say that $\mathcal{P}'$ is obtained from $\mathcal{P}$ by \textbf{decomposing along} $S$.

\subsection{Normal surfaces}\label{subsec:normal}

A \textbf{normal surface} in a triangulation $\mathcal{T}$ is a (possibly disconnected) properly embedded surface that:
\begin{itemize}
\item is disjoint from the vertices of $\mathcal{T}$;
\item meets the edges and faces of $\mathcal{T}$ transversely; and
\item intersects each tetrahedron $\Delta$ of $\mathcal{T}$ in a (possibly empty) disjoint union of finitely many discs, called \textbf{elementary discs},
where each such disc forms a curvilinear triangle or quadrilateral whose vertices lie on different edges of $\Delta$.
\end{itemize}
Two normal surfaces are \textbf{normally isotopic} if they are related by a \textbf{normal isotopy}---that is,
an ambient isotopy that preserves each vertex, edge, face and tetrahedron of the triangulation.
Up to normal isotopy, the elementary discs in each tetrahedron $\Delta$ come in seven possible types:
\begin{itemize}
\item four \textbf{triangle types}, each of which separates one vertex of $\Delta$ from the other three,
as shown in Figure~\ref{subfig:elemTriangles}; and
\item three \textbf{quadrilateral types}, each of which separates a pair of opposite edges of $\Delta$,
as shown in Figure~\ref{subfig:elemQuads}.
\end{itemize}
Observe that if a tetrahedron contains two elementary quadrilaterals of different types, then these two quadrilaterals will always intersect each other;
since normal surfaces are embedded, this means that if a tetrahedron contains quadrilaterals, then these quadrilaterals must all be of the same type.

\begin{figure}[htbp]
\centering
	\begin{subfigure}[t]{0.23\textwidth}
	\centering
		\includegraphics[scale=0.75]{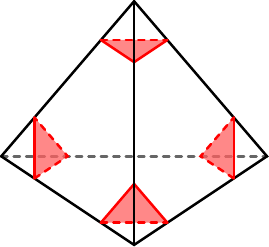}
	\caption{The four triangle types.}
	\label{subfig:elemTriangles}
	\end{subfigure}
	\hfill
	\begin{subfigure}[t]{0.67\textwidth}
	\centering
		\begin{tikzpicture}

		\node[inner sep=0pt] at (0,0) {
			\includegraphics[scale=0.75]{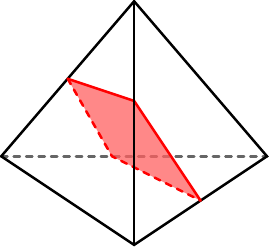}
		};
		\node[inner sep=0pt] at (3.8,0) {
			\includegraphics[scale=0.75]{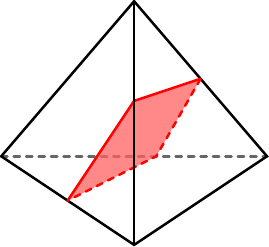}
		};
		\node[inner sep=0pt] at (7.6,0) {
			\includegraphics[scale=0.75]{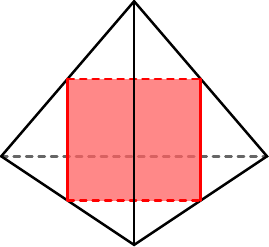}
		};

		\end{tikzpicture}
	\caption{The three quadrilateral types.}
	\label{subfig:elemQuads}
	\end{subfigure}
\caption{The seven types of elementary disc.}
\label{fig:elemDiscs}
\end{figure}

We call a normal surface \textbf{non-trivial} if it includes at least one elementary quadrilateral, and \textbf{trivial} otherwise.
It is easy to see that trivial normal surfaces always exist, and that every component of such a surface is just a vertex link.
The existence of non-trivial normal surfaces is less obvious.
In fact, it is possible to prove that many ``interesting'' embedded surfaces appear as (non-trivial) normal surfaces;
we will get a glimpse of why this is the case when we discuss the theory of barriers and normalisation in Section~\ref{subsec:barriers}.

\begin{figure}[htbp]
\centering
	\includegraphics[scale=1]{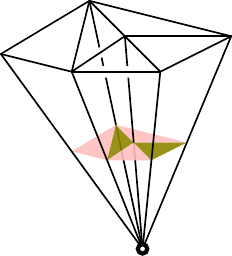}
\caption{A portion of a normal surface built entirely out of triangles.}
\label{fig:trivialNormal}
\end{figure}

A normal surface naturally splits a triangulation into a finer cell decomposition.
To describe this idea more precisely, we introduce the following definitions,
which are partly based on some terminology used by Jaco and Rubinstein~\cite[p.~91]{JacoRubinstein2003}:

\begin{definitions}\label{defs:inducedCells}
Let $S$ be a normal surface in a triangulation $\mathcal{T}$.
The surface $S$ divides each tetrahedron $\Delta$ of $\mathcal{T}$ into a collection of \textbf{induced cells} of the following types:
\begin{itemize}[nosep]
\item \textbf{Parallel cells} of two types (see Figure~\ref{fig:parallelCells}):
	\begin{itemize}[nosep]
	\item \textbf{Parallel triangular cells}:
	These lie between two parallel triangles of $S$.
	\item \textbf{Parallel quadrilateral cells}:
	These lie between two parallel quadrilaterals of $S$.
	\end{itemize}
\item \textbf{Non-parallel cells} of nine types:
	\begin{itemize}[nosep]
	\item \textbf{Corner cells}: These are tetrahedra that lie between a single triangle of $S$ and a single vertex of $\Delta$
	\item \textbf{Wedge cells} of three types (see Figure~\ref{fig:wedgeCells}):
	These only occur when $S$ meets $\Delta$ in one or more quadrilaterals.
	In this case, if we ignore any parallel and corner cells in $\Delta$, then the two cells left over are the wedge cells.
	\item \textbf{Central cells} of five types (see Figure~\ref{fig:centralCells}):
	These only occur when $S$ does not meet $\Delta$ in any quadrilaterals.
	In this case, if we ignore any parallel and corner cells in $\Delta$, then the single cell left over is the central cell.
	\end{itemize}
\end{itemize}
Amongst the faces of these induced cells, we will find it useful to distinguish the \textbf{bridge faces},
which are the quadrilateral faces that intersect $S$ precisely in a pair of opposite edges.
Note that bridge faces only appear in parallel and wedge cells (see Figures~\ref{fig:parallelCells} and~\ref{fig:wedgeCells}).

Let $\mathcal{P}$ denote the truncated pseudomanifold of $\mathcal{T}$,
and let $\mathcal{P}^\dagger$ denote the pseudomanifold obtained from $\mathcal{P}$ by cutting along $S$.
The induced cells naturally yield a cell decomposition $\mathcal{D}$ of $\mathcal{P}$,
such that the surface $S$ is given by a union of faces of $\mathcal{D}$.
Moreover, ungluing the faces of $\mathcal{D}$ that lie inside $S$ yields a cell decomposition $\mathcal{D}^\dagger$ of $\mathcal{P}^\dagger$.
We say that the cell decompositions $\mathcal{D}$ and $\mathcal{D}^\dagger$, and any cell decompositions given by
components of $\mathcal{D}$ and $\mathcal{D}^\dagger$, are \textbf{induced} by the normal surface $S$.
\end{definitions}

\begin{figure}[htbp]
\centering
	\begin{subfigure}[t]{0.49\textwidth}
	\centering
		\includegraphics[scale=0.7]{CrushFigures/CrushCellParallelTri.pdf}
	\caption{A parallel triangular cell.}
	\label{subfig:parallelTri}
	\end{subfigure}
	\hfill
	\begin{subfigure}[t]{0.49\textwidth}
	\centering
		\includegraphics[scale=0.7]{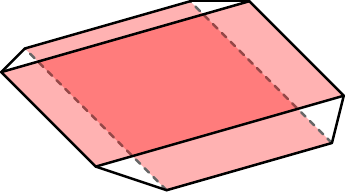}
	\caption{A parallel quadrilateral cell.}
	\label{subfig:parallelQuad}
	\end{subfigure}
	\hfill
\caption{A normal surface $S$ can induce two types of parallel cell.
The faces that lie inside $S$ are shaded red;
all the other faces are bridge faces.}
\label{fig:parallelCells}
\end{figure}

\begin{figure}[htbp]
\centering
	\begin{subfigure}[t]{0.27\textwidth}
	\centering
		\includegraphics[scale=0.7]{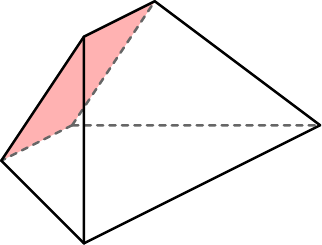}
	\caption{A wedge cell with no bridge faces.}
	\label{subfig:wedge0}
	\end{subfigure}
	\hfill
	\begin{subfigure}[t]{0.27\textwidth}
	\centering
		\includegraphics[scale=0.7]{CrushFigures/CrushCellWedge1.pdf}
	\caption{A wedge cell with one bridge face.}
	\label{subfig:wedge1}
	\end{subfigure}
	\hfill
	\begin{subfigure}[t]{0.27\textwidth}
	\centering
		\includegraphics[scale=0.7]{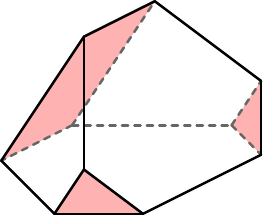}
	\caption{A wedge cell with two bridge faces.}
	\label{subfig:wedge2}
	\end{subfigure}
\caption{A normal surface $S$ can induce three types of wedge cell.
The faces that lie inside $S$ are shaded red.}
\label{fig:wedgeCells}
\end{figure}

\begin{figure}[htbp]
\centering
	\begin{tikzpicture}

	\node at (0,0) {
		\includegraphics[scale=0.7]{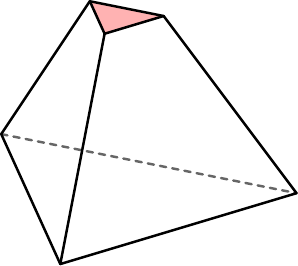}
	};
	\node at (4.3,0) {
		\includegraphics[scale=0.7]{CrushFigures/CrushCellCentral2.pdf}
	};
	\node at (8.6,0) {
		\includegraphics[scale=0.7]{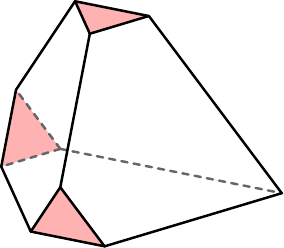}
	};
	\node at (12.5,0) {
		\includegraphics[scale=0.7]{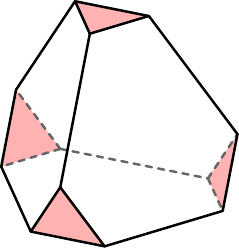}
	};

	\end{tikzpicture}
\caption{A normal surface $S$ can induce five types of central cell:
either a tetrahedron, or one of the four non-tetrahedron cells shown here.
The faces that lie inside $S$ are shaded red.}
\label{fig:centralCells}
\end{figure}

Since a tetrahedron can contain many parallel elementary discs, we could have arbitrarily many parallel cells.
However, there are always at most six non-parallel cells per tetrahedron $\Delta$:
\begin{itemize}
\item If $\Delta$ meets the normal surface in one or more quadrilaterals,
then we have no central cells, exactly two wedge cells, and up to four corner cells.
\item If $\Delta$ does not meet the normal surface in any quadrilaterals,
then we have no wedge cells, exactly one central cell, and again up to four corner cells.
\end{itemize}
We will find this simple observation useful in Section~\ref{subsec:badBigonPaths}.

\subsection{Barriers and normalisation}\label{subsec:barriers}

We now review the theory of normalisation, which gives a procedure for
transforming any properly embedded surface $S$ into a normal surface (not necessarily isotopic to $S$).
We also review the notion of a barrier surface, which gives a tool for ``controlling'' the result of the normalisation procedure.
The material here is essentially an abridged and informal version of Section~3 of~\cite{JacoRubinstein2003},
focusing only on the details that are necessary for our purposes in this paper.

Throughout Section~\ref{subsec:barriers}, let $S$, $S'$ and $B$ denote (possibly disconnected) surfaces that are properly embedded in a triangulation $\mathcal{T}$.
Assume that these surfaces are disjoint from the vertices of $\mathcal{T}$, and transverse to the $2$-skeleton of $\mathcal{T}$.

The idea of the normalisation procedure is to reduce the number of ``anomalies'' in a surface $S$ until it becomes a normal surface.
For instance, for $S$ to be a normal surface, it cannot intersect any tetrahedron $\Delta$ in anomalous pieces such as:
\begin{itemize}
\item a $2$-sphere component that is \textbf{trivial} in the sense that it lies entirely inside $\Delta$; or
\item a disc component that is \textbf{trivial} in the sense that its boundary curve lies
entirely inside a single boundary face, and its interior lies entirely inside $\Delta$.
\end{itemize}
To keep track of these and other anomalous features of $S$, we use the following measures of ``complexity'':
\begin{itemize}
\item Define the \textbf{weight} $\Wt(S)$ to be the number of times $S$ meets the $1$-skeleton $\mathcal{T}^{(1)}$;
that is, $\Wt(S)=\left\lvert S\cap\mathcal{T}^{(1)} \right\rvert$.
In general, $S$ could meet a tetrahedron $\Delta$ of $\mathcal{T}$ in a non-normal piece that ``doubles back'' on itself to meet a single edge twice
(for example, see Figures~\ref{fig:normalMoveIsotopy} and~\ref{fig:normalMoveBdryCompress});
the weight of $S$ gives a proxy for counting the number of such anomalies.
\item For each tetrahedron $\Delta$ of $\mathcal{T}$, let
\[
x_\Delta = \sum_{c\neq S^2}\left( 1-\chi(c) \right),
\]
where $c$ runs over all components of $S\cap\Delta$ other than $2$-spheres;
define the \textbf{local Euler number} $\lambda(S)$ to be
\[
\sum_{\Delta}x_\Delta.
\]
Recall that a normal surface must, in particular, meet each tetrahedron of $\mathcal{T}$ in a disjoint union of discs;
apart from trivial $2$-spheres (which we handle separately), the local Euler number detects any anomalies that violate this requirement.
\item Let $\sigma(S)$ denote the number of closed curves in which $S$ intersects the internal faces of $\mathcal{T}$.
A normal surface cannot have any such anomalous curves.
\item Let $\tau(S)$ denote the number of components of $S$ that form trivial $2$-spheres or trivial discs.
\end{itemize}
Define the \textbf{complexity} of $S$, denoted $C(S)$, to be the tuple
\[
\left(\; \Wt(S),\; \lambda(S),\; \sigma(S),\; \tau(S) \;\right).
\]
We will consider $S$ to have smaller complexity than some other surface $S'$ if $C(S)$ occurs before $C(S')$ in the lexicographical ordering.
As suggested earlier, normalisation consists of a series of steps, each of which reduces the complexity.

Before we define the steps involved in normalisation, we introduce some useful terminology.
Call a disc $D$ an \textbf{edge-compression disc} for $S$ if it is embedded so that:
\begin{itemize}
\item the interior of $D$ lies entirely in the interior of a tetrahedron $\Delta$ of $\mathcal{T}$; and
\item the boundary of $D$ consists of two arcs $\alpha$ and $\gamma$ that intersect each other only at their endpoints,
such that $\alpha=D\cap S$ and $\gamma$ is a sub-arc of an edge $e$ of $\Delta$.
\end{itemize}
Examples of edge-compression discs are shown in Figures~\ref{fig:normalMoveIsotopy}
and~\ref{fig:normalMoveBdryCompress}.
Call an edge-compression disc \textbf{internal} if it meets an internal edge of $\mathcal{T}$,
and \textbf{boundary} if it meets a boundary edge of $\mathcal{T}$;
notice that a boundary edge-compression disc is, in particular, a $\partial$-compression disc for $S$.

With all the preceding setup in mind, the normalisation procedure proceeds by performing the following \textbf{normal moves} on a surface $S$:
\begin{enumerate}[label={(\arabic*)}]
\item\label{normalMove:tetCompress}
\textbf{Compressions along discs that lie entirely in the interior of a tetrahedron.} (See Figure~\ref{fig:normalMoveTetCompress}.)\\
Each such compression reduces the complexity $C(S)$ because
it leaves the weight $\Wt(S)$ unchanged and reduces the local Euler number $\lambda(S)$.
These compressions can be performed until $S$ meets each tetrahedron of $\mathcal{T}$ in a union of $2$-spheres and discs, at which point $\lambda(S) = 0$.
We assume for the rest of the normal moves that we have already reduced $\lambda(S)$ to $0$ in this way.
\begin{figure}[htbp]
\centering
	\begin{tikzpicture}

	\node (before) at (0,0) {
		\includegraphics[scale=1]{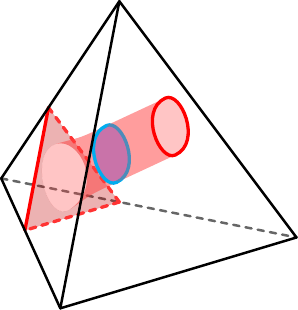}
	};
	\node (after) at (6.9,0) {
		\includegraphics[scale=1]{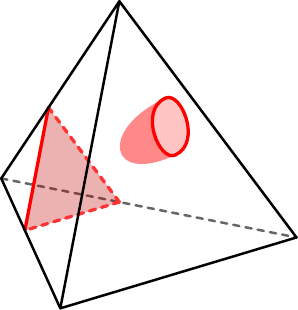}
	};
	\draw[thick, line cap=round, -Stealth] ($(before.east)+(-0.8,0)$) --++ (2.5,0);

	\end{tikzpicture}
\caption{An example of a normal move of type~\ref{normalMove:tetCompress}:
a compression of a surface (shaded red) along a compression disc (shaded blue) lying entirely inside a tetrahedron.}
\label{fig:normalMoveTetCompress}
\end{figure}
\item\label{normalMove:isotopy}
\textbf{Isotopies along internal edge-compression discs.} (See Figure~\ref{fig:normalMoveIsotopy}.)\\
Each such isotopy reduces the complexity $C(S)$ because it reduces the weight $\Wt(S)$.
\begin{figure}[htbp]
\centering
	\begin{tikzpicture}

	\node (before) at (0,0) {
		\includegraphics[scale=1]{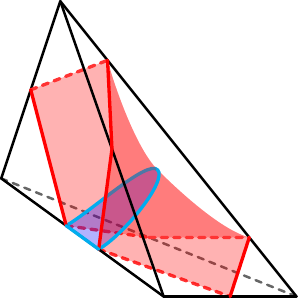}
	};
	\node (after) at (6,-0.5) {
		\includegraphics[scale=1]{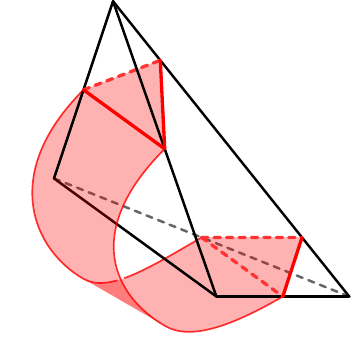}
	};
	\draw[thick, line cap=round, -Stealth] ($(before.east)+(-1.7,0)$) --++ (2.5,0);

	\end{tikzpicture}
\caption{An example of a normal move of type~\ref{normalMove:isotopy}:
an isotopy of a surface (shaded red) along an internal edge-compression disc (shaded blue).}
\label{fig:normalMoveIsotopy}
\end{figure}
\item\label{normalMove:bdryCompress}
\textbf{$\partial$-compressions along boundary edge-compression discs.} (See Figure~\ref{fig:normalMoveBdryCompress}.)\\
Like the isotopies in the previous step, each such $\partial$-compression reduces the complexity $C(S)$ because
it reduces the weight $\Wt(S)$.
For the remaining two normal moves, we assume that we have performed all possible isotopies and $\partial$-compressions along edge-compression discs,
which ensures that $S$ meets each tetrahedron $\Delta$ of $\mathcal{T}$ in a union of:
	\begin{itemize}
	\item elementary discs;
	\item trivial $2$-spheres; and
	\item discs whose boundary curves lie entirely in the interior of some face of $\Delta$.
	\end{itemize}
\begin{figure}[htbp]
\centering
	\begin{tikzpicture}

	\node (before) at (0,0) {
		\includegraphics[scale=1]{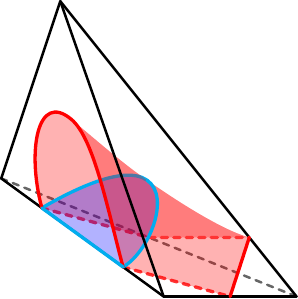}
	};
	\node (after) at (6.1,0) {
		\includegraphics[scale=1]{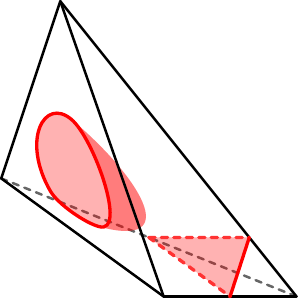}
	};
	\draw[thick, line cap=round, -Stealth] ($(before.east)+(-1.7,0)$) --++ (2.5,0);

	\end{tikzpicture}
\caption{An example of a normal move of type~\ref{normalMove:bdryCompress}:
a $\partial$-compression of a surface (shaded red) along a boundary edge-compression disc (shaded blue).}
\label{fig:normalMoveBdryCompress}
\end{figure}
\item\label{normalMove:faceCompress}
\textbf{Compressions along discs that lie entirely in the interior of an internal face.} (See Figure~\ref{fig:normalMoveFaceCompress}.)\\
Each such compression reduces the complexity $C(S)$ because
it leaves $\Wt(S)$ and $\lambda(S)$ unchanged, and reduces $\sigma(S)$.
After performing these compressions until no more such moves are possible,
$S$ meets each tetrahedron of $\mathcal{T}$ in a union of elementary discs, trivial $2$-spheres, and trivial discs;
we assume for the final normal move that this has already been done.
\begin{figure}[htbp]
\centering
	\begin{tikzpicture}

	\node (before) at (0,0) {
		\includegraphics[scale=1]{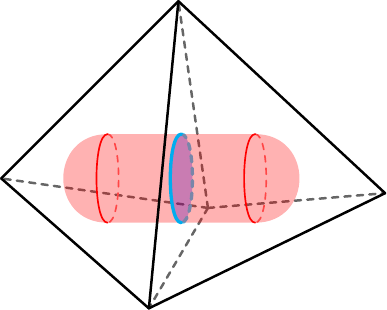}
	};
	\node (after) at (8.65,0) {
		\includegraphics[scale=1]{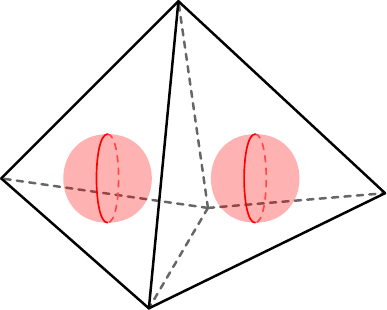}
	};
	\draw[thick, line cap=round, -Stealth] ($(before.east)+(-0.45,0)$) --++ (2.5,0);

	\end{tikzpicture}
\caption{An example of a normal move of type~\ref{normalMove:faceCompress}:
a compression of a surface (shaded red) along a compression disc (shaded blue) lying entirely inside an internal face.}
\label{fig:normalMoveFaceCompress}
\end{figure}
\item\label{normalMove:deleteTrivial}
\textbf{Deletion of trivial $2$-sphere and disc components.}\\
This final ``clean-up'' step reduces the complexity $C(S)$ because
it leaves $\Wt(S)$, $\lambda(S)$ and $\sigma(S)$ unchanged, and reduces $\tau(S)$ to zero.
At the end of this step, $S$ is a normal surface.
\end{enumerate}
For a complete explanation of why normalisation works as we have claimed, see Section~3.2 of~\cite{JacoRubinstein2003}.
Note that, in general, the normal surface that we obtain might not be isotopic to the original surface,
because of the steps where we perform compressions and $\partial$-compressions.
However, if we assume that the original surface was incompressible and $\partial$-incompressible, and also that the ambient $3$-manifold is irreducible and $\partial$-irreducible,
then normalising must produce a normal surface with one component isotopic to the original surface.

We can get even more control over the result of normalisation using the notion of a barrier surface;
we now review the aspects of barrier surfaces that we require for our purposes.
Given a properly embedded surface $B$ in $\mathcal{T}$, let $\mathcal{N}$ denote a fixed but arbitrary component of $\mathcal{T}-B$.
Call $B$ a \textbf{barrier} for $\mathcal{N}$ if any surface $S$ that is properly embedded in $\mathcal{N}$ can actually be normalised inside $\mathcal{N}$;
that is, the discs along which we compress, isotope or $\partial$-compress always lie entirely inside $\mathcal{N}$,
and at every stage the surface $S$ remains properly embedded in $\mathcal{N}$.

In Theorem~3.2 from~\cite{JacoRubinstein2003}, Jaco and Rubinstein list a number of examples of barrier surfaces.
For our purposes, we will need part~(5) of this theorem, which we restate here:

\begin{theorem}\label{thm:subcomplexBarrier}
Consider a (compact) $3$-manifold $\mathcal{M}$ with no $2$-sphere boundary components.
If $\mathcal{M}$ is closed, let $\mathcal{T}$ be a closed triangulation of $\mathcal{M}$;
otherwise, if $\mathcal{M}$ is bounded, let $\mathcal{T}$ be an ideal triangulation of $\mathcal{M}$.
Let $S$ be a normal surface in $\mathcal{T}$, and let $A$ be a subcomplex of the cell decomposition of $\mathcal{M}$ induced by $S$.
The boundary $B$ of a small regular neighbourhood of $S\cup A$ is a barrier surface for any component of $\mathcal{M}-B$ that does not meet $S\cup A$.
\end{theorem}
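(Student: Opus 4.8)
The strategy is to unwind the definition of a barrier and then track the canonical normalisation moves of Section~\ref{subsec:barriers}. Fix a component $\mathcal{N}$ of $\mathcal{M}-B$ with $\mathcal{N}\cap(S\cup A)=\emptyset$; since $B$ is the frontier of the regular neighbourhood $N(S\cup A)$, such an $\mathcal{N}$ is a component of $\mathcal{M}\setminus N(S\cup A)$ lying ``outside'' the neighbourhood. Given an arbitrary surface $S'$ properly embedded in $\mathcal{N}$, I must show that the normal moves~\ref{normalMove:tetCompress}--\ref{normalMove:deleteTrivial} can be performed so that every compression, isotopy and $\partial$-compression disc lies inside $\mathcal{N}$, and so that $S'$ remains properly embedded in $\mathcal{N}$ throughout; since each move strictly decreases the complexity $C(S')$, which takes values in a well-ordered set, this is exactly what it means for $B$ to be a barrier for $\mathcal{N}$.

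The first step is to pin down the local structure of $B$. Because $S$ is normal and $A$ is a subcomplex of the induced cell decomposition $\mathcal{D}$, inside each tetrahedron $\Delta$ of $\mathcal{T}$ the set $N(S\cup A)\cap\Delta$ is a regular neighbourhood of a disjoint union of elementary discs together with some of the induced cells that those discs bound. Working through the finitely many types of induced cell (Definitions~\ref{defs:inducedCells}, including the wedge and central cells of Figures~\ref{fig:wedgeCells} and~\ref{fig:centralCells}), one checks that $B\cap\Delta$ is a disjoint union of discs, each of which is either an elementary disc or is cut off from $\Delta$ by a subdisc of $\partial\Delta$ --- a ``trivial'' disc --- and that $B$ meets each face of $\mathcal{T}$ transversely in arcs and circles, with every circle bounding a disc in that face. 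In particular, $B$ never develops an essential curve in a face of $\mathcal{T}$ or a non-trivial, non-elementary disc in a tetrahedron, and $\overline{\mathcal{N}}$ carries the cell decomposition whose cells are precisely the induced cells of $\mathcal{D}$ that avoid $A$.

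The second step runs the normalisation of $S'$, showing inductively that each canonical move localises in $\mathcal{N}$. At a given stage the move to perform is realised by a disc $D$ whose boundary meets $S'$ in an arc or circle, whose interior is disjoint from $S'$, and which is supported in a single tetrahedron $\Delta$ (moves~\ref{normalMove:tetCompress}--\ref{normalMove:bdryCompress}) or a single internal face (move~\ref{normalMove:faceCompress}). The portion of $\partial D$ lying on $S'$ is disjoint from $B$, and the remaining portion (an arc of an edge of $\mathcal{T}$, in moves~\ref{normalMove:isotopy} and~\ref{normalMove:bdryCompress}) meets $B$ in finitely many points; hence $D\cap B$ is a union of circles in the interior of $D$ together with arcs having both endpoints on that edge-arc. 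Now the local structure of $B$ from the first step feeds into the standard innermost-disc and outermost-arc arguments of Section~3 of~\cite{JacoRubinstein2003}: an innermost circle of $D\cap B$ bounds a disc on $B$ (or in a face) and is removed by pushing $D$ across it, while an outermost arc of $D\cap B$ must lie on a trivial disc of $B$ (since an elementary disc of $B$ meets each edge of $\mathcal{T}$ at most once) and is removed similarly. After these surgeries $D\cap B=\emptyset$, so $D$ lies in a single component of $\mathcal{M}-B$; since $\partial D$ meets $S'\subset\mathcal{N}$, that component is $\mathcal{N}$, and in fact $D\subset\mathcal{N}$. Performing the move along such a $D$ changes $S'$ only within a neighbourhood of $D\subset\mathcal{N}$, so $S'$ stays properly embedded in $\mathcal{N}$; and the deletion move~\ref{normalMove:deleteTrivial} only discards components lying inside balls or faces of $\mathcal{T}$ that are contained in $\mathcal{N}$.

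The main obstacle is the first step: establishing the local structure of $B$, and in particular confirming that $B$ meets each tetrahedron only in elementary and trivial discs and each face only in arcs and inessential circles. This is a finite but fiddly case analysis, since $A$ may be an arbitrary subcomplex of $\mathcal{D}$, so one has to check cell type by cell type that the frontier of $N(S\cup A)$ never acquires a ``genuine'' bad disc or bad curve; the outermost-arc bookkeeping in the second step is the other slightly delicate point. Granting these, the surgery arguments are routine adaptations of the standard normalisation machinery.
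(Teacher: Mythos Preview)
The paper does not supply its own proof of this theorem: it is quoted verbatim as part~(5) of Theorem~3.2 of Jaco--Rubinstein~\cite{JacoRubinstein2003}, and the reader is referred there for the argument. So there is nothing in the paper to compare your proposal against.

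That said, your sketch follows the shape of the Jaco--Rubinstein argument: analyse how the frontier $B$ of $N(S\cup A)$ sits inside each tetrahedron, then use innermost-disc and outermost-arc surgeries to push each normalisation disc off $B$ and into $\mathcal{N}$. The honest caveat you flag---that the first step is a finite but tedious case check over all induced cell types and all possible subcomplexes $A$---is exactly where the work lies, and your write-up leaves that check as an assertion rather than carrying it out. For the purposes of this paper that is fine, since the result is being imported wholesale; if you actually want to \emph{prove} it, you would need to spell out that case analysis or simply cite Section~3.3 of~\cite{JacoRubinstein2003}.
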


\subsection{Crushing via atomic moves}\label{subsec:atomic}

The main purpose of this section is to review the atomic formulation of crushing that was introduced by the first author~\cite{Burton2014}.
We augment this with some new terminology, as this will be useful for our purposes in Section~\ref{sec:genus}.
To begin, we state a version of Definition~1 from~\cite{Burton2014}:

\begin{definitions}[The crushing procedure]\label{defs:stages}
Let $S$ be a normal surface in a triangulation $\mathcal{T}$.
Each of the following operations builds on the previous one:
\begin{enumerate}[nosep, label={(\arabic*)}]
    \item Cut along $S$, and let $\mathcal{D}$ denote the resulting induced cell decomposition.
    \item Using the quotient topology, collapse each remnant of $S$ to a point.
    This turns $\mathcal{D}$ into a new cell decomposition $\mathcal{D}'$
    with $3$-cells of the following four possible types
    (see Figure~\ref{fig:destructibleCellFlatten}):
    \begin{itemize}[nosep]
        \item \textbf{3-sided footballs}, which are obtained from
        corner cells and parallel triangular cells;
        \item \textbf{4-sided footballs}, which are obtained from parallel quadrilateral cells;
        \item \textbf{triangular purses}, which are obtained from wedge cells; and
        \item \textbf{tetrahedra}, which are obtained from central cells.
    \end{itemize}
    We say that $\mathcal{D}'$ is obtained by \textbf{non-destructively crushing} $S$.
    Also, if a cell decomposition $\mathcal{D}^\ast$ is built entirely from $3$-cells of the four types listed above
    (even if it was not directly obtained by non-destructive crushing),
    then we call $\mathcal{D}^\ast$ a \textbf{destructible} cell decomposition.
    \item\label{step:flatten} To recover a triangulation from a destructible cell decomposition $\mathcal{D}^\ast$,
    we first build an intermediate cell complex $\mathcal{C}^\ast$ by using the quotient topology to flatten:
    \begin{itemize}[nosep]
        \item all 3-sided and 4-sided footballs to edges; and
        \item all triangular purses to triangular faces.
    \end{itemize}
    This is illustrated in Figure~\ref{fig:destructibleCellFlatten}.
    Since triangulations are defined only by face gluings between tetrahedra, there are two ways in which $\mathcal{C}^\ast$ might fail to form a triangulation:
    \begin{enumerate}[nosep, label={(\alph*)}]
    \item
    $\mathcal{C}^\ast$ could contain vertices, edges and/or triangles that are \textbf{isolated}, meaning that they do not belong to any tetrahedra.
    \item
    $\mathcal{C}^\ast$ could contain vertices or edges that are \textbf{pinched}, meaning that they include identifications that are independent of any face gluings.
    In contrast, recall that every vertex of a triangulation is an equivalence class of vertices of tetrahedra, where all vertex identifications arise as consequences of face gluings.
    Similarly, every edge of a triangulation is given by edge identifications arising solely as consequences of face gluings.
    \end{enumerate}
    Thus, as illustrated in Figure~\ref{fig:extractTri}, we need to perform the following
    two operations to \textbf{extract} a triangulation $\mathcal{T}^\ast$ from $\mathcal{C}^\ast$:
    \begin{enumerate}[nosep, label={(\alph*)}]
        \item Delete all isolated vertices, edges and
        triangles.
        \item Separate pieces of the cell complex that are only joined together along pinched vertices or edges
        (thereby ensuring that all vertex and edge identifications arise solely as consequences of face gluings).
    \end{enumerate}
    We say that $\mathcal{T}^\ast$ is obtained by \textbf{flattening} $\mathcal{D}^\ast$.
    Consider the triangulation $\mathcal{T}'$ obtained by flattening
    the cell decomposition $\mathcal{D}'$ that results from non-destructively crushing $S$;
    we say that $\mathcal{T}'$ is obtained by \textbf{(destructively) crushing} $S$.
\qedhere
\end{enumerate}
\end{definitions}

\begin{figure}[htbp]
\centering
	\begin{subfigure}[t]{0.25\textwidth}
	\centering
		\begin{tikzpicture}

		\node[inner sep=0pt] (football) at (0,0) {
			\includegraphics[scale=0.7]{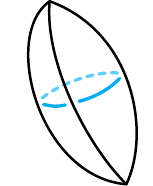}
		};
		\node[inner sep=0pt] (edge) at (2,0) {
			\includegraphics[scale=0.7]{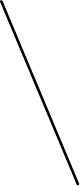}
		};
		\draw[thick, line cap=round, -Stealth] ($(football.east)+(-0.07,0)$) -- ($(edge.west)+(0.25,0)$);

		\end{tikzpicture}
	\caption{Flattening a $3$-sided football to an edge.}
	\label{subfig:football3}
	\end{subfigure}
	\hfill
	\begin{subfigure}[t]{0.25\textwidth}
	\centering
		\begin{tikzpicture}

		\node[inner sep=0pt] (football) at (0,0) {
			\includegraphics[scale=0.7]{CrushFigures/CrushCellFootball4.pdf}
		};
		\node[inner sep=0pt] (edge) at (2,0) {
			\includegraphics[scale=0.7]{CrushFigures/FlattenFootball.pdf}
		};
		\draw[thick, line cap=round, -Stealth] ($(football.east)+(-0.07,0)$) -- ($(edge.west)+(0.25,0)$);

		\end{tikzpicture}
	\caption{Flattening a $4$-sided football to an edge.}
	\label{subfig:football4}
	\end{subfigure}
	\hfill
	\begin{subfigure}[t]{0.45\textwidth}
	\centering
		\begin{tikzpicture}

		\node[inner sep=0pt] (purse) at (0,0) {
			\includegraphics[scale=0.7]{CrushFigures/CrushCellTriPurse.pdf}
		};
		\node[inner sep=0pt] (face) at (3.75,0) {
			\includegraphics[scale=0.7]{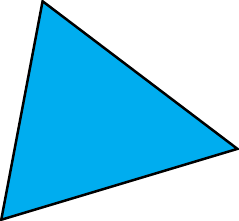}
		};
		\draw[thick, line cap=round, -Stealth] ($(purse.east)+(-0.12,0)$) -- ($(face.west)+(0.12,0)$);

		\end{tikzpicture}
	\caption{Flattening a triangular purse to a triangular face.}
	\label{subfig:triPurse}
	\end{subfigure}
\caption{In addition to tetrahedra, a destructible cell decomposition $\mathcal{D}^\ast$ can contain three other types of $3$-cells.
To recover a triangulation from $\mathcal{D}^\ast$, we need to flatten the non-tetrahedron cells.}
\label{fig:destructibleCellFlatten}
\end{figure}

\begin{figure}[htbp]
\centering
	\begin{tikzpicture}

	\node (before) at (0,0) {
		\includegraphics[scale=0.9]{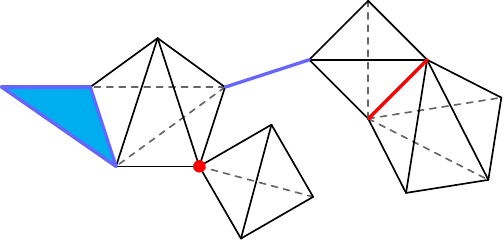}
	};
	\node (after) at (9,0) {
		\includegraphics[scale=0.9]{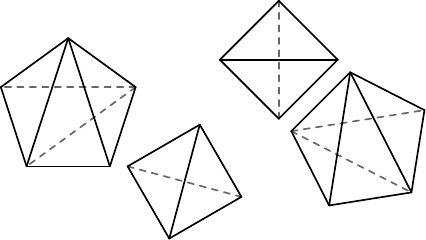}
	};
	\draw[thick, line cap=round, -Stealth] ($(before.east)+(0.05,0)$) -- ($(after.west)+(0.1,0)$);

	\end{tikzpicture}
\caption{Extracting a triangulation by deleting isolated edges and triangles (highlighted in blue),
and separating pinched vertices and edges (highlighted in red).}
\label{fig:extractTri}
\end{figure}

It is not too difficult to see what happens if we crush a \emph{trivial} normal surface $S$ in a triangulation $\mathcal{T}$.
Cutting along $S$ yields one central cell per tetrahedron, together with some number of corner and parallel triangular cells.
All the corner and parallel cells together form components that do not contain any central cells,
so after the non-destructive crushing and flattening steps, these components become isolated edges that do not appear in the final triangulation.
For the central cells, observe that non-destructive crushing turns these into tetrahedra that are glued together in the same way as the original triangulation.
The upshot is that crushing a trivial normal surface always leaves the triangulation unchanged.

Suppose now that $S$ is a \emph{non-trivial} normal surface in a triangulation $\mathcal{T}$,
and let $\mathcal{T}'$ denote the triangulation obtained by crushing $S$.
As before, each tetrahedron of $\mathcal{T}'$ comes from a central cell in the cell decomposition $\mathcal{D}$ given by cutting along $S$.
However, this time, at least one tetrahedron of $\mathcal{T}$ contains an elementary quadrilateral,
which means that not every tetrahedron of $\mathcal{T}$ gives rise to a central cell in $\mathcal{D}$.
Thus, we see that crushing has the following useful feature:

\begin{observation}\label{obs:crushReduce}
Let $\mathcal{T}$ be a triangulation, and let $\mathcal{T}'$ denote
the triangulation obtained by crushing a non-trivial normal surface in $\mathcal{T}$.
Then $\lvert\mathcal{T}'\rvert < \lvert\mathcal{T}\rvert$.
\end{observation}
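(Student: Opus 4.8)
The plan is to track exactly where the tetrahedra of $\mathcal{T}'$ come from, using the step-by-step description of the crushing procedure in Definitions~\ref{defs:stages}. First I would note that in the induced cell decomposition $\mathcal{D}$ obtained by cutting along $S$, the only $3$-cells that are tetrahedra are the central cells, and that by Definitions~\ref{defs:inducedCells} these arise precisely from the tetrahedra of $\mathcal{T}$ that $S$ does not meet in any quadrilateral; moreover each such tetrahedron of $\mathcal{T}$ contributes exactly one central cell.

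Next I would follow these central cells through the remaining stages of crushing. Non-destructive crushing sends each central cell to a tetrahedron of $\mathcal{D}'$ and sends every other induced cell (corner cells, parallel triangular and quadrilateral cells, wedge cells) to a $3$- or $4$-sided football or a triangular purse. In the flattening step all footballs collapse to edges and all triangular purses collapse to triangular faces, so the tetrahedra of the intermediate complex $\mathcal{C}^\ast$ are in bijection with the central cells of $\mathcal{D}$. Finally, extracting $\mathcal{T}'$ from $\mathcal{C}^\ast$ only deletes isolated vertices, edges and triangles and separates pinched vertices and edges---none of which creates or destroys a tetrahedron. Hence $\lvert\mathcal{T}'\rvert$ equals the number of central cells of $\mathcal{D}$, i.e.\ the number of tetrahedra of $\mathcal{T}$ containing no elementary quadrilateral. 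Since $S$ is non-trivial it contains at least one elementary quadrilateral, so at least one tetrahedron of $\mathcal{T}$ meets $S$ in a quadrilateral and contributes no central cell, giving $\lvert\mathcal{T}'\rvert \le \lvert\mathcal{T}\rvert - 1 < \lvert\mathcal{T}\rvert$.

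The only point I would treat with care---and the one place a reader might worry that tetrahedra could vanish---is the extraction step: I would spell out that ``isolated'' cells are by definition cells contained in no tetrahedron, and that ``unpinching'' merely separates already-existing cells, so neither operation alters the tetrahedron count. Everything else is a straightforward bookkeeping argument using the lists of cell types in Definitions~\ref{defs:inducedCells} and~\ref{defs:stages}, so I do not expect any genuine obstacle.
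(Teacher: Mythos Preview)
Your argument is correct and is exactly the approach the paper takes: tetrahedra of $\mathcal{T}'$ correspond bijectively to central cells, and a tetrahedron of $\mathcal{T}$ containing a quadrilateral yields no central cell. One harmless slip: in the induced cell decomposition $\mathcal{D}$, corner cells are also tetrahedra (see Definitions~\ref{defs:inducedCells}), so your claim that ``the only $3$-cells that are tetrahedra are the central cells'' is false as stated---but your argument never actually uses this, since you correctly observe that under non-destructive crushing the corner cells become $3$-sided footballs, not tetrahedra.
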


The difficulty with crushing a non-trivial normal surface is that this operation could drastically change the topology of our triangulations.
In particular, the triangulations before and after crushing could represent different $3$-manifolds, assuming they even represent $3$-manifolds at all.
In~\cite{JacoRubinstein2003}, Jaco and Rubinstein work through this difficulty using
a complicated global analysis of their version of the crushing procedure.

In contrast, the formulation of crushing given in Definitions~\ref{defs:stages} is simpler to work with.
This is because the process of flattening a destructible cell decomposition
can always be realised by a sequence consisting of atomic moves of three types.
The following lemma~\cite[Lemma~3]{Burton2014} gives a precise statement of this idea:

\begin{lemma}[Crushing lemma]\label{lem:benCrushing}
Let $\mathcal{T}^\ast$ be the triangulation given by flattening some destructible cell decomposition $\mathcal{D}^\ast$.
Then $\mathcal{T}^\ast$ can be obtained from $\mathcal{D}^\ast$ by performing a sequence of
zero or more of the following \textbf{atomic moves} (see Figure~\ref{fig:atomicMoves}), one at a time, in some order:
\begin{itemize}[nosep]
    \item flattening a \textbf{triangular pillow} to a triangular face;
    \item flattening a \textbf{bigon pillow} to a bigon face; and
    \item flattening a bigon face to an edge.
\end{itemize}
Since our cell decompositions are defined only by face gluings between $3$-cells,
after each atomic move we implicitly \textbf{extract} a cell decomposition by:
\begin{itemize}[nosep]
    \item deleting all \textbf{isolated} vertices, edges, bigons and triangles that do not belong to any $3$-cells; and
    \item separating pieces of the cell complex that are only joined together along
    \textbf{pinched} vertices or edges.
\end{itemize}
\end{lemma}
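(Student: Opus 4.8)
The plan is to prove the lemma constructively. By step~\ref{step:flatten} of Definitions~\ref{defs:stages}, flattening a destructible cell decomposition turns each of its non-tetrahedron $3$-cells---a $3$-sided football, a $4$-sided football, or a triangular purse---into an edge or a triangular face, while leaving the tetrahedra untouched. The idea is to exhibit, for each of these three cell types, a short explicit sequence of atomic moves that performs exactly this local flattening, and then to show that these sequences can be concatenated, cell by cell, into a single global sequence taking $\mathcal{D}^\ast$ to $\mathcal{T}^\ast$.

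First I would record the combinatorial structure of the three non-tetrahedron cell types, by tracing the collapsing operation of the second step of Definitions~\ref{defs:stages} through the corner, parallel and wedge cells of Definitions~\ref{defs:inducedCells}: a $k$-sided football ($k \in \{3,4\}$) is a $3$-ball with two vertices, $k$ edges joining them, and $k$ bigon faces, each bounded by two consecutive edges (Figure~\ref{fig:destructibleCellFlatten}); a triangular purse is a $3$-ball with two triangular faces together with some bigon faces, glued so that collapsing the bigon faces makes the two triangular faces coincide. The atomic-move sequences are then immediate. For a $k$-sided football, repeatedly flatten a bigon face to an edge---each such move merges a pair of the edges---until only two bigon faces are left; the cell has become a bigon pillow, which we flatten to a bigon face and then to an edge. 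For a triangular purse, flatten its bigon faces to edges one at a time, which forces its two triangular faces to acquire the same edge set; the cell has become a triangular pillow, which we flatten to a triangular face. In each case one checks, by following the induced edge and face identifications, that the composite of these moves realises the same quotient as the corresponding single flattening step.

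Next I would assemble the global sequence by processing the non-tetrahedron cells one after another---more precisely, repeatedly picking any non-tetrahedron $3$-cell and applying the appropriate next atomic move to it (a bigon-pillow or triangular-pillow flattening if the cell is such a pillow, and otherwise a bigon-face flattening on one of its bigon faces). The key verification is that each atomic move preserves destructibility: flattening a bigon face shared with a $k$-sided football turns that football into a $(k-1)$-sided football (or a bigon pillow when $k=3$); flattening a bigon face of a purse moves it one step closer to a triangular pillow; every tetrahedron is left alone, since it has no bigon faces; and flattening a bigon pillow or a triangular pillow merely removes a $3$-cell. Degenerate configurations---where a face of a football or purse is glued to another face of the same cell, or an edge is identified with itself---must be treated as special cases, but in each the prescribed moves still apply and still produce the intended identifications. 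To see that the process terminates in a triangulation, set $\Phi(\mathcal{D}) = M \cdot \#\{\text{non-tetrahedron 3-cells}\} + \#\{\text{bigon faces}\}$ for a large constant $M$: every atomic move strictly decreases $\Phi$, and a decomposition with $\Phi = 0$ consists only of tetrahedra, which after the implicit extractions is a valid triangulation.

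The final point to settle is the bookkeeping of the extractions. Step~\ref{step:flatten} of Definitions~\ref{defs:stages} extracts $\mathcal{T}^\ast$ once, at the end, whereas the lemma extracts (deleting isolated cells, separating pinched cells) after every atomic move. One checks that this makes no difference: once a cell is isolated it remains isolated under any further atomic move, and pinched identifications only accumulate, so performing all the moves first and extracting once gives the same result as extracting move by move. I expect this reconciliation, together with the degenerate-gluing case analysis, to be the main obstacle---verifying that the atomic-move sequences faithfully reproduce the flattening quotient in \emph{every} global gluing pattern is where the care is required, while the individual sequences themselves are short and routine once the cell combinatorics are pinned down.
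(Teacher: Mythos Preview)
The paper does not prove this lemma; it is quoted from \cite[Lemma~3]{Burton2014} and used as a black box. So there is no in-paper proof to compare against. The paper does, however, record one fact from the proof in \cite{Burton2014}: with a careful ordering of the atomic moves, the intermediate cell decompositions only ever contain seven cell types---the four destructible types together with triangular pillows, bigon pillows, and \emph{bigon pyramids} (Figure~\ref{fig:bigonPyramid}).

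Your outline matches this approach and is essentially correct, but there is a small gap in the bookkeeping. You assert that ``each atomic move preserves destructibility,'' but this is not literally true, and your own case list already shows it: after one bigon flattening a $3$-sided football becomes a bigon pillow, and a triangular purse (which has \emph{two} bigon faces, not one) becomes a cell with two triangles and one bigon---exactly the bigon pyramid---before a second bigon flattening turns it into a triangular pillow. Your phrase ``moves it one step closer to a triangular pillow'' hides this intermediate type. This matters because a bigon face can have a purse or partially-flattened purse on \emph{each} side, so your invariant should be ``every $3$-cell is one of the seven listed types'' rather than ``destructible.'' Once you name the bigon pyramid explicitly and include it in the case check (flattening its bigon turns it into a triangular pillow; it has no other bigons), the verification goes through exactly as you describe, and your termination argument via $\Phi$ is fine.
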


\begin{figure}[htbp]
\centering
	\begin{subfigure}[t]{0.4\textwidth}
	\centering
		\begin{tikzpicture}

		\node[inner sep=0pt] (pillow) at (0,0) {
			\includegraphics[scale=0.7]{CrushFigures/CrushCellTriPillow.pdf}
		};
		\node[inner sep=0pt] (face) at (3.4,0) {
			\includegraphics[scale=0.7]{CrushFigures/FlattenTriPurse.pdf}
		};
		\draw[thick, line cap=round, -Stealth] ($(pillow.east)+(-0.32,0)$) -- ($(face.west)+(0.12,0)$);

		\end{tikzpicture}
	\caption{Flattening a triangular pillow.}
	\label{subfig:flattenTriPillow}
	\end{subfigure}
	\hfill
	\begin{subfigure}[t]{0.3\textwidth}
	\centering
		\begin{tikzpicture}

		\node[inner sep=0pt] (pillow) at (0,0) {
			\includegraphics[scale=0.7]{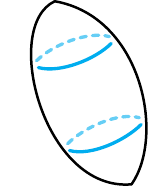}
		};
		\node[inner sep=0pt] (face) at (2.6,0) {
			\includegraphics[scale=0.7]{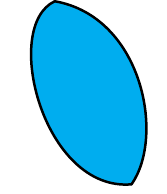}
		};
		\draw[thick, line cap=round, -Stealth] ($(pillow.east)+(-0.07,0)$) -- ($(face.west)+(0.25,0)$);

		\end{tikzpicture}
	\caption{Flattening a bigon pillow.}
	\label{subfig:flattenBigonPillow}
	\end{subfigure}
	\hfill
	\begin{subfigure}[t]{0.25\textwidth}
	\centering
		\begin{tikzpicture}

		\node[inner sep=0pt] (face) at (0,0) {
			\includegraphics[scale=0.7]{CrushFigures/FlattenBigon.pdf}
		};
		\node[inner sep=0pt] (edge) at (2,0) {
			\includegraphics[scale=0.7]{CrushFigures/FlattenFootball.pdf}
		};
		\draw[thick, line cap=round, -Stealth] ($(face.east)+(-0.07,0)$) -- ($(edge.west)+(0.25,0)$);

		\end{tikzpicture}
	\caption{Flattening a bigon face.}
	\label{subfig:flattenBigonFace}
	\end{subfigure}
\caption{The three atomic moves for flattening a destructible cell decomposition.}
\label{fig:atomicMoves}
\end{figure}

As part of the proof of the crushing lemma, the first author showed~\cite{Burton2014} that
if we are careful about the order in which we perform the atomic moves,
then we only ever encounter cell decompositions with $3$-cells of the following seven types:
\begin{itemize}
    \item 3-sided footballs;
    \item 4-sided footballs;
    \item triangular purses;
    \item tetrahedra;
    \item triangular pillows;
    \item bigon pillows; and
    \item \textbf{bigon pyramids} (see Figure~\ref{fig:bigonPyramid}).
\end{itemize}

\begin{figure}[htbp]
\centering
	\includegraphics[scale=0.7]{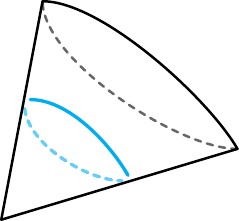}
\caption{A bigon pyramid.}
\label{fig:bigonPyramid}
\end{figure}

The crushing lemma allows us to understand the topological effects of crushing by examining atomic moves one at a time.
In particular, the first author proved the following result~\cite[Lemma~4]{Burton2014}
(which, among other things, paved the way for a practical algorithm for
non-orientable prime decomposition~\cite{Burton2014,Burton2013Regina}):

\begin{lemma}\label{lem:benCompact}
Let $\mathcal{D}_0$ be a valid cell decomposition with no ideal vertices.
If the underlying $3$-manifold $\mathcal{M}_0$ contains no two-sided projective planes,
then performing one of the atomic moves of Lemma~\ref{lem:benCrushing} will yield a (valid) cell decomposition
of a $3$-manifold $\mathcal{M}_1$ such that one of the following holds:
\begin{itemize}[nosep]
    \item $\mathcal{M}_0 = \mathcal{M}_1$.
    \item We flattened a triangular pillow, and $\mathcal{M}_1$ is obtained from $\mathcal{M}_0$ by
    deleting a single component $\mathcal{C}$, where $\mathcal{C}$ is either
    a $3$-ball, a $3$-sphere or a copy of the lens space $L_{3,1}$.
    \item We flattened a bigon pillow, and $\mathcal{M}_1$ is obtained from $\mathcal{M}_0$ by
    deleting a single component $\mathcal{C}$, where $\mathcal{C}$ is either
    a $3$-ball, a $3$-sphere, or a copy of real projective space $\mathbb{R}P^3$.
    \item We flattened a bigon face, and $\mathcal{M}_1$ is
    related to $\mathcal{M}_0$ in one of the following ways:
    \begin{enumerate}[nosep, label={(\roman*)}]
        \item $\mathcal{M}_1$ is obtained by cutting
        along a properly embedded disc in $\mathcal{M}_0$;
        \item $\mathcal{M}_1$ is obtained by filling
        a boundary $2$-sphere of $\mathcal{M}_0$ with a $3$-ball;
        \item $\mathcal{M}_1$ is obtained by decomposing along an embedded $2$-sphere in $\mathcal{M}_0$; or
        \item $\mathcal{M}_0 = \mathcal{M}_1 \ConnSum \mathbb{R}P^3$---that is,
        $\mathcal{M}_1$ removes a single $\mathbb{R}P^3$ summand from the connected sum decomposition of $\mathcal{M}_0$.
    \end{enumerate}
\end{itemize}
\end{lemma}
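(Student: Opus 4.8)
The plan is to treat the three atomic moves one at a time, in each case localising the analysis to a small neighbourhood of the cell (or face) being flattened. Write $\mathcal{D}_1$ for the cell decomposition produced by the move (after the implicit extraction step) and $\mathcal{M}_1$ for its underlying space. Since an atomic move alters $\mathcal{D}_0$ only in such a neighbourhood, the argument reduces to three tasks: (a) enumerate the combinatorially distinct ways in which the faces involved can be glued---to two distinct faces of the rest of $\mathcal{D}_0$, to each other, or left unglued as boundary faces; (b) in each case, identify the resulting operation on $\mathcal{M}_0$ and check that it appears in the statement; and (c) in each case, confirm that $\mathcal{D}_1$ has no invalid edges or vertices, so that $\mathcal{M}_1$ really is a $3$-manifold.

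For the two pillow moves this is short. Flattening a triangular pillow (resp.\ bigon pillow) $P$ collapses the $3$-ball $P$ onto its equatorial $2$-cell, identifying its two faces $F_1$ and $F_2$. If $F_1$ and $F_2$ are glued to distinct faces of $\mathcal{D}_0-P$, then $P$ is a ``lens-shaped'' $3$-ball attached to the remainder of the decomposition along the whole of $\partial P$, and collapsing it leaves $\mathcal{M}_0$ unchanged; the same conclusion holds if exactly one of $F_1,F_2$ is a boundary face, since then $P$ is merely a boundary collar. If both $F_1$ and $F_2$ are boundary faces, then $P$ is an isolated component, namely a $3$-ball, which the move deletes. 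Finally, if $F_1$ is glued to $F_2$, then $P$ by itself is a closed component built by gluing the two hemispherical faces of $\partial P$ to each other along an equator with only $3$ (resp.\ $2$) edges; any such identification is of lens-space type, and with so few equatorial edges the result can only be $S^3$ or $L_{3,1}$ (resp.\ $S^3$ or $\mathbb{R}P^3$), a component that the move then deletes. Comparing these outcomes with the statement settles both pillow moves, and note that the hypothesis on two-sided projective planes is not used here.

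The substantial case, and the main obstacle, is flattening a bigon face $F$ (with edges $a,b$ and vertices $v_0,v_1$) to an edge, since this is where all four listed behaviours occur and where preservation of validity is most delicate. Here I would take a small regular neighbourhood of $F$ and divide into cases according to (i) whether $a$ and $b$ are distinct edges of $\mathcal{D}_0$, (ii) whether $v_0$ and $v_1$ are distinct vertices, and (iii) which of the nearby $2$-cells are boundary faces. When $a\ne b$, the face $F$ is an embedded disc, and flattening it either leaves $\mathcal{M}_0$ unchanged, or cuts $\mathcal{M}_0$ along this properly embedded disc, or---when there is a boundary sphere alongside $F$---fills that sphere with a ball. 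When $a=b$, the face $F$ is a disc with its two boundary arcs identified, hence an embedded $2$-sphere or one-sided projective plane in $\mathcal{M}_0$, and flattening it decomposes $\mathcal{M}_0$ along that surface; the projective-plane subcase is precisely the removal of an $\mathbb{R}P^3$ summand. The hypothesis that $\mathcal{M}_0$ contains no two-sided projective plane is used exactly here: it excludes the remaining configuration, in which identifying $a$ with $b$ would collapse a two-sided projective plane and create a non-manifold point. The crux of the whole proof is to verify that this hypothesis really does rule out every bad configuration, and that the case list above is exhaustive---in particular, that whether one lands in the $2$-sphere subcase or the projective-plane subcase is governed solely by whether the identification of $a$ with $b$ is ``untwisted'' or ``twisted''.
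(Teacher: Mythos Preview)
Your overall plan matches the paper's: the paper does not reprove Lemma~\ref{lem:benCompact} (it is cited from~\cite{Burton2014}), but Section~\ref{sec:atomicIdeal} proves strictly more general versions via exactly the localise-and-enumerate strategy you describe, and specialising Lemmas~\ref{lem:flattenTriangularPillows} and~\ref{lem:flattenBigonPillows} together with Claims~\ref{claim:bigonSinglePath} and~\ref{claim:bigonSeparatePaths} to the valid, non-ideal setting recovers the statement.

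There is, however, a concrete gap in your pillow argument. When the two faces of a pillow are glued to each other, you assert that ``any such identification is of lens-space type''; this is false. For the bigon pillow there are four self-gluings: the two orientation-reversing ones give $S^3$ and $\mathbb{R}P^3$, but the two orientation-preserving ones give, respectively, an ideal cell decomposition of $\mathbb{R}P^2\times[0,1]$ (both vertex links are projective planes) and a one-vertex component with two invalid edges. The triangular pillow likewise admits an orientation-preserving self-gluing producing an invalid edge together with a projective-plane vertex link. These bad cases are excluded not by any lens-space principle but by the hypotheses on $\mathcal{D}_0$: validity rules out the invalid-edge components, and ``no ideal vertices'' rules out the $\mathbb{R}P^2\times[0,1]$ component. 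Your remark that the two-sided-$\mathbb{R}P^2$ hypothesis is unused for pillows is correct, but only after these other hypotheses have done this work; as written, your enumeration simply omits half the cases. For the bigon face your $a=b$ versus $a\neq b$ split is compatible with the paper's organisation (which instead unglues $F$ and asks whether the two resulting boundary bigons share an edge), but, as you yourself flag, the exhaustiveness check is the crux and is not yet carried out. One point that will bite: when $a=b$ and the two vertices of $F$ coincide, $F$ is only an immersed sphere, and you need a push-off-the-vertex step (as in Claim~\ref{claim:bigonSeparatePaths:glueS2}) before ``decompose along $F$'' is even well-defined.
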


One of our main goals in this paper is to extend Lemma~\ref{lem:benCompact} to cell decompositions that
may be invalid and may have ideal vertices;
this, in particular, allows us to study the topological effects of crushing a closed surface of positive genus,
since non-destructively crushing such a surface produces a cell decomposition with ideal vertices.
To do this, we will find it helpful to have ``flattening maps'' that keep track of how the points in a cell decomposition are affected by an atomic move.
Although an atomic move ``looks like'' a quotient operation, the corresponding quotient map does not account for the
implicit operation of extracting a cell decomposition, so a little care is required to define ``flattening maps'' appropriately:

\begin{definitions}\label{defs:flatteningMap}
Let $\mathcal{D}_1$ be a cell decomposition obtained by performing a single atomic move on some cell decomposition $\mathcal{D}_0$.
In Lemma~\ref{lem:benCrushing}, each atomic move implicitly finishes with the operation of extracting a cell decomposition;
consider the intermediate cell complex $\mathcal{C}$ that we obtain by performing the atomic move \emph{without} subsequently extracting a cell decomposition.
Note that $\mathcal{C}$ is obtained as a quotient of $\mathcal{D}_0$, so we have a quotient map $q:\mathcal{D}_0\to\mathcal{C}$.

We use $q$ to construct a map $\widehat{\varphi}_0:\mathcal{D}_0 \to 2^{\mathcal{D}_1}$
(here, $2^X$ denotes the \emph{power set} of a set $X$) that acts on points $p$ in $\mathcal{D}_0$ as follows:
\begin{itemize}[nosep]
\item If $q(p)$ is part of an isolated vertex, edge, bigon or triangle---which
means that $q(p)$ is deleted when we extract a cell decomposition---then
take $\widehat{\varphi}_0(p)$ to be the empty set.
\item If $q(p)$ is part of a pinched edge or vertex---which
means that $q(p)$ gets separated into multiple points when we extract a cell decomposition---then
take $\widehat{\varphi}_0(p)$ to be the set of points in $\mathcal{D}_1$ that originate from $q(p)$.
\item Otherwise, $q(p)$ remains untouched when we extract a cell decomposition,
in which case we take $\widehat{\varphi}_0(p)=\{q(p)\}$
(here, by an abuse of notation, we are viewing $q(p)$ as a point in $\mathcal{D}_1$).
\end{itemize}
Intuitively, $\widehat{\varphi}_0$ keeps track of how points in $\mathcal{D}_0$ are affected when we perform an atomic move.

Observe that the non-empty sets in the image of $\widehat{\varphi}_0$ give a partition of the points in $\mathcal{D}_1$.
Thus, we can construct a map $\widehat{\varphi}_1:\mathcal{D}_1 \to 2^{\mathcal{D}_0}$ as follows:
for each point $p$ in $\mathcal{D}_1$, let $U$ be the (unique) set in the image of $\widehat{\varphi}_0$ that contains $p$,
and define $\widehat{\varphi}_1(p)$ to be the set $\widehat{\varphi}_0^{-1}(U)$.
Intuitively, $\widehat{\varphi}_1$ keeps track of how points in $\mathcal{D}_1$ would be affected if we perform an atomic move in reverse.

For each $i\in\{0,1\}$, define a map $\varphi_i:2^{\mathcal{D}_i} \to 2^{\mathcal{D}_{1-i}}$ that sends any subset $S$ of $\mathcal{D}_i$
(when we actually use the ideas defined here, $S$ will usually be a vertex, edge, face or $3$-cell of $\mathcal{D}_i$) to the set
\[
\bigcup_{p\in S}\widehat{\varphi}_i(p) \subseteq \mathcal{D}_{1-i}.
\]
We call $\varphi_0$ the \textbf{flattening map} associated to the atomic move,
and $\varphi_1$ the \textbf{inverse flattening map}
(although, strictly speaking, these maps are not actually inverses of each other).
\end{definitions}

\section{Atomic moves on cell decompositions with ideal vertices}\label{sec:atomicIdeal}

Let $S$ be a normal surface in a triangulation $\mathcal{T}$.
When $S$ is either a $2$-sphere or a disc, non-destructively crushing $S$ creates new vertices whose links are either $2$-spheres or discs.
Thus, if the vertices of $\mathcal{T}$ are all either internal or boundary,
then the topological effect of destructively crushing $S$ only depends on
how atomic moves affect cell decompositions whose vertices are all either internal or boundary;
this was the motivation for Lemma~\ref{lem:benCompact} in~\cite{Burton2014}.

Our main goal in this section is to extend this atomic approach to crushing beyond the case where $S$ is a $2$-sphere or disc.
This requires us to study atomic moves on cell decompositions that are allowed to have ideal or invalid vertices.
A similarly general understanding of atomic moves is necessary to understand crushing
if we allow the initial triangulation $\mathcal{T}$ to have ideal or invalid vertices.
Moreover, when $\mathcal{T}$ triangulates a non-orientable $3$-manifold,
it turns out to be possible for an atomic move to create an invalid \emph{edge}.
The upshot is that, for a completely general analysis of atomic moves,
we should not restrict the links of the vertices involved, and we should not exclude the possibility of invalid edges.

Of the three atomic moves, flattening a triangular pillow and flattening a bigon pillow are relatively straightforward to understand.
We study these two atomic moves in full generality in Section~\ref{subsec:pillows}.

We then devote Section~\ref{subsec:bigonFaces} to understanding the topological effects of flattening a bigon face.
In contrast to the other two atomic moves, there would be a tediously large number of cases to consider if we wanted to give a complete analysis.
Thus, for the sake of brevity and clarity, we will focus mainly on flattening bigon faces
in valid cell decompositions whose vertices are all either internal or ideal.
This is sufficient to understand the effects of crushing $S$ if the following conditions are satisfied:
\begin{itemize}
\item $S$ is a closed surface;
\item $\mathcal{T}$ is valid and has no boundary vertices; and
\item the truncated $3$-manifold of $\mathcal{T}$ contains no two-sided properly embedded projective planes or M\"{o}bius bands
(which is true, in particular, for all orientable $3$-manifolds).
\end{itemize}
For the cases not covered by our analysis, we leave the details to whomever may require them in future work.

\subsection{Flattening triangular and bigon pillows}\label{subsec:pillows}

\begin{lemma}[Flattening triangular pillows]\label{lem:flattenTriangularPillows}
Let $\mathcal{D}_0$ be a (possibly invalid) cell decomposition, and let $\mathcal{D}_1$ be
the cell decomposition obtained by flattening a triangular pillow $F$ in $\mathcal{D}_0$.
One of the following holds:
\begin{enumerate}[nosep, label={(\alph*)}]
\item\label{case:triangularPillowHomeo}
The two triangular faces of $F$ are not identified and not both boundary,
in which case the truncated pseudomanifolds of $\mathcal{D}_0$ and $\mathcal{D}_1$ are homeomorphic.
\item\label{case:triangularPillowBall}
$F$ forms a (bounded) cell decomposition of a $3$-ball,
in which case $\mathcal{D}_1$ is obtained from $\mathcal{D}_0$ by deleting this $3$-ball component.
\item\label{case:triangularPillowClosed}
$F$ forms a (closed) cell decomposition of either $S^3$ (the $3$-sphere) or $L_{3,1}$ (a lens space),
in which case $\mathcal{D}_1$ is obtained from $\mathcal{D}_0$ by deleting this closed component.
\item\label{case:triangularPillowInvalid}
$F$ forms a two-vertex component $\mathcal{C}$ of $\mathcal{D}_0$ with exactly one invalid edge $e$;
one of the vertices is incident to $e$ and has $2$-sphere link,
while the other vertex is not incident to $e$ and has projective plane link.
In this case, $\mathcal{D}_1$ is obtained from $\mathcal{D}_0$ by deleting this invalid component $\mathcal{C}$.
\end{enumerate}
\end{lemma}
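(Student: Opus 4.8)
The plan is to analyse what a triangular pillow $F$ looks like combinatorially and then to determine the possible ways its two triangular faces can be glued into the rest of $\mathcal{D}_0$. Recall that a triangular pillow is a $3$-cell whose boundary $2$-sphere is divided by a triangle into exactly two triangular faces, so it has three vertices and three edges on its equator. The key observation is that flattening $F$ is the quotient that identifies its two triangular faces; since $\mathcal{D}_0$ is defined only by face gluings, the topological effect of this quotient is controlled entirely by how those two faces were previously attached. So the first step is to enumerate the gluing patterns: either (i) the two triangular faces of $F$ are glued to each other (possibly via a rotation or a reflection), or (ii) they are not glued to each other, in which case each is either an internal face (glued to some other $3$-cell) or a boundary face.

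In case (ii) with at least one of the two faces being internal or in the ``mixed'' subcases, the flattening map simply identifies two distinct faces that are each attached elsewhere, and one checks directly that this realises a homeomorphism of the truncated pseudomanifolds --- this is case~\ref{case:triangularPillowHomeo}. When both faces of $F$ are boundary and $F$ is glued to nothing, $F$ by itself forms a closed-up piece: if nothing on its equator is further identified, $F$ is a $3$-ball whose boundary is subdivided into two triangles, giving case~\ref{case:triangularPillowBall}. If there are additional identifications among the three equatorial edges and vertices of $F$ (coming from the gluing of $F$ to itself), then $F$ forms a closed component, and here I would do the bookkeeping on the possible self-identifications: identifying the two triangular faces of $F$ by an orientation-reversing map of the boundary $S^2$ produces $S^3$, by a cyclic rotation produces $L_{3,1}$, and by the ``flip'' that identifies an edge to itself in reverse produces the invalid component of case~\ref{case:triangularPillowInvalid} (whose link surfaces are exactly as described: a $2$-sphere link at the vertex on the invalid edge, and a projective-plane link at the opposite vertex). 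In each of these closed or invalid cases, $q(F)$ becomes an isolated cell (or isolated invalid piece) disjoint from the rest of $\mathcal{D}_0$, so extracting a cell decomposition deletes it --- this is precisely what the flattening map $\widehat{\varphi}_0$ from Definitions~\ref{defs:flatteningMap} records.

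The main obstacle I anticipate is the complete and careful case analysis of the self-gluings of the two triangular faces of $F$, together with verifying for each one which closed $3$-manifold (or invalid pseudomanifold) arises and that the stated vertex-link and edge-validity data are correct. Concretely, a triangular pillow glued to itself has a boundary $S^2$ with a distinguished equatorial triangle, and the gluing is one of the six symmetries of a triangle composed with the choice of which face maps to which; I would organise these by whether the gluing preserves or reverses the transverse orientation and by whether it fixes, rotates, or reflects the equator, and match each resulting quotient to $S^3$, $L_{3,1}$, or the invalid cone. The only genuinely subtle point is the invalid case: one must confirm that the self-identification forces exactly one edge to be identified with itself in reverse (hence invalid), that the regular neighbourhood of that edge's midpoint is a projective plane while the link of the opposite vertex is $\mathbb{R}P^2$ and the link of the other vertex is $S^2$, and that no other pathology occurs. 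Once the local picture of $F$ under each gluing is pinned down, checking that $\mathcal{D}_1$ agrees with $\mathcal{D}_0$ (respectively, with $\mathcal{D}_0$ minus the component) is then routine, since away from $F$ no cells or gluings are touched.
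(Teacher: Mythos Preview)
Your proposal is correct and follows essentially the same approach as the paper: split into the case where the two triangular faces are not glued to each other (yielding cases~\ref{case:triangularPillowHomeo} and~\ref{case:triangularPillowBall}) versus the case where they are glued to each other, and in the latter case enumerate the self-gluings up to symmetry to obtain $S^3$, $L_{3,1}$, or the invalid component. One small point to tidy up: your transition from the $3$-ball case to the self-gluing cases is slightly muddled (when both faces are boundary there can be no further equatorial identifications, since all identifications arise from face gluings), and the paper organises the self-gluings as two orientation-reversing ones ($S^3$ and $L_{3,1}$, the latter being the ``twist'') and one orientation-preserving one (the invalid case), which is cleaner than tracking all six triangle symmetries separately.
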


\begin{proof}
Throughout this proof, let $t$ and $t'$ denote the triangular faces that bound the triangular pillow $F$.
We have several cases to consider, depending on
how $t$ and $t'$ are glued to other faces of $\mathcal{D}_0$ (if at all).

First, suppose $t$ and $t'$ are not glued to each other.
In this case, the triangular pillow $F$ forms a $3$-ball.
If $t$ and $t'$ are not both boundary, then this ball lives inside some larger component of $\mathcal{D}_0$,
and flattening $F$ does not change the truncated pseudomanifold;
this corresponds to case~\ref{case:triangularPillowHomeo}.
On the other hand, if $t$ and $t'$ are both boundary, then $F$ forms the entirety of a $3$-ball component of $\mathcal{D}_0$,
and flattening $F$ deletes this $3$-ball component;
this corresponds to case~\ref{case:triangularPillowBall}.

With that out of the way, suppose $t$ and $t'$ \emph{are} glued to each other.
Up to symmetry, there are two possibilities for an orientation-reversing gluing:
\begin{itemize}
\item If $t$ and $t'$ are glued without a twist, then $F$ forms a cell decomposition of $S^3$
(see Figure~\ref{subfig:triPillowSphere}).
\item If $t$ and $t'$ are glued with a twist, then $F$ forms a cell decomposition of $L_{3,1}$
(see Figure~\ref{subfig:triPillowLens}).
\end{itemize}
In either case, we see that $F$ forms a closed component of $\mathcal{D}_0$.
Moreover, flattening $F$ has the effect of deleting this closed component.
This corresponds to case~\ref{case:triangularPillowClosed}.

For an orientation-preserving gluing of $t$ and $t'$, there is only one possibility up to symmetry.
With this gluing, $F$ forms a two-vertex component $\mathcal{C}$ of $\mathcal{D}_0$ with exactly one invalid edge $e$
(see Figure~\ref{subfig:triPillowInvalid}).
One of the vertices of $\mathcal{C}$ is given by identifying the two endpoints of $e$, and has $2$-sphere link.
The other vertex of $\mathcal{C}$ is given by the vertex of $F$ disjoint from $e$, and has projective plane link.
This corresponds to case~\ref{case:triangularPillowInvalid}.
\end{proof}

\begin{figure}[htbp]
\centering
	\begin{subfigure}[t]{0.3\textwidth}
	\centering
		\includegraphics[scale=1]{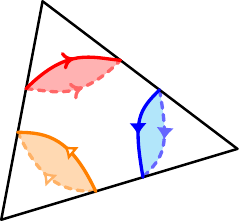}
	\caption{A triangular pillow that forms a (closed) cell decomposition of $S^3$.}
	\label{subfig:triPillowSphere}
	\end{subfigure}
	\hfill
	\begin{subfigure}[t]{0.3\textwidth}
	\centering
		\includegraphics[scale=1]{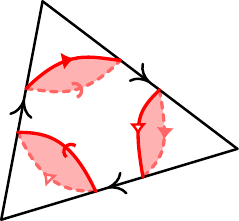}
	\caption{A triangular pillow that forms a (closed) cell decomposition of $L_{3,1}$.}
	\label{subfig:triPillowLens}
	\end{subfigure}
	\hfill
	\begin{subfigure}[t]{0.3\textwidth}
	\centering
		\includegraphics[scale=1]{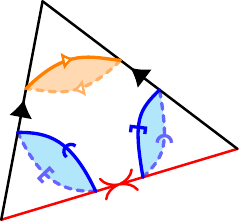}
	\caption{A triangular pillow that forms a component with an invalid edge (highlighted red).}
	\label{subfig:triPillowInvalid}
	\end{subfigure}
\caption{Cases where the two faces of a triangular pillow are glued to each other.}
\label{fig:triPillowGlued}
\end{figure}

\begin{lemma}[Flattening bigon pillows]\label{lem:flattenBigonPillows}
Let $\mathcal{D}_0$ be a (possibly invalid) cell decomposition, and let $\mathcal{D}_1$ be
the cell decomposition obtained by flattening a bigon pillow $F$ in $\mathcal{D}_0$.
One of the following holds:
\begin{enumerate}[nosep, label={(\alph*)}]
\item\label{case:bigonPillowHomeo}
The two bigon faces of $F$ are not identified and not both boundary,
in which case the truncated pseudomanifolds of $\mathcal{D}_0$ and $\mathcal{D}_1$ are homeomorphic.
\item\label{case:bigonPillowBall}
$F$ forms a (bounded) cell decomposition of a $3$-ball,
in which case $\mathcal{D}_1$ is obtained from $\mathcal{D}_0$ by deleting this $3$-ball component.
\item\label{case:bigonPillowClosed}
$F$ forms a (closed) cell decomposition of either $S^3$ (the $3$-sphere) or $\mathbb{R}P^3$ (real projective space),
in which case $\mathcal{D}_1$ is obtained from $\mathcal{D}_0$ by deleting this closed component.
\item\label{case:bigonPillowIdeal}
$F$ forms an ideal cell decomposition of $\mathbb{R}P^2\times[0,1]$,
in which case $\mathcal{D}_1$ is obtained from $\mathcal{D}_0$ by deleting this ideal component.
\item\label{case:bigonPillowInvalid}
$F$ forms a one-vertex component of $\mathcal{D}_0$ with exactly two invalid edges,
in which case $\mathcal{D}_1$ is obtained from $\mathcal{D}_0$ by deleting this invalid component.
\end{enumerate}
\end{lemma}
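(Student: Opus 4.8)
The plan is to mirror the proof of Lemma~\ref{lem:flattenTriangularPillows} for the easy part, and then carry out a short case analysis for the interesting part. Write $b$ and $b'$ for the two bigon faces of $F$, and split according to whether or not $b$ is glued to $b'$. First suppose $b$ and $b'$ are not glued to each other. Then $F$ is a cell decomposition of a $3$-ball. If $b$ and $b'$ are not both boundary faces of $\mathcal{D}_0$, this ball is contained in a larger component and flattening $F$ leaves the truncated pseudomanifold unchanged, giving case~\ref{case:bigonPillowHomeo}; if $b$ and $b'$ are both boundary, then $F$ is a $3$-ball component on its own and flattening it deletes this component, giving case~\ref{case:bigonPillowBall}. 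Nothing here differs from the triangular pillow argument.

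The content of the lemma is the case where $b$ is glued to $b'$. Then both faces of $F$ are internal, so $F$ is an entire component $\mathcal{C}$ of $\mathcal{D}_0$; moreover, flattening $F$ to a bigon face produces a bigon (together with at most two edges and two vertices) that belongs to no $3$-cell, so the extraction step deletes it, and hence $\mathcal{D}_1$ is obtained from $\mathcal{D}_0$ by deleting $\mathcal{C}$; this holds uniformly in all of the remaining subcases because $F$ is the only $3$-cell in its component. It remains to identify $\mathcal{C}$. Up to isotopy there are four cellular gluings $b\to b'$, since the mapping class group of a bigon has order four (it is generated by the reflection fixing the two vertices and the reflection fixing the two edges). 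For each of the four gluings I would record how the two vertices and two edges of $F$ get identified; compute the link of each resulting vertex by tracking how the two boundary arcs of a vertex-disk are glued (a one-dimensional instance of the ``decomposing along curves'' picture of Section~\ref{subsec:decomposing}), obtaining either a $2$-sphere or an $\mathbb{R}P^2$; decide for each edge whether it is identified with itself in reverse; and then read off the truncated manifold or pseudomanifold. The Euler characteristic is a useful bookkeeping tool: the gluing that fixes both vertices and both edges, and the gluing that swaps both vertices and both edges, each produce a complex with $\chi=0$, and these turn out to be the closed $3$-manifolds $S^3$ and $\mathbb{R}P^3$ (case~\ref{case:bigonPillowClosed}); the gluing that fixes the two vertices but swaps the two edges produces a complex with $\chi=1$ whose two vertices have $\mathbb{R}P^2$ links, giving the ideal cell decomposition of $\mathbb{R}P^2\times[0,1]$ (case~\ref{case:bigonPillowIdeal}); and the gluing that swaps the two vertices but fixes the two edges produces a one-vertex complex in which each of the two edges is identified with itself in reverse, that is, a one-vertex component with exactly two invalid edges (case~\ref{case:bigonPillowInvalid}).

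The main obstacle is the analysis in the last paragraph: one must get the four gluings exactly right (neither silently identifying gluings that are in fact distinct nor missing one), and, for each, correctly compute the vertex links and decide validity of the edges. In particular one needs to distinguish $S^3$ from $\mathbb{R}P^3$ among the two valid closed cases, which the Euler characteristic does not separate, and to confirm that the ideal case really is $\mathbb{R}P^2\times[0,1]$ rather than some other interval bundle or pseudomanifold. Everything else — the ``$b$ not glued to $b'$'' case, and the verification that flattening genuinely deletes the whole component in the remaining subcases — is routine and parallels the triangular pillow argument.
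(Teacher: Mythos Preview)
Your proposal is correct and follows essentially the same route as the paper: split on whether $b$ and $b'$ are glued, handle the unglued case exactly as for triangular pillows, and then enumerate the four self-gluings and identify each resulting component. The only cosmetic differences are that the paper parametrises the four gluings by orientation (reversing vs.\ preserving) and ``twist'' rather than by the induced permutation on vertices and edges, and it does not invoke Euler characteristic as a bookkeeping device; but your assignment of the four gluings to cases~\ref{case:bigonPillowClosed}--\ref{case:bigonPillowInvalid} agrees with the paper's.
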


\begin{proof}
Throughout this proof, let $b$ and $b'$ denote the bigon faces that bound the bigon pillow $F$.
We have several cases to consider, depending on how $b$ and $b'$ are glued to other faces of $\mathcal{D}_0$ (if at all).

First, suppose $b$ and $b'$ are not glued to each other.
In this case, the bigon pillow $F$ forms a $3$-ball.
If $b$ and $b'$ are not both boundary, then this ball lives inside some larger component of $\mathcal{D}_0$,
and flattening $F$ does not change the truncated pseudomanifold;
this corresponds to case~\ref{case:bigonPillowHomeo}.
On the other hand, if $b$ and $b'$ are both boundary, then $F$ forms the entirety of a $3$-ball component of $\mathcal{D}_0$,
and flattening $F$ deletes this $3$-ball component;
this corresponds to case~\ref{case:bigonPillowBall}.

With that out of the way, suppose $b$ and $b'$ \emph{are} glued to each other.
There are two possibilities for an orientation-reversing gluing:
\begin{itemize}
\item If $b$ and $b'$ are glued without a twist, then $F$ forms a cell decomposition of $S^3$
(see Figure~\ref{subfig:bigonPillowSphere}).
\item If $b$ and $b'$ are glued with a twist, then $F$ forms a cell decomposition of $\mathbb{R}P^3$
(see Figure~\ref{subfig:bigonPillowProjective}).
\end{itemize}
In either case, we see that $F$ forms a closed component of $\mathcal{D}_0$.
Moreover, flattening $F$ has the effect of deleting this closed component.
This corresponds to case~\ref{case:bigonPillowClosed}.

Finally, for an orientation-preserving gluing, we again have two possibilities:
\begin{itemize}
\item One of these gluings causes the two edges of $F$ to be identified together, and does not create invalid edges.
In this case, $F$ forms an ideal cell decomposition of $\mathbb{R}P^2\times[0,1]$
(see Figure~\ref{subfig:bigonPillowIdeal}),
and flattening $F$ has the effect of deleting this ideal component.
This corresponds to case~\ref{case:bigonPillowIdeal}.
\item The other orientation-preserving gluing causes each edge of $F$ to be identified with itself in reverse,
so that $F$ forms a one-vertex component of $\mathcal{D}_0$ with exactly two invalid edges
(see Figure~\ref{subfig:bigonPillowInvalid}).
Flattening $F$ has the effect of deleting this invalid component.
This corresponds to case~\ref{case:bigonPillowInvalid}.
\qedhere
\end{itemize}
\end{proof}

\begin{figure}[htbp]
\centering
	\begin{subfigure}[t]{0.22\textwidth}
	\centering
		\includegraphics[scale=1]{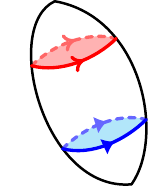}
	\caption{A bigon pillow that forms a (closed) cell decomposition of $S^3$.}
	\label{subfig:bigonPillowSphere}
	\end{subfigure}
	\hfill
	\begin{subfigure}[t]{0.22\textwidth}
	\centering
		\includegraphics[scale=1]{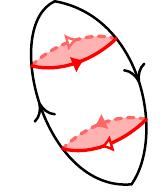}
	\caption{A bigon pillow that forms a (closed) cell decomposition of $\mathbb{R}P^3$.}
	\label{subfig:bigonPillowProjective}
	\end{subfigure}
	\hfill
	\begin{subfigure}[t]{0.22\textwidth}
	\centering
		\includegraphics[scale=1]{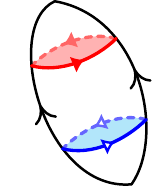}
	\caption{A bigon pillow that forms an ideal cell decomposition of $\mathbb{R}P^2\times[0,1]$.}
	\label{subfig:bigonPillowIdeal}
	\end{subfigure}
	\hfill
	\begin{subfigure}[t]{0.22\textwidth}
	\centering
		\includegraphics[scale=1]{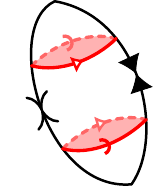}
	\caption{A bigon pillow that forms a component with two invalid edges.}
	\label{subfig:bigonPillowInvalid}
	\end{subfigure}
\caption{Cases where the two faces of a bigon pillow are glued to each other.}
\label{fig:bigonPillowGlued}
\end{figure}

\subsection{Flattening bigon faces}\label{subsec:bigonFaces}

We now study the effect of flattening a bigon face $F$.
As mentioned earlier, our main goal is to give a detailed analysis in the case where $F$ belongs to
a valid cell decomposition whose vertices are all either internal or ideal.
Our arguments only rely on the following properties:
\begin{enumerate}[label={(\alph*)}]
\item\label{cond:bigonInt}
$F$ is an internal face.
\item\label{cond:edgeInt}
Each edge incident to $F$ is internal.
\item\label{cond:vertIntIdeal}
Each vertex incident to $F$ is either internal or ideal.
\end{enumerate}
Provided these properties hold, our analysis will apply even when $F$ belongs to an invalid cell decomposition.

With this in mind, we assume throughout Section~\ref{subsec:bigonFaces} that $F$ is an internal bigon face.
However, for the sake of generality, we do \emph{not} assume that
conditions~\ref{cond:edgeInt} and~\ref{cond:vertIntIdeal} are satisfied;
instead, we carefully enumerate the cases where these conditions hold,
and for each such case we subsequently give a detailed description of the effect of flattening $F$.

We present our analysis in four parts.
First, in Section~\ref{subsubsec:userGuide}, we give a brief user guide for the reader seeking to apply our results.
Then, in Section~\ref{subsubsec:bigonFaceVertexLinks}, we make some preliminary observations by
examining how flattening $F$ interacts with the vertices incident to $F$.
Finally, we partition the main analysis into two broad cases that we handle separately in
Sections~\ref{subsubsec:bigonSinglePath} and~\ref{subsubsec:bigonSeparatePaths}.

Before we dive into the details, we make some general comments about our proof strategy, and we introduce some notation and terminology to support this.
One of the key ideas throughout our analysis is that, under our assumption that $F$ is internal,
flattening $F$ has the side-effect that we lose the face-gluing along $F$.
This means that flattening $F$ has the same result as the following two-step procedure:
\begin{enumerate}[label={(\arabic*)}]
\item\label{step:undoGluing} Undo the gluing along $F$, which yields two new boundary bigons $F^\dagger_0$ and $F^\dagger_1$.
\item\label{step:flattenBdryBigons} Flatten $F^\dagger_0$ and $F^\dagger_1$;
since these are boundary faces, flattening these faces has no side-effects (unlike the original face $F$).
\end{enumerate}
We will see that step~\ref{step:undoGluing} often corresponds to cutting along a properly embedded surface $S$,
and that step~\ref{step:flattenBdryBigons} often corresponds to filling the remnants of $S$,
so that the overall topological effect of flattening $F$ is often to decompose along $S$ (as defined in Section~\ref{subsec:decomposing}).
With this in mind, we introduce the following notation (also see
Figure~\ref{fig:notnBigonFaces}), which we will use throughout the rest of this section:

\begin{atomicnotn}\label{notn:bigonFaces}
As above, let $F$ be an internal bigon face in a (possibly invalid) cell decomposition $\mathcal{D}_0$.
Let $\mathcal{D}_1$ be the cell decomposition obtained by flattening $F$, and let $\varphi$ denote the associated flattening map.
For each $i\in\{0,1\}$, let $V_i$ denote the set of ideal and invalid vertices in $\mathcal{D}_i$,
and let $\mathcal{P}_i$ denote the truncated pseudomanifold of $\mathcal{D}_i$;
recall that $\mathcal{P}_i$ is obtained from $\mathcal{D}_i$ by truncating the vertices in $V_i$.

As in step~\ref{step:undoGluing} above, let $F^\dagger_0$ and $F^\dagger_1$ denote the two new boundary bigons
that we obtain after undoing the face-gluing along $F$, and let $\mathcal{D}^\dagger$ denote the cell decomposition that we obtain after undoing this gluing.
Let $g:\mathcal{D}^\dagger\to\mathcal{D}_0$ be the quotient map associated to the operation of regluing $F^\dagger_0$ and $F^\dagger_1$ to recover the original bigon face $F$.
\end{atomicnotn}

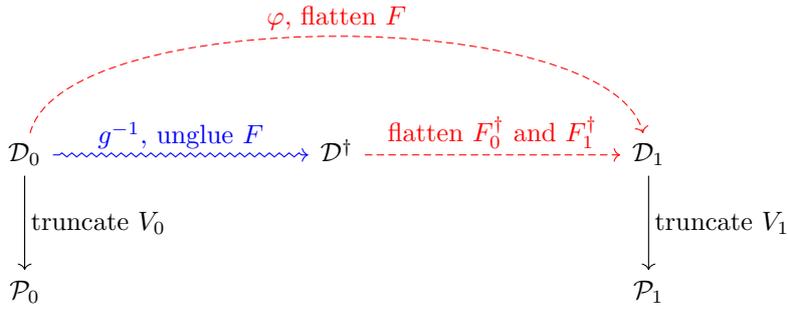
\begin{figure}[htbp]
\centering
\begin{tikzcd}[row sep=huge, column sep=huge,]
\mathcal{D}_0
\arrow[rrrr, "\text{$\varphi$, flatten $F$}", red, dashed, controls={+(0.5,2) and +(-0.5,2)}]
\arrow[rr, "\text{$g^{-1}$, unglue $F$}", blue, squiggly]
\arrow[d, "\text{truncate $V_0$}"]
&& \mathcal{D}^\dagger
\arrow[rr, "\text{flatten $F^\dagger_0$ and $F^\dagger_1$}", red, dashed]
&& \mathcal{D}_1
\arrow[d, "\text{truncate $V_1$}"] \\
\mathcal{P}_0
&&
&& \mathcal{P}_1
\end{tikzcd}
\caption{The protagonists introduced in Notation~\ref{notn:bigonFaces}.}
\label{fig:notnBigonFaces}
\end{figure}

We also introduce the following terminology, which will be useful not only
for flattening bigon faces, but also for proving our main theorem in Section~\ref{sec:genus}:

\begin{definitions}\label{defs:bigonPath}
Let $\mathcal{D}$ be a (possibly invalid) cell decomposition, and let $\mathcal{P}$ be the truncated pseudomanifold of $\mathcal{D}$.
Since $\mathcal{P}$ is obtained from $\mathcal{D}$ by truncating the ideal and invalid vertices of $\mathcal{D}$, we can view $\mathcal{P}$ as a subset of $\mathcal{D}$;
using this viewpoint, the \textbf{truncated bigon} associated to a bigon face $B$ in $\mathcal{D}$ is
given by $B\cap\mathcal{P}$ (see Figure~\ref{fig:truncatedBigonCases});
we will see that in many cases, the truncated bigon forms a properly embedded surface in $\mathcal{P}$.

For some positive integer $n$, consider an embedded curve $\gamma$ in $\mathcal{D}$ that:
\begin{itemize}[nosep]
\item starts at the midpoint of an edge $e_0$;
\item ends at the midpoint of an edge $e_n$ (possibly equal to $e_0$, to allow for the possibility that $\gamma$ is a closed curve); and
\item passes through the midpoints of a sequence $e_0,\ldots,e_n$ of edges,
such that for each $i\in\{0,\ldots,n-1\}$,
the edges $e_i$ and $e_{i+1}$ together bound a single bigon face $B_i$ that is bisected by $\gamma$.
\end{itemize}
This is illustrated in Figure~\ref{fig:bigonPathCurve}.
We call the union $\mathcal{U}:=B_0\cup\cdots\cup B_{n-1}$ a \textbf{bigon path} of \textbf{length} $n$ in $\mathcal{D}_0$,
and we call the edges $e_0$ and $e_n$ the \textbf{ends} of $\mathcal{U}$.
If the bigon faces $B_0,\ldots,B_{n-1}$ are all boundary, then we say that $\mathcal{U}$ is \textbf{boundary};
similarly, if $B_0,\ldots,B_{n-1}$ are all internal, then we say that $\mathcal{U}$ is \textbf{internal}.

For each $i\in\{0,\ldots,n-1\}$, let $S_i$ denote the truncated bigon associated to $B_i$.
We call the union $S_0\cup\cdots\cup S_{n-1}$ the \textbf{truncated bigon path} associated to $\mathcal{U}$;
similar to individual truncated bigons, truncated bigon paths often form properly embedded surfaces in $\mathcal{P}$.
\end{definitions}

\begin{figure}[htbp]
\centering
	\begin{subfigure}[t]{0.31\textwidth}
	\centering
		\includegraphics[scale=1]{FlattenBigon.pdf}
	\caption{The case where both vertices of $B$ are internal or boundary
	(either forming two distinct vertices, or identified to form a single such vertex).}
	\label{subfig:truncatedBigon0}
	\end{subfigure}
	\hfill
	\begin{subfigure}[t]{0.31\textwidth}
	\centering
		\includegraphics[scale=1]{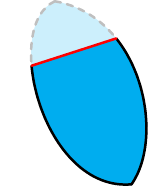}
	\caption{The case where one vertex of $B$ is ideal or invalid, while the other is internal or boundary.}
	\label{subfig:truncatedBigon1}
	\end{subfigure}
	\hfill
	\begin{subfigure}[t]{0.31\textwidth}
	\centering
		\includegraphics[scale=1]{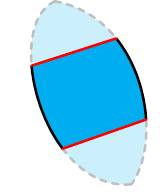}
	\caption{The case where both vertices of $B$ are ideal or invalid
	(either forming two distinct vertices, or identified to form a single such vertex).}
	\label{subfig:truncatedBigon2}
	\end{subfigure}
\caption{There are three possibilities for the truncated bigon associated to a bigon face $B$.
The portion of $B$ that lives outside the truncated bigon is indicated by dashed edges and faint shading.}
\label{fig:truncatedBigonCases}
\end{figure}

\begin{figure}[htbp]
\centering
	\includegraphics[scale=1]{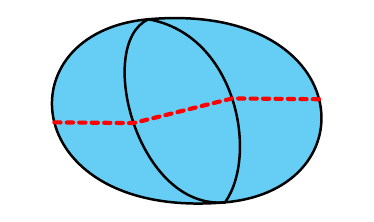}
\caption{Three bigon faces bisected by a curve (drawn as a dashed red line) passing through midpoints of edges.
These bigon faces together form a bigon path of length $3$.}
\label{fig:bigonPathCurve}
\end{figure}

We mentioned earlier that we divide our analysis into two cases that we handle separately in
Sections~\ref{subsubsec:bigonSinglePath} and~\ref{subsubsec:bigonSeparatePaths}.
We now have the terminology to describe these two cases.
Specifically, after ungluing $F$, the two new boundary bigons $F^\dagger_0$ and $F^\dagger_1$ could either:
\begin{itemize}
\item share at least one common edge, so that they together form a single boundary bigon path of length two; or
\item have no common edges, in which case they form two separate boundary bigon paths of length one.
\end{itemize}
There is no technical reason for dividing our analysis according to these two cases;
we make this choice simply to help organise our analysis into smaller, more manageable pieces.

\subsubsection{User guide}\label{subsubsec:userGuide}

We split the effects of flattening $F$ into several parts:
\begin{itemize}
\item The effect on the vertices incident to $F$ is described in Claim~\ref{claim:flattenBigonVertexLink}.
\item The effect on the edges incident to $F$ is described in Claims~\ref{claim:bigonSinglePath} and~\ref{claim:bigonSeparatePaths}.
\item The effect on the truncated pseudomanifold $\mathcal{P}_0$ is described in
Claims~\ref{claim:bigonSinglePath:S2:glueD2},~\ref{claim:bigonSinglePath:S2:glueRP2} and~\ref{claim:bigonSinglePath:S2:glueS2},
and in Claims~\ref{claim:bigonSeparatePaths:glueS2} and~\ref{claim:bigonSeparatePaths:glueRP2}.
\end{itemize}
Claims~\ref{claim:bigonSinglePath},~\ref{claim:bigonSinglePath:S2:glueD2},~\ref{claim:bigonSinglePath:S2:glueRP2}
and~\ref{claim:bigonSinglePath:S2:glueS2} all deal with the case where $F^\dagger_0$ and $F^\dagger_1$
form a single boundary bigon path, so they can be found in Section~\ref{subsubsec:bigonSinglePath};
on the other hand, Claims~\ref{claim:bigonSeparatePaths},~\ref{claim:bigonSeparatePaths:glueS2}
and~\ref{claim:bigonSeparatePaths:glueRP2} all deal with the case where $F^\dagger_0$ and $F^\dagger_1$
form two separate boundary bigon paths, so they can be found in Section~\ref{subsubsec:bigonSeparatePaths}.
The intended way to use all these results is to begin by referring to Claims~\ref{claim:bigonSinglePath} and~\ref{claim:bigonSeparatePaths},
as these two overarching claims will indicate which of the other claims are relevant for any given application.

The only other result that we prove is Claim~\ref{claim:flattenBigonVertexCone}.
This is a useful tool for our proofs in Sections~\ref{subsubsec:bigonSinglePath} and~\ref{subsubsec:bigonSeparatePaths},
but it is otherwise not a crucial part of our description of the effects of flattening $F$.
Having said this, Claim~\ref{claim:flattenBigonVertexCone} might be useful for the reader seeking to
extend our analysis of flattening $F$ to the cases that we do not study in detail.

\subsubsection{Interaction with vertices}\label{subsubsec:bigonFaceVertexLinks}

Let $v$ denote a vertex incident to $F$, and consider a small regular neighbourhood $N$ of $v$.
To describe how flattening $F$ interacts with $v$, we will find it useful to view $N$ as a cone over the link $L$ of $v$;
that is, we view $N$ as a union of lines, with each point in $L$ being joined to $v$
by one such line, and with any two such lines intersecting only at the vertex $v$.
Under this viewpoint, any subset $S$ of $L$ defines a subset $C_S$ of $N$ consisting of the lines joining $S$ to $v$
(for example, see Figure~\ref{fig:coneOverCurve});
we will call $C_S$ the \textbf{$v$-cone} over $S$.
We will use this notion of $v$-cones to prove two claims:
\begin{itemize}
\item In Claim~\ref{claim:flattenBigonVertexLink}, we describe how flattening $F$ changes the vertex $v$.
\item In Claim~\ref{claim:flattenBigonVertexCone}, we give conditions under which we can, in some sense, ``push $F$ away from $v$'';
we will give a more precise formulation of this later.
Roughly, the purpose of this is that it gives us a unified method to deal with some of the more inconvenient ways in which flattening $F$ interacts with $v$;
this will become clearer when we see Claim~\ref{claim:flattenBigonVertexCone} in action in
Sections~\ref{subsubsec:bigonSinglePath} and~\ref{subsubsec:bigonSeparatePaths}.
\end{itemize}

\begin{figure}[htbp]
\centering
	\includegraphics[scale=1]{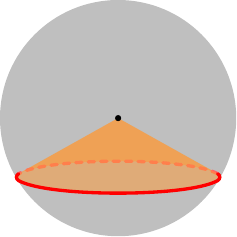}
\caption{A simple example of a $v$-cone (orange) over a subset $S$ of a vertex link $L$.
Here, $L$ is a $2$-sphere, and $S$ is an embedded closed curve (red) in $L$.}
\label{fig:coneOverCurve}
\end{figure}

Since we are interested in the effect of flattening $F$, we devote particular attention to the subset of $L$ given by $F\cap L$.
Assuming that each edge incident to $F$ is internal, we have the following possibilities (see Figure~\ref{fig:curveConeCases}):
\begin{itemize}
\item Suppose the edges of $F$ are not identified, so that $F$ forms a disc.
In this case, $F\cap L$ consists of either one or two arcs:
	\begin{itemize}
	\item If the two vertices of $F$ form two distinct vertices in $\mathcal{D}_0$,
	then $v$ is one of these two vertices,
	and $F\cap L$ consists of a single arc in $L$.
	\item If the two vertices of $F$ are identified to form a single vertex in $\mathcal{D}_0$,
	then $F\cap L$ consists of two disjoint arcs in $L$.
	\end{itemize}
\item Suppose the edges of $F$ are identified to form a single edge $e$, and suppose this identification causes $F$ to form a $2$-sphere.
In this case, $F\cap L$ consists of either one or two closed curves:
	\begin{itemize}
	\item If the two vertices of $F$ form two distinct vertices in $\mathcal{D}_0$,
	then $v$ is one of these two vertices,
	and $F\cap L$ consists of a single closed curve in $L$.
	\item If the two vertices of $F$ are identified to form a single vertex in $\mathcal{D}_0$,
	then $F\cap L$ consists of two disjoint closed curves in $L$.
	\end{itemize}
\item Suppose the edges of $F$ are identified to form a single edge $e$, and suppose this identification causes $F$ to form a projective plane.
In this case, the two vertices of $F$ are identified to form a single vertex in $\mathcal{D}_0$,
and $F\cap L$ consists of a single closed curve in $L$.
\end{itemize}

\begin{figure}[htbp]
\centering
	\begin{subfigure}[t]{0.29\textwidth}
	\centering
		\includegraphics[scale=1]{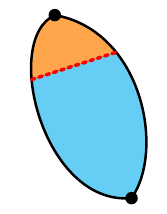}
	\caption{When $F$ forms a disc, $\gamma$ forms an arc.}
	\label{subfig:curveConeD2}
	\end{subfigure}
	\hfill
	\begin{subfigure}[t]{0.29\textwidth}
	\centering
		\includegraphics[scale=1]{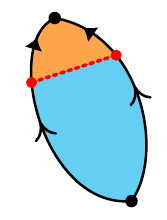}
	\caption{When $F$ forms a $2$-sphere, $\gamma$ forms a closed curve.}
	\label{subfig:curveConeS2}
	\end{subfigure}
	\hfill
	\begin{subfigure}[t]{0.29\textwidth}
	\centering
		\includegraphics[scale=1]{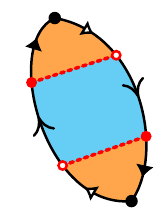}
	\caption{When $F$ forms a projective plane, $\gamma$ forms a closed curve.}
	\label{subfig:curveConeRP2}
	\end{subfigure}
\caption{The three possibilities for a component $\gamma$ (dotted red) of $F\cap L$.
In each case, the $v$-cone (orange) over $\gamma$ forms part of $F\cap N$.}
\label{fig:curveConeCases}
\end{figure}

In each of the above cases, observe that the $v$-cone over $F\cap L$ coincides exactly with $F\cap N$.
Intuitively, this means that flattening $F$ changes $N$ in a way that ``respects the cone structure''.
This idea allows us to give a fairly straightforward description of how flattening $F$ affects the vertex $v$:

\begin{atomic}\label{claim:flattenBigonVertexLink}
Assume that each edge incident to $F$ is internal.
Let $v$ be a vertex incident to $F$, and let $L$ denote the link of $v$.
We have the following possibilities:
\begin{enumerate}[nosep,label={(\alph*)}]
\item\label{case:bigonVertexLinkArc}
If the edges of $F$ are not identified, then $\varphi(v)$ consists of a single vertex whose link is topologically equivalent to $L$.
\item\label{case:bigonVertexLinkClosed}
If the edges of $F$ are identified, then $F\cap L$ consists of either one or two closed curves in $L$.
Let $L'_0,\ldots,L'_{k-1}$ denote the components of the surface obtained by decomposing $L$ along the curves in $F\cap L$;
there could be up to three such components (that is, we have $1\leqslant k\leqslant3$).
After flattening $F$, the image $\varphi(v)$ consists of $k$ vertices $v'_0,\ldots,v'_{k-1}$
such that for each $i\in\{0,\ldots,k-1\}$, the vertex $v'_i$ has link $L'_i$.
\end{enumerate}
\end{atomic}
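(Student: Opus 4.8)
The plan is to localise the whole argument to a small regular neighbourhood $N$ of $v$, using the cone structure set up above: we regard $N$ as the cone over the link $L$, and we use the fact (established just before the claim, under the hypothesis that every edge incident to $F$ is internal) that $F\cap N$ is precisely the $v$-cone over $\gamma:=F\cap L$. Since flattening $F$ is supported in a neighbourhood of $F$, its effect at $v$ is governed entirely by what the atomic move does to $N$; and since the move collapses $F$ to an edge by identifying the two boundary edges of $F$ with one another, it respects the cone structure, acting on $N$ as the $v$-cone over a corresponding operation on $L$. The task is therefore to identify that operation on $L$, and to decide in each case whether the subsequent extraction step of Lemma~\ref{lem:benCrushing} pulls $v$ apart into several vertices.

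For part~\ref{case:bigonVertexLinkArc}, suppose the edges of $F$ are not identified, so $F$ is a disc and $\gamma$ is a disjoint union of one or two embedded arcs in $L$ --- one arc if $v$ occurs once among the vertices of $F$, and two arcs if the two vertices of $F$ are identified to $v$ --- each arc having its endpoints at the (interior) points where the internal edges incident to $F$ meet $L$. Collapsing $F$ identifies these edges in pairs, hence identifies the endpoints of each arc, and in fact collapses each arc of $\gamma$ to a single point; since a regular neighbourhood of an embedded arc in a surface is a disc, and a disc modulo such an arc is again a disc, the resulting link is homeomorphic to $L$. Moreover the $v$-cone over $\gamma$ does not separate $N$, because the complement in $L$ of a disjoint union of such arcs is connected; so the move creates no pinch at $v$, and $\varphi(v)$ is a single vertex. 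I would remark here that this reasoning does not use whether $F$ is internal, so the conclusion holds for all of the vertex links that can arise.

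For part~\ref{case:bigonVertexLinkClosed}, suppose the edges of $F$ are identified, so $F$ is a $2$-sphere or a projective plane and $\gamma$ is a disjoint union of one or two closed curves in $L$; by the case analysis preceding the claim, $\gamma$ has two components precisely when $F$ is a $2$-sphere whose two vertices are identified to $v$. Now the $v$-cone $C$ over $\gamma$ is a union of discs --- joined, if there are two, only at the point $v$ --- each a cone over one curve of $\gamma$ with that curve as its boundary, and collapsing $F$ sends each of these cone-discs to a single point. I claim that the net effect of the move on $N$ is to cut $N$ along $C$ and then fill each remnant of $C$ with a disc supplied by the collapsed cone-disc; restricting to $\partial N=L$, this cuts $L$ along the curves of $\gamma$ and fills each remnant with a disc, which is exactly the operation of decomposing $L$ along $\gamma$ from Section~\ref{subsec:decomposing}. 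Concretely: when a curve of $\gamma$ separates $L$, the two sides of the corresponding cone-disc meet in $N$, after the move, only along the (at most one-dimensional) collapsed image of that disc, so the extraction step of Lemma~\ref{lem:benCrushing} separates the corresponding copies of $v$; when a curve of $\gamma$ is one-sided or non-separating, no such separation occurs, but the collapsed cone-disc still closes up the unique remnant. Hence $\varphi(v)$ has one vertex for each component of the decomposed surface, with links the respective components; and since $L$ is connected (a vertex link in a cell decomposition is always connected), since $\gamma$ has at most two curves, and since cutting a surface along one closed curve raises its component count by at most one while filling remnants leaves it unchanged, we obtain $1\leqslant k\leqslant 3$.

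I expect the one genuinely delicate point to be the claim in part~\ref{case:bigonVertexLinkClosed} that ``flatten $F$, then extract'' acts on $N$ exactly as ``cut along $C$, then cap each remnant with a disc''. To pin this down I would take $N$ chosen compatibly with the cells incident to $v$, so that $N$ is a union of the corner pieces of those cells, and then verify directly that after the collapse two corner pieces lying on opposite sides of a separating cone-disc of $C$ are joined only along the collapsed image of that disc --- so that extraction genuinely pulls them apart --- whereas in the non-separating and one-sided cases the corner pieces remain joined but the collapsed cone-disc still fills the single remnant of $\gamma$. Everything else, namely enumerating the number and two-sidedness of the curves of $\gamma$ and deducing the bound $k\leqslant 3$, is then routine bookkeeping.
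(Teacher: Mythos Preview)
Your approach is essentially the paper's for part~\ref{case:bigonVertexLinkArc}: localise to the cone $N$, observe that collapsing each arc of $\gamma$ to a point leaves $L$ unchanged, and note that no pinch is created. For part~\ref{case:bigonVertexLinkClosed}, however, the paper takes a cleaner route than your ``quotient-then-extract'' argument: it uses the two-step procedure of Notation~\ref{notn:bigonFaces}, first \emph{ungluing} $F$ (which visibly cuts $L$ along $\gamma$ and splits $v$ into $k$ vertices at once) and then flattening the two resulting boundary bigons $F^\dagger_0,F^\dagger_1$ (which collapses each remnant of $\gamma$ to a point, hence caps with a disc). This avoids entirely the ``genuinely delicate point'' you flag, since the separation is effected by the ungluing rather than by having to argue that the quotient produces a pinched vertex which extraction then pulls apart.

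One small correction: flattening $F$ does \emph{not} send each cone-disc $C_\gamma$ to a single point; it sends $C_\gamma$ to the line joining $v$ (or its image) to the point $p_\gamma$ that $\gamma$ collapses to. This is what the paper records explicitly. Your subsequent ``cut along $C$, then cap each remnant with a disc'' description is nonetheless correct, and the bookkeeping for $1\leqslant k\leqslant 3$ is fine.
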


\begin{proof}
As above, let $N$ denote a small regular neighbourhood of $v$, and view $N$ as the $v$-cone over the vertex link $L$.
We first consider the case where the two edges of $F$ are not identified, so that $F\cap L$ consists of either one or two arcs in $L$.
For each such arc $\gamma$, flattening $F$ has the effect of collapsing $\gamma$ to a single point $p_\gamma$, which leaves $L$ topologically unchanged;
the corresponding effect on $N$ is to flatten the $v$-cone over $\gamma$ to a single line joining $p_\gamma$ to $v$,
which means that we can continue to view $N$ as the $v$-cone over $L$.
As a result, we see that $\varphi(v)$ consists of a single vertex whose link is topologically equivalent to $L$.
This proves case~\ref{case:bigonVertexLinkArc}.

In the case where the edges of $F$ \emph{are} identified, recall that $F\cap L$ consists of either one or two closed curves in $L$.
This time, we study the effect of flattening $F$ by first ungluing $F$, and then flattening $F^\dagger_0$ and $F^\dagger_1$:
\begin{description}[font=\normalfont]
\item[\ref{step:undoGluing}]
Ungluing $F$ changes $L$ by cutting along the curves in $F\cap L$.
Since $F\cap L$ could have up to two components, each of which could possibly form a \emph{separating} curve in $L$,
cutting along $F\cap L$ could split $L$ into up to three components;
let $k$ be the number of such components, and denote these components by $L'_0,\ldots,L'_{k-1}$.
The corresponding change to $N$ is to cut along the $v$-cone over $F\cap L$, which has the following effects:
	\begin{itemize}
	\item $v$ gets split into $k$ new vertices $v'_0,\ldots,v'_{k-1}$; and
	\item $N$ gets split into $k$ components $N'_0,\ldots,N'_{k-1}$ such that
	for each $i\in\{0,\ldots,k-1\}$, $N'_i$ forms the $v'_i$-cone over $L'_i$.
	\end{itemize}
At this stage, the vertices $v'_i$ and the surfaces $L'_i$ are not yet the same as the corresponding objects that appear in the claim statement.
However, this situation will soon be fixed when we flatten $F^\dagger_0$ and $F^\dagger_1$, which will have the consequence of modifying the vertices $v'_i$ and the surfaces $L'_i$;
to keep notation as simple as possible, we will continue to use the same notation for these objects even after they are modified.
Likewise, the neighbourhoods $N'_i$ will also be modified, but we will continue to denote the modified neighbourhoods by $N'_i$.
\item[\ref{step:flattenBdryBigons}]
For each $i\in\{0,\ldots,k-1\}$, flattening $F^\dagger_0$ and $F^\dagger_1$ modifies $L'_i$ by collapsing
each remnant $\gamma$ of $F\cap L$ to a single point $p_\gamma$, which is topologically equivalent to filling $\gamma$ with a disc;
the corresponding effect on $N'_i$ is to flatten the $v'_i$-cone over $\gamma$ to a single line joining $p_\gamma$ to $v'_i$,
which means that we can continue to view $N'_i$ as the $v'_i$-cone over $L'_i$.
The end result of all this is that $\varphi(v)$ consists precisely of the (now modified) vertices $v'_0,\ldots,v'_{k-1}$,
and that the links of these vertices are given by the (now modified) surfaces $L'_0,\ldots,L'_{k-1}$, respectively.
We also note that, topologically, $L'_0,\ldots,L'_{k-1}$ form the components of the surface obtained by decomposing $L$ along $F\cap L$.
\end{description}
This proves case~\ref{case:bigonVertexLinkClosed}.
\end{proof}

We now turn our attention to the idea of ``pushing $F$ away from $v$'';
we will build up to this idea in a slightly roundabout way.
Assume that the two edges of $F$ are identified to form a single internal edge.
As observed earlier, this means that $F\cap L$ consists of either one or two closed curves in $L$.
Suppose a component $\gamma$ of $F\cap L$ forms a separating curve that bounds a disc in $L$.
Under these conditions, rather than beginning the process of flattening $F$ by ungluing the entirety of $F$ all at once,
we will find it useful to follow a more fine-grained procedure for flattening $F$
(see Figure~\ref{fig:flattenConeCases}):
\begin{enumerate}[label={(\roman*)}]
\item\label{coneStep:cut}
Cut along the subset of $F$ given by the $v$-cone $C_\gamma$ over $\gamma$.
Since $\gamma$ is a separating curve in $L$, this has the following effects:
	\begin{itemize}
	\item $v$ gets split into two new vertices $v'_0$ and $v'_1$; and
	\item $C_\gamma$ gets split into two remnants $C^\dagger_0$ and $C^\dagger_1$ such that
	for each $i\in\{0,1\}$, $C^\dagger_i$ forms the $v'_i$-cone over $\gamma$.
	\end{itemize}
We postpone ungluing or cutting along $F-C_\gamma$ (i.e., the rest of $F$) until later;
as a result, the curve $\gamma$ does not yet fall apart into two pieces because it is still ``held together'' by $F-C_\gamma$.
\item\label{coneStep:flattenCone}
Flatten $\gamma$ to a single point $p_\gamma$, and for each $i\in\{0,1\}$ flatten
the remnant $C^\dagger_i$ to a single line $\alpha_i$ joining $p_\gamma$ to $v'_i$.
Intuitively, the lines $\alpha_0$ and $\alpha_1$ will eventually form segments of the edges in $\varphi(F)$.
Viewing $p_\gamma$ as a temporary vertex, this step causes $F-C_\gamma$ to become a new bigon $F'$.
\item\label{coneStep:flattenBigon}
Treating $F'$ as if it were an internal bigon face in a cell decomposition, flatten $F'$ by first cutting along it, and then flattening each of its remnants.
Since $\gamma$ was originally a separating curve in $L$, this step splits the temporary vertex $p_\gamma$ into a pair of new points.
We no longer view these new points as vertices (which is why we called $p_\gamma$ a ``temporary'' vertex);
instead, each of these new points will occur in the interior of an edge in $\varphi(F)$.
\end{enumerate}
Together, we refer to steps~\ref{coneStep:cut} and~\ref{coneStep:flattenCone} as the operation of \textbf{flattening} $C_\gamma$.
We emphasise that after performing this operation, the intermediate object that we obtain from $\mathcal{D}_0$ might not be a cell decomposition anymore;
however, this problem is only temporary, since we will recover a cell decomposition once we complete step~\ref{coneStep:flattenBigon}.

\begin{figure}[htbp]
\raggedleft
	\begin{subfigure}[t]{0.6\textwidth}
	\centering
		\begin{tikzpicture}

		\node[inner sep=0pt] (before) at (0,0) {
			\includegraphics[scale=1]{FlattenS2ConeBefore.pdf}
		};
		\node[inner sep=0pt] (cut) at (3.55,0) {
			\includegraphics[scale=1]{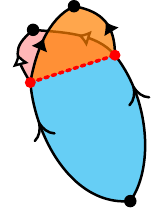}
		};
		\node[inner sep=0pt] (after) at (6.75,0) {
			\includegraphics[scale=1]{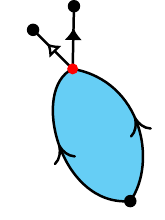}
		};
		\draw[thick, line cap=round, -Stealth] ($(before.east)+(-0.2,0)$) --++ (1.4,0);
		\draw[thick, line cap=round, -Stealth] ($(cut.east)+(-0.2,0)$) --++ (1.4,0);

		\end{tikzpicture}
	\caption{The case where $F$ forms a $2$-sphere.
	In this case, the new bigon $F'$ also forms a $2$-sphere, and one of its two vertices is given by $p_\gamma$.}
	\label{subfig:flattenConeS2}
	\end{subfigure}
	\hfill
	\bigskip
	\hfill
	\begin{subfigure}[t]{0.6\textwidth}
	\centering
		\begin{tikzpicture}

		\node[inner sep=0pt] (before) at (0,0) {
			\includegraphics[scale=1]{FlattenRP2ConeBefore.pdf}
		};
		\node[inner sep=0pt] (cut) at (3.55,0) {
			\includegraphics[scale=1]{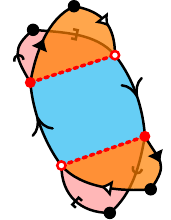}
		};
		\node[inner sep=0pt] (after) at (6.75,0) {
			\includegraphics[scale=1]{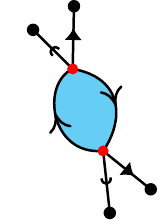}
		};
		\draw[thick, line cap=round, -Stealth] ($(before.east)+(-0.25,0)$) --++ (1.4,0);
		\draw[thick, line cap=round, -Stealth] ($(cut.east)+(-0.3,0)$) --++ (1.4,0);

		\end{tikzpicture}
	\caption{The case where $F$ forms a projective plane.
	In this case, the new bigon $F'$ also forms a projective plane, and its two vertices are identified together to form $p_\gamma$.}
	\label{subfig:flattenConeRP2}
	\end{subfigure}
\caption{Flattening $C_\gamma$ (orange) by first cutting along it, and then flattening the two remnants (orange and pink).
This collapses $\gamma$ (dotted red) to a point $p_\gamma$ that we temporarily view as a vertex;
it also causes $F-C_\gamma$ (blue) to become a new bigon $F'$.
(These illustrations do not accurately reflect how everything is embedded in $\mathcal{D}_0$.)}
\label{fig:flattenConeCases}
\end{figure}

In describing the operation of flattening $C_\gamma$, we used the assumption that
$\gamma$ is a separating curve, but made no mention of the assumption that $\gamma$ bounds a disc in $L$.
The purpose of the latter assumption is that it allows us to show that flattening $C_\gamma$ leaves the topology of $\mathcal{D}_0$ unchanged,
which means that flattening $F$ is topologically equivalent to the operation of flattening the new bigon $F'$.
Moreover, we can view $F'$ as the bigon that results from ``pushing $F$ away from $v$'':

\begin{atomic}\label{claim:flattenBigonVertexCone}
Assume that each edge incident to $F$ is internal.
Let $v$ be a vertex incident to $F$, and let $L$ denote the link of $v$.
Suppose a component $\gamma$ of $F\cap L$ forms a separating curve that bounds a disc $E$ in $L$, and suppose the interior of $E$ is disjoint from $F$.
Consider the subset of $F$ given by the $v$-cone $C_\gamma$ over $\gamma$.
Flattening $C_\gamma$ creates a new internal vertex without changing the topology of $\mathcal{D}_0$,
and reduces the operation of flattening $F$ to the operation of flattening a new bigon $F'$ that is:
\begin{itemize}[nosep]
\item topologically equivalent to a bigon obtained from $F$ by an isotopy that takes $C_\gamma$ to $E$; and
\item incident to a temporary \emph{internal} vertex given by the point $p_\gamma$ that results from flattening $\gamma$.
\end{itemize}
\end{atomic}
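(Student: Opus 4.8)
The linchpin is the $3$-ball $C_E$, the $v$-cone over $E$, so I would begin by recording its structure. Being a cone over a disc, $C_E$ is a $3$-ball, and its boundary $2$-sphere decomposes as $E\cup C_\gamma$, the two discs meeting along $\gamma$. Since $F\cap N$ is exactly the $v$-cone over $F\cap L$ and, by hypothesis, $\Int E$ is disjoint from $F$, we get $F\cap C_E=C_\gamma$ and $\Int C_E\cap F=\varnothing$; moreover $E\subseteq L$ and $C_\gamma\subseteq F$ are both internal, so $\partial C_E$ is an internally embedded $2$-sphere bounding the ball $C_E$ on one side. The picture to keep in mind is that $C_E$ is a ``balloon'' with a flat face $E$ resting against a wall, and $F$ meets the balloon only along the curved face $C_\gamma$.

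The next and most delicate step is to understand the collapse $C_\gamma\rightsquigarrow\alpha_0\cup\alpha_1$ locally. Write a regular neighbourhood of $v$ as $N=\mathrm{Cone}(L)$ with $L=E\cup_\gamma L'$ and $L'=\overline{L-E}$, so that $N=C_E\cup_{C_\gamma}C_{L'}$ with $C_{L'}=\mathrm{Cone}(L')$. Flattening $C_\gamma$ consists of cutting along $C_\gamma$---which, because $\gamma$ separates $L$, splits $v$ into $v'_0\in C_E$ and $v'_1\in C_{L'}$---and then, in each of $C_E$ and $C_{L'}$, collapsing every cone level of $C_\gamma$ to a point, producing arcs $\alpha_0\subseteq C_E$ and $\alpha_1\subseteq C_{L'}$ meeting at the common new point $p_\gamma$ (the image of $\gamma$). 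I would then compute the two sides. On the $C_{L'}$-side, since $L'=\overline{L-E}$, capping off the boundary circle of every level of $\mathrm{Cone}(L')$ turns $L'$ back into $L$, so this side reconstitutes $\mathrm{Cone}(L)=N(v)$ unchanged, with $p_\gamma$ now an ordinary interior point of the link surface $L$. On the $C_E$-side, since $E$ is a disc, the analogous capping turns $E$ into a $2$-sphere, so this side becomes a $3$-ball $\mathrm{Cone}(S^2)$ containing $v'_0$ and the whisker $\alpha_0$ in its interior. A parallel computation on a small bicollar $N(\gamma)\cong S^1\times D^2$ of $\gamma$ alone---cut along the disc $C_\gamma\cap N(\gamma)$, then collapse the resulting boundary arc fibrewise over the $S^1$-factor---shows that a neighbourhood of $p_\gamma$ is a $3$-ball with $p_\gamma$ in its interior. \emph{This is the heart of the matter:} it is only because $\gamma$ both separates $L$ and bounds a disc in $L$ that this local collapse produces an honest interior manifold point, rather than, say, a cone over a torus or a non-manifold pinch; I expect this local bookkeeping to be the main obstacle.

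These computations give everything. For the vertex assertion, the ball neighbourhood of $p_\gamma$ just found has $p_\gamma$ in its interior and meets no boundary faces (everything in sight is built from the internal face $F$ and the closed link $L$), so $p_\gamma$ is a new internal vertex. For the topology assertion, flattening $C_\gamma$ is the identity away from $C_\gamma$, while near $C_\gamma$ the $C_{L'}$-side re-grows $N(v)$ untouched and the $C_E$-side only contributes a $3$-ball in which $v'_0$ and $\alpha_0$ are spurious interior features that do not affect its homeomorphism type; assembling these, flattening $C_\gamma$ is realised by a homeomorphism of the truncated pseudomanifold $\mathcal{P}_0$---concretely, it just deflates the balloon $C_E$ against its flat face $E$---so the topology of $\mathcal{D}_0$ is unchanged.

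Finally, the reduction. Flattening the internal bigon face $F$ is, by definition, the quotient that collapses all of $F$; since $C_\gamma$ and $F-C_\gamma$ meet only along $\gamma$, this quotient factors as ``flatten $C_\gamma$'' followed by ``flatten $F-C_\gamma$'', so flattening $F$ is reduced to flattening the bigon $F':=F-C_\gamma$, whose distinguished vertex is the image $p_\gamma$ of $\gamma$---which we have shown is internal. And because $C_\gamma$ is parallel rel $\gamma$, through the ball $C_E$ whose interior misses $F$, to the disc $E$, there is an isotopy of $\mathcal{P}_0$ fixing $F-C_\gamma$ and carrying $C_\gamma$ onto $E$; the bigon $F'$ is therefore topologically the bigon $(F-C_\gamma)\cup E$ obtained from $F$ by this isotopy. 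This is exactly the assertion of the claim.
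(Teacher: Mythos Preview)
Your approach and the paper's are built on the same core idea: the disc $E$ in $L$ gives a $3$-ball in which the flattening of $C_\gamma$ is visibly topologically trivial. The difference is in how the argument is localised.

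The paper does not work with $C_E$ directly. Instead it constructs a single $3$-ball $\mathcal{B}$, taken as the $v$-cone over a slightly enlarged copy of $E$ sitting in the frontier of a larger neighbourhood $N^\ast \supset N$. The point of this enlargement is that now \emph{all} of $C_\gamma - v$, and in particular the curve $\gamma$ itself, lies in the \emph{interior} of $\mathcal{B}$. Consequently the entire cut-and-collapse operation is confined to the interior of one ball, and one simply checks that a $3$-ball stays a $3$-ball. Nothing outside $\mathcal{B}$ is touched, so no assembly is needed.

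In your version $\gamma$ sits on $\partial C_E$, so collapsing $\gamma$ genuinely affects the $C_{L'}$-side and the region of $\mathcal{D}_0 - N$ abutting $\gamma$ as well. You compensate by analysing three overlapping local models --- the $C_E$-side, the $C_{L'}$-side, and a bicollar $N(\gamma)$ --- and then asserting that these ``assemble'' to a homeomorphism. This is the right idea, but the assembly step is exactly where the bookkeeping load sits, and you do not carry it out. (Two small symptoms: $C_\gamma \cap N(\gamma)$ is an annulus, not a disc; and your final sentence concludes about $\mathcal{P}_0$ when the claim is about $\mathcal{D}_0$, which needs a word when $v$ is ideal.) None of this is fatal --- the pieces you compute are correct and the argument can be completed --- but the paper's single enlarged ball sidesteps the assembly altogether and is the cleaner packaging, especially for ideal or invalid $v$.
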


\begin{proof}
To see how flattening $C_\gamma$ affects $\mathcal{D}_0$ topologically, we first claim that since $\gamma$ bounds a disc $E$ in $L$,
we can find a $3$-ball $\mathcal{B}$ such that $C_\gamma-v$ lies in the interior of $\mathcal{B}$.
If $v$ were internal or boundary, we could simply take $\mathcal{B}$ to be a small regular neighbourhood of the $v$-cone $C_E$ over $E$.
However, to account for the possibility that $v$ is ideal or invalid, we instead construct $\mathcal{B}$ as follows:
\begin{enumerate}[label={(\alph*)}]
\item Consider a regular neighbourhood $N^\ast$ of $v$ that is ``large enough'' so that $C_E$ lies entirely in the interior of $N^\ast$.
\item Slightly isotope the disc $E$ so that it lies in the frontier of $N^\ast$,
and then enlarge this disc slightly so that the $v$-cone over this disc forms the desired $3$-ball $\mathcal{B}$
(see Figures~\ref{subfig:flattenConeBefore} and~\ref{subfig:flattenConeSchematicBefore}).
\end{enumerate}

\begin{figure}[htbp]
\centering
	\begin{subfigure}[t]{0.3\textwidth}
	\centering
		\begin{tikzpicture}

		\node[inner sep=0pt] (pic) at (0,0) {
			\includegraphics[scale=1]{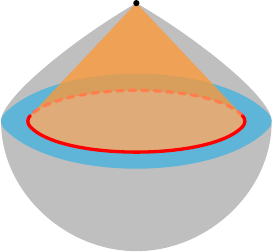}
		};
		\node[right,inner sep=2pt] at (pic.north) {$v$};

		\end{tikzpicture}
	\caption{The $3$-ball $\mathcal{B}$ (grey) contains
	the entirety of the $v$-cone $C_\gamma$ (orange), as well as a portion of $F-C_\gamma$ (blue).}
	\label{subfig:flattenConeBefore}
	\end{subfigure}
	\hfill
	\begin{subfigure}[t]{0.3\textwidth}
	\centering
		\begin{tikzpicture}

		\node[inner sep=0pt] (pic) at (0,0) {
			\includegraphics[scale=1]{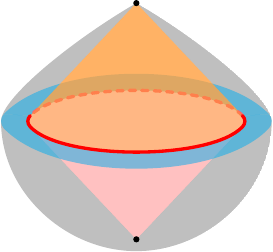}
		};
		\node[right,inner sep=2pt] at (pic.north) {$v'_0$};
		\node[above right,inner sep=2pt] at (pic.south) {$v'_1$};

		\end{tikzpicture}
	\caption{Cutting along the $v$-cone $C_\gamma$ yields two remnants (orange and pink),
	and creates a void.}
	\label{subfig:flattenConeCut}
	\end{subfigure}
	\hfill
	\begin{subfigure}[t]{0.3\textwidth}
	\centering
		\begin{tikzpicture}

		\node[inner sep=0pt] (pic) at (0,0) {
			\includegraphics[scale=1]{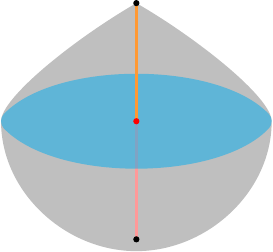}
		};
		\node[right,inner sep=2pt] at (pic.north) {$v'_0$};
		\node[above right,inner sep=2pt] at (pic.south) {$v'_1$};
		\node[right,inner sep=2pt,red] at (pic.center) {$p_\gamma$};

		\end{tikzpicture}
	\caption{Flattening the remnants of $C_\gamma$ fills the void back in, so we recover a $3$-ball.}
	\label{subfig:flattenConeAfter}
	\end{subfigure}
\caption{Since $\gamma$ (red) bounds a disc in $L$, flattening $C_\gamma$ has no topological effect on $\mathcal{D}_0$.
The intersection of the $3$-ball $\mathcal{B}$ (grey) with the ``unflattened'' part of $F$ is shaded blue.}
\label{fig:coneCutFlatten}
\end{figure}

\begin{figure}[htbp]
\centering
	\begin{subfigure}[t]{0.3\textwidth}
	\centering
		\begin{tikzpicture}

		\node[inner sep=0pt] (pic) at (0,0) {
			\includegraphics[scale=1]{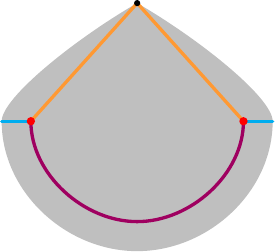}
		};
		\node[right,inner sep=2pt] at (pic.north) {$v$};
		\node[above,inner sep=2pt,purple!50!violet] at ($(pic.south)+(90:0.1)$) {$E$};

		\end{tikzpicture}
	\caption{Cross-section of Figure~\ref{subfig:flattenConeBefore}.}
	\label{subfig:flattenConeSchematicBefore}
	\end{subfigure}
	\hfill
	\begin{subfigure}[t]{0.3\textwidth}
	\centering
		\begin{tikzpicture}

		\node[inner sep=0pt] (pic) at (0,0) {
			\includegraphics[scale=1]{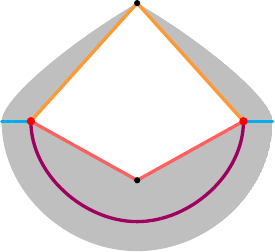}
		};
		\node[right,inner sep=2pt] at (pic.north) {$v'_0$};
		\node[above right,inner sep=2pt] at ($(pic.south)+(90:0.85)$) {$v'_1$};
		\node[above,inner sep=2pt,purple!50!violet] at ($(pic.south)+(90:0.1)$) {$E$};

		\end{tikzpicture}
	\caption{Cross-section of Figure~\ref{subfig:flattenConeCut}.}
	\label{subfig:flattenConeSchematicCut}
	\end{subfigure}
	\hfill
	\begin{subfigure}[t]{0.3\textwidth}
	\centering
		\begin{tikzpicture}

		\node[inner sep=0pt] (pic) at (0,0) {
			\includegraphics[scale=1]{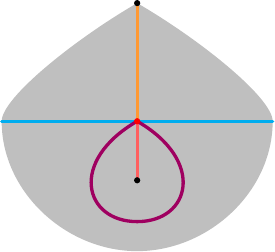}
		};
		\node[right,inner sep=2pt] at (pic.north) {$v'_0$};
		\node[above right,inner sep=2pt] at ($(pic.south)+(90:0.85)$) {$v'_1$};
		\node[above right,inner sep=2pt,red] at (pic.center) {$p_\gamma$};
		\node[above,inner sep=2pt,purple!50!violet] at ($(pic.south)$) {$E/\partial{E}$};

		\end{tikzpicture}
	\caption{Cross-section of Figure~\ref{subfig:flattenConeAfter}.}
	\label{subfig:flattenConeSchematicAfter}
	\end{subfigure}
\caption{Schematic cross-sections of the $3$-dimensional pictures in Figure~\ref{fig:coneCutFlatten}.
Here we also include the disc $E$ (purple), which is not shown in the $3$-dimensional pictures;
the vertex $v'_1$ is repositioned slightly to accommodate this inclusion.}
\label{fig:coneCutFlattenSchematic}
\end{figure}

The operation of flattening $C_\gamma$ leaves everything outside of $\mathcal{B}$ untouched,
so we just need to understand how this operation affects $\mathcal{B}$ topologically.
We follow steps~\ref{coneStep:cut} and~\ref{coneStep:flattenCone} from above:
\begin{description}[font=\normalfont]
\item[\ref{coneStep:cut}]
As illustrated in Figures~\ref{subfig:flattenConeCut} and~\ref{subfig:flattenConeSchematicCut}, cutting along $C_\gamma$ has the following effects:
	\begin{itemize}
	\item The vertex $v$ gets split into two new vertices $v'_0$ and $v'_1$.
	One of the new vertices, say $v'_0$, has link given by $L$ minus a disc.
	The other new vertex $v'_1$ has link given by the disc $E$, as illustrated in Figure~\ref{subfig:flattenConeSchematicCut}.
	\item The $v$-cone $C_\gamma$ gets split into two remnants $C^\dagger_0$ and $C^\dagger_1$
	such that for each $i\in\{0,1\}$, $C^\dagger_i$ forms the $v'_i$-cone over $\gamma$.
	These two remnants bound a newly-created void inside our $3$-ball $\mathcal{B}$.
	\end{itemize}
\item[\ref{coneStep:flattenCone}]
As illustrated in Figures~\ref{subfig:flattenConeAfter} and~\ref{subfig:flattenConeSchematicAfter}, flattening $C^\dagger_0$ and $C^\dagger_1$ has the following effects:
	\begin{itemize}
	\item The link of $v'_0$ gets ``closed up'' so that it becomes topologically equivalent to $L$.
	Thus, we can equate $v'_0$ with the original vertex $v$.
	\item The link of $v'_1$ gets ``closed up'' to become the $2$-sphere corresponding to $E/\partial{E}$,
	as illustrated in Figure~\ref{subfig:flattenConeSchematicAfter}.
	Thus, we can view $v'_1$ as a newly-created internal vertex.
	\item The void that we created in the previous step gets filled in, so that we once again have a $3$-ball $\mathcal{B}'$.
	\item The curve $\gamma$ gets flattened to a single point $p_\gamma$ that we temporarily view as a vertex.
	(Recall that $p_\gamma$ is only a ``temporary'' vertex because after performing step~\ref{coneStep:flattenBigon},
	$p_\gamma$ gets split into two new points that we no longer view as vertices.)
	Since $p_\gamma$ lies in the interior of the $3$-ball $\mathcal{B}'$, we can think of $p_\gamma$ as an \emph{internal} vertex.
	\end{itemize}
\end{description}
Topologically, all we have done is replaced the $3$-ball $\mathcal{B}$ with
another $3$-ball $\mathcal{B}'$, so we have not changed the topology of $\mathcal{D}_0$.

To finish this proof, consider $F-C_\gamma$; this is the part of $F$ that is being left ``unflattened''.
Recall that after flattening $C_\gamma$, this unflattened part of $F$ becomes a new bigon $F'$,
and that the operation of flattening $F$ is reduced to the operation of flattening $F'$.
Topologically, observe that $F'$ is equivalent to a bigon obtained from $F$ by an isotopy that replaces $C_\gamma$ with the disc $E$;
this can be seen by equating the vertices $v$ and $v'_0$, and then comparing how $F$ intersects the grey $3$-ball in
Figure~\ref{subfig:flattenConeBefore} with how $F'$ intersects the grey $3$-ball in Figure~\ref{subfig:flattenConeAfter}.
\end{proof}

\subsubsection{The case where ungluing gives a single boundary bigon path}\label{subsubsec:bigonSinglePath}

We are now ready to present the main analysis of the effect of flattening $F$.
We first consider the case where $F^\dagger_0$ and $F^\dagger_1$ together form a single boundary bigon path $\mathcal{F}^\dagger$.
Depending on how the ends of $\mathcal{F}^\dagger$ are identified (if at all),
$\mathcal{F}^\dagger$ could form a $2$-sphere, projective plane or disc in the boundary of $\mathcal{D}^\dagger$.
We refine this list of cases as follows:

\begin{atomic}\label{claim:bigonSinglePath}
If $F^\dagger_0$ and $F^\dagger_1$ together form a single boundary bigon path $\mathcal{F}^\dagger$, then one of the following holds:
\begin{itemize}[nosep]
\item The boundary bigon path $\mathcal{F}^\dagger$ forms a $2$-sphere,
in which case the result $\varphi(F)$ of flattening $F$ is a single internal edge in $\mathcal{D}_1$.
Moreover, letting $e^\dagger_0$ and $e^\dagger_1$ denote the edges incident to $\mathcal{F}^\dagger$, we have four cases depending on the behaviour of the gluing map $g$
\emph{(see Figure~\ref{fig:bigonSinglePath:S2})}:
	\begin{enumerate}[nosep,label={(\arabic*)}]
	\item\label{case:bigonSinglePath:S2:glueD2}
	The map $g$ realises an orientation-reversing gluing of $F^\dagger_0$ and $F^\dagger_1$ such that for each $i\in\{0,1\}$,
	the edge $e^\dagger_i$ gets identified with itself to form an internal edge of $\mathcal{D}_0$.
	In this case, $F$ forms a \textbf{disc} in $\mathcal{D}_0$.\\
	\emph{(See Claim~\ref{claim:bigonSinglePath:S2:glueD2} for
	details about the effect of flattening $F$ in this case.)}
	\item\label{case:bigonSinglePath:S2:glueRP2}
	The map $g$ realises an orientation-reversing gluing of $F^\dagger_0$ and $F^\dagger_1$ that causes
	$e^\dagger_0$ and $e^\dagger_1$ to be identified together to form a single internal edge of $\mathcal{D}_0$.
	In this case, $F$ forms a \textbf{projective plane} in $\mathcal{D}_0$.\\
	\emph{(See Claim~\ref{claim:bigonSinglePath:S2:glueRP2} for
	details about the effect of flattening $F$ in this case.)}
	\item\label{case:bigonSinglePath:S2:glueInvalid}
	The map $g$ realises an orientation-preserving gluing of $F^\dagger_0$ and $F^\dagger_1$ such that for each $i\in\{0,1\}$,
	the edge $e^\dagger_i$ gets identified with itself in reverse to form an invalid edge of $\mathcal{D}_0$.
	\item\label{case:bigonSinglePath:S2:glueS2}
	The map $g$ realises an orientation-preserving gluing of $F^\dagger_0$ and $F^\dagger_1$ that causes
	$e^\dagger_0$ and $e^\dagger_1$ to be identified together to form a single internal edge of $\mathcal{D}_0$.
	In this case, $F$ forms a \textbf{\textup{2}-sphere} in $\mathcal{D}_0$.\\
	\emph{(See Claim~\ref{claim:bigonSinglePath:S2:glueS2} for
	details about the effect of flattening $F$ in this case.)}
	\end{enumerate}
\item The boundary bigon path $\mathcal{F}^\dagger$ forms a projective plane, in which case $F$ is incident to an invalid edge in $\mathcal{D}_0$.
\item The boundary bigon path $\mathcal{F}^\dagger$ forms a disc, in which case $F$ is incident to a boundary edge in $\mathcal{D}_0$.
\end{itemize}
\end{atomic}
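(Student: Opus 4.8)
The plan is to first pin down the topological type of $\mathcal{F}^\dagger$ by a cell count, and then to read off the behaviour of the edges incident to $F$, and of $\varphi(F)$ itself, from the regluing map $g$. To begin: $\mathcal{F}^\dagger = F^\dagger_0 \cup F^\dagger_1$ is the union of two boundary bigons glued along the middle edge of the length-two bigon path, so every cell of $\mathcal{F}^\dagger$ is part of a boundary face of $\mathcal{D}^\dagger$ and hence $\mathcal{F}^\dagger \subseteq \partial\mathcal{D}^\dagger$. If the two ends of the path are distinct edges, then $\mathcal{F}^\dagger$ has exactly one boundary circle, made of those two end edges, so $\mathcal{F}^\dagger$ is a disc. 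If the two ends coincide, then $\mathcal{F}^\dagger$ is a closed surface built from exactly two $2$-cells and two $1$-cells, with each of its edges incident to exactly two of its faces; since it has at least one and at most two vertices, $\chi(\mathcal{F}^\dagger) \in \{1,2\}$, so $\mathcal{F}^\dagger \cong \mathbb{R}P^2$ (one vertex) or $\mathcal{F}^\dagger \cong S^2$ (two vertices). This establishes the three overarching cases, and also records that $S^2$ is distinguished from $\mathbb{R}P^2$ by whether the two vertices of $\mathcal{F}^\dagger$ remain distinct.

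Next I would dispose of the disc and projective-plane cases. In both, I would use $g$, which recovers $F$ by gluing $F^\dagger_0$ to $F^\dagger_1$: in particular $g$ pairs up their edges and identifies their vertices, and a routine inspection of these identifications determines what the edges incident to $F$ look like in $\mathcal{D}_0$. When $\mathcal{F}^\dagger$ is a disc, its boundary circle lies in $\partial\mathcal{D}^\dagger$ away from the interiors of $F^\dagger_0$, $F^\dagger_1$ and the shared middle edge, so the incidences witnessing that this circle is a boundary circle survive the regluing; hence $F$ is incident to a boundary edge of $\mathcal{D}_0$. When $\mathcal{F}^\dagger$ is a projective plane, its non-orientability forces $g$ to identify one of the two edges of $\mathcal{F}^\dagger$ with itself in reverse, so $F$ is incident to an invalid edge of $\mathcal{D}_0$.

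The principal case is $\mathcal{F}^\dagger \cong S^2$. Here I would first show $\varphi(F)$ is a single internal edge, and this part does not depend on $g$: by the two-step description of flattening $F$ (unglue $F$, obtaining $\mathcal{D}^\dagger$, then flatten the boundary bigons $F^\dagger_0$ and $F^\dagger_1$), flattening $F$ collapses each of the two bigons of $\mathcal{F}^\dagger$ to an edge; since these two bigons share both of their edges, the two collapses together identify all of $\mathcal{F}^\dagger$ to a single edge $\widehat{e}$. This edge is incident to the (modified) $3$-cells that formerly contained $F^\dagger_0$ and $F^\dagger_1$, so it is neither isolated nor pinched and thus survives the extraction step; and since $S^2$ is orientable, the two collapses are orientation-compatible, so $\widehat{e}$ is not identified with itself in reverse, that is, $\widehat{e}$ is internal and $\varphi(F) = \widehat{e}$. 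It then remains to split into the four subcases, which I would do by classifying $g$ up to the combinatorial symmetries of a bigon: after fixing boundary orientations on $F^\dagger_0$ and $F^\dagger_1$, the gluing $g$ is determined up to isotopy by whether it is orientation-reversing or orientation-preserving, and by whether it matches each edge of $\mathcal{F}^\dagger$ with itself or matches the two edges together. These $2 \times 2$ possibilities correspond to the four listed outcomes — a bigon $F$ with two internal edges, so $F$ is a disc (case~\ref{case:bigonSinglePath:S2:glueD2}); a bigon with its two edges identified and its two vertices merged, so $F$ is a projective plane (case~\ref{case:bigonSinglePath:S2:glueRP2}); a bigon whose two edges are each identified with themselves in reverse, hence invalid (case~\ref{case:bigonSinglePath:S2:glueInvalid}); and a bigon with its two edges identified but its two vertices kept distinct, so $F$ is a $2$-sphere (case~\ref{case:bigonSinglePath:S2:glueS2}) — and in the three non-invalid subcases the surface type of $F$ then follows from the corresponding Euler characteristic count, with the resulting effect on $\mathcal{P}_0$ deferred to Claims~\ref{claim:bigonSinglePath:S2:glueD2}, \ref{claim:bigonSinglePath:S2:glueRP2} and~\ref{claim:bigonSinglePath:S2:glueS2}.

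I expect the main obstacle to be precisely the orientation bookkeeping in the last two paragraphs: correctly matching ``$g$ is orientation-reversing versus orientation-preserving on $F^\dagger_0 \to F^\dagger_1$'' together with ``$g$ matches edges with themselves versus matches the two edges together'' against the four outcomes (disc, $\mathbb{R}P^2$, invalid edges, $S^2$). It is easy to confuse the orientation of a bigon face with the induced orientation of its boundary circle, and both the $\mathbb{R}P^2$-versus-$S^2$ dichotomy and the appearance of invalid edges hinge on exactly this distinction. A smaller subtlety is confirming, in the $S^2$ case, that $\widehat{e}$ really does survive the extraction step as one non-isolated internal edge rather than being deleted or split; I would handle this by locating the $3$-cells incident to $\widehat{e}$ and invoking the orientability of $\mathcal{F}^\dagger$.
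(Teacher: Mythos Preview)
Your proposal is correct and follows essentially the same approach as the paper: a case split on whether $\mathcal{F}^\dagger$ is a disc, a projective plane, or a $2$-sphere, followed by an enumeration of the possible gluings $g$ in each case. The paper carries out this enumeration via pictures (Figures~\ref{fig:bigonSinglePath:S2} and~\ref{fig:bigonSinglePath:RP2}) rather than via your Euler-characteristic and orientation bookkeeping, but the underlying case analysis is identical.
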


\begin{figure}[htbp]
\centering
	\begin{subfigure}[t]{0.45\textwidth}
	\centering
		\begin{tikzpicture}

		\node[inner sep=0pt] (before) at (0,0) {
			\includegraphics[scale=1]{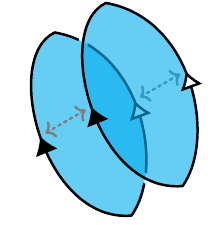}
		};
		\node[inner sep=0pt] (after) at (4,0) {
			\includegraphics[scale=1]{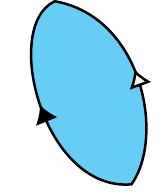}
		};
		\draw[thick, line cap=round, -Stealth] ($(before.east)+(-0.2,0)$) -- ($(after.west)+(0.5,0)$);

		\end{tikzpicture}
	\caption{A gluing that results in $F$ forming a disc.}
	\label{subfig:bigonSinglePath:S2:glueD2}
	\end{subfigure}
	\hfill
	\begin{subfigure}[t]{0.45\textwidth}
	\centering
		\begin{tikzpicture}

		\node[inner sep=0pt] (before) at (0,0) {
			\includegraphics[scale=1]{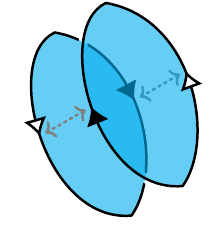}
		};
		\node[inner sep=0pt] (after) at (4,0) {
			\includegraphics[scale=1]{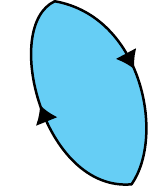}
		};
		\draw[thick, line cap=round, -Stealth] ($(before.east)+(-0.2,0)$) -- ($(after.west)+(0.5,0)$);

		\end{tikzpicture}
	\caption{A gluing that results in $F$ forming a projective plane.}
	\label{subfig:bigonSinglePath:S2:glueRP2}
	\end{subfigure}
	\bigskip
	\begin{subfigure}[t]{0.45\textwidth}
	\centering
		\begin{tikzpicture}

		\node[inner sep=0pt] (before) at (0,0) {
			\includegraphics[scale=1]{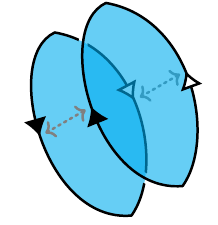}
		};
		\node[inner sep=0pt] (after) at (4,0) {
			\includegraphics[scale=1]{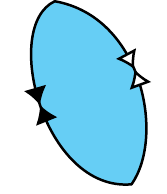}
		};
		\draw[thick, line cap=round, -Stealth] ($(before.east)+(-0.2,0)$) -- ($(after.west)+(0.4,0)$);

		\end{tikzpicture}
	\caption{A gluing that results in $F$ being incident to two invalid edges.}
	\label{subfig:bigonSinglePath:S2:glueInvalid}
	\end{subfigure}
	\hfill
	\begin{subfigure}[t]{0.45\textwidth}
	\centering
		\begin{tikzpicture}

		\node[inner sep=0pt] (before) at (0,0) {
			\includegraphics[scale=1]{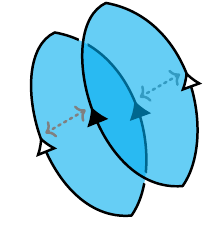}
		};
		\node[inner sep=0pt] (after) at (4,0) {
			\includegraphics[scale=1]{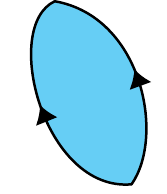}
		};
		\draw[thick, line cap=round, -Stealth] ($(before.east)+(-0.2,0)$) -- ($(after.west)+(0.5,0)$);

		\end{tikzpicture}
	\caption{A gluing that results in $F$ forming a $2$-sphere.}
	\label{subfig:bigonSinglePath:S2:glueS2}
	\end{subfigure}
\caption{The four ways to glue $F^\dagger_0$ and $F^\dagger_1$ together when $\mathcal{F}^\dagger$ forms a $2$-sphere.}
\label{fig:bigonSinglePath:S2}
\end{figure}

\begin{proof}
We first consider the case where the ends of $\mathcal{F}^\dagger$ are identified in such a way that
$\mathcal{F}^\dagger$ forms a $2$-sphere in the boundary of $\mathcal{D}^\dagger$.
In this case, observe that after flattening $F^\dagger_0$ and $F^\dagger_1$, the image $\varphi(F)$ is a single internal edge in $\mathcal{D}_1$.
Let $e^\dagger_0$ and $e^\dagger_1$ denote the two edges of $\mathcal{D}^\dagger$ that are incident to $\mathcal{F}^\dagger$.
As illustrated in Figure~\ref{fig:bigonSinglePath:S2}, there are four ways to glue $F^\dagger_0$ and $F^\dagger_1$
together to recover the bigon face $F$, depending on whether the gluing is orientation-reversing,
and on whether the gluing causes $e^\dagger_0$ and $e^\dagger_1$ to be identified together:
\begin{itemize}
\item The two orientation-reversing gluings are shown in
Figures~\ref{subfig:bigonSinglePath:S2:glueD2} and~\ref{subfig:bigonSinglePath:S2:glueRP2};
these correspond to cases~\ref{case:bigonSinglePath:S2:glueD2} and~\ref{case:bigonSinglePath:S2:glueRP2}, respectively.
\item The two orientation-preserving gluings are shown in
Figures~\ref{subfig:bigonSinglePath:S2:glueInvalid} and~\ref{subfig:bigonSinglePath:S2:glueS2};
these correspond to cases~\ref{case:bigonSinglePath:S2:glueInvalid} and~\ref{case:bigonSinglePath:S2:glueS2}, respectively.
\end{itemize}

Suppose now that the ends of $\mathcal{F}^\dagger$ are identified in such a way that
$\mathcal{F}^\dagger$ forms a projective plane in the boundary of $\mathcal{D}^\dagger$.
Let $e^\dagger_0$ and $e^\dagger_1$ denote the two edges of $\mathcal{D}^\dagger$ that are incident to $\mathcal{F}^\dagger$.
Up to symmetry, there are two ways to glue $F^\dagger_0$ and $F^\dagger_1$ back together, depending on whether the gluing causes $e^\dagger_0$ and $e^\dagger_1$ to be identified together.
As illustrated in Figure~\ref{fig:bigonSinglePath:RP2}, $F$ is incident to an invalid edge in both cases.

\begin{figure}[htbp]
\centering
	\begin{subfigure}[t]{0.45\textwidth}
	\centering
		\begin{tikzpicture}

		\node[inner sep=0pt] (before) at (0,0) {
			\includegraphics[scale=1]{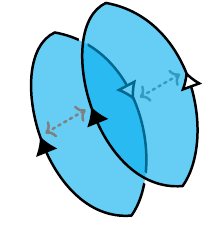}
		};
		\node[inner sep=0pt] (after) at (4,0) {
			\includegraphics[scale=1]{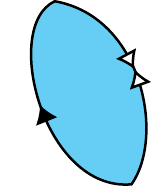}
		};
		\draw[thick, line cap=round, -Stealth] ($(before.east)+(-0.2,0)$) -- ($(after.west)+(0.5,0)$);

		\end{tikzpicture}
	\caption{A gluing that results in $F$ being incident to one internal edge and one invalid edge.}
	\label{subfig:bigonSinglePath:RP2:glueInvalid1}
	\end{subfigure}
	\hfill
	\begin{subfigure}[t]{0.45\textwidth}
	\centering
		\begin{tikzpicture}

		\node[inner sep=0pt] (before) at (0,0) {
			\includegraphics[scale=1]{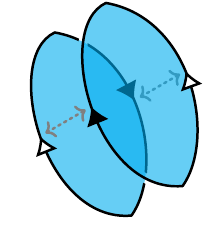}
		};
		\node[inner sep=0pt] (after) at (4,0) {
			\includegraphics[scale=1]{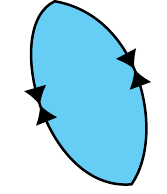}
		};
		\draw[thick, line cap=round, -Stealth] ($(before.east)+(-0.2,0)$) -- ($(after.west)+(0.4,0)$);

		\end{tikzpicture}
	\caption{A gluing that results in $F$ being incident to an invalid edge.}
	\label{subfig:bigonSinglePath:RP2:glueInvalid2}
	\end{subfigure}
\caption{The two ways to glue $F^\dagger_0$ and $F^\dagger_1$ together when $\mathcal{F}^\dagger$ forms a projective plane.}
\label{fig:bigonSinglePath:RP2}
\end{figure}

Finally, suppose the ends of $\mathcal{F}^\dagger$
are not identified, so that $\mathcal{F}^\dagger$ forms a disc in the boundary of $\mathcal{D}^\dagger$.
Observe that each end of $\mathcal{F}^\dagger$ must be incident to a boundary face of $\mathcal{D}^\dagger$ that is not part of $\mathcal{F}^\dagger$.
This means that regardless of how we glue $F^\dagger_0$ and $F^\dagger_1$ back together,
the bigon face $F$ will always be incident to an edge lying in the boundary of $\mathcal{D}_0$.
\end{proof}

As mentioned earlier, we only give a detailed analysis of the effect of flattening $F$ in the cases where $F$ is not incident to any boundary or invalid edges.
This corresponds to cases~\ref{case:bigonSinglePath:S2:glueD2},~\ref{case:bigonSinglePath:S2:glueRP2}
and~\ref{case:bigonSinglePath:S2:glueS2} of Claim~\ref{claim:bigonSinglePath}.

\begin{subatomic}[Disc]\label{claim:bigonSinglePath:S2:glueD2}
In case~\ref{case:bigonSinglePath:S2:glueD2} of Claim~\ref{claim:bigonSinglePath},
the truncated pseudomanifolds $\mathcal{P}_0$ and $\mathcal{P}_1$ are homeomorphic.
\end{subatomic}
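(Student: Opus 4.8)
The plan is to use the two-step description of flattening from Notation~\ref{notn:bigonFaces} (and steps~\ref{step:undoGluing} and~\ref{step:flattenBdryBigons}): flattening $F$ has the same effect as first ungluing $F$ to obtain $\mathcal{D}^\dagger$ with its two boundary bigons $F^\dagger_0,F^\dagger_1$, and then flattening $F^\dagger_0$ and $F^\dagger_1$. Write $\mathcal{P}^\dagger$ for the truncated pseudomanifold of $\mathcal{D}^\dagger$. In case~\ref{case:bigonSinglePath:S2:glueD2}, the boundary bigon path $\mathcal{F}^\dagger = F^\dagger_0\cup F^\dagger_1$ closes up into a $2$-sphere, so $\mathcal{F}^\dagger$ is a bigon-pillow $2$-sphere in $\partial\mathcal{D}^\dagger$ whose two ``hemispheres'' are $F^\dagger_0$ and $F^\dagger_1$ and whose two edges are $e^\dagger_0$ and $e^\dagger_1$. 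Thus $\mathcal{P}_0$ is recovered from $\mathcal{P}^\dagger$ by gluing $F^\dagger_0$ to $F^\dagger_1$ along $g$ (orientation-reversing, fixing each $e^\dagger_i$), whereas $\mathcal{P}_1$ is obtained from $\mathcal{P}^\dagger$ by flattening the two bigons $F^\dagger_0,F^\dagger_1$. Since the edges of $F$ are not identified, Claim~\ref{claim:flattenBigonVertexLink}\ref{case:bigonVertexLinkArc} tells us that flattening $F$ leaves every vertex link unchanged; hence both of these modifications are supported inside a collar neighbourhood $\mathcal{F}^\dagger\times[0,1]$ of $\mathcal{F}^\dagger$ in $\mathcal{P}^\dagger$, and it is enough to compare what the two modifications do to this collar.

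The key step, which I expect to be the main obstacle, is to show that gluing the two hemispheres of $\mathcal{F}^\dagger$ by the orientation-reversing map $g$ that fixes the equator $e^\dagger_0\cup e^\dagger_1$ pointwise has the same effect as capping $\mathcal{F}^\dagger$ with a $3$-ball. I would prove this by an explicit local model: writing $\mathcal{F}^\dagger = D^+\cup_e D^-$ with $D^\pm$ the two hemispheres and $e$ the equatorial circle, the collar decomposes as $(D^+\times[0,1])\cup_{e\times[0,1]}(D^-\times[0,1])$, a union of two $3$-balls glued along the annulus $e\times[0,1]$. Performing $g$ identifies $D^+\times\{0\}$ with $D^-\times\{0\}$ by a homeomorphism restricting to the identity on $e\times\{0\}$; because $g$ is orientation-reversing, this identification is compatible with the collar directions, and the two $3$-balls become glued along a disc in the boundary of each — namely the union of the disc $D^+\times\{0\}$ with the annulus $e\times[0,1]$. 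Gluing two $3$-balls along a disc in each boundary yields a single $3$-ball whose remaining free boundary is the parallel copy $\mathcal{F}^\dagger\times\{1\}$, so $\mathcal{P}_0\cong\mathcal{P}^\dagger\cup_{\mathcal{F}^\dagger}B^3$. (This is exactly where the hypotheses of case~\ref{case:bigonSinglePath:S2:glueD2} are used — that $F$ forms a disc and that $g$ is orientation-reversing and edge-fixing — and is why the remaining cases of Claim~\ref{claim:bigonSinglePath} require separate treatment.)

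Finally, I would check that flattening the two bigons $F^\dagger_0$ and $F^\dagger_1$ also caps $\mathcal{F}^\dagger$ with a $3$-ball: collapsing both hemispheres of the boundary sphere $\mathcal{F}^\dagger$ to the single internal edge $\varphi(F)$ collapses the end $\mathcal{F}^\dagger\times\{0\}$ of the collar, which in the truncated pseudomanifold replaces the collar by the cone on $\mathcal{F}^\dagger$, i.e.\ again by a $3$-ball attached along $\mathcal{F}^\dagger\times\{1\}$ (the analogue of case~(ii) of the bigon-face alternative in Lemma~\ref{lem:benCompact}). Hence $\mathcal{P}_1\cong\mathcal{P}^\dagger\cup_{\mathcal{F}^\dagger}B^3$ as well. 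Combining this with the previous paragraph gives $\mathcal{P}_0\cong\mathcal{P}_1$, as claimed.
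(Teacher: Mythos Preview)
Your approach is correct but considerably more elaborate than the paper's. The paper dispatches this case in two sentences: since the edges of $F$ are not identified, $F$ forms an embedded disc in $\mathcal{D}_0$, and collapsing such a disc to an edge does not change the topology of $\mathcal{D}_0$; combined with Claim~\ref{claim:flattenBigonVertexLink}\ref{case:bigonVertexLinkArc} (vertex links unchanged), this immediately gives $\mathcal{P}_0\cong\mathcal{P}_1$. Your route through the unglue-then-flatten procedure is the machinery the paper reserves for the harder cases (Claims~\ref{claim:bigonSinglePath:S2:glueRP2}, \ref{claim:bigonSinglePath:S2:glueS2}, \ref{claim:bigonSeparatePaths:glueS2}, \ref{claim:bigonSeparatePaths:glueRP2}), where $F$ is a sphere or projective plane and the direct collapse argument no longer applies. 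For the disc case it is overkill, though your collar computation does make explicit \emph{why} the collapse is harmless: both regluing and flattening amount to capping the same $2$-sphere with a $3$-ball.

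One imprecision worth flagging: you take a collar $\mathcal{F}^\dagger\times[0,1]$ of $\mathcal{F}^\dagger$ inside $\mathcal{P}^\dagger$, but if either vertex of $F$ is ideal then truncation removes a disc neighbourhood of that vertex from $\mathcal{F}^\dagger$, so $\mathcal{F}^\dagger$ does not sit in $\mathcal{P}^\dagger$ as a full $2$-sphere and your collar is not well-defined as stated. Your appeal to Claim~\ref{claim:flattenBigonVertexLink}\ref{case:bigonVertexLinkArc} is the right ingredient to repair this, but the ``hence'' connecting it to the collar statement is doing unjustified work. The cleanest fix is to run the collar argument at the level of $\mathcal{D}_0,\mathcal{D}^\dagger,\mathcal{D}_1$ (untruncated), conclude $\mathcal{D}_0\cong\mathcal{D}_1$ by a homeomorphism carrying vertices to vertices, and only then invoke Claim~\ref{claim:flattenBigonVertexLink}\ref{case:bigonVertexLinkArc} to pass to $\mathcal{P}_0\cong\mathcal{P}_1$---which is exactly the paper's logical order.
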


\begin{proof}
Recall that in case~\ref{case:bigonSinglePath:S2:glueD2} of Claim~\ref{claim:bigonSinglePath},
the edges of $F$ are not identified, so that $F$ forms a disc.
In this case, we can flatten $F$ without changing the topology of $\mathcal{D}_0$;
in particular, as we saw in Claim~\ref{claim:flattenBigonVertexLink}, the links of the vertices incident to $F$ remain unchanged after flattening $F$.
Thus, we see that $\mathcal{P}_0$ and $\mathcal{P}_1$ are homeomorphic.
\end{proof}

\begin{subatomic}[Projective plane]\label{claim:bigonSinglePath:S2:glueRP2}
In case~\ref{case:bigonSinglePath:S2:glueRP2} of Claim~\ref{claim:bigonSinglePath},
the two vertices of $F$ are identified to form a single vertex $v$, and one of the following holds:
\begin{enumerate}[nosep,label={(\alph*)}]
\item\label{case:bigonSinglePath:S2:glueRP2:internal}
The vertex $v$ is internal, in which case $F$ forms a one-sided properly embedded projective plane in $\mathcal{P}_0$,
and $\mathcal{P}_1$ is obtained from $\mathcal{P}_0$ by decomposing along this projective plane.
\item\label{case:bigonSinglePath:S2:glueRP2:ideal}
The vertex $v$ is ideal, in which case the truncated bigon associated to $F$ forms
a one-sided properly embedded M\"{o}bius band $S$ in $\mathcal{P}_0$;
the boundary curve $\gamma$ of $S$ forms a two-sided curve in $\partial\mathcal{P}_0$.
In this case, flattening $F$ has one of the following effects:
	\begin{itemize}[nosep]
	\item If $\gamma$ bounds a disc $E$ in $\partial\mathcal{P}_0$,
	then $\mathcal{P}_1$ is obtained from $\mathcal{P}_0$ by
	decomposing along a one-sided projective plane given by
	isotoping $S\cup E$ slightly off the boundary of $\mathcal{P}_0$.
	\item If $\gamma$ does not bound a disc in $\partial\mathcal{P}_0$,
	then $\mathcal{P}_1$ is obtained from $\mathcal{P}_0$ by
	first decomposing along the M\"{o}bius band $S$, and then
	filling any new $2$-sphere boundary components with $3$-balls.
	\end{itemize}
\item\label{case:bigonSinglePath:S2:glueRP2:bdryInvalid}
The vertex $v$ is boundary or invalid.
\end{enumerate}
\end{subatomic}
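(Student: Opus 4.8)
The plan is to carry out the flattening of $F$ via the two-step description in Notation~\ref{notn:bigonFaces} (unglue $F$, then flatten the two resulting boundary bigons $F^\dagger_0$ and $F^\dagger_1$), and to match the outcome against one of the ``decompose along a surface'' operations of Section~\ref{subsec:decomposing}. The three cases of the statement correspond to the three possibilities for the single vertex $v$ of $F$, and case~\ref{case:bigonSinglePath:S2:glueRP2:bdryInvalid} requires no argument, in keeping with our standing decision (Section~\ref{subsec:bigonFaces}) not to analyse boundary or invalid vertices in detail.

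Take first case~\ref{case:bigonSinglePath:S2:glueRP2:internal}, where $v$ is internal. Here $F$ avoids every truncation, so $F$ is a closed surface embedded in $\mathcal{P}_0$; and since we are in case~\ref{case:bigonSinglePath:S2:glueRP2} of Claim~\ref{claim:bigonSinglePath}, the two edges of $F$ are identified so that $F$ forms a projective plane. To see that $F$ is one-sided, I would note that ungluing $F$ induces precisely the operation of cutting $\mathcal{P}_0$ along $F$ (the set of ideal and invalid vertices is unchanged, as $v$ just splits into two boundary vertices), and that this cut produces a \emph{single} remnant $\mathcal{F}^\dagger\cong S^2$ because we are in the branch of Claim~\ref{claim:bigonSinglePath} in which $\mathcal{F}^\dagger$ is a $2$-sphere; a closed surface in a $3$-manifold whose cut-open remnant is a single $2$-sphere is necessarily a one-sided $\mathbb{R}P^2$. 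It then remains to identify the effect of flattening the two boundary bigons comprising $\mathcal{F}^\dagger$. I would flatten $F^\dagger_0$ first: its two edges are distinct (when $\mathcal{F}^\dagger$ is a $2$-sphere it has two edges, and $F^\dagger_0$ meets both), so this flattening merely identifies those two edges and changes nothing topologically. Afterwards $\mathcal{F}^\dagger$ is an $S^2$ boundary component built from the single remaining bigon face $F^\dagger_1$, whose two edges now coincide; flattening $F^\dagger_1$ is then exactly the local move that fills a boundary $2$-sphere with a $3$-ball, as analysed in Lemma~\ref{lem:benCompact}. Composing, $\mathcal{P}_1$ is obtained from $\mathcal{P}_0$ by cutting along the one-sided $\mathbb{R}P^2$ $F$ and capping the new $2$-sphere with a ball, i.e.\ by decomposing along $F$.

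For case~\ref{case:bigonSinglePath:S2:glueRP2:ideal}, where $v$ is ideal, I would first pin down the truncated bigon: by the discussion in Section~\ref{subsubsec:bigonFaceVertexLinks}, $F\cap L$ is a single closed curve $\gamma$ in the link $L$ of $v$, so $S = F\cap\mathcal{P}_0$ is a projective plane with an open disc deleted, namely a M\"{o}bius band, properly embedded in $\mathcal{P}_0$ with $\partial S=\gamma$ lying on the boundary component $\Sigma$ of $\partial\mathcal{P}_0$ produced by truncating $v$; since $F$ is internal, $\gamma$ has an annular neighbourhood in $L$ and hence is two-sided in $\Sigma$. I would then split according to whether $\gamma$ bounds a disc in $\Sigma$. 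If $\gamma$ bounds a disc $E$ in $\Sigma$, then the interior of $E$ is automatically disjoint from $F$ (since $F\cap L=\gamma$), so Claim~\ref{claim:flattenBigonVertexCone} applies: flattening the $v$-cone $C_\gamma$ creates a new internal vertex without changing the topology and reduces flattening $F$ to flattening a projective-plane bigon $F'$ that is incident only to internal vertices and that is topologically a copy of $S\cup E$ pushed slightly off $\Sigma$; applying the argument of case~\ref{case:bigonSinglePath:S2:glueRP2:internal} to $F'$ then yields the stated conclusion. If $\gamma$ does not bound a disc in $\Sigma$, I would argue directly: ungluing $F$ cuts $\mathcal{P}_0$ along the M\"{o}bius band $S$, whose regular neighbourhood has annulus frontier, so $\mathcal{F}^\dagger$ truncates to a single annulus $S^\dagger$; flattening $F^\dagger_0$ and $F^\dagger_1$ then collapses the $2$-sphere $\mathcal{F}^\dagger$, which at the level of $\mathcal{P}$ amounts to filling $S^\dagger$ with a thickened disc $D\times[0,1]$ and capping with $3$-balls any $2$-sphere boundary components this creates, i.e.\ to decomposing along $S$ and then filling any new $2$-sphere boundary components with $3$-balls.

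I expect the main obstacle to be the bookkeeping in case~\ref{case:bigonSinglePath:S2:glueRP2:ideal} when $\gamma$ does not bound a disc: one must track how the boundary component $\Sigma$ transforms when $\mathcal{P}_0$ is cut along $S$ and the annulus remnant is filled (using that a regular neighbourhood of a one-sided M\"{o}bius band is a solid torus with annulus frontier, and that this annulus is exactly what $\mathcal{F}^\dagger$ truncates to), and one must verify that the two boundary-bigon flattenings realise precisely the ``filling'' operations of Section~\ref{subsec:decomposing}. The way the originally ideal vertex $v$ splits under the cut, which is governed by Claim~\ref{claim:flattenBigonVertexLink}, is the other delicate point, and it is why Claim~\ref{claim:flattenBigonVertexCone} is the convenient tool for disposing of the disc-bounding subcase.
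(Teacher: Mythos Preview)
Your approach is essentially the paper's: case~\ref{case:bigonSinglePath:S2:glueRP2:internal} by ungluing and filling the single $2$-sphere remnant, case~\ref{case:bigonSinglePath:S2:glueRP2:ideal} with $\gamma$ bounding a disc via Claim~\ref{claim:flattenBigonVertexCone} to reduce to the internal-vertex case, and the remaining subcase by decomposing along $S$ and then reconciling the truncated vertex sets via Claim~\ref{claim:flattenBigonVertexLink}.

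One correction is needed. Your justification that $\gamma$ is two-sided in $L$ (``since $F$ is internal, $\gamma$ has an annular neighbourhood in $L$'') does not hold as stated: internality of $F$ alone is not enough, as case~\ref{case:bigonSinglePath:S2:glueS2} of Claim~\ref{claim:bigonSinglePath} shows---there $F$ is also an internal bigon face, yet the boundary curves of the truncated bigon are \emph{one}-sided. The paper instead uses the remnant count you already have in hand: since $\mathcal{F}^\dagger$ is a single $2$-sphere, it truncates to a single annulus $S^\dagger$, and cutting $\mathcal{P}_0$ along $S$ therefore splits $\gamma$ into the two boundary curves of $S^\dagger$; a curve that splits into two remnants is two-sided. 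The same observation (a single annulus remnant) is what establishes that $S$ is one-sided, which you assert via ``regular neighbourhood has annulus frontier'' but should derive rather than assume. With this fix your argument is complete and matches the paper's.
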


\begin{proof}
Recall that in case~\ref{case:bigonSinglePath:S2:glueRP2} of Claim~\ref{claim:bigonSinglePath},
the edges of $F$ are identified so that $F$ forms a projective plane.
In particular, this means that the two vertices of $F$ are identified to form a single vertex $v$ in $\mathcal{D}_0$.
We start by getting the easy cases out of the way:
\begin{itemize}
\item If $v$ is boundary or invalid, then we are in case~\ref{case:bigonSinglePath:S2:glueRP2:bdryInvalid}.
\item If $v$ is internal, then $F$ forms an embedded projective plane that lies entirely in the interior of $\mathcal{P}_0$.
In this case, observe that ungluing $F$ corresponds topologically to cutting along this projective plane;
this yields a single $2$-sphere remnant, corresponding precisely to the boundary bigon path $\mathcal{F}^\dagger$.
This means, in particular, that $F$ forms a \emph{one-sided} projective plane in $\mathcal{P}_0$.
Moreover, as illustrated in Figure~\ref{fig:bigonSinglePath:S2:glueRP2:internal:fillS2}, flattening $F^\dagger_0$ and $F^\dagger_1$ is
topologically equivalent to filling the $2$-sphere $\mathcal{F}^\dagger$ with a $3$-ball.
Altogether, we see that flattening $F$ is topologically equivalent to decomposing along $F$.
This proves case~\ref{case:bigonSinglePath:S2:glueRP2:internal}.
\end{itemize}

\begin{figure}[htbp]
\centering
	\begin{tikzpicture}

	\node[inner sep=3pt] (before) at (0,0) {
		\includegraphics[scale=1]{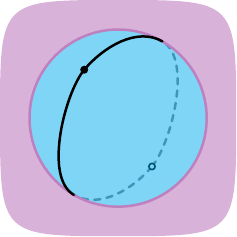}
	};
	\node[inner sep=3pt] (after) at (5.6,0) {
		\includegraphics[scale=1]{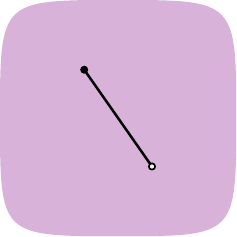}
	};
	\draw[thick, line cap=round, -Stealth] (before.east) -- (after.west);

	\end{tikzpicture}
\caption{When $\mathcal{F}^\dagger$ forms a $2$-sphere,
flattening $\mathcal{F}^\dagger$ is equivalent to filling it with a $3$-ball.}
\label{fig:bigonSinglePath:S2:glueRP2:internal:fillS2}
\end{figure}

The rest of this proof is devoted to the case where $v$ is ideal;
we need to prove all the conclusions stated in case~\ref{case:bigonSinglePath:S2:glueRP2:ideal}.
For this, we first observe that the truncated bigon associated to $F$ forms a properly embedded M\"{o}bius band $S$ in $\mathcal{P}_0$.
Consider the pseudomanifold $\mathcal{P}^\dagger$ obtained from $\mathcal{D}^\dagger$ by truncating the vertices in $g^{-1}(V_0)$;
viewing $\mathcal{P}^\dagger$ as a subset of $\mathcal{D}^\dagger$, let $S^\dagger$ denote the annulus in $\partial\mathcal{P}^\dagger$
given by $\mathcal{F}^\dagger\cap\mathcal{P}^\dagger$.
Topologically, observe that $\mathcal{P}^\dagger$ is obtained from $\mathcal{P}_0$ by cutting along the M\"{o}bius band $S$;
as shown in Figure~\ref{fig:bigonSinglePath:S2:glueRP2:ideal:cutMobius},
this yields a single remnant---namely, the annulus $S^\dagger$---so $S$ must be a \emph{one-sided} M\"{o}bius band in $\mathcal{P}_0$.

\begin{figure}[htbp]
\centering
	\begin{tikzpicture}

	\node[inner sep=-5pt] (before) at (0,0) {
		\includegraphics[scale=1]{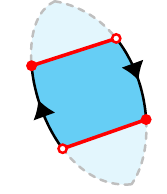}
	};
	\node[inner sep=-10pt] (after) at (4.2,0) {
		\includegraphics[scale=1]{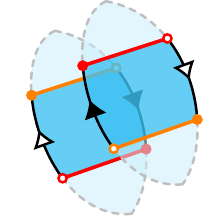}
	};
	\draw[thick, line cap=round, -Stealth] (before.east) -- (after.west);

	\end{tikzpicture}
\caption{The truncated bigon associated to $F$ forms a one-sided M\"{o}bius band $S$.
Cutting along $S$ yields a single annulus remnant.}
\label{fig:bigonSinglePath:S2:glueRP2:ideal:cutMobius}
\end{figure}

Consider the ideal boundary component $L$ of $\mathcal{P}_0$ given by truncating the vertex $v$, and let $\gamma$ denote the boundary curve of $S$.
To see that $\gamma$ forms a two-sided curve in $L$, observe that cutting along $S$
splits $\gamma$ into the two disjoint curves that bound the annulus $S^\dagger$.

All that remains is to understand the overall topological effect of flattening $F$.
We begin with the case where $\gamma$ bounds a disc $E$ in $L$.
In this case, we use Claim~\ref{claim:flattenBigonVertexCone} to flatten the $v$-cone over $\gamma$.
This reduces the operation of flattening $F$ to the operation of flattening a new bigon $F'$ given by pushing $F$ slightly away from $v$;
topologically, $F'$ is equivalent to a projective plane given by isotoping $S\cup E$ slightly off the boundary of $\mathcal{P}_0$.
Since the vertices of $F'$ are identified to form the temporary internal vertex that results from flattening the curve $\gamma$,
flattening $F'$ has the same topological effect as flattening $F$ in the case
where $v$ is internal (case~\ref{case:bigonSinglePath:S2:glueRP2:internal}).
That is, $F'$ forms a \emph{one-sided} projective plane in $\mathcal{P}_0$,
and $\mathcal{P}_1$ is obtained from $\mathcal{P}_0$ by decomposing along this projective plane.
This completes the case where $\gamma$ bounds a disc in $L$.

For the case where $\gamma$ does \emph{not} bound a disc in $L$, we flatten $F$ by first ungluing $F$, and then flattening $F^\dagger_0$ and $F^\dagger_1$.
Earlier, we observed that ungluing $F$ has the effect of cutting $\mathcal{P}_0$ along $S$,
which yields a single annulus remnant $S^\dagger$ in a new pseudomanifold $\mathcal{P}^\dagger$.
With this in mind, consider the pseudomanifold $\mathcal{P}^\ast$ obtained from $\mathcal{D}_1$ by truncating the vertices in $\varphi(V_0)$.
Topologically, observe that $\mathcal{P}^\ast$ is obtained from $\mathcal{P}^\dagger$ by filling the annulus $S^\dagger$ with a thickened disc;
see Figure~\ref{fig:bigonSinglePath:S2:glueRP2:ideal:fillAnnulus}.
In other words, $\mathcal{P}^\ast$ is obtained from $\mathcal{P}_0$ by decomposing along the M\"{o}bius band $S$.

\begin{figure}[htbp]
\centering
	\begin{tikzpicture}

	\node[inner sep=3pt] (before) at (0,0) {
		\includegraphics[scale=1]{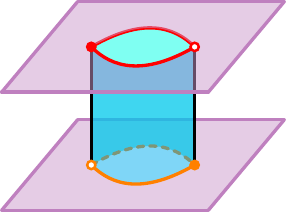}
	};
	\node[inner sep=3pt] (after) at (6.5,0) {
		\includegraphics[scale=1]{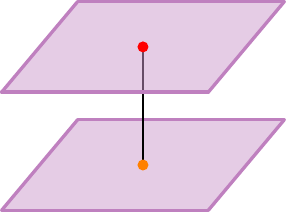}
	};
	\draw[thick, line cap=round, -Stealth] (before.east) -- (after.west);

	\end{tikzpicture}
\caption{Flattening $\mathcal{F}^\dagger$ has the effect of filling the annulus remnant $S^\dagger$ with a thickened disc.}
\label{fig:bigonSinglePath:S2:glueRP2:ideal:fillAnnulus}
\end{figure}

To see how $\mathcal{P}^\ast$ is related to $\mathcal{P}_1$, we need to compare the truncated vertex sets $\varphi(V_0)$ and $V_1$.
The only way these vertex sets can differ is if $\varphi(v)$ contains a vertex that is
neither ideal nor invalid---such a vertex would be in $\varphi(V_0)$ but not in $V_1$.
We can use Claim~\ref{claim:flattenBigonVertexLink} to determine the composition of $\varphi(v)$,
and hence determine the relationship between $\mathcal{P}^\ast$ and $\mathcal{P}_1$:
\begin{itemize}
\item If $\gamma$ is a non-separating curve in $L$, then decomposing $L$ along $\gamma$ gives a single new closed surface $L^\ast$,
and $\varphi(v)$ consists of a single vertex whose link is given by $L^\ast$.
If $L^\ast$ is not a $2$-sphere, then $\varphi(v)$ is an ideal vertex, so $\mathcal{P}^\ast$ is homeomorphic to $\mathcal{P}_1$.
However, if $L^\ast$ is a $2$-sphere, then $\varphi(v)$ is an internal vertex;
topologically, $L^\ast$ corresponds to a $2$-sphere boundary component of $\mathcal{P}^\ast$,
and we need to fill this $2$-sphere with a $3$-ball to recover $\mathcal{P}_1$ from $\mathcal{P}^\ast$.
\item If $\gamma$ is a separating curve in $L$, then decomposing along $\gamma$ gives two new closed surfaces,
and $\varphi(v)$ consists of two vertices whose links are given by these two new surfaces.
Since $\gamma$ does not bound a disc in $L$, both vertices in $\varphi(v)$ are ideal, so $\mathcal{P}^\ast$ is homeomorphic to $\mathcal{P}_1$.
\end{itemize}
This completes the proof of case~\ref{case:bigonSinglePath:S2:glueRP2:ideal}.
\end{proof}

\stepcounter{subatomic}
\begin{subatomic}[$2$-sphere]\label{claim:bigonSinglePath:S2:glueS2}
In case~\ref{case:bigonSinglePath:S2:glueS2} of Claim~\ref{claim:bigonSinglePath},
each vertex incident to $F$ is either ideal or invalid.
Moreover, the truncated bigon associated to $F$ forms a one-sided properly embedded annulus $S$ in $\mathcal{P}_0$,
and each boundary curve of $S$ forms a one-sided curve in $\partial\mathcal{P}_0$.

Suppose $F$ is only incident to ideal vertices
(the two vertices of $F$ could either be identified to form a single ideal vertex, or they could form two distinct ideal vertices).
In this case, $\mathcal{P}_1$ is obtained from $\mathcal{P}_0$ by first decomposing along the annulus $S$,
and then filling any new $2$-sphere boundary components with $3$-balls.
\end{subatomic}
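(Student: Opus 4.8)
The plan is to follow the same ``undo the gluing, then fill the remnants'' strategy used in the proof of Claim~\ref{claim:bigonSinglePath:S2:glueRP2}, with the M\"{o}bius band there replaced by a one-sided annulus. Write $S$ for the truncated bigon associated to $F$, and recall from Claim~\ref{claim:bigonSinglePath} that in case~\ref{case:bigonSinglePath:S2:glueS2} the face $F$ forms a $2$-sphere whose single incident edge is internal, and that undoing the gluing along $F$ replaces $F$ by the single boundary bigon path $\mathcal{F}^\dagger$. The first step is to note, exactly as in the projective-plane case, that undoing the gluing along $F$ has the effect of cutting the truncated pseudomanifold $\mathcal{P}_0$ along $S$, and that the remnant of $S$ created by this cut corresponds to the truncated version of $\mathcal{F}^\dagger$. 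Since $\mathcal{F}^\dagger$ is a single (connected) bigon path and truncation only removes disjoint open discs, this remnant is connected; but cutting along a two-sided properly embedded surface always produces two copies of it (see Section~\ref{subsec:decomposing}), so $S$ must be one-sided.

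With one-sidedness in hand, the first two assertions follow quickly. As a subspace of $\mathcal{D}_0$ the face $F$ is a $2$-sphere, so $S$ is (up to homeomorphism) a $2$-sphere with $k$ open discs removed, one for each ideal or invalid vertex incident to $F$, where $k\in\{0,1,2\}$. Because the incident edge is internal, hence valid, $\mathcal{P}_0$ is locally a genuine $3$-manifold in a neighbourhood of $S$; and a $3$-manifold contains no one-sided $2$-sphere (the normal bundle of an embedded $2$-sphere is trivial) and no one-sided disc (a disc is simply connected). This rules out $k=0$ and $k=1$, forcing $k=2$: hence every vertex incident to $F$ is ideal or invalid, and $S$ is a one-sided properly embedded annulus. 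Finally, each boundary curve $\gamma$ of the annulus $S$ is isotopic in $S$ to the core circle, so traversing $\gamma$ once reverses the normal direction of $S$; equivalently, a regular neighbourhood of $S$ in $\mathcal{P}_0$ is a twisted $I$-bundle over $S$ whose restriction over $\gamma$ is a M\"{o}bius band lying in $\partial\mathcal{P}_0$, so $\gamma$ is one-sided in $\partial\mathcal{P}_0$.

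For the final assertion, suppose every vertex incident to $F$ is ideal, so that $\mathcal{P}_0$ is a $3$-manifold near $S$ with ideal boundary component(s) containing $\partial S$. Using the maps of Notation~\ref{notn:bigonFaces}, and mirroring the corresponding part of the proof of Claim~\ref{claim:bigonSinglePath:S2:glueRP2}, I would flatten $F$ in two stages: undoing the gluing along $F$ cuts $\mathcal{P}_0$ along the one-sided annulus $S$, producing a single annulus remnant $S^\dagger$, and then flattening the two boundary bigons $F^\dagger_0$ and $F^\dagger_1$ fills $S^\dagger$ with a thickened disc. Thus the pseudomanifold $\mathcal{P}^\ast$ obtained from $\mathcal{D}_1$ by truncating the vertices in $\varphi(V_0)$ is precisely $\mathcal{P}_0$ decomposed along $S$, in the sense of Section~\ref{subsec:decomposing}. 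It remains to compare $\mathcal{P}^\ast$ with $\mathcal{P}_1$: by Claim~\ref{claim:flattenBigonVertexLink}, flattening $F$ changes the link $L$ of each incident vertex by decomposing $L$ along the curves of $F\cap L$, and $\varphi(V_0)$ and $V_1$ disagree exactly at those vertices whose new link is a $2$-sphere; each such vertex is an internal vertex of $\mathcal{D}_1$ corresponding to a $2$-sphere boundary component of $\mathcal{P}^\ast$ that is new relative to $\mathcal{P}_0$, and filling all of these with $3$-balls turns $\mathcal{P}^\ast$ into $\mathcal{P}_1$. This is exactly the stated description. (In contrast to the projective-plane case, there is no sub-case in which a component of $F\cap L$ bounds a disc in $L$: these curves are one-sided by the previous paragraph, and a one-sided curve cannot bound a disc, so Claim~\ref{claim:flattenBigonVertexCone} is not needed here.)

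I expect the crux to be the first two paragraphs --- establishing that $S$ is one-sided and then invoking the non-existence of one-sided $2$-spheres and discs in a $3$-manifold. The points needing care are (i) checking that $\mathcal{P}_0$ is genuinely locally Euclidean near $S$, which uses that the edge incident to $F$ is internal, so that no non-manifold point of $\mathcal{P}_0$ lies near $S$, and (ii) confirming that truncating the relevant vertices on $\mathcal{F}^\dagger$ leaves it connected. By contrast, the last assertion should be essentially a transcription of the analogous argument for the projective-plane case, combining Claims~\ref{claim:bigonSinglePath} and~\ref{claim:flattenBigonVertexLink} with the definition of decomposing along a one-sided annulus from Section~\ref{subsec:decomposing}.
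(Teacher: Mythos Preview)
Your proposal is correct, and the second half (obtaining $\mathcal{P}^\ast$ by decomposing along $S$ and then comparing $\varphi(V_0)$ with $V_1$ via Claim~\ref{claim:flattenBigonVertexLink}) matches the paper's argument essentially verbatim.

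For the first half, however, you take a genuinely different route. The paper argues in the order \emph{boundary curves one-sided $\Rightarrow$ links non-orientable $\Rightarrow$ vertices ideal/invalid $\Rightarrow$ $S$ is an annulus}, and only afterwards uses the single-remnant observation to conclude that $S$ itself is one-sided. Concretely, the paper observes directly that ungluing $F$ causes each curve $\gamma_i=F\cap L$ to ``unravel'' into a single new curve, so $\gamma_i$ is one-sided in the link; non-orientability of the link then forces the vertex to be ideal or invalid. You invert this: you first deduce that $S$ is one-sided from the single connected remnant, then invoke the general fact that a $3$-manifold admits no one-sided $2$-sphere or disc to rule out $k=0,1$, and finally recover one-sidedness of the boundary curves from the twisted $I$-bundle structure of a tubular neighbourhood of $S$. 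Both arguments are valid; the paper's is more hands-on and avoids appealing to normal-bundle facts, while yours is more uniform and makes explicit why the ``push $F$ off the vertex'' manoeuvre of Claim~\ref{claim:flattenBigonVertexCone} is genuinely unnecessary in this case. One small point of phrasing: your ``one disc for each ideal or invalid vertex incident to $F$'' should be read as one disc for each of the two \emph{corners} of $F$ that lands at an ideal or invalid vertex (the two corners may be identified to a single vertex in $\mathcal{D}_0$, in which case truncating that one vertex still removes two discs from the $2$-sphere $F$).
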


\begin{proof}
Recall that in case~\ref{case:bigonSinglePath:S2:glueS2} of Claim~\ref{claim:bigonSinglePath},
the edges of $F$ are identified so that $F$ forms a $2$-sphere.
The two vertices of $F$ could either be identified to form a single vertex of $\mathcal{D}_0$,
or they could form two distinct vertices of $\mathcal{D}_0$.
Let $L$ denote the union of the links of the vertices incident to $F$,
and consider the two curves $\gamma_0$ and $\gamma_1$ in which $F$ meets $L$.
For each $i\in\{0,1\}$, observe that ungluing $F$ causes the curve $\gamma_i$ to ``unravel'' to form a single new curve;
thus, $\gamma_i$ forms a one-sided curve in $L$.
In particular, the vertices incident to $F$ must have non-orientable vertex links,
which implies that these vertices must be either ideal or invalid.
This means that the truncated bigon associated to $F$ forms a properly embedded annulus $S$ in $\mathcal{P}_0$.

To see that $S$ is one-sided, consider the pseudomanifold $\mathcal{P}^\dagger$ obtained
from $\mathcal{D}^\dagger$ by truncating the vertices in $g^{-1}(V_0)$;
viewing $\mathcal{P}^\dagger$ as a subset of $\mathcal{D}^\dagger$, let $S^\dagger$ denote the annulus in $\partial\mathcal{P}^\dagger$
given by $\mathcal{F}^\dagger\cap\mathcal{P}^\dagger$.
Topologically, $\mathcal{P}^\dagger$ is obtained from $\mathcal{P}_0$ by cutting along the annulus $S$;
as shown in Figure~\ref{fig:bigonSinglePath:S2:glueS2:cutAnnulus},
this yields a single remnant---namely, the annulus $S^\dagger$---which tells us that $S$ is a one-sided annulus in $\mathcal{P}_0$.

\begin{figure}[htbp]
\centering
	\begin{tikzpicture}

	\node[inner sep=-5pt] (before) at (0,0) {
		\includegraphics[scale=1]{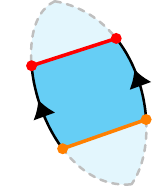}
	};
	\node[inner sep=-10pt] (after) at (4.2,0) {
		\includegraphics[scale=1]{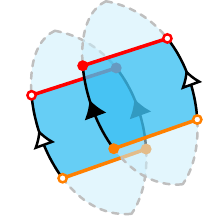}
	};
	\draw[thick, line cap=round, -Stealth] (before.east) -- (after.west);

	\end{tikzpicture}
\caption{The truncated bigon associated to $F$ forms a one-sided annulus $S$.
Cutting along $S$ yields a single annulus remnant.}
\label{fig:bigonSinglePath:S2:glueS2:cutAnnulus}
\end{figure}

Suppose now that the vertices incident to $F$ are all ideal.
Consider the pseudomanifold $\mathcal{P}^\ast$ obtained from $\mathcal{D}_1$ by truncating the vertices in $\varphi(V_0)$.
Topologically, $\mathcal{P}^\ast$ is obtained from $\mathcal{P}^\dagger$ by filling the annulus $S^\dagger$ with a thickened disc;
see Figure~\ref{fig:bigonSinglePath:S2:glueRP2:ideal:fillAnnulus}.
In other words, $\mathcal{P}^\ast$ is obtained from $\mathcal{P}_0$ by decomposing along the annulus $S$.

To see how $\mathcal{P}^\ast$ and $\mathcal{P}_1$ are related, we need to compare the truncated vertex sets $\varphi(V_0)$ and $V_1$.
For this, we first note that since $F$ is only incident to ideal vertices, each component of $L$ must be a closed surface other than a $2$-sphere.
Let $L^\ast$ denote the (possibly disconnected) surface obtained by decomposing $L$ along $\gamma_0$ and $\gamma_1$;
each component of $L^\ast$ must be a closed surface, but could possibly be a $2$-sphere.
By Claim~\ref{claim:flattenBigonVertexLink}, the components of $L^\ast$ correspond precisely to
the boundary components of $\mathcal{P}^\ast$ given by truncating the vertices in $\varphi(v)$.
The only way $\mathcal{P}^\ast$ can differ from $\mathcal{P}_1$ is if $L^\ast$ has $2$-sphere components;
we need to fill each such $2$-sphere with a $3$-ball to recover $\mathcal{P}_1$ from $\mathcal{P}^\ast$.
\end{proof}

\subsubsection{The case where ungluing gives two separate boundary bigon paths}\label{subsubsec:bigonSeparatePaths}

We now consider the case where $F^\dagger_0$ and $F^\dagger_1$ form two separate boundary bigon paths.
We have the following cases:

\begin{atomic}\label{claim:bigonSeparatePaths}
If $F^\dagger_0$ and $F^\dagger_1$ form two separate boundary bigon paths, then one of the following holds
\emph{(see Figure~\ref{fig:bigonSeparatePaths})}:
\begin{enumerate}[nosep,label={(\arabic*)}]
\item\label{case:bigonSeparatePaths:glueS2}
For each $i\in\{0,1\}$, the boundary bigon path $F^\dagger_i$ forms a $2$-sphere.
In this case, $\varphi(F)$ consists of two distinct internal edges in $\mathcal{D}_1$.
Moreover, the edges of $F$ are identified to form a single internal edge in $\mathcal{D}_0$,
and $F$ itself forms a \textbf{\textup{2}-sphere} in $\mathcal{D}_0$.\\
\emph{(See Claim~\ref{claim:bigonSeparatePaths:glueS2} for details about the effect of flattening $F$ in this case.)}
\item\label{case:bigonSeparatePaths:glueRP2}
For each $i\in\{0,1\}$, the boundary bigon path $F^\dagger_i$ forms a projective plane.
In this case, $\varphi(F)$ consists of two distinct invalid edges in $\mathcal{D}_1$.
Moreover, the edges of $F$ are identified to form a single internal edge in $\mathcal{D}_0$,
and $F$ itself forms a \textbf{projective plane} in $\mathcal{D}_0$.\\
\emph{(See Claim~\ref{claim:bigonSeparatePaths:glueRP2} for details about the effect of flattening $F$ in this case.)}
\item\label{case:bigonSeparatePaths:glueInvalid}
For some $i\in\{0,1\}$, the boundary bigon path $F^\dagger_i$ forms a $2$-sphere,
but the boundary bigon path $F^\dagger_{1-i}$ forms a projective plane.
In this case, the edges of $F$ are identified to form a single invalid edge in $\mathcal{D}_0$.
\item\label{case:bigonSeparatePaths:glueBdry}
For some $i\in\{0,1\}$, the boundary bigon path $F^\dagger_i$ forms a disc,
in which case (regardless of the behaviour of $F^\dagger_{1-i}$) $F$ is incident to a boundary edge in $\mathcal{D}_0$.
\end{enumerate}
\end{atomic}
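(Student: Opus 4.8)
The plan is to mirror the structure of the proof of Claim~\ref{claim:bigonSinglePath}: first classify how $F^\dagger_0$ and $F^\dagger_1$ can sit inside $\mathcal{D}^\dagger$, and then, in each case, track the regluing map $g$ in order to read off the edges of $\mathcal{D}_0$ incident to $F$, the topological type of $F$ in $\mathcal{D}_0$, and the image $\varphi(F)$ in $\mathcal{D}_1$. I would begin by noting that, since $F^\dagger_0$ and $F^\dagger_1$ form two \emph{separate} boundary bigon paths, each $F^\dagger_i$ is a single boundary bigon whose two edges are disjoint, as edges of $\mathcal{D}^\dagger$, from the two edges of $F^\dagger_{1-i}$. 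Hence the topological type of $F^\dagger_i$ is governed only by whether its two edges are identified in $\mathcal{D}^\dagger$: if they are not, $F^\dagger_i$ is a disc; if they are, then $F^\dagger_i$ is a $2$-sphere when its two corner vertices stay distinct and a projective plane when they are identified. Running over all pairs of possibilities and using the symmetry that swaps $F^\dagger_0$ with $F^\dagger_1$ yields exactly the four cases, with case~\ref{case:bigonSeparatePaths:glueBdry} collecting every pair in which at least one $F^\dagger_i$ is a disc.

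The key simplification I would exploit is that, because $F^\dagger_0$ and $F^\dagger_1$ are separate, the regluing $g\colon F^\dagger_0\to F^\dagger_1$ is unique up to relabelling the corner vertices and edge-sides of $F^\dagger_1$ (there is no meaningful orientation-reversing-versus-preserving choice, in contrast to Claim~\ref{claim:bigonSinglePath}), so the combinatorics of $\mathcal{D}_0$ near $F$ are pinned down by the types of $F^\dagger_0$ and $F^\dagger_1$. In cases~\ref{case:bigonSeparatePaths:glueS2}--\ref{case:bigonSeparatePaths:glueInvalid}, both $F^\dagger_i$ have their two edges identified, so regluing the (already self-identified) edge of $F^\dagger_0$ to that of $F^\dagger_1$ merges the two edges of the bigon $F$ into a single edge $\epsilon$ of $\mathcal{D}_0$; I would then chase the orientations (or parametrisations) induced by the $2$-sphere / projective-plane self-identifications through $g$ and observe that two orientation-preserving self-identifications (case~\ref{case:bigonSeparatePaths:glueS2}) or two orientation-reversing ones (case~\ref{case:bigonSeparatePaths:glueRP2}) compose to leave $\epsilon$ internal, whereas one of each (case~\ref{case:bigonSeparatePaths:glueInvalid}) identifies $\epsilon$ with itself in reverse, making it invalid. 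Reading off the boundary circle of the $2$-cell $F$ then shows it is $\epsilon\epsilon^{-1}$ in case~\ref{case:bigonSeparatePaths:glueS2} (so $F$ forms a $2$-sphere) and $\epsilon\epsilon$ in case~\ref{case:bigonSeparatePaths:glueRP2} (so $F$ forms a projective plane). For $\varphi(F)$ I would use that flattening the internal bigon $F$ equals ungluing it and then flattening the two boundary bigons $F^\dagger_0$ and $F^\dagger_1$; flattening a $2$-sphere bigon, or a projective-plane bigon, simply deletes its $2$-cell and leaves its single edge, which is still incident to a $3$-cell and so survives in $\mathcal{D}_1$, and which is distinct from the edge coming from $F^\dagger_{1-i}$ (these were distinct already in $\mathcal{D}^\dagger$). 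Since that surviving edge is internal when $F^\dagger_i$ is a $2$-sphere and invalid (identified with itself in reverse) when $F^\dagger_i$ is a projective plane, this gives the stated descriptions of $\varphi(F)$ in cases~\ref{case:bigonSeparatePaths:glueS2} and~\ref{case:bigonSeparatePaths:glueRP2}.

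Case~\ref{case:bigonSeparatePaths:glueBdry} would be handled exactly as in the last paragraph of the proof of Claim~\ref{claim:bigonSinglePath}: if some $F^\dagger_i$ is a disc, then its two ends are unidentified, so each end is incident to a boundary face of $\mathcal{D}^\dagger$ that is not part of $F^\dagger_i$, and hence---no matter how $g$ reglues everything---$F$ stays incident to a boundary edge of $\mathcal{D}_0$.

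The main obstacle will be the bookkeeping in cases~\ref{case:bigonSeparatePaths:glueS2}--\ref{case:bigonSeparatePaths:glueInvalid}: one has to fix consistent orientations of the two edge-sides of the bigon, propagate them through the self-identifications producing the $2$-spheres or projective planes and through the regluing $g$, and verify that the resulting self-map of $\epsilon$ is the identity or a reflection as claimed. It is also worth double-checking that the edges of $\varphi(F)$ are genuinely internal rather than boundary in $\mathcal{D}_1$, which amounts to checking that removing the boundary $2$-cell $F^\dagger_i$ does not leave its edge lying in $\partial\mathcal{D}_1$. I expect Figure~\ref{fig:bigonSeparatePaths} to carry most of this bookkeeping, so, as with Claim~\ref{claim:bigonSinglePath}, the written proof can stay short by appealing to the pictures.
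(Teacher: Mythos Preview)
Your proposal is correct and follows essentially the same approach as the paper's proof: classify each $F^\dagger_i$ as a $2$-sphere, projective plane, or disc according to how its two edges are identified in $\mathcal{D}^\dagger$, run through the resulting pairs (collapsing the disc cases into case~\ref{case:bigonSeparatePaths:glueBdry}), and handle that last case by the same boundary-edge argument used at the end of Claim~\ref{claim:bigonSinglePath}. The paper is in fact terser than your plan---it offloads the orientation bookkeeping in cases~\ref{case:bigonSeparatePaths:glueS2}--\ref{case:bigonSeparatePaths:glueInvalid} entirely to Figure~\ref{fig:bigonSeparatePaths} rather than spelling out the $\epsilon\epsilon^{-1}$ versus $\epsilon\epsilon$ boundary-word analysis you sketch---so your version would, if anything, be slightly more self-contained.
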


\begin{figure}[htbp]
\centering
	\begin{subfigure}[t]{0.45\textwidth}
	\centering
		\begin{tikzpicture}

		\node[inner sep=0pt] (before) at (0,0) {
			\includegraphics[scale=1]{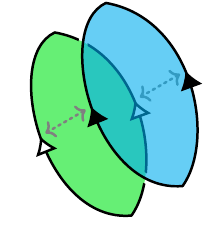}
		};
		\node[inner sep=0pt] (after) at (4,0) {
			\includegraphics[scale=1]{BigonSingleS2_S2After.pdf}
		};
		\draw[thick, line cap=round, -Stealth] ($(before.east)+(-0.2,0)$) -- ($(after.west)+(0.5,0)$);

		\end{tikzpicture}
	\caption{Gluing two separate $2$-spheres results in $F$ forming a $2$-sphere.}
	\label{subfig:bigonSeparatePaths:glueS2}
	\end{subfigure}
	\hfill
	\begin{subfigure}[t]{0.45\textwidth}
	\centering
		\begin{tikzpicture}

		\node[inner sep=0pt] (before) at (0,0) {
			\includegraphics[scale=1]{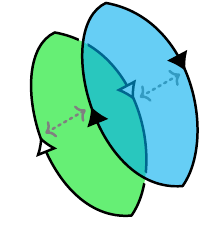}
		};
		\node[inner sep=0pt] (after) at (4,0) {
			\includegraphics[scale=1]{BigonSingleS2_RP2After.pdf}
		};
		\draw[thick, line cap=round, -Stealth] ($(before.east)+(-0.2,0)$) -- ($(after.west)+(0.5,0)$);

		\end{tikzpicture}
	\caption{Gluing two separate projective planes results in $F$ forming a projective plane.}
	\label{subfig:bigonSeparatePaths:glueRP2}
	\end{subfigure}
	\bigskip
	\begin{subfigure}[t]{0.45\textwidth}
	\centering
		\begin{tikzpicture}

		\node[inner sep=0pt] (before) at (0,0) {
			\includegraphics[scale=1]{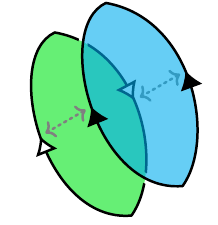}
		};
		\node[inner sep=0pt] (after) at (4,0) {
			\includegraphics[scale=1]{BigonSingleRP2_Invalid2After.pdf}
		};
		\draw[thick, line cap=round, -Stealth] ($(before.east)+(-0.2,0)$) -- ($(after.west)+(0.4,0)$);

		\end{tikzpicture}
	\caption{Gluing a $2$-sphere and a projective plane results in $F$ being incident to an invalid edge.}
	\label{subfig:bigonSeparatePaths:glueInvalid}
	\end{subfigure}
\caption{Gluing $F^\dagger_0$ and $F^\dagger_1$ together when each forms a separate $2$-sphere or projective plane in the boundary of $\mathcal{D}^\dagger$.}
\label{fig:bigonSeparatePaths}
\end{figure}

\begin{proof}
For each $i\in\{0,1\}$, depending on how the ends of $F^\dagger_i$ are identified (if at all),
$F^\dagger_i$ could form a $2$-sphere, projective plane or disc in the boundary of $\mathcal{D}^\dagger$.
Observe that:
\begin{itemize}
\item if $F^\dagger_i$ forms a $2$-sphere, then flattening $F^\dagger_i$ yields a single internal edge; and
\item if $F^\dagger_i$ forms a projective plane, then flattening $F^\dagger_i$ yields a single invalid edge.
\end{itemize}
With this in mind, we have the following four cases, which correspond precisely to the cases stated in the claim:
\begin{description}[font=\normalfont]
\item[\ref{case:bigonSeparatePaths:glueS2}]
Suppose that $F^\dagger_0$ and $F^\dagger_1$ both form $2$-spheres.
After the gluing these bigon faces back together, the edges of $F$ are identified so that $F$ forms a $2$-sphere,
as illustrated in Figure~\ref{subfig:bigonSeparatePaths:glueS2}.
\item[\ref{case:bigonSeparatePaths:glueRP2}]
Suppose that $F^\dagger_0$ and $F^\dagger_1$ both form projective planes.
After gluing these bigon faces back together, the edges of $F$ are identified so that $F$ forms a projective plane,
as illustrated in Figure~\ref{subfig:bigonSeparatePaths:glueRP2}.
\item[\ref{case:bigonSeparatePaths:glueInvalid}]
Suppose that one of $F^\dagger_0$ or $F^\dagger_1$ forms a $2$-sphere, whilst the other forms a projective plane.
After gluing these bigon faces back together, the edges of $F$ are identified to form a single invalid edge,
as illustrated in Figure~\ref{subfig:bigonSeparatePaths:glueInvalid}.
\item[\ref{case:bigonSeparatePaths:glueBdry}]
Suppose that for some $i\in\{0,1\}$, $F^\dagger_i$ forms a disc in the boundary of $\mathcal{D}^\dagger$;
each end of $F^\dagger_i$ must therefore be incident to a boundary face other than $F^\dagger_0$ or $F^\dagger_1$.
Thus, regardless of how we glue $F^\dagger_0$ and $F^\dagger_1$ back together,
the bigon face $F$ will always be incident to at least one boundary edge of $\mathcal{D}_0$.
\qedhere
\end{description}
\end{proof}

As before, we only give a detailed analysis of the effect of flattening $F$ in the cases where $F$ is not incident to any boundary or invalid edges.
This corresponds to cases~\ref{case:bigonSeparatePaths:glueS2}
and~\ref{case:bigonSeparatePaths:glueRP2} of Claim~\ref{claim:bigonSeparatePaths}.

\begin{subatomic}[$2$-sphere]\label{claim:bigonSeparatePaths:glueS2}
In case~\ref{case:bigonSeparatePaths:glueS2} of Claim~\ref{claim:bigonSeparatePaths},
one of the following holds:
	\begin{enumerate}[nosep,label={(\alph*)}]
	\item\label{case:bigonSeparatePaths:glueS2:onlyInternal}
	The bigon face $F$ is only incident to internal vertices
	(the two vertices of $F$ could either be identified to form a single internal vertex,
	or they could form two distinct internal vertices).
	In this case, $\mathcal{P}_1$ is obtained from $\mathcal{P}_0$ by decomposing along
	a properly embedded $2$-sphere given by pushing $F$ slightly away from its incident vertices.
	\item\label{case:bigonSeparatePaths:glueS2:internalIdeal}
	The bigon face $F$ is incident to one internal vertex and one ideal vertex.
	In this case, the truncated bigon associated to $F$ forms a properly embedded disc $S$ in $\mathcal{P}_0$;
	the boundary curve $\gamma$ of $S$ is a two-sided curve in $\partial\mathcal{P}_0$.
	Flattening $F$ has one of the following effects:
		\begin{itemize}[nosep]
		\item If $\gamma$ bounds a disc $E$ in $\partial\mathcal{P}_0$,
		then $\mathcal{P}_1$ is obtained from $\mathcal{P}_0$ by decomposing along
		a properly embedded $2$-sphere given by isotoping $S\cup E$ slightly off the boundary.
		\item If $\gamma$ does not bound a disc in $\mathcal{P}_0$, then $\mathcal{P}_1$ is obtained from $\mathcal{P}_0$ by first cutting along the disc $S$, and then filling any new $2$-sphere boundary components with $3$-balls.
		\end{itemize}
	\item\label{case:bigonSeparatePaths:glueS2:onlyIdeal}
	The bigon face $F$ is only incident to ideal vertices
	(the two vertices of $F$ could either be identified to form a single ideal vertex,
	or they could form two distinct ideal vertices).
	In this case, the truncated bigon associated to $F$ forms a two-sided properly embedded annulus $S$ in $\mathcal{P}_0$;
	the boundary curves $\gamma_0$ and $\gamma_1$ of $S$ form two-sided curves in $\partial\mathcal{P}_0$.
	Flattening $F$ has one of the following effects:
		\begin{itemize}[nosep]
		\item If $\gamma_0$ and $\gamma_1$ respectively bound discs $E_0$ and $E_1$ in $\partial\mathcal{P}_0$,
		then either these discs are disjoint or one of these discs lies entirely in the interior of the other;
		choose $i\in\{0,1\}$ so that $E_i$ either lies entirely inside or entirely outside $E_{1-i}$.
		In this case, $\mathcal{P}_1$ is obtained from $\mathcal{P}_0$ by
		decomposing along a properly embedded $2$-sphere $S^\ast$ constructed as follows:
			\begin{enumerate}[nosep,label={(\roman*)}]
			\item Isotope $S\cup E_i$ slightly off the boundary to obtain
			a properly embedded disc $S'$ in $\mathcal{P}_0$.
			\item Isotope $S'\cup E_{1-i}$ slightly off the boundary to obtain the desired $2$-sphere $S^\ast$.
			\end{enumerate}
		\item If for some $i\in\{0,1\}$, the curve $\gamma_i$ bounds a disc $E_i$ in $\partial\mathcal{P}_0$,
		but $\gamma_{1-i}$ does not bound a disc in $\partial\mathcal{P}_0$,
		then $\mathcal{P}_1$ is obtained from $\mathcal{P}_0$ by first cutting along
		a disc given by isotoping $S\cup E_i$ slightly away from $E_i$,
		and then filling any new $2$-sphere boundary components with $3$-balls.
		\item If neither $\gamma_0$ nor $\gamma_1$ bounds a disc in $\partial\mathcal{P}_0$,
		then $\mathcal{P}_1$ is obtained from $\mathcal{P}_0$ by first decomposing along the annulus $S$,
		and then filling any new $2$-sphere boundary components with $3$-balls.
		\end{itemize}
	\item\label{case:bigonSeparatePaths:glueS2:bdryInvalid}
	There is a boundary or invalid vertex incident to $F$.
	\end{enumerate}
\end{subatomic}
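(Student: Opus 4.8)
The plan is to mirror, almost line for line, the proofs of Claims~\ref{claim:bigonSinglePath:S2:glueRP2} and~\ref{claim:bigonSinglePath:S2:glueS2}, since case~\ref{case:bigonSeparatePaths:glueS2} of Claim~\ref{claim:bigonSeparatePaths} is the ``separate paths'' analogue of the ``$2$-sphere'' case treated in Section~\ref{subsubsec:bigonSinglePath}. First I would record the structural facts already supplied by Claim~\ref{claim:bigonSeparatePaths}: the two edges of $F$ are identified to a single internal edge, $F$ forms a $2$-sphere in $\mathcal{D}_0$, and $\varphi(F)$ consists of two distinct internal edges in $\mathcal{D}_1$. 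Writing $L$ for the union of the links of the vertices incident to $F$, Claim~\ref{claim:flattenBigonVertexLink}\ref{case:bigonVertexLinkClosed} tells us that $F\cap L$ is a disjoint union of one or two closed curves (one per incident vertex if the two vertices of $F$ are distinct, two curves in a single link if they are identified). Unlike the single-path case, ungluing $F$ produces \emph{two} separate $2$-sphere remnants $F^\dagger_0$ and $F^\dagger_1$; tracing how each curve of $F\cap L$ unravels under this ungluing shows that every component of $F\cap L$ is \emph{two-sided} in $L$. This forces each vertex incident to $F$ to be internal, ideal, boundary or invalid, with the boundary/invalid possibilities collected in case~\ref{case:bigonSeparatePaths:glueS2:bdryInvalid}; the remaining possibilities split into case~\ref{case:bigonSeparatePaths:glueS2:onlyInternal} (the truncated bigon is all of $F$, a $2$-sphere), case~\ref{case:bigonSeparatePaths:glueS2:internalIdeal} (the truncated bigon is a disc $S$, obtained by truncating the one ideal vertex of the $2$-sphere $F$), and case~\ref{case:bigonSeparatePaths:glueS2:onlyIdeal} (the truncated bigon is an annulus $S$, obtained by truncating two ideal vertices), exactly matching the statement; in these last two cases the boundary curves of $S$ are the two-sided curves $\gamma$, resp.\ $\gamma_0,\gamma_1$, on $\partial\mathcal{P}_0$.

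For case~\ref{case:bigonSeparatePaths:glueS2:onlyInternal}, the argument is the direct analogue of the internal-vertex portion of the proof of Claim~\ref{claim:bigonSinglePath:S2:glueRP2}: since every incident vertex is internal, the $2$-sphere $F$ lies entirely in the interior of $\mathcal{P}_0$, so ungluing $F$ is topologically the same as cutting $\mathcal{P}_0$ along this $2$-sphere (equivalently, along a pushoff of $F$ off the $1$-skeleton), yielding the two $2$-sphere remnants $F^\dagger_0$ and $F^\dagger_1$; flattening each of these $2$-sphere remnants fills it with a $3$-ball, as in Figure~\ref{fig:bigonSinglePath:S2:glueRP2:internal:fillS2}. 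A final check with Claim~\ref{claim:flattenBigonVertexLink} confirms that decomposing the $2$-sphere link(s) along the one or two curves of $F\cap L$ produces only $2$-sphere components, so $\varphi$ sends the incident vertices to internal vertices and no extra capping is needed. Hence $\mathcal{P}_1$ is $\mathcal{P}_0$ decomposed along the pushed-off $2$-sphere, as claimed.

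Cases~\ref{case:bigonSeparatePaths:glueS2:internalIdeal} and~\ref{case:bigonSeparatePaths:glueS2:onlyIdeal} I would handle together, following the template of the ideal-vertex analysis in Claims~\ref{claim:bigonSinglePath:S2:glueRP2} and~\ref{claim:bigonSinglePath:S2:glueS2}. For each ideal vertex incident to $F$, consider the corresponding boundary curve $\gamma_i$ of the truncated bigon $S$. If $\gamma_i$ bounds a disc $E_i$ in $\partial\mathcal{P}_0$, I would invoke Claim~\ref{claim:flattenBigonVertexCone} to flatten the $v_i$-cone over $\gamma_i$; this pushes $F$ away from that ideal vertex, replaces it by a temporary \emph{internal} vertex without changing the topology of $\mathcal{D}_0$, and so removes one ideal vertex from the picture. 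Doing this at every such $\gamma_i$ reduces to case~\ref{case:bigonSeparatePaths:glueS2:onlyInternal}, now with the ambient $2$-sphere being $S\cup E$ (resp.\ $S\cup E_0\cup E_1$) pushed slightly off the boundary; in case~\ref{case:bigonSeparatePaths:glueS2:onlyIdeal} one checks that $E_0$ and $E_1$ are either disjoint or nested, since they are the caps of the two disjoint boundary curves of an embedded annulus, which is precisely why $S^\ast$ is built by two successive isotopies. If instead some $\gamma_i$ does not bound a disc in $\partial\mathcal{P}_0$, I would unglue $F$ directly: this cuts $\mathcal{P}_0$ along the disc (resp.\ annulus) $S$, and flattening the two $2$-sphere remnants $F^\dagger_0,F^\dagger_1$ fills the resulting boundary remnants --- a thickened disc for each annulus remnant, and nothing topologically for a disc remnant --- giving the pseudomanifold $\mathcal{P}^\ast$ obtained from $\mathcal{D}_1$ by truncating the vertices in $\varphi(V_0)$. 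Comparing $\varphi(V_0)$ with $V_1$ via Claim~\ref{claim:flattenBigonVertexLink}, the only discrepancy is that decomposing the ideal link(s) along the $\gamma_i$ may create $2$-sphere components; these correspond to internal vertices of $\mathcal{D}_1$, i.e.\ to $2$-sphere boundary components of $\mathcal{P}^\ast$ that must be capped with $3$-balls to recover $\mathcal{P}_1$ --- exactly the ``fill any new $2$-sphere boundary components with $3$-balls'' clause appearing in each branch.

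The main obstacle I expect is the bookkeeping in case~\ref{case:bigonSeparatePaths:glueS2:onlyIdeal}: there are two independent curves $\gamma_0,\gamma_1$, each of which may or may not bound a disc in $\partial\mathcal{P}_0$, and the interaction of these two facts (together with whether the two vertices of $F$ are identified) is what generates the branching in the statement. Getting the geometry of the nested discs $E_0,E_1$ right, and carefully verifying that flattening a $2$-sphere \emph{boundary} bigon path really does have the claimed ``fill the truncated remnant'' effect even when that remnant is a disc rather than a $2$-sphere or an annulus, are the two points needing the most care; both are handled by unwinding the flattening map and the truncation exactly as in the proofs of the single-path claims, so I do not anticipate any genuinely new difficulty --- only a longer case analysis.
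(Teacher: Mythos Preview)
Your proposal is correct and follows essentially the same approach as the paper: use Claim~\ref{claim:flattenBigonVertexCone} to push $F$ away from ideal vertices whose link curves bound discs (thereby reducing to a case with fewer ideal vertices), and otherwise unglue $F$ directly, interpret the result as cutting along the truncated bigon, and compare $\varphi(V_0)$ with $V_1$ via Claim~\ref{claim:flattenBigonVertexLink} to account for any new $2$-sphere boundaries.

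The only differences are organisational. The paper handles cases~\ref{case:bigonSeparatePaths:glueS2:internalIdeal} and~\ref{case:bigonSeparatePaths:glueS2:onlyIdeal} separately and reduces stepwise (\ref{case:bigonSeparatePaths:glueS2:onlyIdeal}~$\to$~\ref{case:bigonSeparatePaths:glueS2:internalIdeal}~$\to$~\ref{case:bigonSeparatePaths:glueS2:onlyInternal}), whereas you combine them and reduce directly to~\ref{case:bigonSeparatePaths:glueS2:onlyInternal}; both work. In case~\ref{case:bigonSeparatePaths:glueS2:onlyInternal}, the paper is slightly more careful than your sketch: when the two vertices of $F$ are identified, $F$ is only an \emph{immersed} $2$-sphere (with a self-intersection at the vertex), so the paper explicitly invokes Claim~\ref{claim:flattenBigonVertexCone} twice to push $F$ off both cones before decomposing, rather than appealing to an informal ``pushoff off the $1$-skeleton''. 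Your parenthetical captures the right idea, but when you write this up you should make that step explicit, and likewise verify the ``interior of $E_i$ disjoint from $F$'' hypothesis of Claim~\ref{claim:flattenBigonVertexCone} in the nested-discs situation of case~\ref{case:bigonSeparatePaths:glueS2:onlyIdeal} (the paper does this by swapping $i$ and $1-i$ if necessary).
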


\begin{proof}
Recall that in case~\ref{case:bigonSeparatePaths:glueS2} of Claim~\ref{claim:bigonSeparatePaths},
the edges of $F$ are identified so that $F$ forms a $2$-sphere.
The two vertices of $F$ could either be identified to form a single vertex of $\mathcal{D}_0$,
or they could form two distinct vertices of $\mathcal{D}_0$.
Let $L$ denote the union of the links of the vertices incident to $F$,
and let $\gamma_0$ and $\gamma_1$ denote the two curves in which $F$ meets $L$.
For each $i\in\{0,1\}$, let $v_i$ denote the vertex of $F$ that is cut off by the curve $\gamma_i$, and let $L_i$ denote the link of $v_i$;
if $v_i$ is ideal, then we also think of $L_i$ as the ideal boundary component of $\mathcal{P}_0$ given by truncating $v_i$.
If $v_0$ and $v_1$ are identified, then $L=L_0=L_1$;
otherwise, $L$ is the disjoint union of $L_0$ and $L_1$.
With all this setup in mind, we start by getting the easy cases out of the way:
\begin{itemize}
\item If there is a boundary or invalid vertex incident to $F$, then we are in case~\ref{case:bigonSeparatePaths:glueS2:bdryInvalid}.
\item If the vertices incident to $F$ are all internal, then $F$ forms a $2$-sphere in $\mathcal{P}_0$.
This $2$-sphere might not be embedded, since the two vertices of $F$ could be identified to form a single internal vertex.
Thus, to ensure that we have a properly embedded $2$-sphere, we use
Claim~\ref{claim:flattenBigonVertexCone} to flatten the $v_0$-cone over $\gamma_0$ and the $v_1$-cone over $\gamma_1$, one at a time.
This reduces the operation of flattening $F$ to the operation of flattening a new bigon $F'$ given by pushing $F$ slightly away from its incident vertices.
Since the vertices of $F'$ form two distinct temporary internal vertices, $F'$ forms the desired properly embedded $2$-sphere in $\mathcal{P}_0$.
As shown in Figure~\ref{fig:bigonSeparatePaths:glueS2:onlyInternal:decomposeS2},
flattening $F'$ is topologically equivalent to decomposing along this $2$-sphere.
This proves case~\ref{case:bigonSeparatePaths:glueS2:onlyInternal}.
\end{itemize}

\begin{figure}[htbp]
\centering
	\begin{tikzpicture}

	\node[inner sep=3pt] (before0) at (0,0) {
		\includegraphics[scale=0.8]{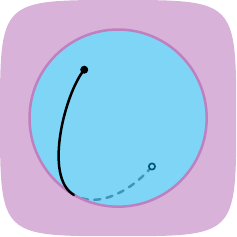}
	};
	\node[inner sep=3pt] (before1) at (3.6,0) {
		\includegraphics[scale=0.8]{SeparatePath_fillS2Before.pdf}
	};
	\node[inner sep=3pt] (after0) at (8.4,0) {
		\includegraphics[scale=0.8]{SinglePath_fillS2After.pdf}
	};
	\node[inner sep=3pt] (after1) at (12,0) {
		\includegraphics[scale=0.8]{SinglePath_fillS2After.pdf}
	};
	\draw[thick, line cap=round, -Stealth] (before1.east) -- (after0.west);
	\draw[very thick, line cap=round, <->, gray, dotted] (0.2,0.9) .. controls
		++(45:1.5) and ++(135:1.5)
		.. node[midway, above, inner sep=0pt]{spheres identified} (3.4,0.9);

	\end{tikzpicture}
\caption{When $F'$ forms a properly embedded $2$-sphere, flattening $F'$ is equivalent to decomposing along this $2$-sphere.}
\label{fig:bigonSeparatePaths:glueS2:onlyInternal:decomposeS2}
\end{figure}

We now consider the case where $F$ is incident to one internal vertex and one ideal vertex;
without loss of generality, suppose that $v_0$ is the internal vertex and $v_1$ is the ideal vertex.
We need to prove the conclusions stated in case~\ref{case:bigonSeparatePaths:glueS2:internalIdeal}.
Observe that the truncated bigon associated to $F$ forms a properly embedded disc $S$ in $\mathcal{P}_0$.
Consider the pseudomanifold $\mathcal{P}^\dagger$ obtained from $\mathcal{D}^\dagger$ by truncating the vertices in $g^{-1}(V_0)$;
viewing $\mathcal{P}^\dagger$ as a subset of $\mathcal{D}^\dagger$, for each $i\in\{0,1\}$ let $S^\dagger_i$ denote the disc in $\partial\mathcal{P}^\dagger$
given by $\mathcal{F}^\dagger_i\cap\mathcal{P}^\dagger$.
Topologically, $\mathcal{P}^\dagger$ is obtained from $\mathcal{P}_0$ by cutting along the disc $S$;
the two discs $S^\dagger_0$ and $S^\dagger_1$ form the remnants of cutting along $S$.
This is illustrated in Figure~\ref{fig:bigonSeparatePaths:glueS2:internalIdeal:cutDisc}.
We also note that the boundary of the disc $S$ is given by the curve $\gamma_1$;
since cutting along $S$ splits $\gamma_1$ into two remnants, one bounding each of the discs $S^\dagger_0$ and $S^\dagger_1$,
we see that $\gamma_1$ is a two-sided curve in $L_1$.
It remains to describe how flattening $F$ changes $\mathcal{P}_0$.
This depends on whether $\gamma_1$ bounds a disc in $L_1$:
\begin{itemize}
\item Suppose that $\gamma_1$ bounds a disc $E$ in $L_1$.
We can use Claim~\ref{claim:flattenBigonVertexCone} to flatten the $v_1$-cone over $\gamma_1$.
This reduces the operation of flattening $F$ to the operation of flattening a new bigon $F'$ such that:
	\begin{itemize}
	\item one of the vertices of $F'$ is the temporary internal vertex that results from flattening $\gamma_1$; and
	\item the other vertex of $F'$ is the internal vertex $v_0$.
	\end{itemize}
Topologically, $F'$ is equivalent to a properly embedded $2$-sphere given by isotoping $S\cup E$ slightly off the boundary of $\mathcal{P}_0$.
Moreover, by analogy with case~\ref{case:bigonSeparatePaths:glueS2:onlyInternal}, we see that $\mathcal{P}_1$ is obtained from $\mathcal{P}_0$ by decomposing along this $2$-sphere $F'$.
\item Suppose that $\gamma_1$ does not bound a disc in $L_1$.
Consider the pseudomanifold $\mathcal{P}^\ast$ obtained from $\mathcal{D}_1$ by truncating the vertices in $\varphi(V_0)$.
As shown in Figure~\ref{fig:bigonSeparatePaths:glueS2:internalIdeal:collapseDiscs}, $\mathcal{P}^\ast$ is obtained
from $\mathcal{P}^\dagger$ by collapsing $S^\dagger_0$ and $S^\dagger_1$ to arcs, which has no topological effect;
in other words, $\mathcal{P}^\ast$ is homeomorphic to $\mathcal{P}^\dagger$.
To see how $\mathcal{P}^\ast$ is related to $\mathcal{P}_1$, consider the surface $L^\ast$ obtained from $L_1$ by decomposing along $\gamma_1$.
Using Claim~\ref{claim:flattenBigonVertexLink}, we see that each component of $L^\ast$ corresponds to
a boundary component of $\mathcal{P}^\ast$ given by truncating one of the vertices in $\varphi(v_1)$.
Thus, if $\varphi(v_1)$ contains any internal vertices (since $\gamma$ is two-sided and does not bound a disc in $L_1$,
this is only possible if $L_1$ is a torus or a Klein bottle), then we need to fill each corresponding
$2$-sphere boundary component of $\mathcal{P}^\ast$ with a $3$-ball to recover $\mathcal{P}_1$.
\end{itemize}
This proves case~\ref{case:bigonSeparatePaths:glueS2:internalIdeal}.

\begin{figure}[htbp]
\centering
	\begin{tikzpicture}

	\node[inner sep=-5pt] (before) at (0,0) {
		\includegraphics[scale=1]{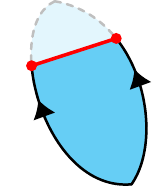}
	};
	\node[inner sep=-10pt] (after) at (6,0) {
		\includegraphics[scale=1]{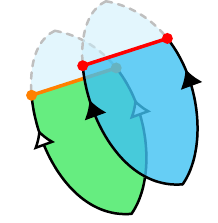}
	};
	\draw[thick, line cap=round, -Stealth] (before.east) -- (after.west);

	\node[inner sep=-6pt] (beforeEmb) at (0,-3.25) {
		\includegraphics[scale=1]{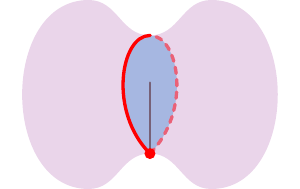}
	};
	\node[inner sep=-6pt] (afterEmb) at (6.5,-3.25) {
		\includegraphics[scale=1]{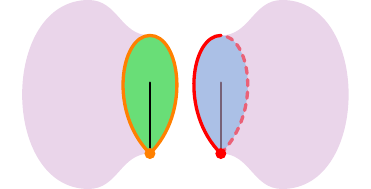}
	};
	\draw[thick, line cap=round, -Stealth] (beforeEmb.east) -- (afterEmb.west);

	\end{tikzpicture}
\caption{The truncated bigon associated to $F$ forms a properly embedded disc $S$.
Cutting along $S$ yields two disc remnants.}
\label{fig:bigonSeparatePaths:glueS2:internalIdeal:cutDisc}
\end{figure}

\begin{figure}[htbp]
\centering
	\begin{tikzpicture}

	\node[inner sep=-6pt] (before) at (0,0) {
		\includegraphics[scale=1]{EmbeddedDisc_Cut2.pdf}
	};
	\node[inner sep=-6pt] (after) at (7.2,0) {
		\includegraphics[scale=1]{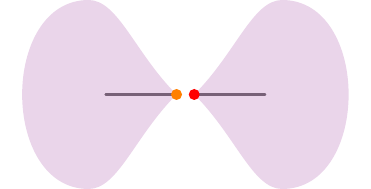}
	};
	\draw[thick, line cap=round, -Stealth] (before.east) -- (after.west);

	\end{tikzpicture}
\caption{Collapsing the discs $S^\dagger_0$ and $S^\dagger_1$ to arcs has no topological effect.}
\label{fig:bigonSeparatePaths:glueS2:internalIdeal:collapseDiscs}
\end{figure}

All that remains is to consider the case where the vertices incident to $F$ are all ideal;
we need to prove the conclusions stated in case~\ref{case:bigonSeparatePaths:glueS2:onlyIdeal}.
This time, the truncated bigon associated to $F$ forms a properly embedded annulus $S$ in $\mathcal{P}_0$.
Similar to before, consider the pseudomanifold $\mathcal{P}^\dagger$ obtained
from $\mathcal{D}^\dagger$ by truncating the vertices in $g^{-1}(V_0)$;
viewing $\mathcal{P}^\dagger$ as a subset of $\mathcal{D}^\dagger$, for each $i\in\{0,1\}$ let $S^\dagger_i$ denote the annulus in $\partial\mathcal{P}^\dagger$
given by $\mathcal{F}^\dagger_i\cap\mathcal{P}^\dagger$.
Topologically, $\mathcal{P}^\dagger$ is obtained from $\mathcal{P}_0$ by cutting along the annulus $S$;
as shown in Figure~\ref{fig:bigonSeparatePaths:glueS2:onlyIdeal:cutAnnulus},
this yields a pair of remnants---namely, the annuli $S^\dagger_0$ and $S^\dagger_1$---which
means that $S$ is a \emph{two-sided} annulus in $\mathcal{P}_0$.
We also note that for each $i\in\{0,1\}$, the curve $\gamma_i$ forms a two-sided curve in $L_i$,
since cutting along $S$ causes this curve to split into two remnants, one meeting $S^\dagger_0$ and the other meeting $S^\dagger_1$.

\begin{figure}[htbp]
\centering
	\begin{tikzpicture}

	\node[inner sep=-5pt] (before) at (0,0) {
		\includegraphics[scale=1]{TruncatedAnnulus_Uncut.pdf}
	};
	\node[inner sep=-10pt] (after) at (4.2,0) {
		\includegraphics[scale=1]{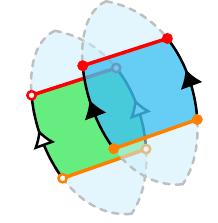}
	};
	\draw[thick, line cap=round, -Stealth] (before.east) -- (after.west);

	\end{tikzpicture}
\caption{The truncated bigon associated to $F$ forms a two-sided annulus $S$.
Cutting along $S$ yields two annulus remnants.}
\label{fig:bigonSeparatePaths:glueS2:onlyIdeal:cutAnnulus}
\end{figure}

Depending on whether $\gamma_0$ and $\gamma_1$ bound discs in $\partial\mathcal{P}_0$,
we have the following possibilities for how flattening $F$ changes $\mathcal{P}_0$:
\begin{itemize}
\item Suppose that for some $i\in\{0,1\}$, $\gamma_i$ bounds a disc $E_i$ in $L_i$.
We can assume without loss of generality that the interior of $E_i$ is disjoint from $F$.
To see why, note that the only way this assumption can fail is if $\gamma_{1-i}$ lies in the interior of $E_i$;
in this case, $\gamma_{1-i}$ bounds a ``smaller'' disc, so we can simply exchange the roles of $\gamma_i$ and $\gamma_{1-i}$.
This allows us to use Claim~\ref{claim:flattenBigonVertexCone} to flatten the $v_i$-cone over $\gamma_i$.
This reduces the operation of flattening $F$ to the operation of flattening a new bigon $F'$ such that:
	\begin{itemize}
	\item one of the vertices of $F'$ is given by the temporary internal vertex that results from flattening $\gamma_i$; and
	\item the other vertex of $F'$ is given by the ideal vertex $v_{1-i}$.
	\end{itemize}
Thus, flattening $F'$ has the same topological effect as flattening $F$ in case~\ref{case:bigonSeparatePaths:glueS2:internalIdeal}.
In more detail, after truncating the ideal vertex $v_{1-i}$, we see that $F'$ becomes a properly embedded disc $S'$ in $\mathcal{P}_0$;
we can view $\gamma_{1-i}$ as the boundary curve of this disc $S'$.
Topologically, $S'$ is obtained by isotoping $S\cup E_i$ slightly off the boundary of $\mathcal{P}_0$,
and the effect of flattening $F'$ depends on whether $\gamma_{1-i}$ bounds a disc in $L_{1-i}$:
	\begin{itemize}
	\item If $\gamma_{1-i}$ bounds a disc $E_{1-i}$ in $L_{1-i}$, then $\mathcal{P}_1$ is obtained from $\mathcal{P}_0$
	by decomposing along a properly embedded $2$-sphere given by isotoping $S'\cup E_{1-i}$ slightly off the boundary of $\mathcal{P}_0$.
	\item If $\gamma_{1-i}$ does not bound a disc in $L_{1-i}$, then $\mathcal{P}_1$ is obtained from $\mathcal{P}_0$
	by first cutting along the disc $S'$, and then filling any new $2$-sphere boundary components with $3$-balls.
	\end{itemize}
\item Suppose that neither $\gamma_0$ nor $\gamma_1$ bounds a disc in $\partial\mathcal{P}_0$.
Consider the pseudomanifold $\mathcal{P}^\ast$ obtained from $\mathcal{D}_1$ by truncating the vertices in $\varphi(V_0)$.
Topologically, $\mathcal{P}^\ast$ is obtained from $\mathcal{P}^\dagger$ by
filling the annuli $S^\dagger_0$ and $S^\dagger_1$ with thickened discs;
see Figure~\ref{fig:bigonSeparatePaths:glueS2:onlyIdeal:fillAnnulus}.
In other words, $\mathcal{P}^\ast$ is obtained from $\mathcal{P}_0$ by decomposing along the annulus  $S$.
To see how $\mathcal{P}^\ast$ is related to $\mathcal{P}_1$, consider the surface $L^\ast$
obtained from $L_0\cup L_1$ by decomposing along $\gamma_0$ and $\gamma_1$.
Using Claim~\ref{claim:flattenBigonVertexLink},
we see that each component of $L^\ast$ corresponds to a boundary component of $\mathcal{P}^\ast$ given by truncating one of the vertices in $\varphi(v_0)$ or $\varphi(v_1)$.
Thus, if there are any internal vertices in $\varphi(v_0)$ or $\varphi(v_1)$, then we need to
fill each corresponding $2$-sphere boundary component of $\mathcal{P}^\ast$ with a $3$-ball to recover $\mathcal{P}_1$.
\end{itemize}
This proves case~\ref{case:bigonSeparatePaths:glueS2:onlyIdeal}.
\end{proof}

\begin{figure}[htbp]
\centering
	\begin{tikzpicture}

	\node[inner sep=3pt] (before) at (0,0) {
		\includegraphics[scale=1]{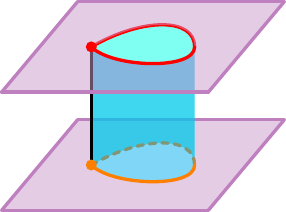}
	};
	\node[inner sep=3pt] (after) at (6.5,0) {
		\includegraphics[scale=1]{SinglePath_fillAnnulusAfter.pdf}
	};
	\draw[thick, line cap=round, -Stealth] (before.east) -- (after.west);

	\end{tikzpicture}
\caption{For each $i\in\{0,1\}$, flattening $F^\dagger_i$ has the effect of filling the annulus remnant $S^\dagger_i$ with a thickened disc.}
\label{fig:bigonSeparatePaths:glueS2:onlyIdeal:fillAnnulus}
\end{figure}

\begin{subatomic}[Projective plane]\label{claim:bigonSeparatePaths:glueRP2}
In case~\ref{case:bigonSeparatePaths:glueRP2} of Claim~\ref{claim:bigonSeparatePaths},
the two vertices of $F$ are identified to form a single vertex $v$, and one of the following holds:
	\begin{enumerate}[nosep,label={(\alph*)}]
	\item\label{case:bigonSeparatePaths:glueRP2:internal}
	The vertex $v$ is internal, in which case $F$ forms
	a two-sided properly embedded projective plane in $\mathcal{P}_0$,
	and $\mathcal{P}_1$ is obtained from $\mathcal{P}_0$ by decomposing along this projective plane.
	\item\label{case:bigonSeparatePaths:glueRP2:ideal}
	The vertex $v$ is ideal, in which case the truncated bigon associated to $F$ forms
	a two-sided properly embedded M\"{o}bius band $S$ in $\mathcal{P}_0$;
	the boundary curve $\gamma$ of $S$ forms a two-sided curve in $\partial\mathcal{P}_0$.
	In this case, flattening $F$ has one of the following effects:
		\begin{itemize}[nosep]
		\item If $\gamma$ bounds a disc $E$ in $\partial\mathcal{P}_0$,
		then $\mathcal{P}_1$ is obtained from $\mathcal{P}_0$ by
		decomposing along a two-sided projective plane given by
		isotoping $S\cup E$ slightly off the boundary of $\mathcal{P}_0$.
		\item If $\gamma$ does not bound a disc in $\partial\mathcal{P}_0$,
		then $\mathcal{P}_1$ is obtained from $\mathcal{P}_0$ by
		first decomposing along the M\"{o}bius band $S$, and then
		filling any new $2$-sphere boundary components with $3$-balls.
		\end{itemize}
	\item\label{case:bigonSeparatePaths:glueRP2:bdryInvalid}
	The vertex $v$ is boundary or invalid.
	\end{enumerate}
\end{subatomic}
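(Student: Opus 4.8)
The plan is to follow closely the proof of Claim~\ref{claim:bigonSinglePath:S2:glueRP2}, which treats the analogous situation where $F$ forms a projective plane but undoing the gluing along $F$ produces a \emph{single} boundary bigon path; the only essential differences are that cutting now produces \emph{two} remnants rather than one, so every ``one-sided'' conclusion there becomes ``two-sided'' here, and the relevant fillings are by invalid cones rather than by $3$-balls. First I would record that, since $F$ forms a projective plane, its two edges are identified into a single internal edge and its two vertices are identified into a single vertex $v$; the easy cases are then immediate. If $v$ is boundary or invalid we are in case~\ref{case:bigonSeparatePaths:glueRP2:bdryInvalid}. If $v$ is internal, then $F$ is an embedded projective plane lying in the interior of $\mathcal{P}_0$, and undoing the gluing along $F$ corresponds topologically to cutting along it; since we are in case~\ref{case:bigonSeparatePaths:glueRP2} of Claim~\ref{claim:bigonSeparatePaths}, this cut yields the two projective-plane remnants $F^\dagger_0$ and $F^\dagger_1$, so $F$ is \emph{two-sided}. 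Flattening each of $F^\dagger_0$ and $F^\dagger_1$ is then topologically the operation of filling a projective-plane remnant with an invalid cone, so altogether flattening $F$ amounts to decomposing along $F$ in the sense of Section~\ref{subsec:decomposing}; this is case~\ref{case:bigonSeparatePaths:glueRP2:internal}.

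The bulk of the work is the case where $v$ is ideal. Here I would first observe that the truncated bigon $S$ associated to $F$ is a properly embedded M\"{o}bius band in $\mathcal{P}_0$ (truncating the single ideal vertex of a projective plane removes an open disc), and then reuse the now-standard argument: passing to the pseudomanifold $\mathcal{P}^\dagger$ obtained from $\mathcal{D}^\dagger$ by truncating the vertices in $g^{-1}(V_0)$, the surface $\mathcal{F}^\dagger_i\cap\mathcal{P}^\dagger$ is a M\"{o}bius band $S^\dagger_i$, and $\mathcal{P}^\dagger$ is $\mathcal{P}_0$ cut along $S$; since this produces the \emph{two} remnants $S^\dagger_0,S^\dagger_1$, the M\"{o}bius band $S$ is two-sided, and hence its boundary curve $\gamma$ is a two-sided curve in the component of $\partial\mathcal{P}_0$ that is the link $L$ of $v$. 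I would then split on whether $\gamma$ bounds a disc $E$ in $L$, exactly as in Claim~\ref{claim:bigonSinglePath:S2:glueRP2}.

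If $\gamma$ bounds a disc $E$ in $L$, then $\gamma$ is separating in $L$ and (because $F\cap L=\gamma$) the interior of $E$ is disjoint from $F$, so Claim~\ref{claim:flattenBigonVertexCone} applies: flattening the $v$-cone over $\gamma$ leaves $\mathcal{P}_0$ unchanged and reduces flattening $F$ to flattening a new bigon $F'$ incident to a temporary \emph{internal} vertex, where $F'$ is topologically a projective plane obtained by isotoping $S\cup E$ slightly off the boundary. Flattening $F'$ is then exactly the internal-vertex case already treated, so $\mathcal{P}_1$ is $\mathcal{P}_0$ decomposed along this two-sided projective plane. If $\gamma$ does \emph{not} bound a disc in $L$, I would instead flatten $F$ directly by ungluing and then flattening $F^\dagger_0$ and $F^\dagger_1$: ungluing cuts $\mathcal{P}_0$ along $S$, and flattening the two boundary bigon paths fills each M\"{o}bius-band remnant with an invalid cone, so the pseudomanifold $\mathcal{P}^\ast$ obtained from $\mathcal{D}_1$ by truncating $\varphi(V_0)$ is precisely $\mathcal{P}_0$ decomposed along $S$. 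Finally I would compare $\varphi(V_0)$ with $V_1$ using Claim~\ref{claim:flattenBigonVertexLink}: $\varphi(v)$ consists of the vertices whose links are the components of $L$ decomposed along $\gamma$, and the only discrepancy is that some of these components may be $2$-spheres (for example if $L$ is a torus or a Klein bottle and $\gamma$ is non-separating), in which case $\mathcal{P}_1$ is recovered from $\mathcal{P}^\ast$ by filling those $2$-sphere boundary components with $3$-balls.

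I expect no genuinely new obstacle: the argument is the two-sided mirror of Claim~\ref{claim:bigonSinglePath:S2:glueRP2} and runs in parallel with case~\ref{case:bigonSeparatePaths:glueS2} of Claim~\ref{claim:bigonSeparatePaths}. The part needing the most care is the bookkeeping in the last subcase --- checking, via Claim~\ref{claim:flattenBigonVertexLink}, exactly when $\varphi(v)$ contains non-ideal vertices, so that the extra $3$-ball fillings are (and are not) needed --- together with confirming that the bigon $F'$ produced by Claim~\ref{claim:flattenBigonVertexCone} really is a projective-plane bigon incident to an internal vertex, so that case~\ref{case:bigonSeparatePaths:glueRP2:internal} can be invoked verbatim.
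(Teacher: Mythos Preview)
Your proposal is correct and follows essentially the same route as the paper: the same case split on the type of $v$, the same use of Claim~\ref{claim:flattenBigonVertexCone} to push $F$ off the ideal vertex when $\gamma$ bounds a disc, and the same comparison of $\varphi(V_0)$ with $V_1$ via Claim~\ref{claim:flattenBigonVertexLink} when it does not. The only point the paper makes more explicit is that flattening a boundary bigon forming a projective plane really does fill the remnant with an invalid cone---this is isolated as a separate lemma (Lemma~\ref{lem:fillInvalidCone}) and given its own proof, whereas you simply assert it.
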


Before we prove Claim~\ref{claim:bigonSeparatePaths:glueRP2}, we need to understand the topological effect of flattening
a boundary bigon face whose corresponding truncated bigon forms either a projective plane or a M\"{o}bius band.
This is the purpose of the following lemma:

\begin{lemma}\label{lem:fillInvalidCone}
Let $B$ be a boundary bigon face, with edges identified so that $B$ forms a projective plane.
Thus, the associated truncated bigon $B'$ forms either a projective plane or a M\"{o}bius band.
Topologically, flattening $B$ has the effect of filling $B'$ with an invalid cone, as described in Section~\ref{subsec:decomposing}.
\end{lemma}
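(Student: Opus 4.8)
The plan is to localise the operation and then recognise the relevant piece as an invalid cone. Since $B$ forms a projective plane, its two edges are identified to a single edge $e$ and its two vertices to a single vertex $v$; consequently the truncated bigon $B'$ is a projective plane when $v$ is internal or boundary, and a M\"obius band when $v$ is ideal or invalid (in the latter case $\partial B'$ lies on the ideal or invalid boundary component coming from $v$). Because $B$ is a boundary face, it is a face of exactly one $3$-cell $\Delta$, so flattening $B$ to $e$ is supported in $\Delta$ together with a small collar of $B$; the rest of $\mathcal{D}_0$ is carried homeomorphically onto the corresponding part of $\mathcal{D}_1$ by the associated flattening map.

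I would run essentially the same argument that shows a boundary bigon face forming a $2$-sphere is flattened by filling that $2$-sphere with a $3$-ball (compare Figure~\ref{fig:bigonSinglePath:S2:glueRP2:internal:fillS2}). The key point is that the $3$-cell $\Delta$, together with the face identifications needed to fold its bigon face into a projective plane, is a cone over $\mathbb{R}P^2$: this follows because coning commutes with quotients, so coning the bigon and then imposing the edge identification gives the cone over $(\textup{bigon})/{\sim}\,=\mathbb{R}P^2$, i.e.\ an invalid cone $\mathcal{C}$. The flattening move collapses each arc of $B$ running between its two edges, which is exactly the effect of collapsing $\mathcal{C}$ along its cone lines onto its $2$-skeleton. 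Tracking this through the flattening map, one checks that $\mathcal{P}_1$ is obtained from $\mathcal{P}_0$ by attaching $\mathcal{C}$ along $B'$: when $v$ is internal or boundary this identifies all of $\partial\mathcal{C}=\mathbb{R}P^2$ with $B'$, and when $v$ is ideal or invalid, truncating $v$ deletes a small open disc $D$ from $\partial\mathcal{C}$ as well as the corresponding disc from $B$, so that $(\partial\mathcal{C})-D$ is identified with the M\"obius band $B'$. In both cases this is precisely the operation of filling $B'$ with an invalid cone as defined in Section~\ref{subsec:decomposing}, and the cone point becomes an invalid vertex of $\mathcal{D}_1$.

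The main obstacle is making rigorous the assertion that collapsing $\mathcal{C}$ along its cone lines is topologically the same, up to homeomorphism of truncated pseudomanifolds, as attaching $\mathcal{C}$ along $B'$ --- in other words, that the flattening map leaves the rest of $\mathcal{P}_0$ untouched and that the only net effect is the promised filling. I would prove this by exhibiting an explicit homeomorphism between $\mathcal{P}_1$ and $\mathcal{P}_0\cup_{B'}\mathcal{C}$ built from the cone structure of $\mathcal{C}$, exactly as in the $2$-sphere case. The secondary subtlety is the bookkeeping when $v$ is ideal or invalid: there one must check that the truncated cone $\mathcal{C}-D$ meets $B'$ compatibly with the ideal or invalid boundary component arising from $v$, so that no other part of $\partial\mathcal{P}_0$ is disturbed by the move.
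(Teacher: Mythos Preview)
Your outline has the right shape---localise to a neighbourhood of $B$, show that flattening turns that neighbourhood into an invalid cone attached along a parallel copy of $B'$---but the identification you give for the cone is wrong, and this is where the actual work lies.

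First, the $3$-cell $\Delta$ containing $B$ is not the cone on $B$: $\Delta$ is one of the seven cell types (football, purse, pillow, pyramid, tetrahedron), each of which has faces other than $B$, so imposing the edge identification on $\Delta$ does not produce a cone over $\mathbb{R}P^2$. Your ``coning commutes with quotients'' argument correctly shows that $\mathrm{cone}(\text{bigon})/{\sim}$ is the invalid cone, but $\Delta$ is not $\mathrm{cone}(\text{bigon})$. If instead you work with a genuine collar $\mathbb{R}P^2\times[0,\varepsilon]$ of $B$ inside $\mathcal{P}_0$, that collar is a product, not a cone; the cone only appears \emph{after} flattening, and that is precisely the statement to be proved.

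Second, your description of the flattening as ``collapsing $\mathcal{C}$ along its cone lines'' has the directions confused. In the cone $\mathcal{C}=\mathrm{cone}(\mathbb{R}P^2)$ the cone lines run from the apex to the base $\mathbb{R}P^2$, so collapsing along them would retract $\mathcal{C}$ onto its base. The flattening move instead collapses the bigon $B$ (the base) to an edge; composed with the edge identification $(0,y)\sim(1,1-y)$, this folds that edge in half, producing an invalid edge. What must be shown is that the quotient of the collar $\mathbb{R}P^2\times[0,\varepsilon]$ by this flattening on $\mathbb{R}P^2\times\{0\}$ is homeomorphic to the invalid cone attached along $\mathbb{R}P^2\times\{\varepsilon\}$. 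This does not follow formally from ``coning commutes with quotients''. The paper handles it by choosing the collar to be the region $\mathcal{R}$ swept out by an isotopy of $B$ that \emph{fixes the vertex}, then giving an explicit model of $\mathcal{R}$ as a quotient of $[0,1]^3$ foliated by lines, and checking that after flattening these lines exhibit the cone structure over the pushed-off copy $B_0$ with apex at the midpoint of the new invalid edge. Your acknowledgement that this step needs an explicit homeomorphism is correct; the gap is that the cone structure you propose to use is not the one that does the job.
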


\begin{proof}
Let $\mathcal{P}$ denote the truncated pseudomanifold corresponding to the cell decomposition containing $B$.
We first consider the case where the truncated bigon $B'$ forms a projective plane boundary component of $\mathcal{P}$;
in other words, we are considering the case where the vertex incident to $B$ does not get truncated, and hence $B'=B$.
We produce a copy $B_0$ of $B$ by an isotopy of $B$ that:
\begin{itemize}
\item
fixes the vertex; and
\item
pushes the rest of $B$ slightly into the interior of $\mathcal{P}$.
\end{itemize}
Let $\mathcal{R}$ denote the region that is swept out by this isotopy, and let $\mathcal{C}$ denote the quotient of $\mathcal{R}$ obtained by flattening $B$.

By construction, deleting $\mathcal{R}-B_0$ preserves the ambient space $\mathcal{P}$ up to homeomorphism, and replaces the boundary component $B$ with its copy $B_0$.
Thus, we see that flattening $B$ has the same topological effect as attaching a copy of $\mathcal{C}$ to $B$.
It therefore suffices to show that $\mathcal{C}$ is actually an invalid cone.
Our strategy for this is to express $\mathcal{R}$ as a union of lines such that after flattening $B$, these lines exhibit the structure of a cone over the projective plane $B_0$.

To this end, begin with $\mathcal{Q} := [0,1]^3$.
Let $\Lambda$ denote the set of lines of the form $\{x\}\times\{y\}\times[0,1]$, so that $\mathcal{Q}$ is a (disjoint) union of the lines in $\Lambda$.
Take $\sim$ to be the minimal equivalence relation satisfying the following:
\begin{itemize}
\item
$(x,0,0) \sim (x',0,0)$ for all $x,x'\in[0,1]$.
\item
$(x,1,0) \sim (x',1,0)$ for all $x,x'\in[0,1]$.
\item
$(x,y,1) \sim (x,y',1)$ for all $x,y,y'\in[0,1]$.
\item
$(0,y,z) \sim (1,1-y,z)$ for all $y,z\in[0,1]$.
\end{itemize}
In the quotient $\mathcal{Q}/{\sim}$, the $z=0$ rectangle becomes a projective plane $P_0$,
and the $y=0$ and $y=1$ rectangles become a pair of triangles that together form a projective plane $P_1$;
see Figure~\ref{fig:invalidConeQuotient}.
Indeed, we can identify this quotient with $\mathcal{R}$ in such a way that $P_0$ is identified with $B_0$ and $P_1$ is identified with $B$.

\begin{figure}[htbp]
\centering
	\begin{tikzpicture}

	\node[inner sep=0pt] (box) at (0,0) {
		\includegraphics[scale=1]{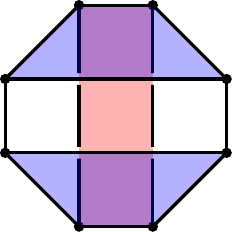}
	};
	\node[inner sep=0pt] (quotient) at (8,0) {
		\includegraphics[scale=1]{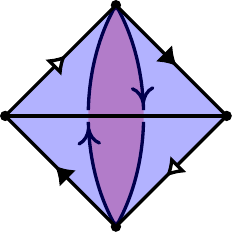}
	};

	\node[inner sep=1pt, below left] at ($(box.south)+(180:0.25)$) {$(0,0,0)$};
	\node[inner sep=1pt, below right] at ($(box.south)+(0:0.25)$) {$(1,0,0)$};
	\node[inner sep=1pt, above left] at ($(box.north)+(180:0.25)$) {$(0,1,0)$};
	\node[inner sep=1pt, above right] at ($(box.north)+(0:0.25)$) {$(1,1,0)$};
	\node[inner sep=1pt, below left] at ($(box.west)+(270:0.4)$) {$(0,0,1)$};
	\node[inner sep=1pt, below right] at ($(box.east)+(270:0.4)$) {$(1,0,1)$};
	\node[inner sep=1pt, above left] at ($(box.west)+(90:0.4)$) {$(0,1,1)$};
	\node[inner sep=1pt, above right] at ($(box.east)+(90:0.4)$) {$(1,1,1)$};

	\draw[ultra thick, line cap=round, -Stealth] ($(box.east)+(0:1.2)$)
		-- node[pos=0.475] (label){} ($(quotient.west)+(180:0.2)$);
	\node[above, inner sep=2pt] at (label.center) {quotient};
	\node[below, inner sep=2pt] at (label.center) {by $\sim$};

	\end{tikzpicture}
\caption{The region $\mathcal{R}$ can be constructed as a quotient of the box $\mathcal{Q}:=[0,1]^3$.
The $z=0$ rectangle becomes the projective plane $B_0$ (shaded red).
The $y=0$ and $y=1$ rectangles together become the projective plane $B$ (shaded blue).}
\label{fig:invalidConeQuotient}
\end{figure}

With this identification, we can express $\mathcal{R}$ as a union of the lines given by $\Lambda/{\sim}$.
Each of these lines joins a point in $B_0$ to a point on the curve $\{(x,\ast,1)\}$.
Moreover, we observe the following:
\begin{itemize}
\item
For each point of the form $(x,\ast,1)$, we have exactly two lines that join this point to the vertex of $B_0$.
The union of these lines is precisely the projective plane $B$.
\item
For every point in $B_0$ other than the vertex, there is a unique line that joins this point to the curve $\{(x,\ast,1)\}$.
\end{itemize}
After flattening $B$, the curve $\{(x,\ast,1)\}$ gets collapsed to a single ``apex'' point $a$, and each point in the projective plane $B_0$ is now joined to $a$ by a unique line.
This exhibits precisely the required cone structure on $\mathcal{C}$, and hence we conclude that flattening $B$ is topologically equivalent to filling $B'$ with this invalid cone $\mathcal{C}$.

What remains is the case where $B'$ forms a M\"{o}bius band in $\partial\mathcal{P}$.
In this case, flattening $B$ is still equivalent to attaching an invalid cone, except we need to account for the fact that the vertex incident to $B$ is truncated.
This truncation only removes a small neighbourhood of the vertex, and this removed neighbourhood is disjoint from the apex of the cone.
Thus, we still have an invalid cone $\mathcal{C}$, but the truncation means that there is a small disc $D\subset\partial\mathcal{C}$ that lies inside $\partial\mathcal{P}$,
and $\mathcal{C}$ is attached by identifying the M\"{o}bius bands $B'$ and $\partial\mathcal{C}-D$.
In other words, we again conclude that flattening $B$ is topologically equivalent to filling $B'$ with the invalid cone $\mathcal{C}$.
\end{proof}

\begin{proof}[Proof of Claim~\ref{claim:bigonSeparatePaths:glueRP2}]
Recall that in case~\ref{case:bigonSeparatePaths:glueRP2} of Claim~\ref{claim:bigonSeparatePaths},
the edges of $F$ are identified so that $F$ forms a projective plane.
We will see that the proof is almost identical to the proof of Claim~\ref{claim:bigonSinglePath:S2:glueRP2};
the main difference is that here, we end up working with embedded surfaces that are two-sided rather than one-sided.

Since $F$ forms a projective plane, the two vertices of $F$ are identified to form a single vertex $v$ in $\mathcal{D}_0$.
When $v$ is not ideal, the claim is easy to prove:
\begin{itemize}
\item If $v$ is boundary or invalid, then we are in case~\ref{case:bigonSeparatePaths:glueRP2:bdryInvalid}.
\item If $v$ is internal, then $F$ forms an embedded projective plane in the interior of $\mathcal{P}_0$.
Ungluing $F$ yields two projective plane remnants corresponding to the boundary bigon paths $F^\dagger_0$ and $F^\dagger_1$,
which tells us that $F$ forms a \emph{two-sided} projective plane in $\mathcal{P}_0$.
Moreover, by Lemma~\ref{lem:fillInvalidCone}, flattening $F^\dagger_0$ and $F^\dagger_1$
corresponds to filling these two projective plane boundary components with invalid cones.
Altogether, we see that flattening $F$ is topologically equivalent to decomposing along $F$.
This proves case~\ref{case:bigonSeparatePaths:glueRP2:internal}.
\end{itemize}

With that out of the way, suppose for the rest of this proof that $v$ is ideal;
we need to prove all the conclusions stated in case~\ref{case:bigonSeparatePaths:glueRP2:ideal}.
Observe that the truncated bigon associated to $F$ forms a properly embedded M\"{o}bius band $S$ in $\mathcal{P}_0$.
Consider the pseudomanifold $\mathcal{P}^\dagger$ obtained from $\mathcal{D}^\dagger$ by truncating the vertices in $g^{-1}(V_0)$;
viewing $\mathcal{P}^\dagger$ as a subset of $\mathcal{D}^\dagger$, for each $i\in\{0,1\}$ let $S^\dagger_i$ denote
the M\"{o}bius band in $\partial\mathcal{P}^\dagger$ given by $F^\dagger_i\cap\mathcal{P}^\dagger$.
Topologically, $\mathcal{P}^\dagger$ is obtained from $\mathcal{P}_0$ by cutting along the M\"{o}bius band $S$;
as shown in Figure~\ref{fig:bigonSeparatePaths:glueRP2:ideal:cutMobius},
this yields two remnants---namely, the M\"{o}bius bands $S^\dagger_0$ and $S^\dagger_1$---so $S$ must be a \emph{two-sided} Mobius band in $\mathcal{P}_0$.

\begin{figure}[htbp]
\centering
	\begin{tikzpicture}

	\node[inner sep=-5pt] (before) at (0,0) {
		\includegraphics[scale=1]{TruncatedMobius_Uncut.pdf}
	};
	\node[inner sep=-10pt] (after) at (4.2,0) {
		\includegraphics[scale=1]{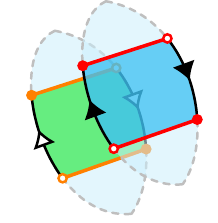}
	};
	\draw[thick, line cap=round, -Stealth] (before.east) -- (after.west);

	\end{tikzpicture}
\caption{The truncated bigon associated to $F$ forms a two-sided M\"{o}bius band $S$.
Cutting along $S$ yields two M\"{o}bius band remnants.}
\label{fig:bigonSeparatePaths:glueRP2:ideal:cutMobius}
\end{figure}

Consider the ideal boundary component $L$ of $\mathcal{P}_0$ given by truncating the vertex $v$, and let $\gamma$ denote the boundary curve of $S$;
we need to show that $\gamma$ forms a two-sided curve in $L$.
For this, it suffices to observe that cutting along $S$ has the effect of splitting $\gamma$ into two curves,
one bounding the M\"{o}bius band $S^\dagger_0$ and the other bounding the M\"{o}bius band $S^\dagger_1$.

All that remains is to understand the overall effect of flattening $F$.
We begin with the case where $\gamma$ bounds a disc $E$ in $L$.
In this case, we use Claim~\ref{claim:flattenBigonVertexCone} to flatten the $v$-cone over $\gamma$.
This reduces the operation of flattening $F$ to the operation of flattening a new bigon $F'$ given by pushing $F$ slightly away from $v$;
topologically, $F'$ is equivalent a properly embedded projective plane given by isotoping $S\cup E$ slightly off the boundary of $\mathcal{P}_0$.
Since the vertices of $F'$ are identified to form a single temporary internal vertex, flattening $F'$ has the same topological effect as
flattening $F$ in the case where $v$ is internal (case~\ref{case:bigonSeparatePaths:glueRP2:internal}).
In other words, $F'$ forms a \emph{two-sided} projective plane in $\mathcal{P}_0$,
and $\mathcal{P}_1$ is obtained from $\mathcal{P}_0$ by decomposing along this projective plane.
This completes the case where $\gamma$ bounds a disc in $L$.

For the case where $\gamma$ does \emph{not} bound a disc in $L$,
consider the pseudomanifold $\mathcal{P}^\ast$ obtained from $\mathcal{D}_1$ by truncating the vertices in $\varphi(V_0)$.
Topologically, by Lemma~\ref{lem:fillInvalidCone}, $\mathcal{P}^\ast$ is obtained from $\mathcal{P}^\dagger$ by filling
the M\"{o}bius bands $S^\dagger_0$ and $S^\dagger_1$ with invalid cones;
in other words, $\mathcal{P}^\ast$ is obtained from $\mathcal{P}_0$ by decomposing along the M\"{o}bius band $S$.
To see how $\mathcal{P}^\ast$ is related to $\mathcal{P}_1$, we compare the truncated vertex sets $\varphi(V_0)$ and $V_1$.
For this, let $L^\ast$ denote the surface obtained by decomposing $L$ along $\gamma$.
Claim~\ref{claim:flattenBigonVertexLink} tells us that the components of $L^\ast$ correspond to
boundary components of $\mathcal{P}^\ast$ given by truncating the vertices in $\varphi(v)$.
The only way $\mathcal{P}^\ast$ can differ from $\mathcal{P}_1$ is if $L^\ast$ has $2$-sphere components;
we need to fill each such $2$-sphere with a $3$-ball to recover $\mathcal{P}_1$ from $\mathcal{P}^\ast$.
This completes the proof of case~\ref{case:bigonSeparatePaths:glueRP2:ideal}.
\end{proof}

\section{Crushing surfaces of positive genus}\label{sec:genus}

Consider a normal surface $S$ in a $3$-manifold $\mathcal{M}$.
Roughly, our goal in this section is to give sufficient conditions under which crushing $S$ gives an ideal triangulation of a component of $\mathcal{M}-S$.
For this, we fix the following notation throughout this section:
\begin{itemize}
\item Let $\mathcal{M}$ be a (compact) $3$-manifold with no $2$-sphere boundary components.
If $\mathcal{M}$ is closed, let $\mathcal{T}$ be a closed triangulation of $\mathcal{M}$;
otherwise, if $\mathcal{M}$ is bounded, let $\mathcal{T}$ be an ideal triangulation of $\mathcal{M}$.
\item Let $S$ be a (possibly disconnected) separating normal surface in $\mathcal{T}$.
(Since $\mathcal{T}$ has no real boundary components, note that $S$ must be a closed surface.)
Assume that every component of $S$ is two-sided, and that none of these components are $2$-spheres.
\item Fix any particular component of $\mathcal{M}-S$ that meets each component of $S$ on exactly one side, and call it the \textbf{chosen region} for $S$;
also, for any (not necessarily normal) surface $E$ isotopic to $S$,
call the corresponding component of $\mathcal{M}-E$ the chosen region for $E$.
Let $X$ denote the compact $3$-manifold given by the closure of the chosen region for $S$.
\end{itemize}
The assumptions that we have made on $S$ and on the chosen region are not as restrictive as they might appear at first glance.
If $S$ has a component $C$ that is either non-separating or one-sided (or both), then we can always ``repair'' $S$ as follows:
build a new surface $\Sigma$ by replacing $C$ with the frontier of a regular neighbourhood of $C$.
Up to homeomorphism, each component of $\mathcal{M}-S$ appears as a component of
$\mathcal{M}-\Sigma$, so we lose nothing by repairing $S$ in this way.
A similar trick allows us to deal with components of $\mathcal{M}-S$ that meet a component of $S$ on both sides.

With this in mind, let $\mathcal{T}'$ denote the triangulation obtained by (destructively) crushing $S$.
As we hinted earlier, our goal is to give sufficient conditions for $\mathcal{T}'$ to include an ideal triangulation of $X$ as one of its components
(the precise statement is given in Theorem~\ref{thm:crushPositiveGenus}).

To this end, consider the cell decomposition $\mathcal{D}'$ given by \emph{non-destructively} crushing $S$.
One of the components $\mathcal{D}^\ast$ of $\mathcal{D}'$ gives a destructible ideal cell decomposition of $X$;
see Figure~\ref{fig:nonDestructiveCartoon}.
Call a $3$-cell in $\mathcal{D}'$ \textbf{benign} if and only if it belongs to the component $\mathcal{D}^\ast$.

\begin{figure}[htbp]
\centering
	\begin{tikzpicture}

	\node[inner sep=-3pt] (before) at (0,0) {
		\includegraphics[scale=1]{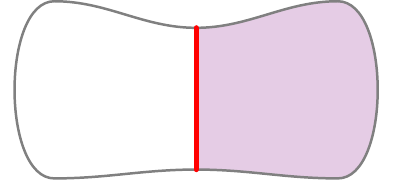}
	};
	\node[inner sep=-3pt] (after) at (7.7,0) {
		\includegraphics[scale=1]{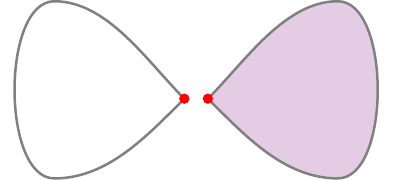}
	};
	\draw[thick, line cap=round, -Stealth] (before.east) -- (after.west);

	\node[violet, align=center] at (1.5,-0.1) {chosen\\region};
	\node[violet] at (9.5,-0.1) {$\mathcal{D}^\ast$};
	\node[red, below, inner sep=1pt] at (before.north) {$S$};

	\end{tikzpicture}
\caption{Schematic illustration of non-destructively crushing $S$, in the case where $S$ is connected.}
\label{fig:nonDestructiveCartoon}
\end{figure}

From the formulation of crushing given in Section~\ref{subsec:atomic}
(in particular, recall Definitions~\ref{defs:stages} and Lemma~\ref{lem:benCrushing}),
$\mathcal{T}'$ is obtained from $\mathcal{D}'$ by a finite sequence of atomic moves.
Since the atomic moves only either destroy or modify the existing $3$-cells in $\mathcal{D}'$,
we can naturally speak about benign $3$-cells in all of the intermediate
cell decompositions that we encounter as we perform the atomic moves.

We will also call a component of a cell decomposition \textbf{benign} if this component is built entirely from benign $3$-cells.
Whilst $\mathcal{D}'$ initially has exactly one benign component, namely $\mathcal{D}^\ast$,
this number can change as we perform atomic moves.

With this in mind, let $\mathcal{T}^\ast$ denote the triangulation consisting only of the benign components of $\mathcal{T}'$;
see Figure~\ref{fig:notationSecGenus} for a visual summary of all the notation we have just introduced.
We can now give a more precise statement of our main goal in this section:
we want to show that one of the components of $\mathcal{T}^\ast$ gives an ideal triangulation of $X$.

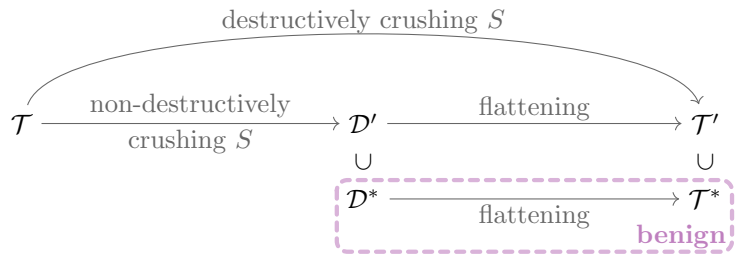
\begin{figure}[htbp]
\centering
	\begin{tikzpicture}[commutative diagrams/every diagram]

	\newcommand{\Shift}{-1}

	\node (t) at (0,0) {$\mathcal{T}$};
	\node (dp) at (4.5,0) {$\mathcal{D}'$};
	\node (tp) at (9,0) {$\mathcal{T}'$};
	\node (ds) at (4.5,\Shift) {$\mathcal{D}^\ast$};
	\node (ts) at (9,\Shift) {$\mathcal{T}^\ast$};

	\path[commutative diagrams/.cd, every arrow, every label]
		(t) edge[darkgray!40!gray] node[above]{non-destructively} node[below]{crushing $S$} (dp)
		(t) edge[darkgray!40!gray, controls={+(0.5,1.5) and +(-0.5,1.5)}] node[inner sep=0pt] {destructively crushing $S$} (tp)
		(dp) edge[darkgray!40!gray] node[inner sep=1pt] {flattening} (tp)
		(ds) edge[darkgray!40!gray] node[inner sep=2pt, swap] {flattening} (ts)
	;

	\node at ($(dp)!0.5!(ds)$) {\rotatebox{90}{$\subset$}};
	\node at ($(tp)!0.5!(ts)$) {\rotatebox{90}{$\subset$}};

	\draw[ultra thick, rounded corners, line cap=round, violet!30!white, dashed] (4.15,0.25+\Shift) rectangle (9.35,-0.7+\Shift);
	\node[left, violet!50!white, inner sep=2pt] at (9.35,-0.5+\Shift) {\textbf{benign}};

	\end{tikzpicture}
\caption{Some notation that we use throughout Section~\ref{sec:genus}.}
\label{fig:notationSecGenus}
\end{figure}

The proof boils down to checking that we do not make ``drastic'' topological changes
when performing atomic moves on an ideal cell decomposition of $X$.
In all but one of the cases, it is enough to require that $X$ satisfies the following conditions:
\begin{itemize}
\item it is irreducible and $\partial$-irreducible; and
\item it contains no essential annuli and no two-sided properly embedded M\"{o}bius bands.
\end{itemize}
The one difficult case is when we flatten a bigon whose corresponding truncated bigon forms a boundary-parallel annulus in $X$;
we will discuss how we circumvent this difficulty in Section~\ref{subsec:badBigonPaths}.
We then put everything together to prove Theorem~\ref{thm:crushPositiveGenus} in Section~\ref{subsec:crushBenign}.

\subsection{Avoiding bad bigon paths}\label{subsec:badBigonPaths}

Throughout the rest of Section~\ref{sec:genus}, call a bigon path (recall Definitions~\ref{defs:bigonPath}) \textbf{bad} if
it is internal and its corresponding truncated bigon path forms a boundary-parallel annulus.
Using this terminology, the difficult case that we mentioned earlier is when we flatten a bad bigon path of length one.
In Section~\ref{subsec:crushBenign}, we will see that the only way to have a bad bigon path of length one is if
the cell decomposition $\mathcal{D}^\ast$ initially contained a bad bigon path (of some arbitrary length).
With this in mind, our goal is to cut this problem off at the source:
we will give conditions on the surface $S$ that will ensure that $\mathcal{D}^\ast$ does not contain any bad bigon paths.

Roughly, the idea is that if $\mathcal{D}^\ast$ contains a bad bigon path, then we can ``push'' or ``expand'' $S$ further into the chosen region;
thus, we would like to ensure that $S$ cannot be ``expanded'' in this way.
To make this idea precise, we introduce the following terminology for any normal surface $E$ in the isotopy class of $S$:
\begin{itemize}
\item Let $E'$ be any normal surface that is, up to normal isotopy, disjoint from $E$.
We call $E'$ an \textbf{expansion} of $E$ if it is isotopic to $E$, but cannot be normally isotoped to lie entirely outside the chosen region for $E$;
see Figure~\ref{fig:expansionCartoon}.
\item Call $E$ \textbf{maximal} if it does not admit such an expansion.
\end{itemize}

\begin{figure}[htbp]
\centering
	\begin{tikzpicture}

	\node at (0,0) {
		\includegraphics[scale=1]{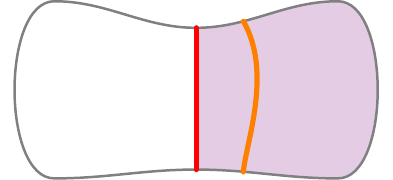}
	};

	\node[violet, align=center] at (2.1,-0.1) {chosen\\region};
	\node[red, below, inner sep=1pt] at (before.north) {$E$};
	\node[orange!60!red, below, inner sep=0pt] at ($(before.north)+(0.8,0.1)$) {$E'$};

	\end{tikzpicture}
\caption{Let $E$ and $E'$ be two disjoint normal surfaces in the same isotopy class.
If $E'$ cannot be \emph{normally} isotoped to lie outside the chosen region for $E$, then it is an expansion of $E$.}
\label{fig:expansionCartoon}
\end{figure}

In Lemma~\ref{lem:badBigonPaths}, we will show that to avoid bad bigon paths
in $\mathcal{D}^\ast$, it is enough to assume that $X$ is irreducible, and that $S$ is incompressible and maximal.
Before we do so, it is worth noting that the maximality assumption is not very restrictive.
Specifically, the following result says that, up to isotopy, we can always choose $S$ to be maximal:

\begin{lemma}\label{lem:maximal}
There is a maximal normal surface in the isotopy class of $S$.
\end{lemma}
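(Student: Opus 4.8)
The plan is to argue by a well-ordering / descent argument on some measure of complexity of normal surfaces in the isotopy class of $S$. Concretely, I would start with $S$ itself and repeatedly replace it by an expansion (in the sense just defined) as long as one exists; the goal is to show this process terminates, so that it eventually reaches a maximal surface. The key is to identify the right complexity functional that strictly decreases under expansion. A natural candidate is a lexicographically ordered pair recording, first, the size of the chosen region in some combinatorial sense—for instance the number of tetrahedra of $\mathcal{T}$ that are not entirely contained in the closure of the chosen region, or equivalently the weight of $S$ together with how much of $\mathcal{T}$ lies on the ``outside'' of $S$—and then, as a tie-breaker, the weight $\Wt(E)$ of the surface $E$ itself. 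The intuition behind Figure~\ref{fig:expansionCartoon} is that an expansion $E'$ of $E$ sits strictly ``further in'' than $E$: since $E'$ is disjoint from $E$, isotopic to $E$, and cannot be pushed entirely outside the chosen region for $E$, it must genuinely occupy part of that region, so the chosen region for $E'$ is strictly smaller than the chosen region for $E$.

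First I would make precise the claim that passing to an expansion strictly shrinks the chosen region. Given $E$ and an expansion $E'$, the surface $E'$ is disjoint from $E$, so $E'$ lies in the closure of a single component of $\mathcal{T}-E$. If that component were \emph{not} the chosen region for $E$, then $E'$ would lie entirely outside the chosen region, contradicting the definition of expansion; hence $E'$ lies in the closure $X_E$ of the chosen region for $E$. Now $E'$ is isotopic to $E$ and separating, so $\mathcal{T}$ cut along $E'$ again has exactly two ``sides,'' and the chosen region $X_{E'}$ for $E'$ is the one meeting each component of $E'$ appropriately; since $E'\subseteq X_E$ and $E'$ is parallel to $E$, the region $X_{E'}$ is contained in $X_E$ and is in fact a proper subset (the product region between $E$ and $E'$ has been removed). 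This inclusion $X_{E'}\subsetneq X_E$ is what drives the descent.

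Next I would convert this into a terminating process. I would fix the ambient triangulation $\mathcal{T}$ once and for all, normalise $E'$ if necessary (staying in the isotopy class of $S$, using that $S$—hence $E'$—is a surface we may assume incompressible and $\partial$-incompressible so that normalisation does not change the isotopy class by the discussion following the normal moves, or more simply just work directly with the combinatorial data), and measure complexity by the number of tetrahedra of $\mathcal{T}$ meeting $X_E$, refined by $\Wt(E)$. Since $X_{E'}\subsetneq X_E$, the first coordinate is non-increasing; when it strictly decreases we are done with that step, and when it stays the same I would argue that the weight must strictly decrease—an expansion that does not remove any tetrahedron from the region must be ``absorbing'' normal discs, lowering $\Wt$. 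Because both coordinates are bounded below by zero and take values in a well-ordered set, the process of repeatedly passing to expansions must halt. When it halts, the surface we have reached admits no expansion, i.e.\ it is maximal, and it lies in the isotopy class of $S$ throughout. This gives the desired maximal normal surface.

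The main obstacle I anticipate is pinning down a complexity measure that is genuinely monotone under \emph{every} expansion, not just the ``obvious'' ones: an expansion is allowed to be any disjoint isotopic normal surface that cannot be pushed out, and a priori such a surface could be more complicated than $E$ in some respects even as the chosen region shrinks. The delicate point is ruling out an infinite sequence in which the region stabilises but the surfaces oscillate; handling this cleanly is likely to require the tie-breaker (weight, or local Euler number) and a short argument that two disjoint isotopic normal surfaces cobounding a product region, with the same ``outside,'' cannot have equal weight unless they are normally isotopic—which would contradict $E'$ being a genuine expansion. Once that lemma is in hand, the descent is routine, and well-ordering finishes the proof.
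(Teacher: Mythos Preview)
Your overall strategy---show that any sequence of successive expansions $E_0, E_1, E_2, \ldots$ must terminate---is exactly right, and it is what the paper does. The gap is in your complexity measure. Neither coordinate of your lexicographic pair behaves as you hope: the number of tetrahedra meeting $X_E$ can remain constant across many expansions (the chosen region can shrink substantially while still touching every tetrahedron), and there is no reason the weight $\Wt(E')$ of an expansion should be \emph{smaller} than $\Wt(E)$---it can easily be larger, since an expansion is merely any normal surface in the isotopy class sitting further in and not normally isotopic to $E$. The lemma you propose at the end (``equal weight implies normally isotopic'') would at best tell you that weight \emph{changes}, not that it decreases, so it does not rescue the descent.

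The paper's fix is short and sidesteps the search for a monotone functional on a single surface. The key observation is that the surfaces $E_0, E_1, \ldots, E_n$ in an expansion sequence are \emph{simultaneously} pairwise disjoint (each lies in the chosen region of the previous one, and these regions are nested), so their union is a single normal surface in $\mathcal{T}$. Cutting along this union, the region between consecutive surfaces $E_{i-1}$ and $E_i$ is a product $I$-bundle; since $E_{i-1}$ and $E_i$ are not normally isotopic, this region must contain at least one \emph{non-parallel} induced cell (in the sense of Definitions~\ref{defs:inducedCells}). But each tetrahedron of $\mathcal{T}$ contributes at most six non-parallel cells in total, so $n \leqslant 6\lvert\mathcal{T}\rvert$. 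A maximal-length expansion sequence therefore exists, and its last term is the desired maximal surface. If you want a single decreasing quantity, the one that actually works is the number of non-parallel cells in the induced decomposition of the chosen region.
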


One way to prove Lemma~\ref{lem:maximal} would be to appeal to Kneser's finiteness theorem
(as is done, for instance, in~\cite[pp.~160--161]{JacoRubinstein2003});
however, we do not actually need the full strength of this theorem.
The following simple proof distils precisely the part of Kneser's finiteness theorem that is necessary:

\begin{proof}[Proof of Lemma~\ref{lem:maximal}]
If $S$ is itself maximal, then there is nothing to prove;
thus, assume for the rest of this proof that $S$ is not maximal.
Let $E_0=S$, and consider any sequence $E_0,\ldots,E_n$ of normal surfaces
such that for each $i\in\{1,\ldots,n\}$, $E_i$ is an expansion of $E_{i-1}$.
The idea is to show that such a sequence cannot be extended indefinitely,
which means that after extending as much as possible, the final entry of the sequence will be maximal.

To this end, consider the surface $E_0\cup\cdots\cup E_n$;
this is a normal surface, since $E_0,\ldots,E_n$ are mutually disjoint, up to normal isotopy.
Let $\mathcal{C}$ denote the induced cell decomposition obtained by cutting along $E_0\cup\cdots\cup E_n$.
For each $i\in\{1,\ldots,n\}$, let $B_i$ denote the component of $\mathcal{C}$ given by the trivial $I$-bundle between $E_i$ and $E_{i-1}$.
Since $E_i$ and $E_{i-1}$ are not normally isotopic, $B_i$ must contain at least one non-parallel cell.
However, each tetrahedron of $\mathcal{T}$ gives rise to at most six non-parallel cells,
so we see that $n\leqslant6\lvert\mathcal{T}\rvert$.
This implies that there is a sequence $E_0,\ldots,E_k$ of expansions whose length $k$ is maximum among all such sequences;
the surface $E_k$ must therefore be a maximal normal surface in the isotopy class of $S$,
otherwise we would be able to extend the sequence by adding an expansion of $E_k$.
\end{proof}

\begin{lemma}\label{lem:badBigonPaths}
If $X$ is irreducible, $S$ is maximal, and the remnants of $S$ are incompressible in $X$, then $\mathcal{D}^\ast$ contains no bad bigon paths.
\end{lemma}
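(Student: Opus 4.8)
The plan is to prove the statement by contradiction: assuming $\mathcal{D}^\ast$ contains a bad bigon path, and using irreducibility of $X$ together with incompressibility of the remnants of $S$, I will manufacture an expansion of $S$, contradicting maximality. The first move is to translate the bad bigon path back into $\mathcal{T}$. Since its truncated bigon path is an annulus, the bad bigon path is a \emph{cyclic} chain $B_0\cup\cdots\cup B_{n-1}$ of bigon faces of $\mathcal{D}^\ast$; each $B_i$ arises from a face of the induced cell decomposition $\mathcal{D}$ of the chosen region that straddles $S$ --- concretely, a bridge face $Q_i$ whose pair of opposite $S$-edges was collapsed. The faces $Q_i$ assemble into an annulus $\mathcal{B}$ that is properly embedded in $X$ (recall $X$ is homeomorphic to the closure of the chosen region), with $\partial\mathcal{B}$ a pair of curves on the copy of $S$ in $\partial X$, and $\mathcal{B}$ is isotopic to the truncated bigon path; so $\mathcal{B}$ is a boundary-parallel annulus. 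Using that the remnants of $S$ are incompressible in $X$, I would upgrade ``boundary-parallel'' to the statement that $\mathcal{B}$ cobounds a genuine product region $W\cong\mathcal{B}\times[0,1]$ with an annulus $A^\flat$ lying in $S\subseteq\partial X$.

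The second move is to push $S$ through $W$ and beyond, so as to swallow a non-parallel induced cell. The region $W$ is a union of cells of $\mathcal{D}$ (footballs of the destructible cell decomposition); I would enlarge it past $\mathcal{B}$ through parallel cells to a maximal product region $W^+\cong(\text{annulus})\times[0,1]$ with one end still $A^\flat\subseteq S$ and the other end an annulus $\mathcal{B}^+$ whose boundary lies on $S$. Because $\mathcal{B}^+$ has non-empty boundary on $S$ it cannot be tiled entirely by vertex-linking triangles abutting corner cells (a corner-cell triangle has no edge on $S$), and by maximality it cannot abut parallel cells everywhere on its far side; hence $\mathcal{B}^+$ abuts a wedge cell or a central cell $C$. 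Now isotope $S$ by pushing $A^\flat$ across $W^+$ and then entirely across $C$ (and a little further), while nudging the rest of $S$ slightly into the open chosen region $\mathcal{R}$; the resulting surface $S''$ is isotopic to $S$, disjoint from $S$, contained in $\mathcal{R}$, and such that the region of $\mathcal{M}$ lying between $S$ and $S''$ contains the non-parallel cell $C$.

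The final move is a barrier-controlled normalisation. Taking $A$ to be the subcomplex of $\mathcal{D}$ consisting of the cells of $W^+\cup C$ (with all their faces), Theorem~\ref{thm:subcomplexBarrier} says the boundary $B$ of a small regular neighbourhood of $S\cup A$ is a barrier for the component of $\mathcal{M}-B$ containing $S''$; normalising $S''$ inside that component yields a normal surface $E'$ that still lies past $C$ and is disjoint from $S$. Since $X$ is irreducible and $S''$ is incompressible in $X$ --- it is isotopic in $X$ to the boundary component $A^\flat\subseteq S$, which is incompressible by hypothesis --- and since a closed surface kept in the interior admits no $\partial$-compressions, normalisation does not change the isotopy class, so $E'$ is isotopic to $S$. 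Thus $E'$ is isotopic to $S$, disjoint from $S$, and separated from $S$ by a region that contains the non-parallel cell $C$; as in the proof of Lemma~\ref{lem:maximal}, such an $E'$ cannot be normally isotoped to lie entirely outside the chosen region for $S$. Hence $E'$ is an expansion of $S$, contradicting maximality of $S$.

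I expect the main obstacle to be the geometric step in the middle: extracting from the purely combinatorial datum ``the truncated bigon path forms a boundary-parallel annulus'' an honest product region inside $X$, and then controlling its enlargement so that it provably runs into a wedge or central cell rather than terminating in a degenerate way (for instance against corner cells cutting off a vertex link, or with its far end wrapping onto a different boundary component of $X$). This is exactly where irreducibility of $X$ and incompressibility of the remnants of $S$ do their work. By contrast, the concluding normalisation and the verification that $E'$ is a bona fide expansion should be routine, given Theorem~\ref{thm:subcomplexBarrier} and the argument already used for Lemma~\ref{lem:maximal}.
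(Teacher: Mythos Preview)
Your overall strategy matches the paper's: assume a bad bigon path exists, lift it to a bridge-face annulus in the induced cell decomposition, invoke Theorem~\ref{thm:subcomplexBarrier}, normalise inside the resulting barrier, and use irreducibility of $X$ together with incompressibility of the remnants of $S$ to conclude that the normalised surface is an expansion of $S$.

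The difference is in your second move, and the paper's execution there is considerably simpler. Rather than extracting a product region $W$, enlarging it through parallel cells to $W^+$, and hunting for an adjacent non-parallel cell $C$, the paper takes $A$ to be the bridge-face annulus $q^{-1}(\mathcal{F})$ itself. This is already a subcomplex of the induced cell decomposition, so Theorem~\ref{thm:subcomplexBarrier} applies to $S\cup A$ with no further enlargement. One then lets $S_2$ be the components of the frontier of a regular neighbourhood of $S\cup A$ lying on the chosen-region side, pushes slightly inward to get $S_3$, and normalises to get $S_4$. The verification that $S_4$ cannot be normally isotoped outside the chosen region is handled ``by construction'': near $A$, the neighbourhood frontier already meets induced cells in pieces that are elementary discs (the paper's accompanying figure exhibits such a triangle in a wedge cell), and these survive normalisation.

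So your anticipated ``main obstacle'' --- controlling the enlarged product region and proving it terminates at a wedge or central cell --- is a self-imposed complication that the paper sidesteps entirely. Your route is not wrong in spirit, but the enlargement step would need more justification than you give (for instance, ruling out that the bigon path sits entirely in footballs coming from parallel cells, or that the far end of $W^+$ wraps back onto $S$), and it buys nothing over the paper's direct use of $S\cup A$.
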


\begin{proof}
Suppose $\mathcal{D}^\ast$ contains a bad bigon path $\mathcal{F}$.
We will show that $S$ cannot be maximal by using $\mathcal{F}$ to construct an expansion of $S$.

To do this, let $\mathcal{D}$ denote the cell decomposition of $X$ induced by $S$,
and let $q:\mathcal{D}\to\mathcal{D}^\ast$ be the quotient map given by non-destructively crushing $S$.
Observe that $q^{-1}(\mathcal{F})$ realises an annulus $A$ that:
\begin{itemize}
\item consists of bridge faces (as defined in Definitions~\ref{defs:inducedCells}) in the $2$-skeleton of $\mathcal{D}$; and
\item is parallel to an annulus $A^\|$ lying entirely inside a component of $S$.
\end{itemize}
We now aim to build a sequence $S_1,S_2,S_3,S_4$ of surfaces isotopic to $S$, each of which is ``expanded further'' into the chosen region than the last.
Our goal is for $S_4$ to be the required expansion of $S$.
We achieve this as follows:
\begin{enumerate}[label={(\arabic*)}]
\item Let $S_1$ be the surface obtained from $S$ by replacing $A^\|$ with $A$.
\item Consider the boundary $B$ of a regular neighbourhood of $S\cup A$, and let $S_2$ be
the union of the components of $B$ that lie inside the chosen region for $S_1$;
see Figure~\ref{fig:maximalProofCell}.
Since the chosen region for $S_1$ meets each component of $S_1$ on exactly one of its two sides, observe that $S_2$ is isotopic to $S$.
By Theorem~\ref{thm:subcomplexBarrier}, $B$ is a barrier for any component of $\mathcal{M}-B$ that does not meet $S\cup A$;
observe that the chosen region $\mathcal{N}$ for $S_2$ is one such component of $\mathcal{M}-B$.
\item Let $S_3$ be the surface given by isotoping $S_2$ slightly into $\mathcal{N}$.
\item Using the barrier $B$, normalise $S_3$ to obtain a normal surface $E$ in $\mathcal{N}$.
Since the remnants of $S$ are incompressible in $X$, we know that $S_3$ must be incompressible in $\mathcal{N}$.
This, together with the fact that $X$ is irreducible, implies that after deleting any $2$-sphere components of $E$, we must be left with a normal surface $S_4$ isotopic to $S$.
By construction, $S_4$ is disjoint from $S$, and it cannot be normally isotoped to lie outside the chosen region for $S$, so it is an expansion of $S$.
\qedhere
\end{enumerate}
\end{proof}

\begin{figure}[htbp]
\centering
	\includegraphics[scale=1]{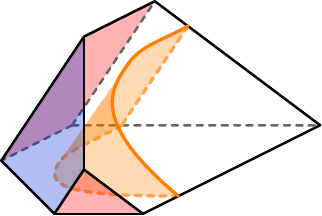}
\caption{An example of an induced cell $\Delta$ that meets the chosen region for $S_1$.
The red faces are parts of $S_1$ that originated from $S$, and the blue bridge face is a part of $S_1$ that originated from $A$.
The intersection of $S_2$ with $\Delta$ is shaded orange.
In this example, the orange piece forms an elementary triangle, which means that this piece is preserved by the normalisation procedure;
thus, the final surface $S_4$ will include this orange triangle among its elementary discs.}
\label{fig:maximalProofCell}
\end{figure}

\subsection{Crushing the benign components}\label{subsec:crushBenign}

In Theorem~\ref{thm:crushPositiveGenus} below, we give sufficient conditions so that after crushing $S$,
one of the components of the triangulation $\mathcal{T}^\ast$ gives an ideal triangulation of $X$.
Specifically, our proof relies on the following:
\begin{itemize}
\item We require that $S$ is maximal.
As discussed in Section~\ref{subsec:badBigonPaths}, this is not a serious restriction, thanks to Lemma~\ref{lem:maximal}.
\item We require that $X$ is irreducible, $\partial$-irreducible and \textbf{anannular}
(i.e., $X$ contains no essential annuli).
These are quite common ``niceness'' conditions for $3$-manifolds.
It is worth noting that we do \emph{not} need to assume that $X$ is \textbf{atoroidal} (i.e., $X$ contains no essential tori).
\item We require that $X$ contains no two-sided properly embedded M\"{o}bius bands.
This condition holds for all orientable $3$-manifolds, but sometimes fails for non-orientable $3$-manifolds.
\item We require that $X$ contains no two-sided properly embedded projective planes.
This follows from the previous condition, together with the fact that $X$ has at least one boundary component (given by a remnant of $S$).
To see why, suppose $X$ contains a two-sided projective plane $E$.
Consider a path $\gamma$ that starts at a point $p_0$ in $E$ and ends at a point $p_1$ in $\partial X$.
Remove a small disc around $p_0$ from $E$, and replace it with a thin tube that ``follows'' the path $\gamma$
and ends with a curve that bounds a small disc around $p_1$ in $\partial X$;
see Figure~\ref{fig:twoSidedRP2Tube}.
This turns $E$ into a two-sided properly embedded M\"{o}bius band in $X$.
\end{itemize}

\begin{figure}[htbp]
\centering
	\begin{tikzpicture}

	\node[inner sep=0pt] (before) at (0,0) {
		\includegraphics[scale=1]{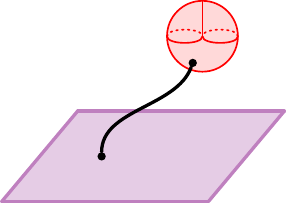}
	};
	\node[inner sep=0pt] (after) at (6,0) {
		\includegraphics[scale=1]{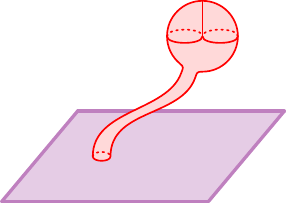}
	};
	\draw[thick, line cap=round, -Stealth] ($(before.east)+(0,-0.5)$) -- ($(after.west)+(0.7,-0.5)$);

	\end{tikzpicture}
\caption{Schematic illustration of using a tube to turn a two-sided projective plane into a two-sided M\"{o}bius band.}
\label{fig:twoSidedRP2Tube}
\end{figure}

\crushPositiveGenus*

\begin{proof}
Throughout this proof, call a cell decomposition \textbf{acceptable} if:
\begin{itemize}
\item one of its components is an ideal cell decomposition of $X$; and
\item every other component is a (closed) cell decomposition of the $3$-sphere.
\end{itemize}
Recall from Lemma~\ref{lem:benCrushing} that the procedure of flattening $\mathcal{D}^\ast$ to get $\mathcal{T}^\ast$
can be realised by a finite sequence of atomic moves.
Thus, our strategy will be to inductively prove that each atomic move preserves the property of being acceptable.

For the proof to work, we actually need to prove slightly more than this.
The problem is that flattening a bad bigon path (as defined in Section~\ref{subsec:badBigonPaths})
has the topological effect of decomposing along a boundary-parallel annulus,
which could potentially yield a cell decomposition that is no longer acceptable.
To circumvent this problem, we will show by induction that performing any number of atomic moves on $\mathcal{D}^\ast$
always yields an acceptable cell decomposition \emph{that contains no bad bigon paths}.

For the base case, consider the initial cell decomposition $\mathcal{D}^\ast$
(which is obtained by performing zero atomic moves).
Recall that $\mathcal{D}^\ast$ is an ideal cell decomposition of $X$
(with no extra $3$-sphere components), so it is acceptable.
By Lemma~\ref{lem:badBigonPaths}, we also know that $\mathcal{D}^\ast$ contains no bad bigon paths.

For the inductive step, assume that we have some acceptable cell decomposition $\mathcal{D}_0$ that contains no bad bigon paths.
We need to show that performing any atomic move on $\mathcal{D}_0$ yields
a new acceptable cell decomposition $\mathcal{D}_1$ that has no bad bigon paths.
In particular, to show that $\mathcal{D}_1$ remains acceptable, we will show that an atomic move always either:
has no topological effect on the truncated pseudomanifold, or changes the truncated pseudomanifold by adding or removing a $3$-sphere component.

Throughout the rest of this proof, let $F$ denote the triangular pillow, bigon pillow or bigon face in $\mathcal{D}_0$ that we flatten to obtain $\mathcal{D}_1$.
Let $\varphi$ denote the flattening map associated to this atomic move,
and let $\psi$ denote the inverse flattening map.
For each $i\in\{0,1\}$, let $\mathcal{P}_i$ denote the truncated pseudomanifold of $\mathcal{D}_i$.
This notation is summarised in Figure~\ref{fig:notationBenignProof}.

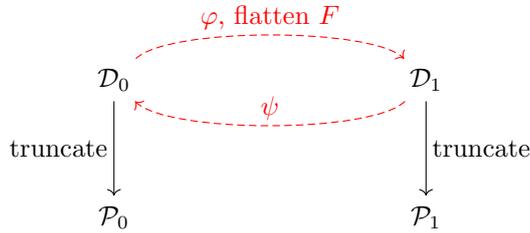
\begin{figure}[htbp]
\centering
\begin{tikzcd}[row sep=huge, column sep=huge,]
\mathcal{D}_0
\arrow[rr, "\text{$\varphi$, flatten $F$}", red, dashed, controls={+(45:1) and +(135:1)}]
\arrow[d, swap, "\text{truncate}"]
&& \mathcal{D}_1
\arrow[ll, swap, "\psi", red, dashed, controls={+(225:1) and +(315:1)}]
\arrow[d, "\text{truncate}"] \\
\mathcal{P}_0
&& \mathcal{P}_1
\end{tikzcd}
\caption{Notation for the inductive step in Theorem~\ref{thm:crushPositiveGenus}.}
\label{fig:notationBenignProof}
\end{figure}

We first consider the case where $F$ is a triangular pillow.
Recall from Lemma~\ref{lem:flattenTriangularPillows} that the effect of flattening $F$ depends on whether the two triangular faces of $F$ are identified:
\begin{itemize}
\item If the faces of $F$ are \emph{not} identified,
then we are in case~\ref{case:triangularPillowHomeo} of Lemma~\ref{lem:flattenTriangularPillows}.
Thus, the truncated pseudomanifolds of $\mathcal{D}_0$ and $\mathcal{D}_1$ are homeomorphic,
which implies that $\mathcal{D}_1$ is acceptable.
Suppose for the sake of contradiction that $\mathcal{D}_1$ contains a bad bigon path $B_1$.
Observe that $B_1$ meets the triangle $\varphi(F)$ in some (possibly empty) subset of the edges of this triangle,
which implies that $\psi(B_1)$ is a bad bigon path in $\mathcal{D}_0$;
this violates the inductive hypothesis, and hence shows that $\mathcal{D}_1$ cannot contain a bad bigon path.
\item If the faces of $F$ \emph{are} identified,
then we are in case~\ref{case:triangularPillowClosed} of Lemma~\ref{lem:flattenTriangularPillows}:
$F$ forms a (closed) cell decomposition of either $S^3$ or $L_{3,1}$.
Since the only closed components of $\mathcal{D}_0$ are $3$-spheres, $\mathcal{D}_1$ must
be obtained from $\mathcal{D}_0$ by deleting the $3$-sphere component given by $F$;
thus, $\mathcal{D}_1$ is acceptable.
Moreover, since the ideal component of $\mathcal{D}_0$ is left entirely untouched by the operation of flattening $F$,
we see that $\mathcal{D}_1$ cannot contain any bad bigon paths.
\end{itemize}
This completes the inductive step for the case where $F$ is a triangular pillow.

The case where $F$ is a bigon pillow is similar, but slightly more involved.
Recall from Lemma~\ref{lem:flattenBigonPillows} that the effect of flattening $F$ depends on whether the two bigon faces of $F$ are identified:
\begin{itemize}
\item If the faces of $F$ are \emph{not} identified,
then we are in case~\ref{case:bigonPillowHomeo} of Lemma~\ref{lem:flattenBigonPillows}.
Thus, the truncated pseudomanifolds of $\mathcal{D}_0$ and $\mathcal{D}_1$ are homeomorphic, which implies that $\mathcal{D}_1$ is acceptable.
Suppose for the sake of contradiction that $\mathcal{D}_1$ contains a bad bigon path $B_1$.
If $B_1$ is disjoint from the interior of the bigon $\varphi(F)$, then observe that $\psi(B_1)$ is a bad bigon path in $\mathcal{D}_0$.
This would violate the inductive hypothesis, so we conclude that the bigon $\varphi(F)$ must be part of the bigon path $B_1$;
this situation is illustrated in Figure~\ref{fig:bigonPathInverseBigonPillow}.
Consider the internal bigon path $B_0$ in $\mathcal{D}_0$ given by $\psi(B_1)-F$;
the ends of $B_0$ are precisely the two edges incident to the bigon pillow $F$.
Observe that augmenting $B_0$ with one of the two bigon faces of $F$ gives a bad bigon path in $\mathcal{D}_0$.
This again violates the inductive hypothesis, so we conclude that $\mathcal{D}_1$ cannot contain a bad bigon path.
\item If the faces of $F$ \emph{are} identified,
then we are in either case~\ref{case:bigonPillowClosed} or case~\ref{case:bigonPillowIdeal} of Lemma~\ref{lem:flattenBigonPillows}.
Actually, $F$ cannot form an ideal cell decomposition of $\mathbb{R}P^2\times[0,1]$ (case~\ref{case:bigonPillowIdeal})
because such a component would contain a two-sided projective plane.
Thus, we must be in case~\ref{case:bigonPillowClosed}: $F$ must form a (closed) cell decomposition of either $S^3$ or $\mathbb{R}P^3$.
By assumption, the only closed components of $\mathcal{D}_0$ are $3$-spheres,
so $\mathcal{D}_1$ must be obtained from $\mathcal{D}_0$ by deleting the $3$-sphere component given by $F$;
this shows that $\mathcal{D}_1$ is acceptable.
We also see that flattening $F$ leaves the ideal component of $\mathcal{D}_0$ entirely untouched,
which implies that $\mathcal{D}_1$ contains no bad bigon paths.
\end{itemize}
This completes the inductive step for the case where $F$ is a bigon pillow.

\begin{figure}[htbp]
\centering
	\begin{tikzpicture}

	\node[inner sep=-15pt] (before) at (0,0) {
		\includegraphics[scale=1]{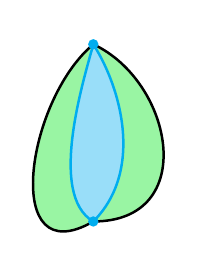}
	};
	\node[inner sep=-15pt] (after) at (3.6,0) {
		\includegraphics[scale=1]{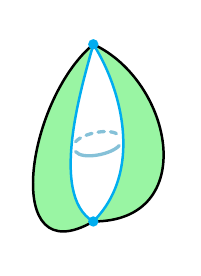}
	};
	\draw[thick, line cap=round, -Stealth] (before.east) -- node[above, inner sep=1pt]{$\psi$} (after.west);

	\end{tikzpicture}
\caption{If $F$ is a bigon pillow such that the bigon face $\varphi(F)$ forms part of a bad bigon path $B_1$,
then $\psi(B_1)$ contains a (bad) bigon path that is topologically equivalent to $B_1$.}
\label{fig:bigonPathInverseBigonPillow}
\end{figure}

With the pillow cases out of the way, all that remains is to consider the case where $F$ is a bigon face.
As in Sections~\ref{subsubsec:bigonSinglePath}
and~\ref{subsubsec:bigonSeparatePaths}, we divide our study into cases depending on whether
the two new boundary bigons given by ungluing $F$ form a single boundary bigon path or two separate boundary bigon paths.

First, suppose ungluing $F$ yields a single boundary bigon path.
Since $\mathcal{D}_0$ is valid and has no boundary edges, Claim~\ref{claim:bigonSinglePath}
tells us that $\varphi(F)$ is a single internal edge in $\mathcal{D}_1$.
Validity of $\mathcal{D}_0$ also tells us that we are in either
case~\ref{case:bigonSinglePath:S2:glueD2},~\ref{case:bigonSinglePath:S2:glueRP2}
or~\ref{case:bigonSinglePath:S2:glueS2} of Claim~\ref{claim:bigonSinglePath}.
Actually, cases~\ref{case:bigonSinglePath:S2:glueRP2} and~\ref{case:bigonSinglePath:S2:glueS2} are both impossible:
\begin{itemize}
\item Consider case~\ref{case:bigonSinglePath:S2:glueRP2} of Claim~\ref{claim:bigonSinglePath}.
In this case, $F$ forms a \textbf{projective plane} in $\mathcal{D}_0$, and Claim~\ref{claim:bigonSinglePath:S2:glueRP2}
tells us that the two vertices of $F$ are identified to form a single vertex $v$.
Moreover, since $\mathcal{D}_0$ is valid and has no boundary vertices, we must be in either case~\ref{case:bigonSinglePath:S2:glueRP2:internal} or
case~\ref{case:bigonSinglePath:S2:glueRP2:ideal} of Claim~\ref{claim:bigonSinglePath:S2:glueRP2}:
	\begin{description}[font=\normalfont]
	\item[\ref{case:bigonSinglePath:S2:glueRP2:internal}]
	If $v$ is internal, then Claim~\ref{claim:bigonSinglePath:S2:glueRP2} tells us that
	$F$ forms a one-sided properly embedded projective plane in $\mathcal{P}_0$.
	In fact, $F$ lies in $X$, since the $3$-sphere components
	of $\mathcal{P}_0$ cannot contain an embedded projective plane.
	Observe that a small regular neighbourhood $N$ of $F$ is homeomorphic to $\mathbb{R}P^3$ minus a small open $3$-ball,
	and that the frontier of $N$ forms a properly embedded $2$-sphere $E$ in $X$.
	Notice that $E$ does not bound a $3$-ball in $X$:
	the region on the ``inside'' of $E$ is $N$, and the region on the ``outside'' contains all the boundary components of $X$.
	This contradicts the assumption that $X$ is irreducible.
	\item[\ref{case:bigonSinglePath:S2:glueRP2:ideal}]
	If $v$ is ideal, then Claim~\ref{claim:bigonSinglePath:S2:glueRP2} tells us that the truncated bigon associated to $F$
	forms a one-sided properly embedded M\"{o}bius band $S$ in $\mathcal{P}_0$.
	In fact, $S$ lies in $X$, since the $3$-sphere components of $\mathcal{P}_0$ have empty boundary.
	Consider the annulus $A$ given by the frontier of a small regular neighbourhood $N$ of $S$.
	Since $X$ is anannular, $A$ must be either compressible or boundary-parallel.
	We claim that neither case is possible:
		\begin{itemize}
		\item If $A$ is compressible, then consider an essential compression disc $E$ for $A$.
		Up to isotopy, the boundary curve of $E$ coincides with the boundary curve
		of a M\"{o}bius band $B$ in $N$ given by thickening the core curve of $S$;
		see Figure~\ref{fig:oneSidedMobiusImpossible}.
		Observe that $E\cup B$ forms an embedded projective plane in $X$.
		By assumption, this projective plane cannot be two-sided.
		However, it also cannot be one-sided, since this would contradict
		the fact that $X$ is irreducible, by the same argument as before.
		Thus, we conclude that $A$ cannot be compressible.
		\item If $A$ is boundary-parallel, then isotoping $A$ into the boundary shows that
		the entire component $X$ is homeomorphic to a regular neighbourhood of the M\"{o}bius band $S$.
		But this means that $X$ is a solid torus, which contradicts the assumption that $X$ is $\partial$-irreducible.
		\end{itemize}
	\end{description}
The upshot is that, under our assumptions on $X$, case~\ref{case:bigonSinglePath:S2:glueRP2}
of Claim~\ref{claim:bigonSinglePath} can never occur.

\begin{figure}[htbp]
\centering
	\includegraphics[scale=1]{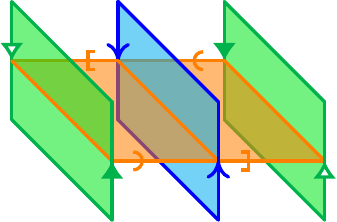}
\caption{When the truncated bigon associated to $F$ forms a one-sided M\"{o}bius band $S$ (blue),
the frontier of a small regular neighbourhood of $S$ forms an annulus $A$ (green).
If $A$ admits an essential compression disc $E$, then the boundary curve of $E$ coincides with the boundary of the M\"{o}bius band $B$ (orange).}
\label{fig:oneSidedMobiusImpossible}
\end{figure}
\item Consider case~\ref{case:bigonSinglePath:S2:glueS2} of Claim~\ref{claim:bigonSinglePath}.
In this case, $F$ forms a \textbf{\textup{2}-sphere} in $\mathcal{D}_0$, and Claim~\ref{claim:bigonSinglePath:S2:glueS2} tells us that
the truncated bigon associated to $F$ forms a one-sided properly embedded annulus in $\mathcal{P}_0$.
But this gives an essential annulus in $\mathcal{P}_0$, which is impossible since
$X$ is anannular and none of the $3$-sphere components of $\mathcal{P}_0$ can contain properly embedded annuli.
\end{itemize}
We are left with case~\ref{case:bigonSinglePath:S2:glueD2} of Claim~\ref{claim:bigonSinglePath}.
In this case, $F$ forms a \textbf{disc} in $\mathcal{D}_0$, and Claim~\ref{claim:bigonSinglePath:S2:glueD2} tells us that
the truncated pseudomanifolds $\mathcal{P}_0$ and $\mathcal{P}_1$ are homeomorphic, and hence that $\mathcal{D}_1$ is acceptable.
Suppose for the sake of contradiction that $\mathcal{D}_1$ contains a bad bigon path $B_1$.
We note that $B_1$ must contain the edge $\varphi(F)$;
otherwise, flattening $F$ would leave $B_1$ untouched, which would imply that
$\psi(B_1)$ is a bad bigon path in $\mathcal{D}_0$, contradicting the inductive hypothesis.
Thus, $\psi(B_1)$ must contain the bigon face $F$, and we are left with the following two possibilities:
\begin{itemize}
\item If $\psi(B_1)$ itself forms an internal bigon path in $\mathcal{D}_0$, then observe that flattening $F$
reduces the length of this bigon path by one, but has no topological effect;
see Figure~\ref{subfig:inverseDiscFlat3}.
Thus, $\psi(B_1)$ is bad bigon path in $\mathcal{D}_0$, contradicting the inductive hypothesis.
\item Otherwise, there must be an internal bigon path $B_0$ in $\mathcal{D}_0$
such that $B_0\cap F$ is a single edge and $B_0\cup F=\psi(B_1)$;
see Figure~\ref{subfig:inverseDiscTripod}.
In this case, we have $\varphi(B_0)=B_1$, and flattening $F$ essentially leaves $B_0$ untouched,
which means that $B_0$ is a bad bigon path in $\mathcal{D}_0$, again contradicting the inductive hypothesis.
\end{itemize}
Thus, we conclude that $\mathcal{D}_1$ contains no bad bigon paths.
This completes the inductive step for the case where ungluing $F$ yields a single boundary bigon path.

\begin{figure}[htbp]
\centering
	\begin{subfigure}[t]{0.45\textwidth}
	\centering
		\begin{tikzpicture}

		\node[inner sep=-15pt] (before) at (0,0) {
			\includegraphics[scale=1]{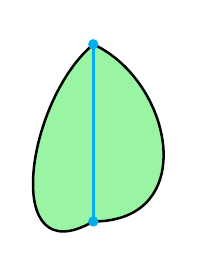}
		};
		\node[inner sep=-15pt] (after) at (3.6,0) {
			\includegraphics[scale=1]{BigonPathFlat3.pdf}
		};
		\draw[thick, line cap=round, -Stealth] (before.east) -- node[above, inner sep=1pt]{$\psi$} (after.west);

		\end{tikzpicture}
	\caption{If $\psi(B_1)$ is a bigon path, then it is topologically equivalent to $B_1$.}
	\label{subfig:inverseDiscFlat3}
	\end{subfigure}
	\hfill
	\begin{subfigure}[t]{0.45\textwidth}
	\centering
		\begin{tikzpicture}

		\node[inner sep=-15pt] (before) at (0,0) {
			\includegraphics[scale=1]{BigonPathFlat2.pdf}
		};
		\node[inner sep=-15pt] (after) at (3.6,0) {
			\includegraphics[scale=1]{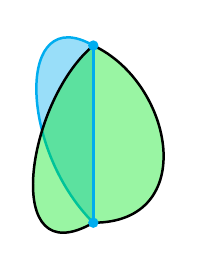}
		};
		\draw[thick, line cap=round, -Stealth] (before.east) -- node[above, inner sep=1pt]{$\psi$} (after.west);

		\end{tikzpicture}
	\caption{If there is a bigon path $B_0$ such that $B_0\cap F$ is a single edge and $B_0\cup F=\psi(B_1)$,
	then $B_0$ is topologically equivalent to $B_1$.}
	\label{subfig:inverseDiscTripod}
	\end{subfigure}
\caption{The two cases when the bigon face $F$ is a disc that forms part of $\psi(B_1)$.}
\label{fig:bigonPathInverseDisc}
\end{figure}

Suppose now that ungluing $F$ yields two separate boundary bigon paths.
Since $\mathcal{D}_0$ is valid and has no boundary edges, we are in either case~\ref{case:bigonSeparatePaths:glueS2} or
case~\ref{case:bigonSeparatePaths:glueRP2} of Claim~\ref{claim:bigonSeparatePaths}.
Actually, case~\ref{case:bigonSeparatePaths:glueRP2} is impossible,
since all three possibilities in Claim~\ref{claim:bigonSeparatePaths:glueRP2} contradict the assumption that $\mathcal{D}_0$ is acceptable:
\begin{itemize}
\item Possibility~\ref{case:bigonSeparatePaths:glueRP2:internal} requires
$\mathcal{P}_0$ to contain a two-sided properly embedded projective plane.
\item Possibility~\ref{case:bigonSeparatePaths:glueRP2:ideal} requires
$\mathcal{P}_0$ to contain a two-sided properly embedded M\"{o}bius band.
\item Possibility~\ref{case:bigonSeparatePaths:glueRP2:bdryInvalid} requires
$\mathcal{D}_0$ to contain either a boundary vertex or an invalid vertex.
\end{itemize}
We are left with case~\ref{case:bigonSeparatePaths:glueS2} of Claim~\ref{claim:bigonSeparatePaths}.
In this case, $F$ forms a \textbf{\textup{2}-sphere} in $\mathcal{D}_0$,
and $\varphi(F)$ consists of two distinct internal edges in $\mathcal{D}_1$.
Since $\mathcal{D}_0$ is valid and has no boundary vertices, we are in either
case~\ref{case:bigonSeparatePaths:glueS2:onlyInternal},~\ref{case:bigonSeparatePaths:glueS2:internalIdeal}
or~\ref{case:bigonSeparatePaths:glueS2:onlyIdeal} of Claim~\ref{claim:bigonSeparatePaths:glueS2}.
In cases~\ref{case:bigonSeparatePaths:glueS2:onlyInternal} and~\ref{case:bigonSeparatePaths:glueS2:internalIdeal},
it is relatively easy to see that $\mathcal{D}_1$ is acceptable and contains no bad bigon paths:
\begin{itemize}
\item Consider case~\ref{case:bigonSeparatePaths:glueS2:onlyInternal} of Claim~\ref{claim:bigonSeparatePaths:glueS2}.
In this case, $F$ is only incident to internal vertices, and $\mathcal{P}_1$ is
obtained from $\mathcal{P}_0$ by decomposing along a properly embedded $2$-sphere $E$.
Since $X$ is irreducible and every other component of $\mathcal{P}_0$ is a $3$-sphere, we see that $E$ bounds a $3$-ball,
which implies that flattening $F$ only changes the truncated pseudomanifold by creating a new $3$-sphere component.
Thus, $\mathcal{D}_1$ remains acceptable.
Moreover, observe that $\varphi(F)$ is not incident to any ideal vertices of $\mathcal{D}_1$,
which means that any bad bigon path $B_1$ in $\mathcal{D}_1$ must be disjoint from $\varphi(F)$;
no such $B_1$ can exist, otherwise $\psi(B_1)$ would be a bad bigon path in $\mathcal{D}_0$.
\item Consider case~\ref{case:bigonSeparatePaths:glueS2:internalIdeal} of Claim~\ref{claim:bigonSeparatePaths:glueS2}.
In this case, $F$ is incident to one internal vertex and one ideal vertex, which means that
the truncated bigon associated to $F$ forms a properly embedded disc $S$ in $X$.
Since $X$ is $\partial$-irreducible, the boundary curve of $S$ must bound a disc lying entirely in $\partial X$,
in which case Claim~\ref{claim:bigonSeparatePaths:glueS2} tells us that $\mathcal{P}_1$ is obtained from $\mathcal{P}_0$ by decomposing along a properly embedded $2$-sphere in $X$.
As before, by irreducibility of $X$, we conclude that flattening $F$
only changes the truncated pseudomanifold by creating a new $3$-sphere component.
Thus, $\mathcal{D}_1$ remains acceptable.
To see that $\mathcal{D}_1$ contains no bad bigon paths, we first note that $\varphi(F)$ is incident to exactly one ideal vertex $v$.
With this in mind, suppose for the sake of contradiction that $\mathcal{D}_1$ contains a bad bigon path $B_1$.
Observe that $B_1\cap\varphi(F)$ is either empty or consists only of the ideal vertex $v$,
which means that flattening $F$ essentially leaves $B_1$ untouched.
This implies that $\psi(B_1)$ is a bad bigon path in $\mathcal{D}_0$, contradicting the inductive hypothesis.
\end{itemize}

All that remains is to consider case~\ref{case:bigonSeparatePaths:glueS2:onlyIdeal} of Claim~\ref{claim:bigonSeparatePaths:glueS2}.
In this case, $F$ is only incident to ideal vertices, which means that
the truncated bigon associated to $F$ forms a properly embedded annulus $S$ in $X$;
let $\gamma_0$ and $\gamma_1$ denote the two boundary curves of $S$.
Observe that $S$ cannot be boundary-parallel, because this would mean that $F$ itself forms a bad bigon path in $\mathcal{D}_0$.
Combining this with the assumption that $X$ is anannular, we see that $S$ must be a compressible annulus.

Given what we know about the $3$-manifold $X$ and the annulus $S$,
we claim that flattening $F$ only changes the truncated pseudomanifold by creating a new $3$-sphere component.
To prove this, we start by compressing $S$ along an essential compression disc,
which yields two properly embedded discs $E_0$ and $E_1$ such that for each $i\in\{0,1\}$, the boundary curve of $E_i$ is $\gamma_i$.
Since $X$ is $\partial$-irreducible, each curve $\gamma_i$ must therefore bound a disc $E'_i$ lying entirely in the boundary of $X$;
see Figure~\ref{fig:benignCompressAnnulus}.
Thus, by case~\ref{case:bigonSeparatePaths:glueS2:onlyIdeal} of Claim~\ref{claim:bigonSeparatePaths:glueS2},
flattening $F$ corresponds to decomposing along a properly embedded $2$-sphere in $X$.
Since $X$ is irreducible, this $2$-sphere bounds a $3$-ball, so the only topological effect is to create a new $3$-sphere component.
This shows that $\mathcal{D}_1$ remains acceptable.

\begin{figure}[htbp]
\centering
	\begin{tikzpicture}

	\node[inner xsep=3pt] (before) at (0,0) {
		\includegraphics[scale=1]{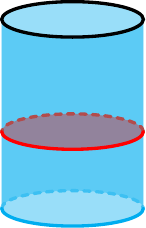}
	};
	\node[inner xsep=-6pt] (after) at (4,0) {
		\includegraphics[scale=1]{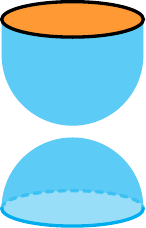}
	};
	\draw[thick, line cap=round, -Stealth] (before.east) -- (after.west);

	\node[cyan!50!black] at ($(before.west)+(0,0.5)$) {$S$};
	\node at ($(before.north)+(1,-0.1)$) {$\gamma_i$};
	\node[orange!50!black] at ($(after.north)+(0,-0.47)$) {$E'_i$};
	\node at ($(after.north)+(1,-0.1)$) {$\gamma_i$};
	\node[right, cyan!50!black] at ($(after.east)+(-0.05,0.5)$) {$E_i$};
	\node[right, cyan!50!black] at ($(after.east)+(-0.05,-1)$) {$E_{1-i}$};

	\end{tikzpicture}
\caption{For each $i\in\{0,1\}$, the curve $\gamma_i$ bounds a properly embedded disc $E_i$ in $X$ given by compressing the annulus $S$.
The fact that $X$ is $\partial$-irreducible therefore implies that $\gamma_i$ bounds a disc $E'_i$ that lies entirely in $\partial X$.}
\label{fig:benignCompressAnnulus}
\end{figure}

To finish, we just need to verify that $\mathcal{D}_1$ contains no bad bigon paths.
If a bad bigon path $B_1$ in $\mathcal{D}_1$ is disjoint from $\varphi(F)$ or meets $\varphi(F)$ only in (ideal) vertices,
then observe that $\psi(B_1)$ would be a bad bigon path in $\mathcal{D}_0$, which is impossible.
The only other possibility is that $B_1$ meets $\varphi(F)$ in an edge, in which case there would exist
an internal bigon path $B_0$ in $\mathcal{D}_0$ such that $B_0\cap F$ is a single edge and $B_0\cup F=\psi(B_1)$;
see Figure~\ref{fig:bigonPathInverseSphere}.
Observe that $B_0$ would be a bad bigon path in $\mathcal{D}_0$, which is again impossible.

\begin{figure}[htbp]
\centering
	\begin{tikzpicture}

	\node[inner sep=-15pt] (before) at (0,0) {
		\includegraphics[scale=1]{BigonPathFlat2.pdf}
	};
	\node[inner sep=-10pt] (after) at (3.8,0) {
		\includegraphics[scale=1]{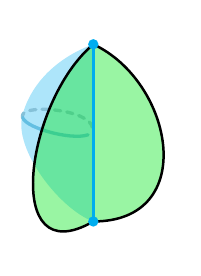}
	};
	\draw[thick, line cap=round, -Stealth] (before.east) -- node[above, inner sep=1pt]{$\psi$} (after.west);

	\end{tikzpicture}
\caption{If $F$ is a bigon face that forms a $2$-sphere, and if there is a bigon path $B_0$ such that $B_0\cap F$ is a single edge
and $B_0\cup F=\psi(B_1$), then $B_0$ is topologically equivalent to $B_1$.}
\label{fig:bigonPathInverseSphere}
\end{figure}

In summary, we have shown that in every possible case, performing an atomic move on $\mathcal{D}_0$ gives a new acceptable cell decomposition $\mathcal{D}_1$
(and also that $\mathcal{D}_1$ contains no bad bigon paths).
By induction, this shows that after performing however many atomic moves we need to flatten $\mathcal{D}^\ast$,
the triangulation $\mathcal{T}^\ast$ that results from flattening will be acceptable.
\end{proof}

\section{Triangulation complexity of \texorpdfstring{$3$}{3}-dimensional submanifolds} \label{sec:submanifold}

The purpose of this section is to showcase some applications of Theorem~\ref{thm:crushPositiveGenus} to a notion---namely,
\textbf{triangulation complexity}---whose significance is independent from crushing.
There has been substantial effort devoted to finding upper and lower bounds on triangulation complexity for various families of $3$-manifolds;
for instance, see~\cite{BFP2015,Jackson2023,JJST2016,JRST2019,JRT2009,JRST2020,Jaco-Rubinstein-Spreer-Tillmann:ComplexityDF,JRT2011,Jaco-Rubinstein-Tillmann:ThurstonNormComplexity,LackenbyPurcell2022Elliptic,Lackenby-Purcell:Complexity,RST2021Minimal,VesninFominykh2011}.
For this paper, the triangulation complexity for a closed $3$-manifold $\mathcal{M}$ will refer to
the minimum number of tetrahedra in any (closed) triangulation of $\mathcal{M}$,
and the triangulation complexity for a bounded $3$-manifold $\mathcal{M}$ will refer to
the minimum number of tetrahedra in any \emph{ideal} triangulation of $\mathcal{M}$;
in either case, we will denote this quantity by $\Delta(\mathcal{M})$.

Our main application of Theorem~\ref{thm:crushPositiveGenus} is to prove that, under quite general conditions,
the triangulation complexity of a $3$-manifold $\mathcal{M}$ is strictly bigger than the triangulation complexity of
a $3$-dimensional submanifold of $\mathcal{M}$ bounded by surfaces of positive genus.
The precise statement is given in Theorem~\ref{thm:submanifold} below.

To our knowledge, Theorem~\ref{thm:submanifold} has never previously been written down in the literature.
However, as mentioned in Section~\ref{sec:intro}, it is important to note that
a similar result can also be obtained by combining the following ideas from Matveev's book~\cite{Matveev2007}:
\begin{itemize}
\item duality of triangulations and special spines~\cite[Section~1.1]{Matveev2007};
\item conversion of almost simple spines into special spines~\cite[Section~2.1.1]{Matveev2007}; and
\item results about how the complexity of almost simple spines interacts with the operation of cutting along
normal surfaces in handle decompositions~\cite[Section~4.2]{Matveev2007}.
\end{itemize}

In any case, a good reason to explicitly write down Theorem~\ref{thm:submanifold} is that its assumptions are relatively easy to check.
This gives a way to streamline some applications by avoiding the need to directly use either our crushing machinery or Matveev's spine machinery.
We demonstrate this in Sections~\ref{subsec:jsj} to~\ref{subsec:rod} by giving some straightforward applications
of Theorem~\ref{thm:submanifold} to JSJ decompositions and satellite knots.

\submanifold*

\begin{proof}
Let $\mathcal{T}$ be a closed (if $\mathcal{M}$ is closed) or ideal (if $\mathcal{M}$ has boundary)
triangulation of $\mathcal{M}$ such that $\lvert\mathcal{T}\rvert = \Delta(\mathcal{M})$.
Our goal is to find an ideal triangulation of $\mathcal{R}$ with strictly fewer tetrahedra than $\mathcal{T}$.
We do this by constructing a suitable normal surface $S'$ in $\mathcal{T}$, and using
Theorem~\ref{thm:crushPositiveGenus} to ensure that crushing $S'$ yields the desired triangulation of $\mathcal{R}$.

In detail, let $N(S)$ denote a closed tubular neighbourhood of $S$ in $\mathcal{M}$.
Viewing $\mathcal{R}$ as the submanifold of $\mathcal{M}$ given by deleting the interior of $N(S)$,
let $S'$ be the union of all the components of $\partial N(S)$ that meet $\mathcal{R}$.
Since $S$ has no $2$-sphere or projective plane components, observe that $S'$ cannot have any $2$-sphere components.
Also, each component of $S'$ is two-sided, since it meets $N(S)$ on one side and $\mathcal{R}$ on the other side;
moreover, since $\mathcal{R}$ meets each component of $S'$ on exactly one side, we can take the interior of $\mathcal{R}$ to be
the \emph{chosen region} (as defined at the beginning of Section~\ref{sec:genus}) for $S'$.
This already establishes most of what we require to apply Theorem~\ref{thm:crushPositiveGenus};
what remains is to show that $S'$ is an essential surface in $\mathcal{M}$,
which will allow us to assume that $S'$ is a non-trivial normal surface with respect to $\mathcal{T}$.

Since $S'$ is a closed (but possibly disconnected) surface, showing that $S'$ is essential entails verifying that
every component of $S'$ is incompressible, and that at least one component of $S'$ is not boundary-parallel;
in fact, we will be able to show that every component of $S'$ is not boundary-parallel, which is stronger than we require.
To this end, consider any particular component $C'$ of $S'$.
Since $C'$ lies in the boundary of a closed tubular neighbourhood $N(C)$ of some component $C$ of $S$, we have the following two cases:
\begin{itemize}
\item If $C$ is two-sided, then $C'$ is isotopic to $C$, so the fact that $C'$ is incompressible and not boundary-parallel follows
from the assumption that these conditions are satisfied by every component of $S$.
\item If $C$ is one-sided, then $C'=\partial N(C)$.
To see that $C'$ is incompressible, consider a compression disc $D$ for $C'$ in $\mathcal{M}$;
we need to show that $D$ cannot be an \emph{essential} compression disc.
We have the following cases:
	\begin{itemize}
	\item If $D\subset\mathcal{R}$, then $D$ is a compression disc for $\mathcal{R}$.
	Since $\mathcal{R}$ is assumed to be $\partial$-irreducible, $D$ cannot be essential.
	\item If $D\subset N(C)$, then we can use a standard fundamental group argument.
	Note that $\pi_1(N(C)) \cong \pi_1(C)$, and that the double-covering map $p:\partial N(C) \to C$
	induces an injective homomorphism $p_\ast:\pi_1(\partial N(C)) \to \pi_1(C)$.
	Since $\partial D$ is homotopically trivial in $N(C)$, injectivity of $p_\ast$ tells us that
	$\partial D$ must also be homotopically trivial in $C' = \partial N(C)$.
	Thus, we again see that $D$ cannot be essential.
	\end{itemize}
The upshot is that $C'$ does not admit an essential compression disc, so it is incompressible.
To see that $C'$ is not boundary-parallel, suppose instead that this is false.
The isotopy of $C'$ into $\partial\mathcal{M}$ defines a product region $P$ in $\mathcal{M}-C'$.
Note that $C'$ meets two components of $\mathcal{M}-C'$:
the interior of $N(C)$, and the interior of $\mathcal{R}$.
The product region $P$ must coincide with $\mathcal{R}$.
However, this would contradict the assumption that $\mathcal{R}$ is anannular, so we conclude that $C'$ cannot be boundary-parallel.
\end{itemize}
As mentioned above, this suffices to show that $S'$ is a closed essential surface in $\mathcal{M}$.

By incompressibility of $S'$, we can use the normalisation procedure (recall
Section~\ref{subsec:barriers}) to ensure that $S'$ is normal with respect to $\mathcal{T}$.
Moreover, since links of ideal vertices of $\mathcal{T}$ correspond to boundary-parallel surfaces,
the fact that $S'$ is not boundary-parallel ensures that we have a \emph{non-trivial} normal surface in $\mathcal{T}$.
By Lemma~\ref{lem:maximal}, we may further assume that this normal surface $S'$ is maximal.

To recap, we are now in the setting laid out at the beginning of Section~\ref{sec:genus}:
we have a suitable normal surface $S'$, together with a suitable chosen region given by the interior of $\mathcal{R}$.
By assumption, we have that $\mathcal{R}$ is irreducible, $\partial$-irreducible and anannular, and also that $\mathcal{R}$ contains no two-sided properly embedded M\"{o}bius bands.
Thus, all the prerequisites for Theorem~\ref{thm:crushPositiveGenus} are satisfied, and applying this theorem tells us that after crushing $S'$,
one of the benign components (as defined in Section~\ref{sec:genus}) forms
an ideal triangulation $\mathcal{T}^\ast$ of $\mathcal{R}$.
Since $S'$ is a \emph{non-trivial} normal surface, we have $\lvert\mathcal{T}^\ast\rvert < \lvert\mathcal{T}\rvert$.
Hence $\Delta(\mathcal{R}) \leqslant \lvert\mathcal{T}^\ast\rvert < \lvert\mathcal{T}\rvert = \Delta(\mathcal{M})$, as required.
\end{proof}

\subsection{Application: hyperbolic JSJ pieces}\label{subsec:jsj}

Let $\mathcal{M}$ be an irreducible and $\partial$-irreducible $3$-manifold with no $2$-sphere boundary components.
Recall that by work of Jaco and Shalen~\cite{JacoShalen1978,Jaco-Shalen:SFSpacesIn3Mfds},
and independently by Johannson~\cite{Johannson:HmtpEquiOf3MfdsWithBdry},
there is a canonical collection $\{S_i\}$ of finitely many disjoint essential tori in $\mathcal{M}$ such that
each piece resulting from cutting along $\bigcup_i S_i$ is either atoroidal or Seifert fibred;
formal statements of this result can also be found in~\cite[Theorem~1.9]{Hatcher3Mfld} and~\cite[Theorem~8.23]{Purcell:HyperbolicKnotTheory}.
This collection of tori is called the \textbf{JSJ decomposition} (or the \textbf{torus decomposition}) of $\mathcal{M}$;
it is closely related to (but not exactly the same as) the decomposition along tori described by the Thurston-Perelman Geometrisation Theorem.
Theorem~\ref{thm:submanifold} almost immediately yields the following consequence for JSJ decompositions:

\begin{theorem} \label{thm:JSJDecomp}
Let $\mathcal{M}$ be an orientable $3$-manifold with no $2$-sphere boundary components.
If $\mathcal{M}$ is irreducible, $\partial$-irreducible and has non-empty JSJ decomposition $\{S_i\}$, then any hyperbolic component $\mathcal{H}$
that results from cutting $\mathcal{M}$ along $\bigcup_i S_i$ satisfies $\Delta(\mathcal{H}) < \Delta(\mathcal{M})$.
\end{theorem}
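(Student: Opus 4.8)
The plan is to derive Theorem~\ref{thm:JSJDecomp} as a direct application of Theorem~\ref{thm:submanifold}, so the work consists almost entirely of checking that a hyperbolic JSJ piece $\mathcal{H}$ satisfies the hypotheses imposed on $\mathcal{R}$ there. First I would set $S=\bigcup_i S_i$, which by definition of the JSJ decomposition is a (possibly disconnected) closed incompressible surface in $\mathcal{M}$ consisting entirely of tori; in particular $S$ has no $2$-sphere components and no projective plane components, and since $\mathcal{M}$ is orientable these tori are two-sided. The one subtlety is the no-boundary-parallel-components requirement: a JSJ torus could in principle be boundary-parallel, but if it were, the piece it cuts off would be $T^2\times I$, which is Seifert fibred, not hyperbolic, so such a torus does not border our hyperbolic piece $\mathcal{H}$. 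I would handle this by discarding any boundary-parallel tori from $S$ before applying Theorem~\ref{thm:submanifold} (removing them does not change the piece $\mathcal{H}$ up to homeomorphism), or simply by noting that the canonical JSJ system, suitably normalised, contains no boundary-parallel tori when $\mathcal{M}$ is not itself $T^2\times I$ --- and if $\mathcal{M}=T^2\times I$ there is no hyperbolic piece, so the statement is vacuous.

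Next I would verify the four conditions on $\mathcal{R}=\mathcal{H}$. Irreducibility and $\partial$-irreducibility of $\mathcal{H}$ follow from standard JSJ theory: the pieces of the JSJ decomposition of an irreducible, $\partial$-irreducible $3$-manifold are themselves irreducible and $\partial$-irreducible (this is part of the Jaco--Shalen--Johannson package, e.g.~\cite[Theorem~1.9]{Hatcher3Mfld}). Anannularity of $\mathcal{H}$ is where hyperbolicity enters: a finite-volume hyperbolic $3$-manifold with torus cusps and/or torus boundary contains no essential annuli (an essential annulus would give a $\mathbb{Z}\oplus\mathbb{Z}$ subgroup not peripheral, or a non-trivial centraliser, contradicting discreteness/hyperbolicity), so $\mathcal{H}$ is anannular. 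Finally, $\mathcal{H}$ is orientable (a codimension-zero submanifold of the orientable $\mathcal{M}$), and orientable $3$-manifolds contain no two-sided properly embedded M\"obius bands, since a regular neighbourhood of such a band is a solid torus glued to itself in a way that forces non-orientability --- this is exactly the kind of remark already made in the bulleted list preceding Theorem~\ref{thm:crushPositiveGenus}.

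With all hypotheses checked, Theorem~\ref{thm:submanifold} applied with $\mathcal{R}=\mathcal{H}$ gives $\Delta(\mathcal{H})<\Delta(\mathcal{M})$ immediately, completing the proof.

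I do not expect a serious obstacle here: the only genuine content beyond citing JSJ theory is the observation that hyperbolic pieces are anannular and the bookkeeping around boundary-parallel tori. The mild annoyance to get right is making sure that after discarding boundary-parallel components of the JSJ system, the hyperbolic piece $\mathcal{H}$ really does arise as a component of $\mathcal{M}$ cut along the remaining tori --- but this is routine, since a boundary-parallel torus only ever splits off a $T^2\times I$ collar, which is not the hyperbolic piece.
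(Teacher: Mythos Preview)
Your proposal is correct and follows essentially the same route as the paper: verify that the hyperbolic piece $\mathcal{H}$ is irreducible, $\partial$-irreducible, anannular, and contains no two-sided M\"obius bands, then invoke Theorem~\ref{thm:submanifold}. The paper is terser---it cites Thurston's Hyperbolisation Theorem for the first three properties and does not fuss over boundary-parallel tori (since the canonical JSJ system consists of \emph{essential} tori by definition)---but your extra bookkeeping is harmless and the argument is the same.
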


\begin{proof}
Suppose that after cutting along the tori in the JSJ decomposition of $\mathcal{M}$, (at least) one of the resulting pieces $\mathcal{H}$ is hyperbolic.
By Thurston's Hyperbolisation Theorem, $\mathcal{H}$ is irreducible, $\partial$-irreducible and anannular.
Moreover, orientability of $\mathcal{M}$ implies that $\mathcal{H}$ is orientable,
and hence that $\mathcal{H}$ contains no two-sided properly embedded M\"{o}bius bands.
Thus, by Theorem~\ref{thm:submanifold}, we have $\Delta(\mathcal{H}) < \Delta(\mathcal{M})$.
\end{proof}

\subsection{Application: satellite knots}\label{subsec:satellite}

Recall that the \textbf{exterior} of a knot or link $L$ in $S^3$ is the $3$-manifold obtained by deleting an open regular neighbourhood of $L$ from $S^3$.
We take the \textbf{triangulation complexity} of a link $L$, denoted $\Delta(L)$, to mean the triangulation complexity of the exterior of $L$.

Our goal now is to present an easy consequence of Theorem~\ref{thm:submanifold} concerning the triangulation complexity of satellite knots;
see Theorem~\ref{thm:satellite} below.
For this, we first review the definition of a satellite knot:

\begin{definitions}\label{defs:satellite}
Let $V^\ast$ denote the solid torus given by the exterior of an unknot $U^\ast$, let $C^\ast$ denote the core circle of $V^\ast$,
and let $e:V^\ast\to S^3$ be an embedding such that the image of $C^\ast$ under $e$ is a non-trivial knot $C$.
Consider a knot $K^\ast$ in the interior of $V^\ast$ such that:
\begin{itemize}[nosep]
\item $K^\ast$ is not isotopic (inside $V^\ast$) to $C^\ast$; and
\item every meridional disc of $V^\ast$ meets $K^\ast$ at least once.
\end{itemize}
The image of $K^\ast$ under $e$ is a non-trivial knot $K$ called a \textbf{satellite knot}.
We call the knot $C$ a \textbf{companion} of $K$, and we call the torus $e(\partial V^\ast)$ a \textbf{companion torus} of $K$.
We also call the link $K^\ast\cup U^\ast$ a \textbf{pattern} of $K$.
\end{definitions}

\begin{figure}[htbp]
\centering
	\begin{subfigure}[t]{0.3\textwidth}
	\centering
		\includegraphics[scale=0.8]{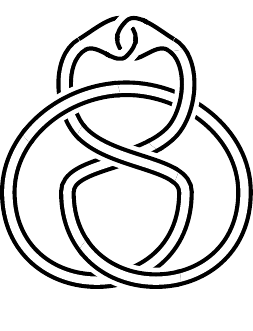}
	\caption{A satellite knot $K$: the untwisted Whitehead double of the figure-eight knot.}
	\label{subfig:whiteheadDouble}
	\end{subfigure}
	\hfill
	\begin{subfigure}[t]{0.3\textwidth}
	\centering
		\includegraphics[scale=0.8]{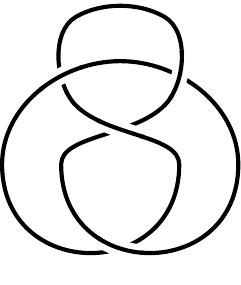}
	\caption{A companion of $K$: the figure-eight knot.}
	\label{subfig:companion}
	\end{subfigure}
	\hfill
	\begin{subfigure}[t]{0.3\textwidth}
	\centering
		\includegraphics[scale=0.8]{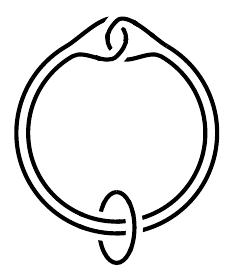}
	\caption{A pattern of $K$: the Whitehead link.}
	\label{subfig:pattern}
	\end{subfigure}
\caption{An example of a satellite knot, together with a companion and a pattern;
in this case, both the companion and the pattern are hyperbolic, and hence anannular.}
\label{fig:satellite}
\end{figure}

\begin{theorem}\label{thm:satellite}
Let $K$ be a satellite knot, and let $L$ denote either a companion or a pattern of $K$.
If the exterior of $L$ is anannular, then $\Delta(L) < \Delta(K)$.
\end{theorem}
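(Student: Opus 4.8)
The plan is to apply Theorem~\ref{thm:submanifold} with $\mathcal{M}$ taken to be the exterior of the satellite knot $K$, and with $S$ taken to be the companion torus, so that cutting along $S$ produces two pieces: the exterior of the companion $C$ on one side, and the exterior of the pattern link $K^\ast\cup U^\ast$ on the other. First I would set up this decomposition carefully. Let $T$ be the companion torus $e(\partial V^\ast)$; since $K$ is a satellite with companion $C$, the torus $T$ is an essential torus in the exterior $E(K)$ of $K$. Cutting $E(K)$ along $T$ yields exactly two components: the ``outside'' piece, which is the exterior $E(C)$ of the companion knot $C$ (since $e$ maps $V^\ast$ so that its core becomes $C$, the complement of $e(V^\ast)$ in $S^3$ is $E(C)$); and the ``inside'' piece, which is $e(V^\ast)$ with an open neighbourhood of $K = e(K^\ast)$ removed, i.e.\ the exterior of the link $K^\ast\cup U^\ast$ inside $S^3$ (identifying $V^\ast$ with the exterior of $U^\ast$). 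So whichever of these two choices we make for the component $\mathcal{R}$ in Theorem~\ref{thm:submanifold}, $\mathcal{R}$ is exactly $E(L)$ with $L$ the relevant companion or pattern.

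Next I would verify the hypotheses of Theorem~\ref{thm:submanifold}. The ambient manifold $\mathcal{M} = E(K)$ is compact, orientable, has torus boundary (no $2$-sphere boundary components), and we need $S = T$ to be a closed incompressible surface with no $2$-sphere components, no projective plane components, and no boundary-parallel components. Incompressibility of a companion torus is standard: a satellite knot's companion torus is essential (this is essentially the definition, using that $K^\ast$ is not isotopic to the core and every meridional disc meets $K^\ast$, which rules out both compressibility and boundary-parallelism); I would cite this or give the short argument. Then I need $\mathcal{R} = E(L)$ to be irreducible, $\partial$-irreducible, anannular, and to contain no two-sided properly embedded M\"obius bands. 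Irreducibility of a knot or link exterior in $S^3$ is classical (knot and link complements are irreducible). Since $E(L)\subset S^3$ is orientable, it contains no two-sided properly embedded M\"obius bands automatically (as noted in the excerpt, this condition holds for all orientable $3$-manifolds). The anannular hypothesis is precisely what we are assuming in the statement of Theorem~\ref{thm:satellite}. The one remaining point is $\partial$-irreducibility: I would observe that an anannular knot/link exterior other than a solid torus is $\partial$-irreducible --- more precisely, if $E(L)$ had a compressible boundary component then $L$ would be a split link or have an unknotted component bounding a disc, forcing an essential annulus or contradicting that $L$ arises as a companion/pattern of a genuine satellite; alternatively, a $\partial$-reducible irreducible manifold with nonempty boundary is a connected sum of solid tori and $S^3$-pieces, and a solid torus is not anannular. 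I would phrase this cleanly, possibly just noting that anannular plus irreducible plus ``not a solid torus'' gives $\partial$-irreducible, and that $E(L)$ is not a solid torus because $L$ is non-trivial.

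With all hypotheses checked, Theorem~\ref{thm:submanifold} immediately gives $\Delta(\mathcal{R}) < \Delta(\mathcal{M})$, that is, $\Delta(L) < \Delta(K)$, which is exactly the claim. The proof is therefore short: the entire content is translating the topology of satellite knots into the language of cutting along an essential surface, and confirming the ``niceness'' hypotheses. The main obstacle --- though it is minor --- is pinning down $\partial$-irreducibility of $E(L)$ from the anannular hypothesis without circularity, and making sure the identification of the two pieces of $E(K)\setminus T$ with $E(\text{companion})$ and $E(\text{pattern})$ is stated correctly (in particular handling the boundary components: $E(C)$ has one torus boundary component, while $E(K^\ast\cup U^\ast)$ has two, one of which is $T$ and the other is the meridian boundary of $K$; both are consistent with $\mathcal{R}$ being a component of $\mathcal{M}$ cut along $S$). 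I would keep the write-up to a single paragraph of setup followed by a bullet-free checklist of the four conditions on $\mathcal{R}$, then invoke Theorem~\ref{thm:submanifold}.
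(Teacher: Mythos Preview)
Your proposal is correct and follows essentially the same route as the paper: both apply Theorem~\ref{thm:submanifold} with $\mathcal{M}=E(K)$ and $S$ the companion torus, then verify the four conditions on $\mathcal{R}=E(L)$. The only minor difference is in how $\partial$-irreducibility is checked: the paper dispatches it directly by noting that the companion is a non-trivial knot and the pattern is a non-split link (standard facts that immediately give irreducibility and $\partial$-irreducibility of the exterior), whereas you propose deducing it from anannularity plus irreducibility plus ``not a solid torus,'' which is more circuitous and, as you yourself flag, needs a little care for the two-component pattern case. The paper's route is cleaner here and avoids that wrinkle.
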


It is worth noting that every hyperbolic link is anannular.
Thus, Theorem~\ref{thm:satellite} applies to a very large class of satellite knots;
Figure~\ref{fig:satellite} shows one example of such a satellite knot.

\begin{proof}[Proof of Theorem~\ref{thm:satellite}]
Let $\overline{K}$ and $\overline{L}$ denote the exteriors of $K$ and $L$, respectively.
Since $\overline{L}$ is one of the components given by cutting the exterior of $\overline{K}$ along a companion torus (which is an essential torus),
we can apply Theorem~\ref{thm:submanifold} provided that $\overline{L}$:
\begin{itemize}
\item is irreducible,
$\partial$-irreducible and anannular; and
\item contains no two-sided properly embedded M\"{o}bius bands.
\end{itemize}
We have assumed that $\overline{L}$ is anannular, and the fact that $\overline{L}$
is orientable implies that it contains no two-sided properly embedded M\"{o}bius bands.
Moreover, in the case where $L$ is a companion, irreducibility and $\partial$-irreducibility follow from the fact that $L$ must be a non-trivial knot;
on the other hand, when $L$ is a pattern, irreducibility and $\partial$-irreducibility follow from the fact that $L$ must be a non-split link.
The upshot is that, by Theorem~\ref{thm:submanifold},  we have
$\Delta(L) = \Delta\left(\overline{L}\right) < \Delta\left(\overline{K}\right) = \Delta(K)$.
\end{proof}

\begin{corollary}\label{cor:compositeKnot}
Consider the connected sum $K \ConnSum K'$ of two non-trivial knots $K$ and $K'$.
If $K$ is a hyperbolic knot, then $\Delta(K) < \Delta(K \ConnSum K')$.
\end{corollary}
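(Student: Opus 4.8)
The plan is to recognise $K \ConnSum K'$ as a satellite knot whose companion is $K$, and then to apply Theorem~\ref{thm:satellite} directly. Write $J := K \ConnSum K'$. First I would recall the classical fact that a composite knot is a satellite: form the solid torus $V^\ast$ as in Definitions~\ref{defs:satellite}, and choose an embedding $e : V^\ast \to S^3$ sending the core $C^\ast$ to $K$; inside $V^\ast$, place the pattern knot $K^\ast$ as a copy of $K'$ that traverses $V^\ast$ with winding number one and has all of its knotting concentrated inside a small ball in $V^\ast$, so that $e(K^\ast) = J$. I would then verify the three conditions of Definitions~\ref{defs:satellite}: the companion $C = K$ is non-trivial since $K$ is hyperbolic; $K^\ast$ meets every meridional disc of $V^\ast$, since it is not contained in a ball in $V^\ast$; and $K^\ast$ is not isotopic to $C^\ast$ inside $V^\ast$, because otherwise $J = e(K^\ast)$ would be isotopic to $e(C^\ast) = K$, forcing $K'$ to be trivial, contrary to hypothesis. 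Hence $J$ is a satellite knot with companion $K$.

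Next I would observe that the exterior of $K$ is anannular: since $K$ is hyperbolic, its exterior carries a complete finite-volume hyperbolic metric, and such a $3$-manifold contains no essential annuli. Taking $K$ to play the role of the knot ``$L$'' in Theorem~\ref{thm:satellite}, with $J = K \ConnSum K'$ playing the role of ``$K$'', all the hypotheses of Theorem~\ref{thm:satellite} are satisfied, so we conclude $\Delta(K) < \Delta(K \ConnSum K')$, as required. (Alternatively, one could bypass Theorem~\ref{thm:satellite} and apply Theorem~\ref{thm:submanifold} directly, taking $\mathcal{M}$ to be the exterior of $J$, taking $S$ to be the companion torus $e(\partial V^\ast)$ — an essential torus, hence incompressible and not boundary-parallel — and taking $\mathcal{R}$ to be the exterior of $K$, which is irreducible, $\partial$-irreducible, anannular, and contains no two-sided properly embedded M\"obius bands.)

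The only step requiring genuine care is the first one: making the identification of $K \ConnSum K'$ as a satellite precise and checking Definitions~\ref{defs:satellite}. This is standard material, and once it is in place the result is an immediate consequence of Theorem~\ref{thm:satellite} together with the fact that hyperbolic knot exteriors are anannular.
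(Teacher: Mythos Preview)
Your proposal is correct and follows essentially the same approach as the paper: recognise $K\ConnSum K'$ as a satellite knot with companion $K$, note that hyperbolic knot exteriors are anannular, and invoke Theorem~\ref{thm:satellite}. The paper's proof is a terse two-sentence version of exactly this argument, while you have helpfully filled in the verification of Definitions~\ref{defs:satellite} and spelled out the alternative direct route via Theorem~\ref{thm:submanifold}.
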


\begin{proof}
Recall that the connected sum $K \ConnSum K'$ can be viewed as a satellite knot with companion given by either of its summands.
Thus, if the summand $K$ is hyperbolic, in which case the exterior of $K$ is anannular, then it follows immediately from Theorem~\ref{thm:satellite} that $\Delta(K) < \Delta(K \ConnSum K')$.
\end{proof}

\subsection{Application: rod complements in the \texorpdfstring{$3$}{3}-torus}\label{subsec:rod}

The crushing techniques developed in previous sections can also be used to study link exteriors in ambient spaces other than the $3$-sphere, such as the $3$-torus $\mathbb{T}^3$.
The fourth author and Purcell~\cite{Hui-Purcell:RodPackings} initiated the use of $3$-dimensional geometry and topology to study rod packing structures in crystallography.
In crystallographic chemistry, a rod packing is a packing of uniform cylinders (also called rods) that represent linear or zigzag chains of particles.
Readers may refer to \cite{OKeeffeEtAl:CubicRodPackings, OKeeffeEtAl:CylinderPackingsRevisited} for examples of rod packing structures.

Many rod packing structures exhibit translational symmetry in each dimension of $3$-dimensional Euclidean space.
Taking a quotient by this symmetry allows such rod packings to be encoded as geodesic links in the $3$-torus, which we can then study using tools from $3$-manifold geometry and topology.
Each component of such a geodesic link is called a \textbf{rod-shaped circle}, or often simply a \textbf{rod}, in $\mathbb{T}^3$;
the complement of such a link in $\mathbb{T}^3$ is a $3$-manifold called a \textbf{rod complement}.

For $\mathcal{M}$ belonging to a large family of rod complements, the JSJ decomposition gives a unique hyperbolic piece $\mathcal{H}$, and this hyperbolic piece is also a rod complement.
Theorem~\ref{thm:3-torus} states this precisely, and then (by applying Theorem~\ref{thm:JSJDecomp}) relates the triangulation complexities of $\mathcal{M}$ and $\mathcal{H}$.

To study the JSJ decompositions of rod complements, we rely on work of the fourth author~\cite{Hui:ClassifyT3Rods},
which completely classified the geometry of all rod complements using simple linear algebra conditions.
The linear algebra arises from the fact that a rod in $\mathbb{T}^3$ lifts to a straight line in the universal cover $\mathbb{R}^3$.
This linear structure leads to the following natural concepts:
\begin{itemize}
\item
\textbf{linear independence} of rods in $\mathbb{T}^3$; and
\item
\textbf{linear isotopy} of rods in $\mathbb{T}^3$, which roughly means an isotopy along a planar annulus between two parallel rods.
\end{itemize}
These notions appear in the statements of Lemma~\ref{lem:3-torus} and Theorem~\ref{thm:3-torus} below;
readers may refer to~\cite{Hui-Purcell:RodPackings, Hui:ClassifyT3Rods} for
definitions of these notions, as well as for explanations of other related terminology that we use in the proofs.

\begin{lemma}\label{lem:3-torus}
Let $\mathcal{M}$ be a toroidal rod complement in the $3$-torus with at least three linearly independent rods.
Any essential torus in $\mathcal{M}$ bounds a solid torus in $\mathbb{T}^3$ whose interior contains two or more linearly isotopic rods.
\end{lemma}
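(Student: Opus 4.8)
The plan is to prove Lemma~\ref{lem:3-torus} by analysing an essential torus $T$ in the toroidal rod complement $\mathcal{M}$ using the classification of rod complement geometries from~\cite{Hui:ClassifyT3Rods}, together with standard $3$-manifold facts about incompressible tori in $\mathbb{T}^3$. First I would recall that $\mathbb{T}^3$ is irreducible and that any incompressible torus in a compact orientable irreducible $3$-manifold is either boundary-parallel or, when it separates, bounds a ``knotted'' piece on at least one side. Here $T$ is properly embedded in $\mathcal{M} = \mathbb{T}^3 - N(L)$, where $L$ is the union of rods; cap off $\mathcal{M}$ to get back $\mathbb{T}^3$, and view $T$ as an embedded torus in $\mathbb{T}^3$ that is disjoint from $L$. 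The key classical input is that an embedded incompressible torus in $\mathbb{T}^3$ is either non-separating (hence isotopic to a ``coordinate'' subtorus, i.e.\ its homology class is primitive and it is isotopic to a linear $2$-torus) or separating and compressible in $\mathbb{T}^3$, in which case it bounds a solid torus on one side (since the only separating incompressible surfaces in $\mathbb{T}^3$ are inessential --- $\mathbb{T}^3$ is atoroidal as a closed manifold in the sense that it has no \emph{nonperipheral} essential tori because $\pi_1(\mathbb{T}^3) = \mathbb{Z}^3$ contains no rank-two free abelian subgroup that is not contained in a direct summand).

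Next I would argue that the non-separating case cannot occur for an essential torus in $\mathcal{M}$. If $T$ were non-separating in $\mathbb{T}^3$ and disjoint from $L$, it would be isotopic in $\mathbb{T}^3$ to a linear $2$-torus $P$; pushing $L$ off $P$ exhibits every rod as having direction lying in (a translate of) the plane of $P$, which forces all rods to lie in a single $2$-dimensional linear subspace --- contradicting the hypothesis that $\mathcal{M}$ has at least three linearly independent rods. (One has to be a little careful: the isotopy in $\mathbb{T}^3$ might pass through $L$, so I would instead use the homology class: $[T] \in H_2(\mathbb{T}^3) = H_2(\mathbb{T}^3 - L) \oplus (\text{meridian classes})$, and a non-separating $T$ in $\mathbb{T}^3$ has primitive class dual to some primitive $v \in H_1(\mathbb{T}^3)$; each rod, being disjoint from $T$, must have homology class with zero intersection pairing with $[T]$, i.e.\ every rod direction is orthogonal to $v$, again forcing all rods into a plane.) Hence $T$ separates $\mathbb{T}^3$, and by the previous paragraph it bounds a solid torus $W$ on one side.

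It then remains to show that the interior of $W$ contains two or more linearly isotopic rods. Since $T$ is essential in $\mathcal{M}$ --- in particular incompressible and not boundary-parallel in $\mathcal{M}$ --- the solid torus $W$ must contain at least one rod (otherwise $T$ would bound a solid torus in $\mathcal{M}$ itself, making it compressible or boundary-parallel, since a rod complement's boundary tori are exactly the $\partial N(\text{rod})$'s; more precisely, if $W \cap L = \emptyset$ then $W \subset \mathcal{M}$ and $T$ compresses). If $W$ contained exactly one rod $r$, then $W - N(r)$ would be a one-rod piece sitting inside $\mathbb{T}^3$; here I would invoke the classification in~\cite{Hui:ClassifyT3Rods} (and the structure results in~\cite{Hui-Purcell:RodPackings}) to say that a single rod inside a solid torus $W \subset \mathbb{T}^3$ is isotopic to the core of $W$ (a rod is a geodesic, hence ``straight'', and cannot be knotted or wound nontrivially inside a solid torus embedded in $\mathbb{T}^3$ without violating linearity), so $\partial W = T$ would be boundary-parallel in $\mathcal{M}$, a contradiction. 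Therefore $W$ contains at least two rods. Finally, the requirement that these rods be linearly isotopic follows because $T = \partial W$ is incompressible in $\mathcal{M}$: the geometric classification forces any two rods cohabiting a solid-torus JSJ piece to be parallel straight lines in the relevant lift, which is exactly the definition of linear isotopy; if two rods in $W$ were \emph{not} linearly isotopic the piece $W - N(L \cap W)$ would itself be toroidal or hyperbolic rather than a trivial $I$-bundle neighbourhood, again contradicting that $T$ is the \emph{innermost} such torus, after passing to an innermost $W$ among all essential tori. The main obstacle I anticipate is this last step: cleanly extracting ``two linearly isotopic rods'' from the classification without re-deriving the geometric decomposition of rod complements --- i.e.\ pinning down exactly which lemma of~\cite{Hui:ClassifyT3Rods} says that a solid torus in $\mathbb{T}^3$ containing $\geqslant 2$ disjoint rods and with incompressible boundary in the complement must contain a linearly-isotopic pair, and handling the bookkeeping of choosing $T$ (equivalently $W$) to be innermost so that no essential torus lies inside $W$.
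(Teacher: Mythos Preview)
Your overall strategy coincides with the paper's: rule out the case where $T$ is a ``plane torus'' in $\mathbb{T}^3$ using the three linearly independent rods, conclude that $T$ bounds a solid torus $W$ in $\mathbb{T}^3$, and then count rods inside $W$. The paper does the first two steps by citing specific results (Proposition~3.8, Lemma~3.7, Lemma~3.9) from~\cite{Hui:ClassifyT3Rods}, whereas your intersection-pairing argument in $H_1(\mathbb{T}^3)\times H_2(\mathbb{T}^3)$ is a clean self-contained alternative and is correct as written.

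The gap is in your final step, and it is smaller than you think. Your ``innermost'' manoeuvre does not prove the lemma: the statement is about \emph{every} essential torus, so you are not free to replace $T$ by an innermost one. What actually finishes the argument is the observation you already made in step~5 for the single-rod case and then abandoned. Any rod $r\subset W$ is a geodesic in $\mathbb{T}^3$, so $[r]\in\pi_1(\mathbb{T}^3)\cong\mathbb{Z}^3$ is primitive; but $[r]$ factors through $\pi_1(W)\cong\mathbb{Z}$, so $r$ represents $\pm 1$ times the core class and is therefore isotopic (in $W$) to the core. Hence \emph{all} rods in $W$ share the core direction, i.e.\ they are pairwise parallel. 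Parallel rods confined to a common solid torus are linearly isotopic in the sense of~\cite{Hui:ClassifyT3Rods} (take an adjacent pair, so the planar annulus between them misses the remaining rods). The paper's proof simply asserts this last sentence without spelling it out; there is no deeper input from the classification needed here, and no innermost argument is required.
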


\begin{proof}
Since $\mathcal{M}$ is toroidal, by Proposition~3.8 in~\cite{Hui:ClassifyT3Rods}, either $\mathcal{M}$ is a rod complement with all rods spanning
a plane torus, or there exist disjoint parallel rods that are linearly isotopic in the complement of the other rods (or possibly both).
The assumption that $\mathcal{M}$ is a rod complement with three linearly independent rods
thus implies the existence of at least two linearly isotopic parallel rods.

Let $T_\mathrm{e}$ be an essential torus in $\mathcal{M}$.
Note that $T_\mathrm{e}$ is not in the homotopy class of a plane torus because there exist three linearly independent rods for $\mathcal{M}$.
Hence, the torus $T_\mathrm{e}$, essential in $\mathcal{M}$, has at least one generator in $\pi_1(T_\mathrm{e}) \cong \mathbb{Z} \times \mathbb{Z}$
represented by an essential loop that is homotopically trivial in $\mathbb{T}^3$.

By Lemma~3.7 in~\cite{Hui:ClassifyT3Rods}, there exists a compression disc $D$ for $T_\mathrm{e}$ in $\mathbb{T}^3$.
It then follows from the irreducibility of $\mathbb{T}^3$ that $T_\mathrm{e}$ is separating.
As $T_\mathrm{e}$ is incompressible in $\mathcal{M}$, some rod must intersect $D$.
Hence, the other generator of $\pi_1(T_\mathrm{e}) \cong \mathbb{Z} \times \mathbb{Z}$
is represented by a loop that is homotopically non-trivial in $\mathbb{T}^3$.
By Lemma~3.9 in \cite{Hui:ClassifyT3Rods}, $T_\mathrm{e}$ bounds a solid torus $V_\mathrm{e}$ in $\mathbb{T}^3$.
Since $T_\mathrm{e}$ is not boundary-parallel, the interior of $V_\mathrm{e}$ contains
two or more parallel rods that are linearly isotopic in the complement of the other rods.
\end{proof}

\begin{theorem}\label{thm:3-torus}
Let $\mathcal{M}$ be a toroidal rod complement in the $3$-torus with at least three linearly independent rods.
The JSJ decomposition of $\mathcal{M}$ gives a unique (up to homeomorphism) hyperbolic
rod complement $\mathcal{H}$ with $\Delta(\mathcal{H}) < \Delta(\mathcal{M})$.
\end{theorem}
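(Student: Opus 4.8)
The plan is to combine the structural results about JSJ decompositions of rod complements from Lemma~\ref{lem:3-torus} with the complexity inequality of Theorem~\ref{thm:JSJDecomp}. First I would observe that $\mathcal{M}$, being a rod complement in $\mathbb{T}^3$ with at least three linearly independent rods, is irreducible (since $\mathbb{T}^3$ is irreducible and the rods are geodesics, their complement contains no essential spheres), is $\partial$-irreducible (each boundary torus is incompressible, as a compressing disc would bound a meridian disc meeting the rod), and has no $2$-sphere boundary components; moreover $\mathcal{M}$ is orientable. Since $\mathcal{M}$ is toroidal by hypothesis, its JSJ decomposition $\{S_i\}$ is non-empty. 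By Lemma~\ref{lem:3-torus}, every essential torus in $\mathcal{M}$ bounds a solid torus in $\mathbb{T}^3$ whose interior contains two or more linearly isotopic rods; this is the key geometric fact constraining the JSJ pieces.

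Next I would identify the JSJ pieces. Cutting $\mathcal{M}$ along the solid tori supplied by Lemma~\ref{lem:3-torus} for each torus in the JSJ family, the ``outermost'' piece is a rod complement in $\mathbb{T}^3$ in which (after the linear isotopies have been used to coalesce parallel rods) the remaining rods are linearly independent and span all of $\mathbb{R}^3$, hence by the classification of rod complement geometry in~\cite{Hui:ClassifyT3Rods} this piece is hyperbolic; the inner pieces (inside the solid tori) are themselves rod complements in a solid torus, which are Seifert fibred. So there is exactly one hyperbolic piece $\mathcal{H}$, and it is a rod complement. Uniqueness up to homeomorphism follows from the uniqueness (up to isotopy) of the JSJ decomposition itself, together with the fact that distinct hyperbolic JSJ pieces would each contribute a hyperbolic summand, contradicting that the classification leaves only one such piece. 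I would cite the relevant propositions of~\cite{Hui:ClassifyT3Rods} (Propositions~3.8 and the geometry classification) for these identifications rather than reprove them.

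Finally, with $\mathcal{H}$ identified as a hyperbolic piece of the JSJ decomposition of the orientable, irreducible, $\partial$-irreducible $3$-manifold $\mathcal{M}$ (which has non-empty JSJ decomposition and no $2$-sphere boundary components), I would apply Theorem~\ref{thm:JSJDecomp} directly to conclude $\Delta(\mathcal{H}) < \Delta(\mathcal{M})$. Theorem~\ref{thm:JSJDecomp}'s hypotheses are met: hyperbolicity of $\mathcal{H}$ gives irreducibility, $\partial$-irreducibility and anannularity via Thurston's Hyperbolisation Theorem, and orientability of $\mathcal{M}$ passes to $\mathcal{H}$ so it has no two-sided properly embedded M\"{o}bius bands. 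This yields the complexity inequality.

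The main obstacle I anticipate is not the complexity inequality---that is a black-box application of Theorem~\ref{thm:JSJDecomp}---but rather the bookkeeping needed to show that the JSJ decomposition of $\mathcal{M}$ has \emph{exactly one} hyperbolic piece and that it is a rod complement. This requires carefully threading together the statements of Lemma~\ref{lem:3-torus} and the classification results from~\cite{Hui:ClassifyT3Rods}: one must verify that every non-hyperbolic JSJ piece arises as a Seifert fibred rod complement in a solid torus, that the solid tori provided by Lemma~\ref{lem:3-torus} can be taken disjoint and compatible with an actual JSJ family, and that after maximally coalescing linearly isotopic rods the leftover piece genuinely has three linearly independent rods (so the classification forces hyperbolicity). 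The uniqueness-up-to-homeomorphism claim also needs a short argument that no two JSJ pieces are simultaneously hyperbolic, which again reduces to the linear-algebra classification. None of these steps is deep, but they are where the real content of the proof lives.
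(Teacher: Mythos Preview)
Your proposal is correct and follows essentially the same route as the paper: verify the hypotheses of Theorem~\ref{thm:JSJDecomp}, use Lemma~\ref{lem:3-torus} to see that every JSJ torus bounds a solid torus containing linearly isotopic rods, identify the single ``outer'' piece $\mathcal{H}$ as a rod complement with three linearly independent rods and no linearly isotopic pair (hence hyperbolic by the classification in~\cite{Hui:ClassifyT3Rods}), observe the remaining pieces are Seifert fibred, and then invoke Theorem~\ref{thm:JSJDecomp}. One small slip: ``toroidal'' alone does not force a non-empty JSJ decomposition---you also need that $\mathcal{M}$ is not Seifert fibred, which the paper gets from Theorem~4.1 of~\cite{Hui:ClassifyT3Rods} (and which your appeal to the classification would supply anyway).
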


\begin{proof}
Without loss of generality, assume that $\mathcal{M}$ is the complement of an open neighbourhood of all the rods in the $3$-torus.

We first show that $\mathcal{M}$ satisfies all the assumptions in Theorem~\ref{thm:JSJDecomp}.
Note that $\mathcal{M}$ is an orientable $3$-manifold with no $2$-sphere boundary components.
Since $\mathcal{M}$ is the complement of finitely many rods in the $3$-torus, Proposition~3.6
in~\cite{Hui:ClassifyT3Rods} tells us that $\mathcal{M}$ is irreducible and $\partial$-irreducible.
We also know that $\mathcal{M}$ has non-empty JSJ decomposition because it is toroidal by assumption, and because it is not Seifert fibred by Theorem~4.1 in~\cite{Hui:ClassifyT3Rods}.
Thus, Theorem~\ref{thm:JSJDecomp} applies to any hyperbolic component that results from cutting along the JSJ tori.

Let $\{T_i\}$ be the set of essential tori that gives the JSJ decomposition of $\mathcal{M}$;
recall that this set of tori is unique up to isotopy, which means that the JSJ pieces
(the $3$-manifold components obtained by cutting along these tori) are unique up to homeomorphism.
By Lemma~\ref{lem:3-torus}, each essential torus $T_i$ bounds a solid torus $V_i$ in $\mathbb{T}^3$ whose interior contains two or more linearly isotopic rods.
Thus, one of the JSJ pieces is a $3$-manifold $\mathcal{H} := \mathcal{M} - \bigcup_i\Int(V_i)$ whose interior is homeomorphic to a rod complement.
We will show that $\mathcal{H}$ is the unique hyperbolic JSJ piece for $\mathcal{M}$.

To do this, we repeatedly appeal to the classification given by Theorem~4.1 in~\cite{Hui:ClassifyT3Rods}.
Since $\mathcal{M}$ has at least three linearly independent rods, observe that the same must be true for $\mathcal{H}$.
Thus, by the classification, $\mathcal{H}$ cannot be Seifert fibred.
This means that $\mathcal{H}$ must be atoroidal, since it is one of the JSJ pieces.
Using the classification again, we therefore see that $\mathcal{H}$ cannot have a pair of linearly isotopic rods.
This has two implications:
\begin{itemize}
\item
First, we must have exactly one essential torus $T_i$ for each linear isotopy class containing at least two rods from $\mathcal{M}$,
and the corresponding solid torus $V_i$ in $\mathbb{T}^3$ must contain \emph{all} of the rods in this class.
\item
Second, by the classification, the interior of $\mathcal{H}$ admits a complete hyperbolic structure.
\end{itemize}
To see that $\mathcal{H}$ is the only hyperbolic JSJ piece, observe that each of the other JSJ pieces is
obtained from one of the solid tori $V_i$ by deleting a small neighbourhood of all the (linearly isotopic) rods inside $V_i$;
in other words, all the other JSJ pieces are Seifert fibred, since they are homeomorphic to solid tori with at least two core curves removed.

Finally, since $\mathcal{H}$ is a hyperbolic piece obtained after cutting along the JSJ tori for $\mathcal{M}$,
it follows from Theorem~\ref{thm:JSJDecomp} that $\Delta(\mathcal{H}) < \Delta(\mathcal{M})$.
\end{proof}

\bibliography{crush}

\end{document}